\newif\ifslide
\theoremstyle{plain}
\newtheorem{theorem}{Theorem}
\newtheorem{theorem}{Theorem}[section]
\newtheorem{corollary}[theorem]{Corollary}
\newtheorem{lemma}[theorem]{Lemma}
\newtheorem{setup}[theorem]{Setup}
\newtheorem*{theorem*}{Theorem}
\newtheorem{proposition}[theorem]{Proposition}
\newtheorem{definition-lemma}[theorem]{Definition-Lemma}
\newtheorem{red-question}[theorem]{\textcolor{red}{Question}}
\theoremstyle{definition}
\newtheorem{definition}[theorem]{Definition}
\newtheorem{remark}[theorem]{Remark}
\newtheorem{example}[theorem]{Example}
\DeclareMathOperator{\Cl}{Cl}
\def\red{\textcolor{red}}
\def\blue {\textcolor{blue}}
\def\ideal#1.{I_{#1}}
\def\ring#1.{\mathcal {O}_{#1}}
\def\fring#1.{\hat{\mathcal {O}}_{#1}}
\def\proj#1.{\mathbb {P}(#1)}
\def\pr #1.{\mathbb {P}^{#1}}
\def\dpr #1.{\hat{\mathbb {P}}^{#1}}
\def\af #1.{\mathbb A^{#1}}
\def\Hz #1.{\mathbb F_{#1}}
\def\Hbz #1.{\overline{\mathbb F}_{#1}}
\def\fb#1.{\underset #1 {\times}}
\def\rest#1.{\underset {\ \ring #1.} \to \otimes}
\def\au#1.{\operatorname {Aut}\,(#1)}
\def\deg#1.{\operatorname {deg } (#1)}
\def\pic#1.{\operatorname {Pic}\,(#1)}
\def\pico#1.{\operatorname{Pic}^0(#1)}
\def\picg#1.{\operatorname {Pic}^G(#1)}
\def\ner#1.{NS (#1)}
\def\rdown#1.{\llcorner#1\lrcorner}
\def\rfdown#1.{\lfloor{#1}\rfloor}
\def\rup#1.{\ulcorner{#1}\urcorner}
\def\rcup#1.{\lceil{#1}\rceil}
\def\n1#1.{\operatorname {N_1}(#1)}  %Vector space of 1-cycles
\def\cn1#1.{\overline{\operatorname {N^1}(#1)}} %closure
\def\cone#1.{\operatorname {NE}(#1)}     %cone of curves
\def\ccone#1.{\overline{\operatorname {NE}}(#1)}
\def\none#1.{\operatorname {NF}(#1)}
\def\cnone#1.{\overline{\operatorname {NF}}(#1)}
\def\mone#1.{\operatorname {NM}(#1)} %cone of moving curves
\def\cmone#1.{\overline{\operatorname {NM}}(#1)}
\def\coef#1.{\frac{(#1-1)}{#1}}
\def\vit#1.{D_{\langle #1 \rangle}}
\def\mm#1.{\overline {M}_{0,#1}}
\def\H1#1.{H^1(#1,{\ring #1.})}
\def\ac#1.{\overline {\mathbb F}_{#1}}
\def\adj#1.{\frac {#1-1}{#1}}
\def\spn#1.{\overline{#1}}
\def\pek#1.#2.{\Cal P^{#1}(#2)}
\def\plk#1.#2.{\Cal P^{\leq #1}(#2)}
\def\ev#1.{\operatorname{ev_{#1}}}
\def\ilist#1.{{#1}_1,{#1}_2,\dots}
\def\bminv#1.{(\nu_1,s_1;\nu_2,s_2;\dots ;\nu_{#1},s_{#1};\nu_{r+1})}
\def\zinv#1.{(\nu_1,s_1;\nu_2,s_2;\dots ;\nu_{#1},s_{#1};0)}
\def\iinv#1.{(\nu_1,s_1;\nu_2,s_2;\dots ;\nu_{#1},s_{#1};\infty)}
\def\scr #1.{\mathcal #1}
\def\llist#1.#2.{{#1}_1,{#1}_2,\dots,{#1}_{#2}}
\def\ulist#1.#2.{{#1}^1,{#1}^2,\dots,{#1}^{#2}}
\def\lomitlist#1.#2.{{#1}_1,{#1}_2,\dots,\hat {{#1}_i}, \dots, {#1}_{#2}}
\def\lomitlistz#1.#2.{{#1}_0,{#1}_1,\dots,\hat {{#1}_i}, \dots, {#1}_{#2}}
\def\loc#1.#2.{\Cal O_{#1,#2}}
\def\fderiv#1.#2.{\frac {\partial #1}{\partial #2}}
\def\deriv#1.#2.{\frac {d #1}{d #2}}
\def\map#1.#2.{#1 \longrightarrow #2}
\def\rmap#1.#2.{#1 \dasharrow #2}
\def\emb#1.#2.{#1 \hookrightarrow #2}
\def\non#1.#2.{\text {Spec }#1[\epsilon]/(\epsilon)^{#2}}
\def\Hi#1.#2.{\text {Hilb}^{#1}(#2)}
\def\sym#1.#2.{\operatorname {Sym}^{#1}(#2)}
\def\Hb#1.#2.{\text {Hilb}_{#1}(#2)}
\def\Hm#1.#2.{\Hom_{#1}(#2)}
\def\prd#1.#2.{{#1}_1\cdot {#1}_2\cdots {#1}_{#2}}
\def\Bl #1.#2.{\operatorname {Bl}_{#1}#2}
\def\pl #1.#2.{#1^{\otimes #2}}
\def\mgn#1.#2.{\overline {M}_{#1,#2}}
\def\ialist#1.#2.{{#1}_1 #2 {#1}_2, #2\dots}
\def\pair#1.#2.{\langle #1, #2\rangle}
\def\vandermonde#1.#2.{\left|
\begin{matrix}
1 & 1 & 1 & \dots & 1\\
{#1}_1 & {#1}_2 & {#1}_3 & \dots & {#1}_{#2}\\
{#1}_1^2 & {#1}_2^2 & {#1}_3^2 & \dots & {#1}_{#2}^2\\
\vdots & \vdots & \vdots & \ddots & \vdots\\
{#1}_1^{#2-1} & {#1}_2^{#2-1} & {#1}_2^{#2-1} & \dots & {#1}_{#2}^{#2-1}\\
\end{matrix}
\right|
}
\def\vandermondet#1.#2.{\left|
\begin{matrix}
1 & {#1}_1   & {#1}_1^2 & \dots & {#1}_1^{#2-1}\\
1 & {#1}_2   & {#1}_2^2 & \dots & {#1}_2^{#2-1}\\
1 & {#1}_3   & {#1}_3^2 & \dots & {#1}_3^{#2-1}\\
\vdots & \vdots & \vdots & \ddots & \vdots\\
1 & {#1}_{#2}& {#1}_{#2}^2 & \dots & {#1}_{#2}^{#2-1}\\
\end{matrix}
\right|
}
\def\gr#1.#2.{\mathbb{G}(#1,#2)}
\def\alist#1.#2.#3.{{#1}_1 #2 {#1}_2 #2\dots #2 {#1}_{#3}}
\def\zlist#1.#2.#3.{#1_0 #2 #1_1 #2\dots #2 #1_{#3}}
\def\lomitlist30#1.#2.#3.{{#1}_0,{#1}_1 #2 \dots #2\hat {{#1}_i} #2\dots #2 {#1}_{#3}}
\def\lmap#1.#2.#3.{#1 \overset{#2}{\longrightarrow} #3}
\def\mes#1.#2.#3.{#1 \longrightarrow #2 \longrightarrow #3}
\def\ses#1.#2.#3.{0\longrightarrow #1 \longrightarrow #2 \longrightarrow #3 \longrightarrow 0}
\def\les#1.#2.#3.{0\longrightarrow #1 \longrightarrow #2 \longrightarrow #3}
\def\res#1.#2.#3.{#1 \longrightarrow #2 \longrightarrow #3\longrightarrow 0}
\def\Hi#1.#2.#3.{\text {Hilb}^{#1}_{#2}(#3)}
\def\ten#1.#2.#3.{#1\underset {#2}{\otimes} #3}
\def\lomitlist30#1.#2.#3.{{#1}_0 #2 {#1}_1 #2 \dots #2 \hat {{#1}_i} #2 \dots #2 {#1}_{#3}}
\def\mderiv#1.#2.#3.{\frac {d^{#3} #1}{d #2^{#3}}}
\def\Hom{\operatorname{Hom}}
\def\dim{\operatorname{dim}}
\def\deg{\operatorname{deg}}
\def\Pic{\operatorname{Pic}}
\def\Sing{\operatorname{Sing}}
\def\mult{\operatorname{mult}}
\def\ord{\operatorname{ord}}
\def\rest{\operatorname{res}}
\def\vol{\operatorname{vol}}
\def\e{\Cal E}
\def\e1{E_1}
\def\e2{E_2}
\def\Q{\mathbb Q}
\def\mapdown#1{\big\downarrow\rlap{$\vcenter{\hbox{$\scriptstyle#1$}}$}}
\def\mapse#1{
{\vcenter{\hbox{$\mathop{\smash{\raise1pt\hbox{$\diagdown$}\!\lower7pt
\hbox{$\searrow$}}\vphantom{p}}\limits_{#1}\vphantom{\mapdown{}}$}}}}
\def\VR#1.{height#1pt&\omit&&\omit&&\omit&&\omit&&\omit&\cr}
\def\VRT#1.{height#1pt&\omit&&\omit&\cr}
\newcommand{\bA}{\ensuremath{\mathbb{A}}}
\newcommand{\bC}{\ensuremath{\mathbb{C}}}
\newcommand{\bD}{\ensuremath{\mathbb{D}}}
\newcommand{\bE}{\ensuremath{\mathbb{E}}}
\newcommand{\bH}{\ensuremath{\mathbb{H}}}
\newcommand{\bP}{\ensuremath{\mathbb{P}}}
\newcommand{\bQ}{\ensuremath{\mathbb{Q}}}
\newcommand{\bR}{\ensuremath{\mathbb{R}}}
\newcommand{\bZ}{\ensuremath{\mathbb{Z}}}
\newcommand{\cL}{\ensuremath{\mathcal{L}}}
\newcommand{\cM}{\ensuremath{\mathcal{M}}}
\newcommand{\cO}{\ensuremath{\mathcal{O}}}
\newcommand{\vecbul}{\vec{\bullet}}
\DeclareMathOperator{\Image}{Im}
\DeclareMathOperator{\CaCl}{CaCl}
\DeclareMathOperator{\Coker}{Coker}
\DeclareMathOperator{\Ker}{Ker}
\newcommand{\dps}{\displaystyle}
\newcommand{\bul}{\bullet}
\title{K-stability of Fano weighted hypersurfaces \\ via plt flags and convex geometry}
\date{\today}
\author{Livia Campo} 
\address{Institut f\"{u}r Mathematik, Universit\"{a}t Wien, Oskar-Morgenstern-Platz 1, 1090 Wien, Austria}
\email{livia.campo@univie.ac.at}
\author{Kento Fujita}
\address{Department of Mathematics, Graduate School of Science, 
Osaka University, Toyonaka, Osaka 560-0043, Japan} 
\email{fujita@math.sci.osaka-u.ac.jp} 
\author{Taro Sano}
\address{Department of Mathematics, Graduate School of Science, 
Kobe university, 
1-1, Rokkodai, Nada-ku, Kobe 657-8501, Japan} 
\email{tarosano@math.kobe-u.ac.jp} 
\author{Luca Tasin}
\address{Dipartimento di Matematica F.\ Enriques, Universit\`a degli Studi di Milano, Via Cesare Saldini 50, 20133 Milano, Italy} 
\email{luca.tasin@unimi.it}
\begin{document}

\subjclass[2010]{primary 14J40,14J45}

\keywords{Fano varieties, K-stability, weighted hypersurfaces}

\begin{abstract}
We develop a framework to study the K-stability of weighted Fano hypersurfaces based on a combination of birational and convex-geometric techniques. 
As an application, we prove that all quasi-smooth weighted Fano hypersurfaces of index~1 with at most two weights greater than~1 are K-stable. 
We also construct several examples of K-unstable quasi-smooth weighted Fano hypersurfaces of low indices. 
%\blue{Although this might be still not suitable, this sentence can be a bit milder?} \red{Maybe better "many" or "several" than "some".}
%whose indices are smaller than their dimensions.  
%~2 that are K-unstable in every dimension at least~5. 
%\blue{How do we modify this?}
%\red{I think we should erase the phrase "We also construct examples...". The result 
%is no longer strong, so it's not suitable to write in the abstract.}
%\blue{It could also make sense to keep it. As far as I know, we are the first ones to construct such examples.}

To prove these results, we establish lower bounds for stability thresholds using the method of Abban--Zhuang, which reduces the problem to lower-dimensional cases. A key feature of our approach is the use of plt flags that are not necessarily admissible.
\end{abstract}
\maketitle

\section{Introduction}

K-stability has become a central notion in the study of Fano varieties,  serving as the algebro-geometric counterpart of the existence of K\"{a}hler--Einstein metrics (see \cite{CDS1,CDS2,CDS3, MR3352459}).
It provides a bridge between birational geometry, moduli theory, and complex differential geometry, 
and plays a crucial role in the classification of Fano varieties and their degenerations. For a comprehensive treatment of the topic, we refer the reader to \cite{Xubook}.
For instance, Fano manifolds are classified in dimension 2 and 3 and their K-stability has been intensively studied (see \cite{Tian90}, \cite{Calabiproblem}, and references therein).
One of the most effective approaches to studying K-stability is through \emph{stability thresholds} associated with the anticanonical divisor~\(-K_X\) 
\cite{MR3956698, MR3715806, MR3896135, MR4067358, LXZ22}. 
Among these, the stability threshold offers a particularly concrete numerical criterion: a klt Fano variety~\(X\) is K-stable if and only if $\delta(X; -K_X) > 1$.

Among Fano varieties, hypersurfaces in weighted projective spaces form a particularly natural and rich class. 
They generalize smooth hypersurfaces in ordinary projective spaces while admitting a wide range of singularities and geometric behaviors (cf. \cite{Mori75,Dolgachev,Fletcher00}). 
While it is expected that every smooth Fano hypersurface of degree at least~\(3\) is K-stable 
(see Subsection~\ref{ss:known-standard} for the known results), 
the situation in the weighted setting is considerably more intricate and delicate (see Subsection~\ref{ss:known-weighted}). 

In this paper, we initiate a systematic study of the K-stability of quasi-smooth weighted Fano hypersurfaces. 
Our main goal is to develop effective techniques to estimate their stability thresholds and to identify explicit families of K-stable examples. 
The approach we propose combines methods from birational geometry and convex geometry, providing a framework that can be applied to a broad range of situations.
As a first application we prove the following. 

\begin{theorem}\label{thm:d=ak+1}
Let $a >1$ and $n \ge 3$ be integers.  
Let $X=X_d \subset \bP(1^{n+1},a)$ be a quasi-smooth Fano weighted hypersurface of degree $d$, dimension $n$ and index $1$. Then 
\[
\delta\left(X; \cO_X(1)\right) \ge  \frac{n+1}{n}>1.
\]
In particular, $X$ is K-stable. 
\end{theorem}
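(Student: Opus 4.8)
The plan is to prove the numerical bound $\delta(X;\cO_X(1))\ge\frac{n+1}{n}$; since $\frac{n+1}{n}>1$ and $\cO_X(1)\sim -K_X$, K-stability then follows from the stability-threshold criterion recalled in the introduction. First I would record the consequences of the hypotheses: index $1$ together with adjunction on $\bP(1^{n+1},a)$ forces $d=n+a$ and $-K_X\sim\cO_X(1)$, and the degree formula for weighted hypersurfaces gives $\vol(\cO_X(1))=(\cO_X(1))^n=\frac{d}{a}=\frac{n+a}{a}$. I fix homogeneous coordinates $x_0,\dots,x_n$ (weight $1$) and $y$ (weight $a$), and let $P_y=[0:\cdots:0:1]$ be the ambient singular point, a cyclic quotient of type $\frac1a(1,\dots,1)$.

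The core of the argument is the Abban--Zhuang method applied to the complete flag of weighted-linear sections
\[
X=Y_0\supset Y_1\supset\cdots\supset Y_n,\qquad Y_i:=X\cap\{x_0=\cdots=x_{i-1}=0\}.
\]
I would use quasi-smoothness of $X$ and a general choice of coordinates to check that each $Y_i$ is irreducible and quasi-smooth of dimension $n-i$, that $Y_1\sim\cO_X(1)$ is a klt Calabi--Yau divisor, and that the flag is plt with $A_{Y_{i-1}}(Y_i)=1$ at each step. Since every $x_j$ vanishes at $P_y$, the flag necessarily runs through the quotient singularity: this is exactly the regime of a flag that is plt but \emph{not} admissible, and controlling it is the new input. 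The Abban--Zhuang inequality then reduces the estimate on $\delta(X)$ to the refined stability thresholds of the successive restricted series $W^{Y_1,\dots,Y_{i-1}}_{\bullet,\dots,\bullet}$ along the flag, i.e.\ to a chain of lower-dimensional problems ending on the bottom curve $C:=Y_{n-1}\subset\bP(1,1,a)_{[x_{n-1}:x_n:y]}$.

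To compute the refined invariants I would pass to convex geometry. Tracking the vanishing orders $(c_0,\dots,c_{n-2})$ along $Y_1,\dots,Y_{n-1}$ and the order along the last step on $C$, a monomial count produces the Newton--Okounkov body
\[
\Delta=\Bigl\{\tau\in\bR^n_{\ge0}\ :\ \textstyle\sum_{i=1}^{n-1}\tau_i\le1,\ \ 0\le\tau_n\le\tfrac{n+a}{a}\bigl(1-\sum_{i=1}^{n-1}\tau_i\bigr)\Bigr\},
\]
of Euclidean volume $\frac{n+a}{a}\cdot\frac1{n!}$, consistent with $\vol(\cO_X(1))$. From $\Delta$ the contributions of the flag divisors are read off as barycentric coordinates: $S(Y_i)=\frac1{n+1}$ for $i\le n-1$ (so $A/S=n+1$), while a smooth point of the bottom curve yields the value $\frac{n+a}{a(n+1)}$ (so $A/S=\frac{a(n+1)}{n+a}$). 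A direct check shows $\frac{a(n+1)}{n+a}\ge\frac{n+1}{n}$ precisely when $a\ge\frac{n}{n-1}$, which holds for all $a\ge2$ and $n\ge3$; thus every valuation coming from the smooth part of the flag is comfortably $\ge\frac{n+1}{n}$.

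The remaining and hardest step is to bound $A_X(E)/S(E)\ge\frac{n+1}{n}$ for the valuations $E$ whose centre is the quotient singularity $P_y$ (which occurs on $X$ exactly when $a\nmid n$, and which the barycentric computation above does not see, since the bottom step of the Abban--Zhuang induction is an infimum over \emph{all} points of $C$). Here I would analyse the local model $\bA^n/\mu_a$ of $X$ at $P_y$, where $X$ is cut out by the lowest-degree part of $f$, of degree $n\bmod a$, and combine the log-discrepancy bookkeeping of the cyclic quotient with the refined $S$-invariants of the bottom stratum supplied by the Okounkov body. I expect this orbifold analysis at $P_y$ to be the main obstacle: it is both where the plt-but-not-admissible flag is indispensable and where the clean uniform constant $\frac{n+1}{n}$ — rather than the sharper but arithmetic-dependent $\frac{a(n+1)}{n+a}$ — is forced. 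Once all valuations are shown to satisfy $A/S\ge\frac{n+1}{n}$, the Abban--Zhuang inequality yields $\delta(X;\cO_X(1))\ge\frac{n+1}{n}>1$, and hence K-stability.
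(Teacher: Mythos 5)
Your setup is sound: $d=n+a$, $\vol(\cO_X(1))=\frac{n+a}{a}$, and the reduction via Abban--Zhuang along a flag of degree-one sections does give $A/S=n+1$ at the intermediate steps and $A/S=\frac{a(n+1)}{n+a}\ge\frac{n+1}{n}$ at smooth points of the bottom curve. This matches what the paper does away from the vertex (there via Proposition \ref{proposition_ST-cover}, Lemma \ref{lem:hyperplane_mod} and the surface estimate of Proposition \ref{proposition_perhaps-useful}). But the entire difficulty of the theorem is concentrated at $P=[0:\cdots:0:1]$, and there your proposal stops at ``I would analyse the local model \dots I expect this orbifold analysis at $P_y$ to be the main obstacle.'' That is not an argument, and it cannot be filled in along the route you sketch: if you run the hyperplane flag all the way down to the curve $C=Y_{n-1}\subset\bP(1,1,a)$ and take the last flag element to be the point $P\in C$, adjunction gives $A_{C,\Delta_C}(P)=\frac{1}{a}$ while the last barycentric coordinate of your simplex is $\frac{n+a}{a(n+1)}$, so the ratio is $\frac{n+1}{n+a}<1$. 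The flag must be \emph{changed} at $P$, not merely ``bookkept.''

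What the paper actually does at $P$ occupies Sections \ref{section:barycenter} and \ref{section_singular}: it replaces the hyperplane flag by the exceptional divisor $E$ of the ordinary blowup at $P$ (with $A_X(E)=\frac{n}{a}$), splits into the generalized Eckardt case $m=k$ and the non-Eckardt case $m\le k-1$, proves a sharp Seshadri-constant bound $\varepsilon(\cO_S(1);E|_{\tilde S})\ge\frac{(m+1)a+1}{(ma+1)a}$ on a general surface section by an explicit intersection-theoretic analysis of curves on $\bP_C(\cO\oplus\cL^{\otimes a})$ (Lemma \ref{lemma_sesh-non-ec}), converts this into a $\delta_P$ bound via the barycenter inequality for Okounkov bodies at quotient singularities (Proposition \ref{proposition_gravity}, Corollary \ref{corollary_lobster}), and in the Eckardt case computes the multi-graded $S$-invariants of the flag $X\triangleright E\triangleright\cdots$ exactly (Proposition \ref{proposition_ninja}), yielding precisely the borderline value $\frac{n+1}{(k-1)a+1}=\frac{n+1}{n}$. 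None of this machinery, nor any substitute for it, appears in your proposal; also note your description of the local model is off ($X$ is a hypersurface \emph{in} $\bA^{n+1}/\mu_a$, not $\bA^n/\mu_a$, and its lowest-order part at $P$ is governed by the integer $m$ of Setup \ref{setup:d=ak+1}, which is exactly the invariant controlling the Eckardt dichotomy). So the proposal identifies the right crux but leaves it unproved, and the one concrete route it suggests for that crux would fail.
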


As one expects, the hard part in the proof of Theorem \ref{thm:d=ak+1} is to understand the local stability threshold $\delta_P(X,\cO(1))$ in the case $P=[0:\cdots:0:1] \in X$.
If $P$ is a generalized Eckardt point (cf.\ Section \ref{section_eckardt}), 
then we obtain a sharp estimate in the following sense. 

\begin{example}[See Corollary \ref{corollary_ninja}]
Let $X_d \subset \bP(1^{n+1},a)$ be a quasi-smooth weighted hypersurface of dimension $n\ge 3$ and degree $d=ak+1 \ge n$ with $k \in \mathbb N$. Assume that the point $P=[0:\cdots:0:1]$ is a generalized Eckardt point for $X$, i.e., $X$ is defined by a polynomial $f$ of the form 
$$
f=x_{n+1}^kx_0+x_{n+1}^{k-1}f_{a+1}(x_0,\dots,x_n)+\cdots+
f_{ak+1}(x_0,\dots,x_n)
$$
such that $x_0 \mid f_{at+1}$ for any $t=1,\ldots,k-1$ (cf. Example \ref{example_eckardt-qsm}). Then 
\begin{equation}
    \delta_P\left(X; \cO_X(1)\right)=\frac{n(n+1)}{ak+n}.
\end{equation}
\end{example}

As a consequence of this result, we construct examples of K-unstable Fano quasi-smooth weighted hypersurfaces whose indices are smaller than their dimensions; see Example~\ref{example:unstable}. 
To the best of our knowledge, these provide the first examples with such properties in higher dimensions 
and highlight a notable difference from the case of smooth hypersurfaces in the standard projective space. 
Compare also the examples with \cite{KW21}, \cite[Theorem 6.1]{ST21}.

If $P$ is not a generalized Eckardt point, we do not obtain a sharp estimate, but we can still derive several lower bounds.
 See Propositions \ref{p:surfacesingular} and \ref{p:surface_dbig-new}.

\medskip

Next, we consider the case in which exactly two weights are bigger than 1. 

\begin{theorem}\label{thm:wt2}
Let $2\leq a\leq b$ and $n\geq 3$ be integers. 
Let $X=X_d\subset\bP(1^n,a,b)$ be a quasi-smooth Fano weighted 
hypersurface of degree $d$, dimension $n$ and index $1$. Then 
\[
\delta\left(X; \cO_X(1)\right) \ge  \frac{n+1}{n+\frac{1}{a}}>1.
\]
In particular, $X$ is K-stable. 
\end{theorem}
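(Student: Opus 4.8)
The plan is to bound $\delta(X;\cO_X(1))$ pointwise, via $\delta(X;\cO_X(1))=\inf_{P\in X}\delta_P(X;\cO_X(1))$, and to locate the binding point. Since the index is $1$ we have $-K_X=\cO_X(1)$ and $d=n+a+b-1$. Write the coordinates of $\bP(1^n,a,b)$ as $x_1,\dots,x_n,y,z$ with $\deg y=a$ and $\deg z=b$. The only points where $X$ can fail to be smooth are the two torus-fixed vertices $P_y=[0:\cdots:0:1:0]$ and $P_z=[0:\cdots:0:0:1]$, where $X$ carries cyclic quotient singularities of orders dividing $a$ and $b$; at every other point $X$ is smooth (quasi-smoothness makes the weight-$1$ vertices smooth), and there the flag built from weight-$1$ hyperplanes involves only weight-$1$ coordinates, so the local threshold is controlled by the same estimate as in the one-big-weight setting of Theorem~\ref{thm:d=ak+1} and is at least $\tfrac{n+1}{n}>\tfrac{n+1}{n+1/a}$. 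Hence the whole problem concentrates at $P_y$ and $P_z$.

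At each vertex I would run the Abban--Zhuang method along a complete flag obtained by successively cutting with the weight-$1$ coordinate hyperplanes $\{x_i=0\}$ through the point, descending
\[
X=D_0\supset D_1\supset\cdots\supset D_{n-1}=C\subset\bP(1,a,b),
\]
where $C$ is a curve passing through the chosen vertex. The first cut lies in $|\cO_X(1)|=|-K_X|$, so $A_X(D_1)/S_X(D_1)=n+1$; but $K_{D_i}=\cO_{D_i}(i-1)$, so $D_1$ is Calabi--Yau and the deeper members are of general type. Thus the flag is plt but \emph{not} admissible, and one must work throughout with the refined (restricted) linear series $W^{D_i}_\bullet$ obtained by restricting $|\cO_X(m)|$, rather than with $-K_{D_i}$. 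Iterating the Abban--Zhuang inequality reduces the estimate to the intermediate divisorial ratios $A_{D_i}(D_{i+1})/S(W^{D_i}_\bullet;D_{i+1})$ together with the final point contribution $A_C(P)/S(W^{C}_\bullet;P)$ on the curve $C$.

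The decisive term is the final one, on the curve $C\subset\bP(1,a,b)$, where the two big weights finally interact. The refined series $W^C_\bullet$ is an explicit graded linear series, and $S(W^C_\bullet;P)$ is a one-dimensional volume integral whose Okounkov body is a polygon determined by $a$ and $b$. At $P_z$ the surviving big coordinate in the local orbifold chart is $y$, of weight $a$; it enters this integral with order governed by $a$ and enlarges the relevant $S$-invariant by a term $\tfrac1a$, producing the local value $\tfrac{n+1}{n+1/a}$. At $P_y$ the surviving big coordinate is $z$, of weight $b\ge a$, giving instead the larger value $\tfrac{n+1}{n+1/b}$. Hence $P_z$ is the binding vertex, $\delta(X;\cO_X(1))\ge \tfrac{n+1}{n+1/a}>1$, and K-stability follows from the criterion $\delta>1$.

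The main obstacle is the precise evaluation of the refined $S$-invariants along this non-admissible flag, and in particular pinning down the Okounkov polygon of $W^C_\bullet$ at the vertex. This forces a case analysis of the monomials allowed by quasi-smoothness — which fixes the local singularity type at $P_y,P_z$ and the lowest-order terms of the defining equation there — together with a verification that none of the intermediate ratios $A_{D_i}(D_{i+1})/S(W^{D_i}_\bullet;D_{i+1})$ drops below $\tfrac{n+1}{n+1/a}$, so that the minimum over the flag is genuinely realized by the weight-$a$ contribution of $y$ at $P_z$.
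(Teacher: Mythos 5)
Your overall strategy (Abban--Zhuang along non-admissible plt flags of weight-$1$ hyperplane sections, then locate the worst point) is in the spirit of the paper, but the proposal has a decisive gap at the step where you dismiss all points other than the two vertices. First, the claim that $X$ is smooth away from $P_y,P_z$ is false in general: $\Sing X\subset X\cap\bP(a,b)$, and when $\gcd(a,b)>1$ and the stratum $B_1^{\bP}=\bP(a,b)$ lies in $X$ this is a whole singular curve; the paper has to treat the case $B_1^{\bP}\subset X$ by an entirely separate mechanism (the cyclic covering trick of Propositions \ref{proposition_ST-cover} and \ref{proposition_2wts-B1}, reducing to $\bP(1^{n+1},b)$ and the Eckardt/non-Eckardt analysis of Section \ref{section_singular}). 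Second, and more seriously, at a point $p$ not on $B_1^{\bP}$ your flag of weight-$1$ hyperplanes through $p$ bottoms out at a surface in $\bP(1,1,a,b)$ or a curve in $\bP(1,a,b)$ in which \emph{both} big weights survive, so the situation is not ``controlled by the same estimate as in Theorem \ref{thm:d=ak+1}'' (whose ambient space has only one big weight), and no bound $\geq\frac{n+1}{n}$ follows. The paper's actual route for these points is to first perform the standard weighted blowup of the $b$-vertex, verify that the strict transform stays quasi-smooth (Proposition \ref{proposition_qsm-wbl} --- this fails for general weighted blowups, cf.\ Example \ref{example_non-qsm}), cut inside the blowup using the induced bundle structure over $\bP^{n-1}$ so as to land on a surface in $\bP(1,1,1,b)$ with only one big weight (Corollary \ref{corollary_2wt-cutting}, which itself needs the Bertini-type Lemmas \ref{lemma:hyperplane-wbs} and \ref{lem:NQSwellformed_swb}), and finally run the Okounkov-body/Zariski-decomposition computation of Proposition \ref{proposition_perhaps-useful}. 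None of this is present or replaceable by what you wrote.

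At the vertices your picture is closer to the truth, but it is incomplete in ways that matter. The curve $C=X\cap(x_1=\cdots=x_{n-1}=0)\subset\bP(1,a,b)$ carries an orbifold boundary $\Delta_C$ coming from the different (with coefficients $1-1/b$ at $P_z$, $1-1/a$ at $P_y$, and $1-1/\gcd(a,b)$ elsewhere on $B_1^{\bP}\cap X$); you never introduce it, and without it the log discrepancy in the final ratio is wrong. With it, the binding value at $P_z$ is $\frac{(n+1)a}{d}$, which is \emph{not} equal to $\frac{n+1}{n+1/a}$ in general; the inequality $\frac{(n+1)a}{d}\geq\frac{n+1}{n+1/a}$ requires the arithmetic of Lemma \ref{lemma_numeri}, which in turn uses the quasi-smoothness constraint $d\equiv 0,1,a \pmod b$ and the index-$1$ relation $d=n+a+b-1$, and genuinely fails at the boundary case $X_{2n+2}\subset\bP(1^n,2,n+1)$ --- which is why the paper splits off the case $b\mid d$ and handles it by \cite[Theorem 2.4]{ST24}. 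Your proposal makes none of these case distinctions, so even granting the flag computation it does not close.
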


\medskip

When more than two weights are bigger than 1, the analysis of K-stability becomes subtler. 
Even for explicit families, such as
\[
X_{2n+3} \subset \mathbb{P}(1^{n-1},2,2,n+1)
\]
with $n\geq 4$, 
the K-stability of all members remains an open problem. 
In this direction, we obtain the following result for general quasi-smooth weighted hypersurfaces.

\begin{corollary}\label{cor:imperial}
Let $n, c_1 \in \bZ_{>0}$ with $n \ge 3$ and $c_1 \ge \frac{n+2}{2} $. Let $X_d \subset \bP= \bP(1^{c_1}, a_{c_1}, \ldots ,a_{n+1})$ be a quasi-smooth Fano weighted hypersurface of degree $d$ and index $1$ such that $2 \le a_{c_1}\le \cdots \le a_{n+1}$.  
Assume the following: 
\begin{itemize}
    \item $d \equiv 1 \mod a_i$ for all $c_1 \le i \le n+1$. 
    \item $X_d$ is general in the linear system $|\cO_{\bP}(d)|$. 
\end{itemize}
Then $X_d$ is K-stable. 
\end{corollary}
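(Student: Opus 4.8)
The plan is to verify $\delta(X;\cO_X(1))>1$ by establishing the stronger local statement $\delta_P(X;\cO_X(1))>1$ at every point $P\in X$, since $\delta=\inf_P\delta_P$. Away from the coordinate vertices $P_i=[0:\cdots:1:\cdots:0]$ (the $1$ in the $i$-th slot) for $c_1\le i\le n+1$, the variety is quasi-smooth of a mild type or can be handled by restricting to a general member of $|\cO_{\bP}(1)|$, and genericity of $X_d$ makes these bounds uniform; so the problem reduces to controlling $\delta_{P_i}$ at the finitely many vertices lying on high-weight coordinates.

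The combinatorial heart of the argument is the hypothesis $c_1\ge\frac{n+2}{2}$. Writing $m=n+2-c_1$ for the number of weights exceeding $1$, this is equivalent to $m\le c_1$, which lets me fix an injective assignment $\sigma\colon\{c_1,\dots,n+1\}\hookrightarrow\{0,\dots,c_1-1\}$ pairing each high-weight coordinate $x_i$ with a distinct weight-$1$ coordinate $x_{\sigma(i)}$. The congruence $d\equiv1\pmod{a_i}$ means the monomial $x_i^{k_i}x_{\sigma(i)}$, with $k_i=(d-1)/a_i$, has degree exactly $d$; since $X_d$ is general, each such monomial appears in the defining polynomial with nonzero coefficient, so every vertex $P_i$ is a generalized Eckardt point in the sense of Section~\ref{section_eckardt} (cf.\ Example~\ref{example_eckardt-qsm}), and the Eckardt directions at distinct vertices are moreover independent.

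With this structure in hand, I would fix a vertex $P=P_i$ and construct a plt flag adapted to it, peeling off the weight-$1$ coordinate $x_{\sigma(i)}$ together with general members of $|\cO_{\bP}(1)|$ so as to isolate the single high weight $a_i$; injectivity of $\sigma$ guarantees that enough weight-$1$ directions remain available to run the construction simultaneously at every vertex. Applying the Abban--Zhuang inductive estimate along this flag --- crucially allowing the flags to be non-admissible, as developed earlier in the paper --- reduces the computation of $\delta_{P_i}$ to the refined $S$-invariants of the induced multigraded linear series on the bottom strata. These are then evaluated by the convex-geometric machinery: each $S$-invariant becomes a ratio of volumes of explicit polytopes cut out by the weights $(1^{c_1},a_{c_1},\dots,a_{n+1})$ and the degree $d$, for which the Eckardt normal form gives sharp control, exactly as in Corollary~\ref{corollary_ninja} in the one-weight case.

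The final step is to check that the resulting lower bound exceeds $1$, and here $c_1\ge\frac{n+2}{2}$ enters a second time: the abundance of weight-$1$ coordinates forces the relevant polytope volumes into the favorable range, much as the thresholds $\frac{n+1}{n}$ and $\frac{n+1}{n+\frac1a}$ arise in Theorems~\ref{thm:d=ak+1} and~\ref{thm:wt2}. I expect the main obstacle to be the analysis at the deepest vertex $P_{n+1}$ carrying the largest weight $a_{n+1}$: this is where the cyclic quotient singularity is worst and the anticanonical volume is most concentrated, and where --- unlike the one- and two-weight settings of those two theorems --- the presence of $m\ge3$ mutually interacting high weights makes the polytope computation genuinely higher-dimensional rather than a single slicing. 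Controlling the cross-terms in that polytope, uniformly in the weights, is the crux of the estimate.
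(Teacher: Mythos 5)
There is a genuine gap at the center of your argument. You claim that because $d\equiv 1\pmod{a_i}$ forces the monomial $x_i^{k_i}x_{\sigma(i)}$ to appear in a general $f$, ``every vertex $P_i$ is a generalized Eckardt point.'' This is backwards: being a generalized Eckardt point is a \emph{closed}, degenerate condition, not a generic one. In the notation of Setup~\ref{setup:d=ak+1}, it requires $x_{\sigma(i)}\mid f_{a_it+1}$ for \emph{all} $t=1,\dots,k_i-1$, i.e.\ the vanishing of many coefficients of $f$; a general member of $|\cO_\bP(d)|$ has $m=1$ (the non-Eckardt case), so Corollary~\ref{corollary_ninja} is simply not available for the general $X_d$, and the whole flag construction you build on the Eckardt normal form collapses. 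The paper's strategy is exactly the reverse of yours: Proposition~\ref{proposition:imperial} exhibits \emph{one special} quasi-smooth member $f=\sum_{i<c_1}x_i^d+\sum_j x_jx_{c_1+j}^{k_{c_1+j}}$, verifies $\delta>1$ for it, and then Corollary~\ref{cor:imperial} follows for the general member by \emph{openness of K-stability} --- a mechanism your proposal never invokes but which is indispensable here, since the explicit member is highly non-general.

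A second, related miss: even for the special member, the hard vertex $P_{n+1}$ is not attacked by a multi-weight polytope computation. The paper first reduces all points $p\neq P_{n+1}$ via Proposition~\ref{proposition_2wts-B1} (itself a covering argument landing in $\bP(1^{n+1},a_{n+1})$ plus Corollary~\ref{corollary_smooth-points}), and at $P_{n+1}$ it splits into the cases $d<(n+1)a_n$ and $d<n^2$ (Lemma~\ref{lemma:imperial}), using the cyclic cover $\bP(1^{n+1},a_{n+1})\to\bP$ of Proposition~\ref{proposition_ST-cover} to transport the estimate to a \emph{one-weight} Eckardt hypersurface, where Corollary~\ref{corollary_ninja} gives $\min\{\tfrac{n(n+1)}{d+n-1},\tfrac{d(n+1)}{2d-1}\}>1$. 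So the ``crux'' you identify --- controlling cross-terms of a polytope with $m\geq 3$ interacting high weights --- is precisely what the covering trick is designed to avoid, and without it (or some substitute) your outline does not close.
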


We briefly explain the strategy for the proofs of Theorems \ref{thm:d=ak+1} and 
\ref{thm:wt2}. %, and also the structure of the paper. 
In order to estimate the local stability threshold for quasi-smooth and well-formed 
hypersurfaces $X$ in weighted projective spaces $\bP$, the \emph{Abban--Zhuang method} 
\cite{AZ22} is one of the most powerful techniques. For a given point $p\in X$, 
if there exists a quasi-smooth prime divisor $Y$ 
on $X$ passing through $p$ of small degree, then Abban--Zhuang's 
method enables us to reduce estimating the local stability threshold of $X$ at $p$ to 
the local stability threshold of $Y$ at $p$. 

In many cases, we can reduce the situation to quasi-smooth surfaces 
by using the Abban--Zhuang method. 
One of the main tools to estimate the local stability threshold for surfaces 
is the \emph{Okounkov body} of the (weighted) blowups of the surfaces 
associated with an admissible flag.  
In fact, Abban--Zhuang \cite[Lemma 3.2]{AZ23} gave an interesting relationship between 
the \emph{Seshadri constant} and the local stability threshold for smooth surfaces, 
which can be proved by looking at the position of the barycenter of suitable 
Okounkov bodies. In Section \ref{section:barycenter}, we will give a generalization of it 
such that given points have quotient singularities (Corollaries \ref{corollary_gravity}, \ref{corollary_lobster}). 
Moreover, in Section \ref{section_singular}, we will give sharp estimates of 
Seshadri constants at specific quotient singularities. 
Theorem \ref{thm:d=ak+1} can be proved by using this technique. 

On the other hand, in general, we cannot take a suitable quasi-smooth prime divisor on $X$ 
in order to apply Abban--Zhuang's method. 
In order to resolve this difficulty, 
in Section~\ref{section:w-blowup} we introduce another fundamental tool, i.e.\ \emph{the 
standard weighted blowup of weighted projective spaces}. The blowup 
is a weighted generalization of the blowup of ordinary projective spaces along linear 
subspaces. In several cases, even if a prime divisor on $X$ passing through a given point $p$ 
is not quasi-smooth, its \emph{strict transform} to the standard weighted blowup 
can be quasi-smooth, see Corollary \ref{corollary_2wt-cutting}. 

Theorem \ref{thm:wt2} can be proven by combining the above techniques. 
Beyond its immediate role in the present work, we expect that this construction will prove useful in broader investigations involving weighted and toric birational geometry.

\subsection{Structure of the paper} We briefly outline how the paper is organized. In Section \ref{section:prelim}, 
we recall the notion of (local) stability thresholds and the main properties we are going to use.
In Section \ref{section:w-blowup} %, after recalling the basic properties of weight projective spaces. 
we introduce the notion of standard weighted blowups and see their basic 
properties. We also analyze quasi-smoothness and well-formedness for hypersurfaces 
in $\bQ$-factorial toric varieties in Subsection \ref{section_q-smooth}. 
In Sections \ref{section_cutting} and \ref{section_covering}, we see when divisors 
or covers in quasi-smooth weighted hypersurfaces are again quasi-smooth. 
The results are generalizations of the results in \cite[\S 3 and \S 5]{ST24}. 
Section \ref{section:barycenter} is one of the main technical keys of our paper. 
We analyze the positions of $2$-dimensional convex sets in Proposition 
\ref{proposition_gravity}. As in \cite[\S 11]{Fujita2023}, the local stability thresholds 
can be estimated via the information of the barycenters of suitable Okounkov bodies. 
In fact, as an immediate consequence of Proposition \ref{proposition_gravity}, 
we can get a weighted version of \cite[Lemma 3.2]{AZ23} in Corollary 
\ref{corollary_gravity}. In Section \ref{section_singular}, we consider the situation 
in Theorem \ref{thm:d=ak+1}. More precisely, we consider the case such that the ambient 
weighted projective space is $\bP(1^{n+1},a)$. In Subsection \ref{section_non-eckardt}, 
we consider the case that the hypersurface $X$ is not a generalized Eckardt point at the singular 
vertex. By looking at the standard weighted blowup 
along its singular vertex, we can get a sharp estimates for the Seshadri constant 
for general sections of original weighted hypersurfaces in Corollary 
\ref{corollary_sesh-non-ec}. In Subsection \ref{section_eckardt}, 
we consider the case that $X$ is a generalized Eckardt point at the singular vertex. 
In this case, the estimate of the local stability threshold can be explicitly described 
in Corollary \ref{corollary_ninja}. Sections \ref{section_wt1}, \ref{section_wt2} and 
\ref{section_imperial} are the proofs of Theorems \ref{thm:d=ak+1}, \ref{thm:wt2} 
and Corollary \ref{cor:imperial} respectively, by using the techniques in the previous 
sections.

\subsection{Known results for standard hypersurfaces}\label{ss:known-standard}

It is conjectured that every smooth Fano hypersurface  in the standard projective space \(X \subset \mathbb{P}^{n+1}\) of degree at least~\(3\) is K-stable. 
The conjecture has been verified when the Fano index of \(X\) is~\(1\) 
\cite{Pukhlikov98, Cheltsov01, MR3936640}, 
or~\(2\) \cite{AZ22}, for cubic fourfolds \cite{Liu22}, 
and in the range \(\dim X \ge \iota_X^3\) 
\cite[Theorem~B]{AZ23}, where $\iota_X$ is the Fano index of $X$.  Remarkably, the conjecture is also known for Fermat type hypersurfaces \cite{MR894378, Tian00, MR2212883, MR4309493}. By openness of K-stability \cite{MR4411858, MR4505846}, this implies that the general Fano hypersurface of degree at least 3 is K-stable.

\subsection{Known results for weighted hypersurfaces}\label{ss:known-weighted}

Quasi-smooth del Pezzo surfaces $X \subset \bP(a_0,\ldots,a_3)$ of index 1 have been classified in \cite{Johnson-Kollar2} and the existence of K\"{a}hler--Einstein metrics has been proven in all cases (cf.\ Ibid, \cite{MR1926877,CJS10,MR4225810}). Note that there exist unstable examples with index 2 \cite{KW21}. 

There are 95 families of terminal quasi-smooth Fano 3-fold hypersurfaces $X \subset \bP(a_0,\ldots,a_4)$ of index 1, see \cite{Johnson-Kollar,CCC11,Fletcher00}. By \cite{Cheltsov08,MR2499678}, the general element of each family is K-stable. Using the relation with birational superrigidity (see \cite{SZ19,KOW18,KOW23}), the K-stability of all members of 75 families have been proven, as reported in \cite[Table 7]{KOW23}.  
On the other hand, \cite{CO} established the K-stability in the remaining cases of the 95 families. K-stability for the non-terminal quasi-smooth Fano threefold hypersurfaces of index 1 is an interesting open problem.

In higher dimension, the picture is very much incomplete, some partial results can be found in \cite{Johnson-Kollar, MR4056840}. It is worth recalling the Johnson--Koll\'ar criterion \cite{Johnson-Kollar2} where the authors showed that if $X_d \subset \bP(a_0,\ldots,a_{n+1})$ is a Fano weighted hypersurface of  index 1 with $a_0 \le \ldots \le a_{n+1}$ such that 
\[
d  < \frac{n+1}{n}a_0a_1,
\]
then $X_d$ admits a K\"ahler--Einstein metric. Such criterion is useful when all weights are relatively big. 

Assuming that $a_i \mid d$ for all $i$, in \cite[Corollary 1.5]{ST21} (see also \cite{LST}), it is shown that general Fano hypersurfaces of degree $d$ and $\iota_X < \dim X$ are K-stable.  Finally, the main result of \cite{ST24} says that given $X \subset \bP(a_0,\ldots,a_n)$, if there is $r$  such that $a_r >1$ and $a_r | d$, then 
$$
\delta(X; \cO_X(1)) \ge \frac{(n+1)a_{r}}{d}.
$$
This implies, in particular, that all smooth Fano weighted hypersurfaces of index 1 or 2 are K-stable. It also %applies to
gives a different proof of the K-stability of all members of 82 out of 95 families of quasi-smooth terminal Fano $3$-fold hypersurfaces $X \subset \bP(a_0,\ldots,a_4)$ of index $1$.

\medskip

\textbf{Acknowledgment.}
L.C.\ was partially supported by \"{O}FG International Communication Grant IK-00001446. 
K.F.\ was supported by JSPS KAKENHI Grant Number 22K03269, 
Royal Society International Collaboration Award 
ICA\textbackslash 1\textbackslash 23109 and Asian Young Scientist Fellowship. 
T.S was partially supported by JSPS KAKENHI Grant Number JP23K03032 and Long-term Overseas
Teaching Fellowship Program for Young Researchers from Kobe university. 
L.T.\ was supported by PRIN2020 research grant ”2020KKWT53” and is a member of the GNSAGA group of INdAM.

\section{Preliminaries}\label{section:prelim}

We recall some notation and definitions from K-stability. Firstly, let us recall 
the notion of the (local) stability thresholds. Whenever we are dealing with log pairs 
$(X,\Delta)$, their boundaries $\Delta$ are always assumed to be effective and of 
rational coefficients. We follow the terminologies in \cite{Fujita:2024aa}.

\begin{definition}[{\cite{MR3896135, MR4067358, AZ22}}]\label{definition_delta}
Let $(X,\Delta)$ be a projective klt pair and let $\eta\in X$ be 
a scheme-theoretic point. Take any big $\bQ$-line bundle $L$ on $X$. 
For any prime divisor $F$ over $X$ (i.e., there exists a projective and 
birational morphism 
$\sigma\colon\tilde{X} \to X$ with $\tilde{X}$ normal such that 
$F$ can be realized as a prime $\bQ$-Cartier divisor 
on $\tilde{X}$), 
let $A_{X,\Delta}(F):=1+\ord_F(K_{\tilde{X}}-\sigma^*(K_X+\Delta))$ be 
the log discrepancy of $(X,\Delta)$ along $F$, and let us set 
\begin{eqnarray*}
S(L; F)&:=&\frac{1}{\vol_X(L)}\int_0^\infty\vol_{\tilde{X}}
(\sigma^*L-x F)dx, \\
T(L; F)&:=&\min\left\{x\in\bR_{>0}\,\,|\,\,\vol_{\tilde{X}}
(\sigma^*L-x F)=0\right\}. 
\end{eqnarray*}
The \emph{stability threshold} $\delta(X,\Delta; L)$ of $L$ 
(resp., \emph{the local stability threshold} $\delta_\eta(X,\Delta; L)$ of $L$ at $\eta$) 
is defined to be the infimum of the values 
\[
\frac{A_{X,\Delta}(F)}{S(L; F)},
\]
where $F$ run through 
all prime divisors over $X$ (resp., all prime divisors over $X$ such that 
the centers contain $\eta$). 
\end{definition}

We use the notion of the Veronese equivalence as follows.

\begin{definition}\label{defn:VeroneseEquiv}(\cite{Fujita2023}, 
\cite[Definition 2.1]{Fujita:2024aa}) 
Let $X$ be a normal projective variety and $L_1, \ldots , L_r$ be $\bQ$-Cartier $\bQ$-divisors on $X$, that is, $L_1, \ldots , L_r \in \CaCl(X) \otimes_{\bZ} \bQ=: \CaCl(X)_{\bQ}$, where $\CaCl(X)$ is the group of Cartier divisors on $X$ modulo linear equivalence. 
For $\vec{x} \in \bZ_{\ge 0}^r$ and $\vec{L}:= (L_1, \ldots , L_r) \in \CaCl (X)_{\bQ}^r$, let $\vec{x} \cdot \vec{L}:= \sum_{i=1}^r x_i L_i$. 
\begin{enumerate}
\item\label{item:mZgraded} Let $m \in \bZ_{>0}$ be such that $mL_1, \ldots , mL_r$ have lifts in $\CaCl(X)$ and fix them. 
We say that $V_{m \vecbul}$ is an {\it $(m \bZ_{\ge 0})^r$-graded linear series associated to $L_1, \ldots , L_r$} if 
it is a collection $\{V_{m \vec{a}} \}_{\vec{a} \in \bZ_{\ge 0}^r}$ of linear subspaces 
\[
V_{m \vec{a}} \subset H^0(X, m \vec{a} \cdot \vec{L})
\]
such that $V_{m \vec{0}} = \bC$ and $\dps{V_{m \vec{a}} \cdot V_{m \vec{b}} \subset V_{m(\vec{a} + \vec{b})}}$ for all $\vec{a} , \vec{b} \in \bZ_{\ge 0}^r$. \\ 
For $k \in \bZ_{>0}$, we can define a $(km \bZ_{\ge 0})^r$-graded linear series $V_{km \vecbul}$ by setting 
$
V_{km \vec{a}}:= V_{m (k\vec{a})} 
$ for all $\vec{a} \in \bZ_{\ge 0}^r$. 
\item Let $m \in \bZ_{>0}$ and $V_{m \vecbul}$ be as in (\ref{item:mZgraded}). 
Let $m' \in \bZ_{>0}$ and $V_{m' \vecbul}$ also be such objects (and fix lifts $m'L_1, \ldots , m'L_r \in \CaCl(X)$). 
%that $m'L_1, \ldots ,m'L_r$ are Cartier and  be an $m'\bZ_{\ge 0}$-graded linear series. 
\\ 
We say that $V_{m \vecbul}$ and $V_{m' \vecbul}$ are {\it Veronese equivalent} if there is $d \in mm' \bZ_{>0}$ such that the linear equivalence 
$(d/m)mL_i \sim (d/m')m'L_i$ holds for $1 \le i \le r$ and  
\[
V_{\left( \frac{d}{m}\right) m \vecbul} =  V_{\left(\frac{d}{m'}\right) m'\vecbul}
\]   
holds as $(d \bZ_{\ge 0})^r$-graded linear series under the above linear equivalence. 
We can check that this defines an equivalence relation (on the set of the graded linear series on $X$  
associated to $L_1, \ldots , L_r$ as in (\ref{item:mZgraded})). 
We write $V_{\vecbul}$ for the Veronese equivalence class of $V_{m \vecbul}$. 
\item Let $m \in \bZ_{>0}$ be as in (\ref{item:mZgraded}). Then we have an $(m\bZ_{\ge 0})^r$-graded linear series $H^0(m\vecbul \cdot \vec{L})$ %:= V_{m \vecbul, \vec{L}}$ 
defined by 
\[
H^0(m \vec{a} \cdot \vec{L}):= H^0(X, \vec{a} \cdot m \vec{L}). 
\]
We write $H^0(\vecbul \cdot \vec{L})$ for the class of $H^0(m \vecbul \cdot \vec{L})$ up to the Veronese equivalence and call it {\it the Veronese equivalence class of the complete linear series $H^0(\vecbul \cdot \vec{L})$}.    
\end{enumerate}
\end{definition}

%We introduce the notions of plt flags and the stability thresholds associated to them in the following. 

\begin{definition}[{\cite{Fujita2023, 
Fujita:2024aa}}]\label{definition_plt-flag}
Let $(X,\Delta)$ be an $n$-dimensional normal projective variety. 
\begin{enumerate}
\item 
Let $F$ be a prime 
divisor over $X$. We say that $F$ is \emph{plt-type} over $(X,\Delta)$ 
if there exists a projective and birational morphism 
$\sigma\colon\tilde{X} \to X$ with $\tilde{X}$ normal such that 
$F$ can be realized as a prime $\bQ$-Cartier divisor 
on $\tilde{X}$ which is anti-ample over $X$ and the pair 
$(\tilde{X},\tilde{\Delta}+F)$ is plt, where $\tilde{\Delta}$ is defined 
to be 
\[
\sigma^*(K_X+\Delta)=K_{\tilde{X}}+\tilde{\Delta}+(1-A_{X,\Delta}(F))F. 
\]
We call the morphism $\sigma$
the \emph{plt blowup associated with $F$}. 
\item 
We say that 
\[
Y_\bullet\colon X=Y_0\triangleright Y_1 \triangleright\cdots\triangleright Y_j
\]
is a \emph{plt flag} over $(X,\Delta)$ if $(Y_0,\Delta_0):=(X,\Delta)$, 
$Y_i$ is a plt-type prime divisor over $(Y_{i-1},\Delta_{i-1})$ 
and the klt pair $(Y_i,\Delta_i)$ is inductively defined by 
$K_{Y_i}+\Delta_i:=(K_{\tilde{Y}_{i-1}}+\tilde{\Delta}_{i-1}+Y_i)|_{Y_i}$
for any $1\leq i\leq j$, where $\sigma_{i-1}\colon\tilde{Y}_{i-1}\to 
Y_{i-1}$ is the plt blowup of $(Y_{i-1},\Delta_{i-1})$ associated with 
$Y_i$. If moreover $j=n$, then we say that the plt flag over $(X,\Delta)$ 
is \emph{complete}. 
\item\label{item:refinement} 
%Let $X$ be a normal projective variety, 
Let $Y \subset X$ be a normal prime $\bQ$-Cartier divisor 
%such that $eY$ is Cartier for some $e \in \bZ_{>0}$ 
and $L_1, \ldots ,L_r$ $\bQ$-Cartier $\bQ$-divisors on $X$.  %such that $mL_1, \ldots , mL_r$ are Cartier for some $m \in \bZ_{>0}$. 
Let $m \in \bZ_{>0}$ be such that there are lifts $mL_1, \ldots ,m L_r \in \CaCl(X)$ and $mY$ are Cartier. 
Let $V_{m\vec{\bullet}}$ be an $(m\bZ_{\ge 0})^r$-graded linear series on $X$ associated to $L_1, \ldots , L_r$.   
%$e|m$. 
Then we define an $(m\bZ_{\ge 0})^{r+1}$-graded linear series $V^{(Y)}_{m \vec{\bullet}}$ associated to 
$
L_1|_Y, \ldots , L_r|_Y, -Y|_Y \in \CaCl(Y)_{\bQ}
$
by 
\[
V^{(Y)}_{m(\vec{a},j)}:= \Image \left( V_{m \vec{a}} \cap \left( jmY+ H^0(X, \vec{a} \cdot m \vec{L} - jm Y)\right) \to H^0(Y, \vec{a} \cdot m \vec{L}|_Y - jm Y|_Y) \right). 
\]
We see that the Veronese equivalence class of $V^{(Y)}_{m \vec{\bullet}}$ is well-defined by the class $V_{\vecbul}$ of $V_{m\vecbul}$.  
Hence we denote the class of $V^{(Y)}_{m \vec{\bullet}}$ by $V^{(Y)}_{\vec{\bullet}}$ 
%be the Veronese equivalence class of $V^{(Y)}_{m \vec{\bullet}}$ and we 
and call it the {\it refinement of $V_{\vecbul}$ by $Y$}. 

\item 
Let $L_1, \ldots , L_r$, $V_{\vecbul}$ and $m \in \bZ_{>0}$ be as in (\ref{item:refinement}).  Let $Y$ be a plt-type prime divisor over $(X,\Delta)$ and $\sigma \colon \tilde{X} \to X$ be the associated plt blowup. 
Then we can naturally define an $(m\bZ_{\ge 0})^{r+1}$-graded linear series $\sigma^* V_{m\vec{\bullet}}$ by pulling back the $(m\bZ_{\ge 0})^r$-graded linear series $V_{m\vec{\bullet}}$ by $\sigma$ as in \cite[Definition 2.3]{Fujita:2024aa}. 
Then we see that the Veronese equivalence class $\sigma^* V_{\vecbul}$ of $\sigma^* V_{m\vecbul}$ is well-defined and call it the {\it pull-back of $V_{\vec{\bullet}}$}. 
We can also define the refinement 
$V^{(Y)}_{\vec{\bullet}}$ as the refinement of $\sigma^* V_{\vec{\bullet}}$ by $Y$, that is, 
\[
V^{(Y)}_{\vec{\bullet}}:= (\sigma^* V_{\vec{\bullet}})^{(Y)}. 
\]
\item
For a non-complete plt flag $Y_\bullet\colon X=Y_0\triangleright Y_1 
\triangleright\cdots\triangleright Y_j$, a prime divisor $F$ over $Y_j$, 
and a big $\bQ$-line bundle $L$ on $X$, we can define 
the value $S\left(L; Y_1\triangleright\cdots\triangleright Y_j
\triangleright F\right)$ by 
\[
S\left(L; Y_1\triangleright\cdots\triangleright Y_j
\triangleright F\right):= S\left( V^{\left(Y_1\triangleright\cdots\triangleright Y_j
\right)}_{\vec{\bullet}}; F \right)
\]
as in \cite[Definition 4.7]{Fujita:2024aa}, where 
$V_{\vec{\bullet}}^{\left(Y_1\right)}:= H^0(\bullet \cdot L)^{(Y_1)}$ is the 
refinement of the class of complete linear series associated to $L$ by $Y_1$, and 
the class of the series 
$V_{\vec{\bullet}}^{\left(Y_1\triangleright\cdots\triangleright Y_i\right)}$ 
for $2\leq i\leq j$ is inductively defined to be 
\[
V_{\vec{\bullet}}^{\left(Y_1\triangleright\cdots\triangleright Y_i\right)}
:=\left(V_{\vec{\bullet}}^{\left(Y_1\triangleright\cdots\triangleright 
Y_{i-1}\right)}\right)^{(Y_i)}.
\]

Let $\eta\in Y_j$ be a 
scheme-theoretic point. 
Similar to Definition \ref{definition_delta}, let 
\[
\delta\left(X,\Delta\triangleright Y_1\triangleright\cdots\triangleright 
Y_j; L\right) \text{ (resp., $\delta_\eta\left(X,\Delta\triangleright 
Y_1\triangleright\cdots\triangleright Y_j; L\right)$)}
\]
be the infimum of the values 
\[
\frac{A_{Y_j,\Delta_j}(F)}{S\left(L; 
Y_1\triangleright\cdots\triangleright Y_j\triangleright F\right)},
\]
where $F$ run through all prime divisors over $Y_j$ (resp., all 
prime divisors over $Y_j$ such that the centers contain $\eta$).

%For a non-complete plt flag $Y_\bullet\colon X=Y_0\triangleright Y_1 
%\triangleright\cdots\triangleright Y_j$, a prime divisor $F$ over $Y_j$, 
%and a big $\bQ$-line bundle $L$ on $X$, we can define 
%the value $S\left(L; Y_1\triangleright\cdots\triangleright Y_j
%\triangleright F\right)$
%as in \cite[Definition 4.7]{Fujita:2024aa}. More precisely, the above 
%value is equal to $S\left(W^j_{\vec{\bullet}}; F\right)$, where 
%$W^0_{\vec{\bullet}}$ is the complete graded linear series of $L$ and 
%$W^i_{\vec{\bullet}}$ is inductively defined as the refinement of 
%$W^{i-1}_{\vec{\bullet}}$ by $Y_i$ for any $1\leq i\leq j$ in the sense of 
%\cite[Definitions 2.7 and 4.6]{Fujita:2024aa}. 
%\blue{inductively defined or something?} 
%Let $\eta\in Y_j$ be a 
%scheme-theoretic point. 
%Similar to Definition \ref{definition_delta}, let 
%$\delta\left(X,\Delta\triangleright Y_1\triangleright\cdots\triangleright 
%Y_j; L\right)$ (resp., $\delta_\eta\left(X,\Delta\triangleright 
%Y_1\triangleright\cdots\triangleright Y_j; L\right)$)
%be the infimum of the values 
%\[
%\frac{A_{Y_j,\Delta_j}(F)}{S\left(L; 
%Y_1\triangleright\cdots\triangleright Y_j\triangleright F\right)},
%\]
%where $F$ run through all prime divisors over $Y_j$ (resp., all 
%prime divisors over $Y_j$ such that the centers contain $\eta$). 
\item\label{item:Okbodydfn} 
Assume that $Y_\bullet\colon 
X=Y_0\triangleright Y_1 \triangleright\cdots\triangleright Y_n$ 
is a complete plt flag over $(X,\Delta)$, and take any big 
$\bQ$-line bundle $L$ on $X$. The \emph{Okounkov body 
$\mathbf{O}_{Y_\bullet}(L)\subset\bR_{\geq 0}^n$ of $L$ associated 
with the flag $Y_\bullet$} is defined as in 
\cite[Definition 4.9]{Fujita:2024aa}. Note that the definition is 
just a natural generalization of the standard definition in \cite{LM09}. 
Note that, as in \cite[Definition 4.9]{Fujita:2024aa} (cf.\ 
\cite[Proposition 3.12]{Fujita2023}), 
the value $S\left(L; Y_1\triangleright\cdots \triangleright Y_j\right)$
is equal to the $j$-th coordinate of the barycenter of 
$\mathbf{O}_{Y_\bullet}(L)$. 
\end{enumerate}
\end{definition}

\begin{remark}\label{remark_higher-S}
\begin{enumerate}
\item 
Assume that $(X,\Delta)$ is a klt log Fano pair (i.e., the divisor 
$-(K_X+\Delta)$ is ample). Then, as in 
\cite{MR3956698, MR3715806, MR3896135, MR4067358, LXZ22}, the pair 
$(X,\Delta)$ is K-stable (resp., K-semistable) if and only if 
$\delta(X,\Delta):=\delta\left(X,\Delta; -(K_X+\Delta)\right)>1$ (resp., $\geq 1$) 
holds. 
\item 
The value $S\left(L; Y_1\triangleright\cdots \triangleright Y_j\right)$ 
can be computed for several special cases. 
See \cite[\S 7 and \S 8]{Fujita:2024aa}, 
In fact, we will use \cite[Theorem 8.8]{Fujita:2024aa} in Section 
\ref{section_singular}. 
\end{enumerate}
\end{remark}

The following theorem is known to be a special case of 
\emph{Abban--Zhuang's method}; applying \cite[Theorem 3.2]{AZ22} for 
klt pairs to be $(Y_{j-1},\Delta_{j-1})$, plt blowups to be 
$\sigma_j$ and graded linear series to be 
$H^0(\bullet L)^{(Y_1\triangleright\cdots\triangleright Y_{j-1})}$. 
See also Lemma \ref{lem:cutting}.

\begin{theorem}[{\cite[Theorem 3.2]{AZ22} (see also \cite[Theorem 12.3]{Fujita:2024aa})}]\label{theorem:AZ}
Let $(X,\Delta)$ be a projective klt pair, let $L$ be a big $\bQ$-line bundle 
on $X$, let $Y_\bullet\colon 
X=Y_0\triangleright Y_1 \triangleright\cdots\triangleright Y_j$ be 
a plt flag over $(X,\Delta)=(Y_0,\Delta_0)$, let $\sigma_{i-1}\colon\tilde{Y}_{i-1}
\to Y_{i-1}$ be the associated plt blowups of $(Y_{i-1},\Delta_{i-1})$, and let 
$(Y_i,\Delta_i)$ be the naturally induced klt pair from the plt blowup for 
any $1\leq i\leq j$. Take any scheme-theoretic 
point $\eta\in Y_{j-1}$ such that the center of $Y_j$ contains $\eta$. 
Then we have 
\[
\delta_\eta\left(X,\Delta\triangleright 
Y_1\triangleright\cdots\triangleright Y_{j-1}; 
L\right)\geq\min\left\{\frac{A_{Y_{j-1},\Delta_{j-1}}(Y_j)}{S\left(
L; Y_1\triangleright\cdots\triangleright Y_j\right)},\quad
\inf_{\eta'}\delta_{\eta'}\left(X,\Delta\triangleright 
Y_1\triangleright\cdots\triangleright Y_j; 
L\right)
\right\},
\]
where $\eta'\in Y_j$ run through all scheme-theoretic points with 
$\sigma_{j-1}(\eta')=\eta$. 
\end{theorem}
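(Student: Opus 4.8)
The plan is to recognize Theorem \ref{theorem:AZ} as the one-step specialization of the general Abban--Zhuang refinement inequality \cite[Theorem 3.2]{AZ22}, in the graded-linear-series form of \cite[Theorem 12.3]{Fujita:2024aa}. Concretely, I would apply that inequality with the klt pair taken to be $(Y_{j-1},\Delta_{j-1})$, the plt-type prime divisor taken to be $Y_j$ over $(Y_{j-1},\Delta_{j-1})$ (with associated plt blowup $\sigma_{j-1}\colon\tilde{Y}_{j-1}\to Y_{j-1}$ and induced klt pair $(Y_j,\Delta_j)$), and the graded linear series taken to be the iterated refinement
\[
V_{\vecbul}:=H^0(\bullet \cdot L)^{(Y_1\triangleright\cdots\triangleright Y_{j-1})}
\]
of the complete series of $L$ by the earlier terms of the flag. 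The general inequality then takes the form
\[
\delta_\eta\left(Y_{j-1},\Delta_{j-1};V_{\vecbul}\right)\geq\min\left\{\frac{A_{Y_{j-1},\Delta_{j-1}}(Y_j)}{S(V_{\vecbul};Y_j)},\;\inf_{\eta'}\delta_{\eta'}\left(Y_j,\Delta_j;V_{\vecbul}^{(Y_j)}\right)\right\},
\]
with $\eta'$ ranging over the points of $Y_j$ satisfying $\sigma_{j-1}(\eta')=\eta$, and the remaining task is to match each of the three invariants with the flag notation of the statement.

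This matching is pure unwinding of the definitions. On the left, $\delta_\eta(Y_{j-1},\Delta_{j-1};V_{\vecbul})$ is by construction the infimum of $A_{Y_{j-1},\Delta_{j-1}}(F)/S(V_{\vecbul};F)$ over divisors $F$ over $Y_{j-1}$ with centre containing $\eta$; since $S(V_{\vecbul};F)=S\left(L;Y_1\triangleright\cdots\triangleright Y_{j-1}\triangleright F\right)$ by Definition \ref{definition_plt-flag}(5), this equals $\delta_\eta(X,\Delta\triangleright Y_1\triangleright\cdots\triangleright Y_{j-1};L)$. The same definition gives $S(V_{\vecbul};Y_j)=S\left(L;Y_1\triangleright\cdots\triangleright Y_j\right)$ for the first term on the right. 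For the second term, the inductive clause of Definition \ref{definition_plt-flag}(4)--(5) yields
\[
V_{\vecbul}^{(Y_j)}=\left(H^0(\bullet \cdot L)^{(Y_1\triangleright\cdots\triangleright Y_{j-1})}\right)^{(Y_j)}=H^0(\bullet \cdot L)^{(Y_1\triangleright\cdots\triangleright Y_j)},
\]
so that $\delta_{\eta'}(Y_j,\Delta_j;V_{\vecbul}^{(Y_j)})=\delta_{\eta'}(X,\Delta\triangleright Y_1\triangleright\cdots\triangleright Y_j;L)$. Substituting these three identities into the displayed inequality gives exactly the assertion.

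The real content, and hence the only genuine obstacle, sits inside the cited statement rather than in this specialization: one must know that \cite[Theorem 3.2]{AZ22} holds verbatim for the graded linear series $V_{\vecbul}$ (not merely for an ample line bundle) and in its \emph{local}, pointed form with the fibre $\{\eta'\colon\sigma_{j-1}(\eta')=\eta\}$. This is exactly the generality recorded in \cite[Theorem 12.3]{Fujita:2024aa}, which I would invoke for the graded case, while the geometric heart --- splitting the divisors $F$ over $Y_{j-1}$ into $Y_j$ itself and those dominated by $Y_j$ on $\tilde{Y}_{j-1}$, and comparing their $S$-invariants and log discrepancies via the refinement --- is supplied by \cite[Theorem 3.2]{AZ22}. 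A secondary point to confirm is that all the invariants above are independent of the auxiliary integer $m$ and of the chosen lifts, which is guaranteed by the invariance under Veronese equivalence built into Definitions \ref{defn:VeroneseEquiv} and \ref{definition_plt-flag}(3)--(4). Granting these, no further computation is needed.
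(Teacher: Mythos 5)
Your proposal is correct and matches the paper's own treatment: the paper proves this theorem precisely by applying \cite[Theorem 3.2]{AZ22} (in the graded-linear-series form of \cite[Theorem 12.3]{Fujita:2024aa}) with the klt pair $(Y_{j-1},\Delta_{j-1})$, the plt blowup $\sigma_{j-1}$, and the graded linear series $H^0(\bullet L)^{(Y_1\triangleright\cdots\triangleright Y_{j-1})}$, exactly as you do. Your additional unwinding of $S(V_{\vecbul};F)$ and $V_{\vecbul}^{(Y_j)}$ via Definition \ref{definition_plt-flag} is just the bookkeeping the paper leaves implicit.
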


\section{Blowing-up weighted projective spaces and their hypersurfaces}\label{section:w-blowup}
The main goal of this section is to introduce and study the notion of weighted blowups of weighted projective spaces. For the theory of toric varieties, we follow the notions in 
\cite{CLS}. 

\subsection{Weighted projective spaces}\label{section:wps}

In this subsection, we recall some facts about weighted projective spaces and their hypersurfaces, mainly to fix notation and for later reference. Almost all results are well-known to experts. 

\begin{definition}\label{definition_wps}
Let $s\in\bZ_{>0}$, and let $a_0,\dots,a_s\in\bZ_{>0}$ be with 
$\gcd\left(a_0,\dots,a_s\right)=1$. 
\begin{enumerate}
\item 
Consider $\bZ^{s+1}=\bigoplus_{i=0}^s\bZ\mathbf{e}_i$ and 
\[
N:=\bZ^{s+1}/\bZ\sum_{i=0}^s a_i\mathbf{e}_i.
\]
We usually denote the canonical projection by $\pi_N\colon
\bZ^{s+1}\twoheadrightarrow N$. 
Set $u_i:=\pi_N\left(\mathbf{e}_i\right)\in N$ for any $0\leq i\leq s$. 
Let $\Sigma$ be the fan in $N_\bR$ defined by 
\[
\Sigma:=\left\{\operatorname{Cone}(\mathcal{C})\subset N_\bR\,\,|\,\,
\mathcal{C}\subsetneq\left\{u_0,\dots,u_s\right\}\right\}.
\]
We set $\bP(a_0,\dots,a_s):=X_\Sigma$ as in \cite[Example 3.1.17]{CLS} and call it {\it a weighted projective space}. 
We usually set $D_i:=V(u_i)\subset\bP(a_0,\dots,a_s)$ for any 
$0\leq i\leq s$, where we set 
\[
V\left(u_{i_1},\dots,u_{i_k}\right):=
V\left(\bR_{\geq 0}u_{i_1}+\dots+\bR_{\geq 0}u_{i_k}\right)
\subset\bP(a_0,\dots,a_s)
\]
as in \cite[\S 3.2]{CLS}. 
\item 
We say that $(a_0,\dots,a_s)$ (or, $\bP(a_0,\dots,a_s)$) is 
\emph{well-formed} if $\gcd(a_0,\dots,\hat{a}_i,\dots,a_s)=1$
for any $0\leq i\leq s$. 
\end{enumerate}
\end{definition}

The following is well-known. 

\begin{proposition}[{\cite[Lemma 5.7 and 
Corollary 5.9]{Fletcher00}}]\label{proposition_well-formed}
Let $a_0,\dots,a_s\in\bZ_{>0}$ satisfying $\gcd(a_0,\dots,a_s)=1$. 
For any $0\leq i\leq s$, set 
\[
g_i:=\gcd\left(a_0,\dots,\hat{a}_i,\dots,a_s\right),\quad
g:=\prod_{i=0}^s g_i,\quad
a'_i:=\frac{a_ig_i}{g}. 
\]
\begin{enumerate}
\item 
The tuple $\left(a'_0,\dots,a'_s\right)$ is well-formed, and 
$\prod_{i=0}^s a_i=g^{s-1}\cdot\prod_{i=0}^sa'_i$ holds. 
Set 
\[
N':=\bZ^{s+1}/\bZ\sum_{i=0}^s
a'_i\mathbf{e}_i.
\]
Then 
$u'_i:=\pi_{N'}\left(\mathbf{e}_i\right)$
is a primitive element in $N'$ for any $0\leq i\leq s$. 
\item 
There is an isomorphism 
\[
\pi_{N',N}\colon N'\to N
\]
between lattices such that $g_iu_i$ maps to $u_i$ for any $0\leq i\leq s$. 
In particular, there is a natural isomorphism 
between $\bP\left(a'_0,\dots,a'_s\right)$ and $\bP(a_0,\dots,a_s)$. 
\end{enumerate}
\end{proposition}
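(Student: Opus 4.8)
The plan is to treat the arithmetic assertions of part (1) first, then deduce primitivity of the $u'_i$, and finally construct the lattice isomorphism of part (2); the geometric statement then follows from general toric nonsense.

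I would begin with two elementary observations. Since $\gcd(a_0,\dots,a_s)=1$, the integers $g_0,\dots,g_s$ are pairwise coprime: a prime $p$ dividing both $g_i$ and $g_j$ with $i\neq j$ would divide $a_k$ for all $k\neq i$ and for all $k\neq j$, hence for all $k$. Consequently, for each fixed $i$ the integers $g_j$ with $j\neq i$ are pairwise coprime and each divides $a_i$ (because $g_j=\gcd_{k\neq j}a_k$ and $i\neq j$), so their product $g/g_i$ divides $a_i$; this shows $a'_i=a_i/(g/g_i)\in\bZ_{>0}$. The product identity is then obtained by substituting $a'_i=a_ig_i/g$ and using $\prod_i g_i=g$, a routine computation.

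The heart of part (1) is the well-formedness of $(a'_0,\dots,a'_s)$, which I would prove by comparing $p$-adic valuations $v_p$ prime by prime. Fix a prime $p$ and write $\alpha_k:=v_p(a_k)$; then $v_p(g_j)=\min_{k\neq j}\alpha_k$, and $\min_k\alpha_k=0$ because $\gcd(a_0,\dots,a_s)=1$. A short case analysis—according to whether the value $0$ is attained by the $\alpha_k$ at a single index or at two or more indices—shows that $v_p(a'_j)=0$ for at least two indices $j$; in particular at least one such index is $\neq i$. Hence no prime divides every $a'_j$ with $j\neq i$, i.e.\ $\gcd_{j\neq i}a'_j=1$. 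This gives well-formedness, and in particular $\gcd(a'_0,\dots,a'_s)=1$, so that $v':=\sum_i a'_i\mathbf{e}_i$ is primitive and $N'\cong\bZ^s$ is free of rank $s$ (as is $N$, since $v:=\sum_i a_i\mathbf{e}_i$ is primitive). Primitivity of $u'_i$ now follows formally: were $u'_i$ divisible by some $\lambda\geq 2$ in $N'$, writing $\mathbf{e}_i\equiv c\,v'\pmod{\lambda}$ and reading off the coordinates $k\neq i$ would force $\lambda\mid a'_k$ for all $k\neq i$, contradicting well-formedness.

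For part (2) the key point is that $u_i$ is already divisible by $g_i$ inside $N$. To see this I would solve $\mathbf{e}_i\equiv c\,v\pmod{g_i}$ coordinatewise: the coordinates $k\neq i$ hold automatically since $g_i\mid a_k$, while the $i$-th coordinate $c\,a_i\equiv 1\pmod{g_i}$ is solvable because $\gcd(a_i,g_i)=\gcd(a_0,\dots,a_s)=1$. As $N$ is torsion-free, there is a unique $w_i\in N$ with $g_iw_i=u_i$. The assignment $\mathbf{e}_i\mapsto w_i$ defines a homomorphism $\bZ^{s+1}\to N$ that annihilates $v'$, because $\sum_i a'_i w_i=\frac{1}{g}\sum_i a_i u_i=\frac{1}{g}\,\pi_N(v)=0$; it therefore descends to $\pi_{N',N}\colon N'\to N$ with $u'_i\mapsto w_i$, equivalently $g_i u'_i\mapsto u_i$. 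This map is surjective, since each $u_i=g_iw_i$ lies in its image and the $u_i$ generate $N$; being a surjection of free $\bZ$-modules of the same rank $s$, it is an isomorphism. Finally $\pi_{N',N}$ carries the ray $\bR_{\geq 0}u'_i$ onto $\bR_{\geq 0}w_i=\bR_{\geq 0}u_i$, hence the fan defining $\bP(a'_0,\dots,a'_s)$ isomorphically onto the fan defining $\bP(a_0,\dots,a_s)$, which yields the asserted isomorphism of toric varieties.

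I expect the well-formedness step to be the only real obstacle: it is the one place where the combinatorics of the minima of the $\alpha_k$ must be handled with some care, and it is what makes both the primitivity of $u'_i$ and (via $\gcd(a'_0,\dots,a'_s)=1$) the freeness of $N'$ work. Everything else is formal once one notices the divisibility $g_i\mid u_i$ in $N$, which simultaneously produces the $w_i$ and pins down the isomorphism.
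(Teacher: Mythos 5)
Your proof is correct and complete, but it takes a genuinely different route from the one the paper follows. The paper defers to Iano--Fletcher, and the argument it has in mind (visible in the suppressed proof in the source) peels off one index at a time: it sets $b_0:=a_0$, $b_i:=a_i/g_0$ for $i\geq 1$, shows that $u_0$ is divisible by $g_0$ in $N$ and that $N\simeq\bZ^{s+1}/\bZ\sum_i b_i\mathbf{e}_i$, and then iterates over the remaining indices. You instead treat all indices simultaneously: the pairwise coprimality of the $g_i$ and the prime-by-prime analysis of the valuations $v_p(a'_j)$ give integrality and well-formedness of $(a'_0,\dots,a'_s)$ in one stroke, and the single observation $\gcd(a_i,g_i)=\gcd(a_0,\dots,a_s)=1$ produces all the elements $w_i=u_i/g_i\in N$ at once, after which the map $\mathbf{e}_i\mapsto w_i$ descends to $N'$ and is an isomorphism by a surjectivity-plus-rank count. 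Your version avoids the induction and isolates well-formedness as a purely arithmetic lemma about the minima of the $v_p(a_k)$; the price is the case analysis on whether $\min_k v_p(a_k)=0$ is attained once or more than once, which you have carried out correctly. The fan compatibility at the end is handled the same way in both approaches.

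Two small remarks. First, if you actually perform the ``routine computation'' of the product identity you obtain $\prod_{i=0}^s a_i=g^{s}\cdot\prod_{i=0}^s a'_i$, not $g^{s-1}$ as stated: for $(a_0,a_1)=(2,3)$ one has $g=6$, $(a'_0,a'_1)=(1,1)$ and $6=g^{1}\cdot 1$. The exponent in the statement is a typo (the correct exponent is one less than the number of weights, which is how the identity is used in Proposition \ref{proposition_sublinear}); you should state the identity you actually prove rather than asserting agreement with the displayed formula. Second, in the primitivity argument for $u'_i$, the step ``$0\equiv c\,a'_k\pmod{\lambda}$ forces $\lambda\mid a'_k$'' needs $\gcd(c,\lambda)=1$, which comes from the $i$-th coordinate $1\equiv c\,a'_i\pmod{\lambda}$; this is implicit in your write-up and should be made explicit.
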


\begin{definition}\label{definition_cox-wps}
Assume that $(a_0,\dots,a_s)$ is well-formed and set 
$\bP:=\bP(a_0,\dots,a_s)$. 
\begin{enumerate}
\item 
By \cite[Exercise 4.1.5]{CLS}, we have $\operatorname{Cl}(\bP)\simeq\bZ$. 
We usually denote by $\cO_\bP(1)\in \operatorname{Cl}(\bP)$
 the ample generator of $\operatorname{Cl}(\bP)$. 
If we regard $\cO_\bP(1)$ as an element in 
$\operatorname{{Pic}}(\bP)_\bQ$, then we write 
$\left[\cO_{\bP}(1)\right]\in\operatorname{{Pic}}(\bP)_\bQ$. 
\item 
We usually denote $\bC[x_0,\dots,x_s]$ the Cox ring (i.e., the total 
coordinate ring in the sense of \cite[\S 5.2]{CLS}) of $\bP$ such that 
$x_i$ corresponds to $u_i\in N$. In particular, $\bC[x_0,\dots,x_s]$ 
admits a $\bZ$-grading with $\deg x_i=a_i$. 
The variables $x_0,\dots,x_s$ are called by the \emph{(weighted) 
homogeneous coordinates} of $\bP$. 
By \cite[Proposition 5.3.7]{CLS}, we have the canonical isomorphism 
\[
\bigoplus_{d\in\bZ}H^0\left(\bP,\cO_\bP(d)\right)\simeq
\bC[x_0,\dots,x_s]
\]
of graded $\bC$-algebras. Moreover, the element 
$x_i\in H^0\left(\bP,\cO_\bP(a_i)\right)$ corresponds to the prime divisor 
$D_i\subset\bP$. In other words, $D_i=(x_i=0)\subset\bP$ holds.
More generally, for any nonzero homogeneous element 
$f\in\bC[x_0,\dots,x_s]$ of degree $d$, it gives the $\bQ$-Cartier Weil 
divisor $(f=0)\in|\cO_\bP(d)|$ on $\bP$ 
as in \cite[Proposition 5.3.7]{CLS}. We call it the 
\emph{(weighted) hypersurface} of degree $d$ defined by $f$. 
\end{enumerate}
\end{definition}

We see several basic properties of weighted projective spaces. 

\begin{proposition}\label{proposition_finite-wps}
Assume that $(a_0,\dots,a_s)$ is well-formed. We set 
$N:=\bZ^{s+1}/\bZ\sum_{i=0}^sa_i\mathbf{e}_i$, $u_i:=\pi_N(\mathbf{e}_i)$, 
$\bP:=\bP(a_0,\dots,a_s)$ as in Definitions \ref{definition_wps} and 
\ref{definition_cox-wps}. For any $0\leq i\leq s$, take $\bar{a}_i,e_i
\in\bZ_{>0}$ with $a_i=e_i\bar{a}_i$. Obviously, 
$\left(\bar{a}_0,\dots,\bar{a}_s\right)$ is well-formed. 
We set 
\[
\bar{N}:=\bZ^{s+1}/\bZ\sum_{i=0}^s\bar{a}_i\mathbf{e}_i,\quad
\bar{u}_i:=\pi_{\bar{N}}\left(\mathbf{e}_i\right)\in\bar{N}, \quad
\bar{\bP}:=\bP\left(\bar{a}_0,\dots,\bar{a}_s\right). 
\]
Set $f_{\bar{N},N}\colon\bZ^{s+1}\to \bZ^{s+1}$ defined by the diagonal matrix 
$\operatorname{diag}(e_0,\dots,e_s)$. 
Then we get a lattice homomorphism $\pi_{\bar{N},N}\colon \bar{N}\to N$
satisfying $\pi_{\bar{N},N}\circ\pi_{\bar{N}}=\pi_N\circ f_{\bar{N},N}$. 
From the homomorphism $\pi_{\bar{N}, N}$, we get the natural toric finite 
morphism $\tau\colon\bar{\bP}\to \bP$ of degree $\prod_{i=0}^s e_i$. 
\begin{enumerate}
\item 
We have 
\[
\tau^*\left[\cO_\bP(1)\right]=\left[\cO_{\bar{\bP}}(1)\right], \quad
K_{\bar{\bP}}=\tau^*\left(K_\bP+\sum_{i=0}^s
\left(1-\frac{1}{e_i}\right)D_i\right). 
\]
In particular, we have 
$\left(\cO_\bP(1)^{\cdot s}\right)=1/\prod_{i=0}^s a_i$.
\item 
For any nonzero homogeneous $f\in\bC\left[x_0,\dots,x_s\right]$ of degree 
$d$, the pullback $\tau^*(f=0)\subset\bar{\bP}$ of the Weil divisor 
$(f=0)\subset\bP$ satisfies that 
\[
\tau^*(f=0)=\left(f\left(\bar{x}_0^{e_0},\dots,
\bar{x}_s^{e_s}\right)=0\right)\subset\bar{\bP}, 
\]
where $\bC\left[\bar{x}_0,\dots,\bar{x}_s\right]$ be the Cox ring 
of $\bar{\bP}$. 
\end{enumerate}
\end{proposition}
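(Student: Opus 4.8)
The plan is to first pin down $\tau$ explicitly in the homogeneous (Cox) coordinates, and then read off all the assertions from the resulting ramification behaviour. To this end I would introduce the map
\[
\Phi\colon \bC^{s+1}\to\bC^{s+1},\qquad (\bar x_0,\dots,\bar x_s)\mapsto(\bar x_0^{e_0},\dots,\bar x_s^{e_s}).
\]
If $\lambda\in\bC^{*}$ acts on the source with weights $(\bar a_0,\dots,\bar a_s)$, then the $i$-th component of $\Phi(\lambda\cdot\bar x)$ is $\lambda^{e_i\bar a_i}\bar x_i^{e_i}=\lambda^{a_i}\bar x_i^{e_i}$, so $\Phi$ intertwines the weight-$\bar a$ scaling with the weight-$a$ scaling and descends to a morphism $[\bar x_0:\dots:\bar x_s]\mapsto[\bar x_0^{e_0}:\dots:\bar x_s^{e_s}]$, which is everywhere defined on $\bar\bP$ (not all $\bar x_i^{e_i}$ can vanish). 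Restricted to the big torus this is the monomial map induced by $f_{\bar N,N}=\operatorname{diag}(e_0,\dots,e_s)$, hence by $\pi_{\bar N,N}$; since the torus is dense in $\bar\bP$ and $\bP$ is separated, the descended map coincides with the toric morphism $\tau$. This identification is the first, and I expect the only genuinely delicate, step.

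Granting the coordinate description, $\tau^*x_i=\bar x_i^{e_i}$ as sections, and reading off the ramification of $\tau$ along the toric boundary one sees that $\tau$ has ramification index $e_i$ along $\bar D_i$ (which is the reduced preimage of $D_i$, since $x_i=0\Leftrightarrow\bar x_i=0$), so
\[
\tau^*D_i=e_i\bar D_i\qquad(0\le i\le s).
\]
Because $[D_i]=a_i[\cO_\bP(1)]$ in $\operatorname{Cl}(\bP)_\bQ$ and $[\bar D_i]=\bar a_i[\cO_{\bar\bP}(1)]$, pulling back yields $a_i\,\tau^*[\cO_\bP(1)]=e_i\bar a_i[\cO_{\bar\bP}(1)]=a_i[\cO_{\bar\bP}(1)]$, and cancelling $a_i\ne 0$ gives $\tau^*[\cO_\bP(1)]=[\cO_{\bar\bP}(1)]$, the first formula in (1).

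For the canonical bundle I would combine the toric identities $K_\bP=-\sum_{i=0}^sD_i$ and $K_{\bar\bP}=-\sum_{i=0}^s\bar D_i$ with $\tau^*D_i=e_i\bar D_i$:
\[
\tau^*\Bigl(K_\bP+\sum_{i=0}^s\bigl(1-\tfrac1{e_i}\bigr)D_i\Bigr)=\tau^*\Bigl(-\sum_{i=0}^s\tfrac1{e_i}D_i\Bigr)=-\sum_{i=0}^s\bar D_i=K_{\bar\bP}.
\]
For the intersection number I would apply the first formula of (1), already established in general, to the special case $\bar a_i=1$ (so $\bar\bP=\bP^s$ and $e_i=a_i$): then $\tau^*\cO_\bP(1)=\cO_{\bP^s}(1)$, and the projection formula for the finite morphism $\tau$ of degree $\prod_i a_i$ gives $\prod_i a_i\cdot\left(\cO_\bP(1)^{\cdot s}\right)=\left(\cO_{\bP^s}(1)^{\cdot s}\right)=1$, i.e.\ $\left(\cO_\bP(1)^{\cdot s}\right)=1/\prod_{i=0}^s a_i$.

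Finally, for (2): since $\tau^*\cO_\bP(d)=\cO_{\bar\bP}(d)$ by the first part of (1), a homogeneous $f$ of degree $d$, viewed as a section of $\cO_\bP(d)$, pulls back under the monomial map to the substitution $f(\bar x_0^{e_0},\dots,\bar x_s^{e_s})$, which is homogeneous of degree $d$ in $\bC[\bar x_0,\dots,\bar x_s]$ (each $\bar x_i^{e_i}$ has degree $e_i\bar a_i=a_i$). As $\tau$ is finite and both varieties are normal, the pullback of the $\bQ$-Cartier divisor $(f=0)$ equals the divisor of the pulled-back section, so $\tau^*(f=0)=\bigl(f(\bar x_0^{e_0},\dots,\bar x_s^{e_s})=0\bigr)$. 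The one point requiring real care throughout is the comparison in the first paragraph between the lattice-theoretic $\tau$ and the explicit monomial map; once it is secured, the ramification formula $\tau^*D_i=e_i\bar D_i$ drives every remaining computation essentially formally.
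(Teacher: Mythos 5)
Your proposal is correct and follows essentially the same route as the paper: the paper likewise rests everything on the coordinate description $[\bar x_0:\cdots:\bar x_s]\mapsto[\bar x_0^{e_0}:\cdots:\bar x_s^{e_s}]$ together with the pullback formula $\tau^*D_i=e_i\bar D_i$ (which it reads off directly from $\pi_{\bar N,N}(\bar u_i)=e_iu_i$ rather than via ramification of the monomial map, as you do), and it obtains the intersection number by the same specialization $\bar a_i=1$, $e_i=a_i$. Your write-up merely supplies the details the paper leaves implicit.
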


\begin{proof}
Since $\pi_{\bar{N}, N}\left(\bar{u}_i\right)=e_iu_i$, we have 
$\tau^*D_i=e_i\bar{D}_i$ for any $0\leq i\leq s$. Moreover, the 
morphism $\tau$ is defined by 
\[
\left[\bar{x}_0:\cdots:\bar{x}_s\right]\mapsto
\left[\bar{x}_0^{e_0}:\cdots:\bar{x}_s^{e_s}\right]. 
\]
Thus the assertion follows. The equality 
$\left(\cO_\bP(1)^{\cdot s}\right)=1/\prod_{i=0}^s a_i$ follows directly 
if we set $e_i=a_i$ and $\bar{a}_i=1$ for all $0\leq i\leq s$. 
\end{proof}

\begin{proposition}\label{proposition_sublinear}
Assume that $(a_0,\dots,a_s)$ is well-formed and set 
$\bP:=\bP(a_0,\dots,a_s)$. Take any $0\leq r\leq s-2$, and set 
\[
Z:=\left(D_0\cap\dots\cap D_r\right)_{\operatorname{red}}
=V\left(u_0,\dots,u_r\right)\subset\bP.
\]
Moreover, for any $r+1\leq j\leq s$, let us set 
\begin{eqnarray*}
&&h:=\gcd(a_{r+1},\dots,a_s), \quad
a''_j:=\frac{a_j}{h}, \\
&&g_j:=\gcd\left(a''_{r+1},\dots,\hat{a}''_j,\dots,a''_s\right), \quad
g:=\prod_{j=r+1}^s g_j, \quad a'_j:=\frac{a''_j g_j}{g}. 
\end{eqnarray*}
Then we have $Z\simeq\bP':=\bP\left(a'_{r+1},\dots,a'_s\right)$. 
Moreover, we have 
\[
\left[\cO_\bP(1)\right]|_Z\sim_\bQ\left[\cO_{\bP'}\left(
\frac{1}{gh}\right)\right]
\]
under the above isomorphism. 
\end{proposition}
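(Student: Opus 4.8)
The plan is to recognize $Z$ as a torus–orbit closure inside the toric variety $\bP$, read off its structure from the orbit–cone correspondence to obtain the isomorphism $Z\cong\bP'$, and then pin down the restricted polarization by a single top self-intersection computation, carried out on the smooth cover $\bP^s\to\bP$ provided by Proposition \ref{proposition_finite-wps}. The two assertions (the isomorphism and the $\bQ$-linear equivalence) are treated in this order, the second building on the first.

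For the isomorphism, note that $\tau:=\operatorname{Cone}(u_0,\dots,u_r)$ is a cone of $\Sigma$ and, by Definition \ref{definition_wps}, $Z=V(\tau)$ is its orbit closure. By the orbit–cone correspondence \cite[\S 3.2]{CLS}, $Z$ is the toric variety with fan $\operatorname{Star}(\tau)$ in the lattice $N(\tau)=N/N_\tau$, where $N_\tau=\operatorname{span}_\bR(\tau)\cap N$. The cones of $\Sigma$ containing $\tau$ are exactly $\operatorname{Cone}(u_0,\dots,u_r,\{u_j\}_{j\in J})$ with $J\subsetneq\{r+1,\dots,s\}$, so their images give precisely the fan of a weighted projective space on the $s-r$ rays $\bar u_{r+1},\dots,\bar u_s$. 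To identify the lattice and weights I write $N=\bZ^{s+1}/\bZ(\sum_i a_i\mathbf e_i)$ and project away the first $r+1$ coordinates: this gives $N/\langle u_0,\dots,u_r\rangle\cong\bZ^{s-r}/\bZ(\sum_{j>r}a_j\mathbf f_j)$, and passing to the torsion-free quotient amounts to dividing the relation by its content $h=\gcd_{j>r}a_j$, so $N(\tau)\cong\bZ^{s-r}/\bZ(\sum_{j>r}a''_j\mathbf f_j)$ with $\bar u_j$ the image of $\mathbf f_j$. Comparing with Definition \ref{definition_wps}, this is the toric variety $\bP(a''_{r+1},\dots,a''_s)$; since $\gcd(a''_{r+1},\dots,a''_s)=1$, Proposition \ref{proposition_well-formed} identifies it with the well-formed model $\bP'=\bP(a'_{r+1},\dots,a'_s)$. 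This proves the first claim, and in particular $\Pic(Z)_\bQ=\bQ\cdot[\cO_{\bP'}(1)]$, so $[\cO_\bP(1)]|_Z\sim_\bQ c\,[\cO_{\bP'}(1)]$ for a unique $c\in\bQ_{>0}$ (positive because $\cO_\bP(1)$ is ample and restricts to an ample class on the closed subvariety $Z$).

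To compute $c$ I compare top self-intersections, with $\dim Z=s-r-1$. On the well-formed side Proposition \ref{proposition_finite-wps}(1) gives $([\cO_{\bP'}(1)])^{s-r-1}=1/\prod_{j>r}a'_j$, and expanding $a'_j=a''_jg_j/g$ with $g=\prod_{j>r}g_j$ yields $\prod_{j>r}a'_j=g^{-(s-r-1)}\prod_{j>r}a''_j$. For the left-hand side I pass to the cover $\tau\colon\bP^s\to\bP$ of Proposition \ref{proposition_finite-wps} taken with $\bar a_i=1,\ e_i=a_i$, so that $\tau^*x_i=\bar x_i^{a_i}$ and $\tau^*[\cO_\bP(1)]=[\cO_{\bP^s}(1)]$. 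The reduced preimage of $Z$ is the linear subspace $\bar Z=(\bar x_0=\dots=\bar x_r=0)\cong\bP^{s-r-1}$, on which $[\cO_{\bP^s}(1)]$ restricts to the hyperplane class. The projection formula for the finite map $\tau|_{\bar Z}\colon\bar Z\to Z$ gives
\[
1=([\cO_{\bP^s}(1)]|_{\bar Z})^{s-r-1}=\deg(\tau|_{\bar Z})\cdot([\cO_\bP(1)]|_Z)^{s-r-1}.
\]
Here $\tau|_{\bar Z}\colon[\bar x_j]_{j>r}\mapsto[\bar x_j^{a_j}]_{j>r}$ is the quotient of $\bP^{s-r-1}$ by $\prod_{j>r}\mu_{a_j}$ acting coordinatewise, whose kernel (the tuples acting trivially) is the diagonal $\mu_h$; hence $\deg(\tau|_{\bar Z})=(\prod_{j>r}a_j)/h=h^{s-r-1}\prod_{j>r}a''_j$ and $([\cO_\bP(1)]|_Z)^{s-r-1}=1/(h^{s-r-1}\prod_{j>r}a''_j)$. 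Taking the ratio of the two self-intersections, the factor $\prod_{j>r}a''_j$ cancels and $c^{s-r-1}=(gh)^{-(s-r-1)}$, so $c=1/(gh)$, which is the asserted equivalence.

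The main obstacle is that neither the cycle $D_0\cap\dots\cap D_r$ nor the surviving weights $(a_{r+1},\dots,a_s)$ are reduced/well-formed, so a naive toric intersection computation directly on $\bP$ produces the wrong rational multiplicities. The device that cleanly separates the two correction factors is the smooth cover $\bP^s$: there $\bar Z$ is an honest linear subspace, the factor $h$ appears transparently as the order of the diagonal kernel $\mu_h$ of $\tau|_{\bar Z}$, while the factor $g$ enters only through the well-formedization of the quotient lattice via Proposition \ref{proposition_well-formed}. The one point demanding genuine care is the lattice computation $N(\tau)\cong\bZ^{s-r}/\bZ(\sum_{j>r}a''_j\mathbf f_j)$, i.e.\ checking that saturating $\langle u_0,\dots,u_r\rangle$ in $N$ corresponds exactly to dividing the relation $\sum_{j>r}a_j\mathbf f_j$ by its content $h$; everything downstream is formal once this is in place.
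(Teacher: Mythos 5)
Your proof is correct. The first half (identifying $Z=V(\tau)$ via the orbit--cone correspondence, computing $N(\tau)\cong\bZ^{s-r}/\bZ\sum_{j>r}a''_j\mathbf{f}_j$ by dividing the relation by its content $h$, and then passing to the well-formed model via Proposition \ref{proposition_well-formed}) is the same as the paper's argument, which packages the saturation step as a snake-lemma diagram and records along the way that $\operatorname{mult}(\tau)=h$. The second half reaches the same comparison of top self-intersections, $\left(\cO_\bP(1)|_Z^{\cdot s-r-1}\right)=h/\prod_{j>r}a_j$ versus $\alpha^{s-r-1}/\prod_{j>r}a'_j$, but by a different mechanism: the paper evaluates $\left(\cO_\bP(1)^{\cdot s-r-1}\cdot Z\right)$ through the cycle identity $D_0\cdots D_r=\frac{1}{\operatorname{mult}(\tau)}Z$ from \cite[p.~100]{Fulton93} together with Proposition \ref{proposition_finite-wps}, whereas you restrict the smooth cover $\bP^s\to\bP$ to the linear subspace $\bar Z\cong\bP^{s-r-1}$ and compute $\deg(\tau|_{\bar Z})=(\prod_{j>r}a_j)/h$ as the order of the effective covering group $\bigl(\prod_{j>r}\mu_{a_j}\bigr)/\mu_h$, the diagonal $\mu_h$ being exactly the kernel of the action on $\bP^{s-r-1}$. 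Your route trades Fulton's multiplicity formula for an elementary group-theoretic degree count and makes the provenance of the factor $h$ (kernel of the cover) versus the factor $g$ (well-formedization of the quotient lattice) more transparent; the paper's route is slightly shorter because the multiplicity $h$ has already been computed for the lattice identification and is reused for the intersection theory. Both are complete and yield $\alpha=1/(gh)$.
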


\begin{proof}
Set 
\[
\tau:=\bR_{\geq 0}u_0+\cdots+\bR_{\geq 0}u_r\in\Sigma.
\]
Note that $Z=V(\tau)$. Consider the sublattice 
$N_\tau:=N\cap\bR\tau\subset N$ and the quotient $N(\tau):=N/N_\tau$ 
as in \cite[Lemma 3.2.4]{CLS}. Note that we have a commutative diagram with the horizontal exact sequences  
\[
\xymatrix{
0 \ar[r] & \sum_{i=0}^r\bZ u_i \ar[r] \ar@{^{(}->}[d]^{\iota} & N \ar[r] \ar[d] & \bZ^{s-r}/\bZ\sum_{j=r+1}^s a_j\mathbf{e}_j  \ar[r] \ar@{->>}[d]^{\rho} & 0 \\
0 \ar[r] & N_{\tau} \ar[r] & N \ar[r] & N(\tau)  \ar[r] & 0
}. 
\]
and see that $\Coker \iota \simeq \Ker \rho$ 
%the cokernel of the 
%inclusion 
%\[
%\sum_{i=0}^r\bZ u_i\hookrightarrow N_\tau, 
%\]
%which is of order $\operatorname{mult}(\tau)$ \blue{What is $\mult (\tau)$?}, 
%is isomorphic to the 
%kernel of the natural surjection 
%\[
%\bZ^{s-r}/\bZ\sum_{j=r+1}^s a_j\mathbf{e}_j\twoheadrightarrow N(\tau)
%\]
by the snake lemma. 
Let $\operatorname{mult}(\tau)$ be the order of the cokernel. Since both $N(\tau)$ and 
$\bZ^{s-r}/\bZ\sum_{j=r+1}^s a''_j\mathbf{e}_j$ are torsion-free of 
rank $s-r-1$, we get the natural isomorphism 
\[
\bZ^{s-r}/\bZ\sum_{j=r+1}^s a''_j\mathbf{e}_j\simeq N(\tau)
\]
from the surjection $\rho$, and $\operatorname{mult}(\tau)=h$ holds. 
By \cite[Theorem 3.2.6]{CLS} and Proposition 
\ref{proposition_well-formed}, we get $Z\simeq\bP'$. 
Set $\alpha\in\bQ_{>0}$ satisfying $\left[\cO_\bP(1)\right]|_Z\sim_\bQ
\left[\cO_{\bP'}(\alpha)\right]$. By \cite[p.~100]{Fulton93}, we have 
\[
D_0\cdots D_r=\frac{1}{\operatorname{mult}(\tau)}Z=\frac{1}{h}Z.
\]
Thus, by Proposition \ref{proposition_finite-wps}, we get 
\begin{eqnarray*}
&&\frac{h}{a_{r+1}\cdots a_s}=h\left(\cO_\bP(1)^{\cdot s-r-1}\cdot
\cO_\bP(a_0)\cdots\cO_\bP(a_r)\right)\\
&=&\left(\cO_\bP(1)^{\cdot s-r-1}\cdot Z\right)
=\frac{\alpha^{s-r-1}}{a'_{r+1}\cdots a'_s}
=\frac{g^{s-r-1}h^{s-r}\alpha^{s-r-1}}{a_{r+1}\cdots a_s}.
\end{eqnarray*}
Thus we get the equality $\alpha=1/(gh)$. 
\end{proof}

\begin{lemma}\label{lemma_aut-wps}
Assume that $(a_0,\dots,a_s)$ is well-formed and set 
$\bP:=\bP(a_0,\dots,a_s)$. Take 
$\varphi\in H^0\left(\bP,\cO_\bP(a_0)\right)$ general. 
(More generally, $\varphi\in H^0\left(\bP,\cO_\bP(a_0)\right)$ is assumed 
to be quasi-linear in the sense of \cite[Definition 2.17]{KOW23}.)
Then 
there exists an isomorphism $\iota\colon\bP\to\bP$ such that 
$\iota^*D_0=\left(\varphi=0\right)$. 
\end{lemma}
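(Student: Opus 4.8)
The plan is to build the required isomorphism explicitly on the Cox ring $\bC[x_0,\dots,x_s]$ and descend it to $\bP$. The starting point is a degree count. Since every variable has positive weight, $x_0$ is the \emph{unique} degree-$a_0$ monomial in which $x_0$ occurs: indeed $x_0^2$ already has degree $2a_0>a_0$, so $x_0$ can appear at most linearly, and when it does it appears alone. Using the dictionary of Definition \ref{definition_cox-wps} between sections of $\cO_\bP(a_0)$ and degree-$a_0$ elements of the Cox ring (valid because $\bP$ is well-formed), every $\varphi\in H^0(\bP,\cO_\bP(a_0))$ therefore has the shape
\[
\varphi = c_0\,x_0 + \psi(x_1,\dots,x_s),
\]
with $c_0\in\bC$ and $\psi$ homogeneous of degree $a_0$ in $x_1,\dots,x_s$ only. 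For $\varphi$ general, or quasi-linear in the sense of \cite[Definition 2.17]{KOW23}, the coefficient $c_0$ is nonzero, and this is the only input I will take from the genericity hypothesis.

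Next I would consider the equivariant polynomial endomorphism $\Psi\colon\bC^{s+1}\to\bC^{s+1}$ of the affine cone given by
\[
\Psi(x_0,x_1,\dots,x_s) = \bigl(\varphi(x),\, x_1,\dots,x_s\bigr),
\]
and check it respects the weighted action $\lambda\cdot(x_0,\dots,x_s)=(\lambda^{a_0}x_0,\dots,\lambda^{a_s}x_s)$ of $\bC^\times$: this holds since $\varphi$ is homogeneous of degree $a_0$, so the $0$-th coordinate scales by $\lambda^{a_0}$ while the others are untouched. Because $x_0$ occurs only linearly, $\Psi$ is a triangular automorphism of $\bC^{s+1}$ with explicit inverse
\[
\Psi^{-1}(y_0,y_1,\dots,y_s) = \Bigl(\tfrac{1}{c_0}\bigl(y_0-\psi(y_1,\dots,y_s)\bigr),\, y_1,\dots,y_s\Bigr),
\]
well-defined precisely because $c_0\neq 0$, and again homogeneous for the same action. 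A one-line check shows $\Psi(x)=0$ forces $x_1=\dots=x_s=0$ and then $c_0x_0=0$, hence $x=0$; thus both $\Psi$ and $\Psi^{-1}$ preserve $\bC^{s+1}\setminus\{0\}$.

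Since $\bP=(\bC^{s+1}\setminus\{0\})/\bC^\times$ for the weighted action, the equivariant automorphism $\Psi$ descends to an isomorphism $\iota\colon\bP\to\bP$ with inverse induced by $\Psi^{-1}$. In homogeneous coordinates $\iota\bigl([x_0:\dots:x_s]\bigr)=[\varphi(x):x_1:\dots:x_s]$, so the coordinate section $x_0$ pulls back to $\iota^*x_0=\varphi$, i.e.\ $\iota^{-1}(D_0)=(\varphi=0)$. As $\iota$ is an isomorphism and $D_0=(x_0=0)$, this gives $\iota^*D_0=(\varphi=0)$; primality of $(\varphi=0)$ comes for free as the image of the prime divisor $D_0$.

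The argument is elementary, so there is no single deep obstacle; the only point requiring care is the passage from the Cox ring to $\bP$. One must confirm that a graded (equivariant) automorphism of $\bC^{s+1}$ preserving $\bC^{s+1}\setminus\{0\}$ genuinely descends to an automorphism of the quotient, and that the direction of the induced map is tracked correctly when computing $\iota^*D_0$. The well-formedness hypothesis is what guarantees the clean identifications $H^0(\bP,\cO_\bP(a_0))\cong\bC[x_0,\dots,x_s]_{a_0}$ and $D_0=(x_0=0)$ used throughout.
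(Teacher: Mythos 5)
Your proof is correct and follows essentially the same route as the paper: after observing that $\varphi=c_0x_0+\psi(x_1,\dots,x_s)$ with $c_0\neq 0$ (the paper normalizes $c_0=1$), both arguments use the triangular coordinate change $x_0\mapsto \varphi$ to produce the automorphism $\iota$ with $\iota^*x_0=\varphi$. Your extra care in checking equivariance on the affine cone and descending to the quotient is a fuller account of what the paper calls "obviously an isomorphism," but it is the same proof.
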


\begin{proof}
Since $\varphi$ is general, we may assume that 
$\varphi=x_0+\varphi_0(x_1,\dots,x_s)$ with 
$\varphi_0(x_1,\dots,x_s)\in 
H^0\left(\bP,\cO_\bP(a_0)\right)$. 
If we take $\iota$ to be 
\[
\left[x_0:x_1:\cdots:x_r\right]\mapsto
\left[x_0+\varphi_0(x_1,\dots,x_s):x_1:\cdots:x_r\right]
\]
then the $\iota$ is obviously an isomorphism, and 
we get $\iota^*D_0=\left(\varphi=0\right)$. 
\end{proof}

\begin{proposition}\label{proposition_divisor_wps}
Assume that $s\geq 2$, 
$(a_0,\dots,a_s)$ is well-formed, and set 
$\bP:=\bP(a_0,\dots,a_s)$. For any $0\leq i_1<i_2\leq s$, we set 
\[
D_{i_1i_2}:=V\left(u_{i_1},u_{i_2}\right)\subset\bP. 
\]
We focus on the divisor $D_0\subset\bP$. For any $1\leq i\leq s$, 
let us set 
\[
g_i:=\gcd\left(a_1,\dots,\hat{a}_i,\dots,a_s\right), \quad
g:=\prod_{i=1}^s g_i, \quad a'_i:=\frac{a_ig_i}{g}. 
\]
(By Proposition \ref{proposition_sublinear}, we have 
$D_0\simeq\bP\left(a'_1,\dots,a'_s\right)$ and 
$\left[\cO_\bP(1)\right]|_{D_0}\sim_\bQ\left[\cO_{D_0}(1/g)\right]$.)
\begin{enumerate}
\item
For any $1\leq i\leq s$, we have $D_0\cdot D_i=\frac{1}{g_i}D_{0i}$. 
In particular, we have 
\[
\left(K_\bP+D_0\right)|_{D_0}=K_{D_0}+\sum_{i=1}^s
\left(1-\frac{1}{g_i}\right)D_{0i}. 
\]
\item
Take any nonzero homogeneous $f\in\bC\left[x_0,\dots,x_s\right]$ of 
degree $d$, and set $X:=(f=0)\subset\bP$. Assume that $D_0\not\subset X$ 
and $(a_1,\dots,a_s)$ is well-formed. 
Then, the $\bQ$-Cartier $\bQ$-divisor $X|_{D_0}$ on $D_0$ is the Weil 
divisor in $|\cO_{\bP(a_1,\dots,a_s)}(d)|$ defined by the 
homogeneous polynomial 
\[
f|_{D_0}:=f(0,x_1,\dots,x_s)\in\bC[x_1,\dots,x_s]. 
\]
\end{enumerate}
\end{proposition}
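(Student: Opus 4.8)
The plan is to handle the two parts separately, reducing each to the toric intersection theory and Cox-ring dictionary already assembled in this section. For part~(1), I would first apply the toric intersection formula \cite[p.~100]{Fulton93}: since $u_0$ and $u_i$ span the cone $\tau_i:=\operatorname{Cone}(u_0,u_i)\in\Sigma$, one has
\[
D_0\cdot D_i=\frac{1}{\mult(\tau_i)}V(\tau_i)=\frac{1}{\mult(\tau_i)}D_{0i}.
\]
It then remains to compute the lattice multiplicity $\mult(\tau_i)=[N_{\tau_i}:\bZ u_0+\bZ u_i]$, and for this I would run the same index computation as in the proof of Proposition~\ref{proposition_sublinear}. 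Concretely, the snake-lemma argument there identifies the relevant quotient with the torsion subgroup of $\bZ^{s-1}/\bZ\sum_{j\neq 0,i}a_j\mathbf{e}_j$, which is cyclic of order $\gcd(a_1,\dots,\hat{a}_i,\dots,a_s)=g_i$; this works uniformly for all $s\geq 2$. Hence $D_0\cdot D_i=\tfrac{1}{g_i}D_{0i}$.

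For the adjunction statement in~(1), I would take the standard torus-invariant representative $K_\bP=-\sum_{j=0}^s D_j$, so that $K_\bP+D_0=-\sum_{j=1}^s D_j$. Restricting to $D_0$ and inserting the formula just proved gives
\[
(K_\bP+D_0)|_{D_0}=-\sum_{j=1}^s D_j|_{D_0}=-\sum_{j=1}^s\frac{1}{g_j}D_{0j}.
\]
Under the identification $D_0\simeq\bP(a'_1,\dots,a'_s)$ of Proposition~\ref{proposition_sublinear}, the divisors $D_{0j}$ for $1\le j\le s$ are exactly the torus-invariant boundary divisors of $D_0$, so $K_{D_0}=-\sum_{j=1}^s D_{0j}$. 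Comparing the two expressions yields $(K_\bP+D_0)|_{D_0}=K_{D_0}+\sum_{j=1}^s(1-\tfrac{1}{g_j})D_{0j}$, the asserted different formula.

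For part~(2), I would first note that the hypothesis that $(a_1,\dots,a_s)$ is well-formed forces $g_i=1$ for every $i$, hence $g=1$ and $a'_i=a_i$; by Proposition~\ref{proposition_sublinear} this gives $D_0\simeq\bP(a_1,\dots,a_s)$ together with $[\cO_\bP(1)]|_{D_0}\sim_\bQ[\cO_{D_0}(1)]$, so restriction to $D_0$ preserves degrees. I would then observe that the closed immersion $D_0=(x_0=0)\hookrightarrow\bP$ corresponds on Cox rings to the graded surjection $\bC[x_0,\dots,x_s]\twoheadrightarrow\bC[x_1,\dots,x_s]$ sending $x_0\mapsto 0$ and $x_j\mapsto x_j$ for $j\geq 1$. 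Via the correspondence between homogeneous polynomials and Weil divisors (Proposition~5.3.7 of \cite{CLS}, recalled in Definition~\ref{definition_cox-wps}), a homogeneous $f$ of degree $d$ therefore restricts to the degree-$d$ element $f|_{D_0}=f(0,x_1,\dots,x_s)\in\bC[x_1,\dots,x_s]$, whose zero locus is precisely $X|_{D_0}\in|\cO_{\bP(a_1,\dots,a_s)}(d)|$. The hypothesis $D_0\not\subset X$ is exactly what guarantees $f(0,x_1,\dots,x_s)\not\equiv 0$, so this restriction is a genuine divisor and not all of $D_0$.

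The only genuinely delicate point is the multiplicity computation $\mult(\tau_i)=g_i$ in part~(1); everything else is a direct application of toric adjunction and the Cox-ring dictionary. To keep the argument self-contained I would either transcribe the snake-lemma computation of Proposition~\ref{proposition_sublinear} after permuting the indices so that $\{0,i\}$ play the role of the cone-generating indices, or verify directly that a generator of $N_{\tau_i}$ over $\bZ u_0+\bZ u_i$ has order $g_i$ by solving the linear congruences defining $N$.
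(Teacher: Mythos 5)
Your treatment of part~(1) is correct and is essentially the paper's argument: the intersection formula from \cite[p.~100]{Fulton93} together with the multiplicity computation $\mult\left(\bR_{\geq 0}u_0+\bR_{\geq 0}u_i\right)=g_i$ (carried out exactly as in Proposition~\ref{proposition_sublinear}), and then toric adjunction with $K_\bP=-\sum_j D_j$ and $K_{D_0}=-\sum_j D_{0j}$; you merely spell out the adjunction step that the paper delegates to a citation.

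In part~(2), however, there is a gap at the decisive step. After setting up the degree bookkeeping and the Cox-ring surjection $x_0\mapsto 0$, you conclude that the restricted polynomial $f(0,x_1,\dots,x_s)$ has ``zero locus precisely $X|_{D_0}$.'' That ``therefore'' is exactly the content of the proposition and is not supplied by \cite[Proposition 5.3.7]{CLS}: the correspondence between homogeneous polynomials and Weil divisors is internal to each toric variety and says nothing about whether the \emph{$\bQ$-Cartier divisorial restriction} $X|_{D_0}$ (defined by restricting a Cartier multiple $mX$ and dividing by $m$) agrees with the Weil divisor cut out by the restricted polynomial. Remark~\ref{remark_divisor-wps} shows this identification genuinely fails without the hypotheses --- for $\bP(1,1,2)$ and $X=(x_1=0)$ the restriction $X|_{D_0}$ is a $\bQ$-divisor of degree $1/2$, not the divisor of $x_1|_{D_0}$ --- so matching degrees and classes is not enough; two divisors in the same ($\bQ$-)linear equivalence class need not coincide. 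The missing argument is the one the paper gives: since $(a_1,\dots,a_s)$ is well-formed, all $g_i=1$, so by part~(1) (indeed by \cite[5.15]{Fletcher00}) $\bP$ is nonsingular at the generic point of each $D_{0i}$, and every codimension-one point of $D_0$ lies in the smooth locus of $\bP$. Hence $X$ and $D_0$ are Cartier there, the two candidate $\bQ$-divisors agree at all codimension-one points of $D_0$, and therefore they are equal. With that one step inserted your proof is complete.
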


\begin{proof}
As in the proof of Proposition \ref{proposition_sublinear}, we have 
\[
\operatorname{mult}\left(\bR_{\geq 0}u_0+\bR_{\geq 0}u_i\right)=g_i. 
\]
Thus the assertion (1) follows from \cite[p.~100]{Fulton93}
and the proof of \cite[Lemma 2.3.1]{FS2020}. 

Let us consider (2). We note that $g_1=\cdots=g_s=1$ from the assumption. 
Thus, by (1), both $D_0$ and $D_i$ are Cartier divisors at the 
generic point of $D_{0i}$. (In fact, by \cite[5.15]{Fletcher00}, 
it is known that $\bP$ is nonsingular at the generic point 
of $D_{0i}$.) Thus the $\bQ$-Cartier $\bQ$-divisor $X|_{D_0}$ is 
equal to the Weil divisor $\left(f(0,x_1,\dots,x_s)=0\right)$
for all codimension $1$ points in $D_0$. Thus they are equal 
to each other. 
\end{proof}

\begin{remark}\label{remark_divisor-wps}
In general, $X|_{D_0}$ is not a Weil divisor. For example, if 
$\bP=\bP(1,1,2)$ and $X=(x_1=0)$, then $X|_{D_0}$ is a $\bQ$-divisor 
of degree $1/2$ on $D_0\simeq\bP^1$. 
\end{remark}

\begin{definition}\label{definition_Ba}
Assume that $\left(a_0,\dots,a_s\right)$ is well-formed, and 
set $\bP:=\bP\left(a_0,\dots,a_s\right)$. Take any $p\in\bP$. 
\begin{enumerate}
\item 
For any $0\leq i\leq s$, let 
\[
H^0\left(\bP,\cO_{\bP}(a_i)_p\right)\subset
H^0\left(\bP,\cO_{\bP}(a_i)\right)
\]
be the subspace consisting of the homogeneous polynomials vanishing at 
$p$. We also write $\left|\cO_{\bP}(a_i)_p\right|\subset
\left|\cO_{\bP}(a_i)\right|$ for the associated sub-linear series. 
\item
For any $a\in\bZ_{>0}$, let us set 
\[
B_a^\bP:=\bigcap_{a_i\leq a}
\operatorname{Bs}\left|\cO_{\bP}(a_i)\right|\subset\bP
\]
with the reduced structure. For example, $B_1^\bP\subset\bP$ is the 
reduced structure of $\operatorname{Bs}\left|\cO_\bP(1)\right|$. 
Similarly, let us set 
\[
B_{a,p}^\bP:=\bigcap_{a_i\leq a}
\operatorname{Bs}\left|\cO_{\bP}(a_i)_p\right|\subset\bP
\]
with the reduced structure. 
\end{enumerate}
\end{definition}

\begin{lemma}\label{lemma_Ba-taro}
Under the assumption in Definition \ref{definition_Ba}, 
for any $0\leq i\leq s$, we have 
\[
\operatorname{Bs}\left|\cO_\bP(a_i)_p\right|\subset B_1^\bP\cup 
B_{a_i,p}^\bP. 
\]
\end{lemma}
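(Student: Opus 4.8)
The plan is to prove this set-theoretic inclusion by a direct pointwise argument. Since the right-hand side is a union, it suffices to fix a point $q\in\operatorname{Bs}\left|\cO_\bP(a_i)_p\right|$, assume that $q\notin B_1^\bP$, and deduce that $q\in B_{a_i,p}^\bP$. First I would record that $B_1^\bP=\operatorname{Bs}\left|\cO_\bP(1)\right|$, because in the defining intersection only the indices with $a_j=1$ contribute (the condition $a_j\le 1$ forces $a_j=1$); if no weight equals $1$ the index set is empty and $B_1^\bP=\bP$, so the statement is vacuous and we may assume there is at least one weight-one variable.

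Since $\operatorname{Bs}\left|\cO_\bP(1)\right|$ is cut out by the weight-one coordinates, the assumption $q\notin B_1^\bP$ gives a homogeneous coordinate $x_\ell$ with $a_\ell=1$ such that $q\notin D_\ell=(x_\ell=0)$. The key step is then a multiplication trick: for any $j$ with $a_j\le a_i$ and any $g\in H^0\left(\bP,\cO_\bP(a_j)_p\right)$, I set $h:=x_\ell^{\,a_i-a_j}g\in H^0\left(\bP,\cO_\bP(a_i)\right)$. Because $g$ vanishes at $p$, so does $h$, hence $h\in H^0\left(\bP,\cO_\bP(a_i)_p\right)$; as $q$ lies in the base locus of this sublinear series, $h$ vanishes at $q$. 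Reading this on divisors, $q$ lies on $(h=0)=(a_i-a_j)D_\ell+(g=0)$, and since $q\notin D_\ell$ we conclude that $g$ vanishes at $q$. Letting $g$ and $j$ vary shows $q\in\operatorname{Bs}\left|\cO_\bP(a_j)_p\right|$ for every $a_j\le a_i$, i.e.\ $q\in B_{a_i,p}^\bP$, which completes the inclusion.

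I do not expect a genuine obstacle here, as the argument is elementary; the only points requiring care are routine. One must check that the exponent $a_i-a_j$ is a nonnegative integer, which holds precisely because the index $j$ ranges over weights with $a_j\le a_i$ (when $a_j=a_i$ the multiplier is $1$ and $h=g$). One must also justify that $q$ lies on $(h=0)$ if and only if $q$ lies on $D_\ell$ or on $(g=0)$: this is simply the fact that on the normal toric variety $\bP$ the divisor of the product $h=x_\ell^{\,a_i-a_j}g$ is the sum $(a_i-a_j)D_\ell+(g=0)$, so the support of $(h=0)$ is the union of the supports of its summands.
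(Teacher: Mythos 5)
Your proof is correct and rests on the same mechanism as the paper's: multiplying by a power of a weight-one coordinate that does not vanish at the point $q$ under consideration in order to promote lower-degree sections through $p$ to degree $a_i$. The paper runs the argument contrapositively with explicit monomials after normalizing $p=[1:0:\cdots:0]$ and splitting into the cases $q_0\neq 0$ and $q_0=0$, whereas your direct version with an arbitrary section $g$ handles both cases at once and avoids the coordinate change; this is a harmless (and slightly cleaner) reorganization rather than a different method.
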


\begin{proof}
We may assume that 
\[
1=a_0=\cdots=a_{c_1-1}<a_{c_1}\leq\cdots\leq a_s
\]
with $0\leq c_1\leq s$. If $c_1=0$, then $B_1^\bP=\bP$. Thus we may assume 
that $c_1\geq 1$. Since 
\[
\operatorname{Bs}\left|\cO_\bP(a_i)_p\right|\subset
\operatorname{Bs}\left|\cO_\bP(1)_p\right|, 
\]
we may further assume that $p\not\in B_1^\bP$. 
After coordinate changes, we may also assume that $a_i<a_{i+1}$ 
(or $i=s$), and 
$ p=\left[1:0:\cdots:0\right]\in\bP
$. Then we have 
\[
B_1^\bP=\left(x_0=\cdots=x_{c_1-1}=0\right),\quad
B_{a_i,p}^\bP=\left(x_1=\cdots=x_i=0\right).
\]
Take any point 
\[
q=\left[q_0:\cdots:q_s\right]\in\bP\setminus
\left(B_1^\bP\cup B_{a_i,p}^\bP\right). 
\]
If $q_0\neq 0$, then there exists $1\leq j\leq i$ such that $q_j\neq 0$. 
Since the polynomial $x_0^{a_i-a_j}\in H^0\left(\bP,\cO_\bP(a_i)_p\right)$
does not vanish at $q$, we have $q\not\in\operatorname{Bs}\left|
\cO_\bP(a_i)_p\right|$. If $q_0=0$, then there exists 
$1\leq j\leq c_1-1$ such that $q_j\neq 0$. 
Since the polynomial $x_j^{a_i}\in H^0\left(\bP,\cO_\bP(a_i)_p\right)$
does not vanish at $q$, we have $q\not\in\operatorname{Bs}\left|
\cO_\bP(a_i)_p\right|$. 
\end{proof}

\subsection{Standard weighted blowups}\label{section_swb}

We define the notion of standard weighted blowups of weighted projective spaces, and see basic properties. Throughout Subsection \ref{section_swb}, we fix the following: 

\begin{setup}\label{setup:swb}
Let $s,r\in\bZ_{>0}$ with $1\leq r\leq s-1$, and let 
$a_0,\dots,a_s\in\bZ_{>0}$ with $(a_0,\dots,a_s)$ well-formed. 
For any $0\leq i\leq r$, we set 
\begin{eqnarray*}
&&h:=\gcd\left(a_0,\dots,a_r\right), \quad
a''_i:=\frac{a_i}{h}, \\
&&g_i:=\gcd\left(a''_0,\dots,\hat{a}''_i,\dots,a''_r\right), \quad
g:=\prod_{i=0}^r g_i, \quad
a'_i:=\frac{a''_ig_i}{g}.
\end{eqnarray*}
Similarly, for any $r+1\leq j\leq s$, we set 
\begin{eqnarray*}
&&h':=\gcd\left(a_{r+1},\dots,a_s\right), \quad
a''_j:=\frac{a_j}{h'}, \\
&&g_j:=\gcd\left(a''_{r+1},\dots,\hat{a}''_j,\dots,a''_s\right), \quad
g':=\prod_{j=r+1}^s g_j, \quad
a'_j:=\frac{a''_jg_j}{g'}.
\end{eqnarray*}
Moreover, as in Definition \ref{definition_wps}, we fix the canonical surjection 
\[
\pi_N\colon \bZ^{s+1}=\bigoplus_{i=0}^s\bZ\mathbf{e}_i
\twoheadrightarrow N:=\bZ^{s+1}/\bZ\sum_{i=0}^s a_i\mathbf{e}_i, 
\]
set $u_i:=\pi_N(\mathbf{e}_i)$ $(0\leq i\leq s)$, and consider the fan 
\[
\Sigma:=\left\{\operatorname{Cone}(\mathcal{C})\subset N_\bR\,\,|\,\,
\mathcal{C}\subsetneq\left\{u_0,\dots,u_s\right\}\right\}
\]
in $N_\bR$, set $\bP:=X_\Sigma=\bP(a_0,\dots,a_s)$, and 
let $\bC[x_0,\dots,x_s]$ be the Cox ring of $\bP$ with $\deg x_i=a_i$. 
Set $D_i:=(x_i=0)\subset\bP$ for any $0\leq i\leq s$. 
We also fix the canonical surjection 
\[
\pi_{N'}\colon \bZ^{r+1}=\bigoplus_{i=0}^r\bZ\mathbf{e}_i
\twoheadrightarrow N':=\bZ^{r+1}/\bZ\sum_{i=0}^r a'_i\mathbf{e}_i, 
\]
set $u'_i:=\pi_{N'}(\mathbf{e}_i)$ $(0\leq i\leq r)$, and consider the fan 
\[
\Sigma':=\left\{\operatorname{Cone}(\mathcal{C}')\subset N'_\bR\,\,|\,\,
\mathcal{C}'\subsetneq\left\{u'_0,\dots,u'_r\right\}\right\}
\]
in $N'_\bR$, set $\bP':=X_{\Sigma'}=\bP(a'_0,\dots,a'_r)$, and 
let $\bC[x'_0,\dots,x'_r]$ be the Cox ring of $\bP'$ with 
$\deg x'_i=a'_i$. Set $D'_i:=(x'_i=0)\subset\bP'$ for any $0\leq i\leq r$. 
\end{setup}

Note that, by Proposition \ref{proposition_well-formed}, all $u_i\in N$
(resp., all $u'_i\in N'$) are primitive elements. 

\begin{lemma}\label{lemma_ray}
The element 
\[
v_{s+1}:=-\frac{1}{h}\sum_{j=r+1}^s a''_j u_j
=\frac{1}{h'}\sum_{i=0}^r a''_i u_i \in N_\bQ
\]
is a primitive element in $N$. We note that the element $v_{s+1}\in N$ 
can be characterized as a primitive element in $N$ such that 
%that belongs to a $1$-dimensional $\bQ$-linear subspace $\left(\sum_{i=0}^r\bQ u_i\right)
%\cap\left(\sum_{j=r+1}^s\bQ u_j\right) \subset N_{\bQ}$ such that $v_{s+1}\in \sum_{i=0}^r\bQ_{\geq 0}u_i$.
\[
v_{s+1}\in \sum_{i=0}^r\bQ_{\geq 0}u_i\quad\text{and}\quad
v_{s+1}\in\left(\sum_{i=0}^r\bQ u_i\right)
\cap\left(\sum_{j=r+1}^s\bQ u_j\right) \simeq \bQ.
\]
\end{lemma}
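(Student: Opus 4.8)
The plan is to separate the two assertions—that $v_{s+1}$ lies in $N$ and is primitive there, and that it is characterized by the stated geometric conditions—after first recording the arithmetic that drives everything. From the definition $N=\bZ^{s+1}/\bZ\sum_{i=0}^s a_i\mathbf{e}_i$ we have the single relation $\sum_{i=0}^s a_iu_i=0$, and since $a_i=ha''_i$ for $0\le i\le r$ and $a_j=h'a''_j$ for $r+1\le j\le s$, this rearranges to $h\sum_{i=0}^r a''_iu_i=-h'\sum_{j=r+1}^s a''_ju_j$; dividing by $hh'$ shows the two displayed expressions for $v_{s+1}$ agree in $N_\bQ$. I would then record three $\gcd$ facts coming from the normalizations in Setup \ref{setup:swb}: $\gcd(a''_0,\dots,a''_r)=1$, $\gcd(a''_{r+1},\dots,a''_s)=1$, and $\gcd(h,h')=1$ (the last because any common divisor of $h$ and $h'$ divides every $a_i$, contradicting $\gcd(a_0,\dots,a_s)=1$).

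For integrality, the key point is that $\gcd(h,h')=1$ lets me choose $k\in\bZ$ with $1+kh\equiv 0\pmod{h'}$. I would then set
\[
z:=\frac{1}{h'}\left(\sum_{i=0}^r a''_i\mathbf{e}_i+k\sum_{i=0}^s a_i\mathbf{e}_i\right)\in\bZ^{s+1},
\]
and check that $z$ has integer entries: its $i$-th entry for $i\le r$ equals $a''_i(1+kh)/h'$, which is integral since $h'\mid 1+kh$, while for $j>r$ it equals $ka''_j$. Because $\pi_N$ kills $\sum_i a_i\mathbf{e}_i$, we get $\pi_N(z)=\tfrac{1}{h'}\sum_{i=0}^r a''_iu_i=v_{s+1}$, so $v_{s+1}\in N$. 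For primitivity I would exhibit a functional taking value $1$ on it: using the first two $\gcd$ facts, pick integers $c_i$ with $\sum_{i=0}^r a''_ic_i=1$ and $d_j$ with $\sum_{j=r+1}^s a''_jd_j=1$, and define $\psi$ on $\bZ^{s+1}$ by $\psi(\mathbf{e}_i)=h'c_i$ for $i\le r$ and $\psi(\mathbf{e}_j)=-hd_j$ for $j>r$. A direct computation gives $\psi\bigl(\sum_i a_i\mathbf{e}_i\bigr)=hh'-hh'=0$, so $\psi$ descends to a functional on $N$, and $\psi(v_{s+1})=\tfrac{1}{h'}\sum_{i=0}^r a''_i(h'c_i)=1$; hence a factorization $v_{s+1}=m\cdot y$ with $y\in N$ forces $m\mid 1$, proving primitivity.

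Finally, for the characterization I would argue by dimension count. The only linear relation among $u_0,\dots,u_s$ in $N_\bQ$ is a multiple of $(a_0,\dots,a_s)$, so every proper subset is linearly independent; thus $A:=\sum_{i=0}^r\bQ u_i$ and $B:=\sum_{j=r+1}^s\bQ u_j$ have dimensions $r+1$ and $s-r$, span $N_\bQ$, and therefore meet in a line $A\cap B\simeq\bQ$, matching the stated isomorphism. The two representations place $v_{s+1}$ in $A\cap B$, and since $N\cap(A\cap B)$ is a saturated rank-one subgroup of $N$, the primitive vector $v_{s+1}$ generates it, so its primitive generators are exactly $\pm v_{s+1}$. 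As $v_{s+1}=\tfrac{1}{h'}\sum_{i=0}^r a''_iu_i$ is a strictly positive combination of the independent $u_0,\dots,u_r$, only $v_{s+1}$ (not $-v_{s+1}$) lies in the strongly convex cone $\sum_{i=0}^r\bQ_{\ge 0}u_i$, which pins it down uniquely. I expect the integrality step to be the main obstacle: producing the right $k$ and checking that clearing $h'$ yields an honest lattice point is precisely where well-formedness (via $\gcd(h,h')=1$) is genuinely used, whereas the primitivity and characterization steps are comparatively formal.
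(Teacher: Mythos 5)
Your proof is correct. It rests on the same two arithmetic pillars as the paper's argument --- $\gcd(h,h')=1$ and $\gcd(a''_0,\dots,a''_r)=\gcd(a''_{r+1},\dots,a''_s)=1$ --- and your integrality step is the same B\'ezout manipulation (the paper takes $k,k'$ with $h'k-hk'=1$ and lifts $v_{s+1}$ to $k(a''_0,\dots,a''_r;\,0,\dots,0)+k'(0,\dots,0;\,a''_{r+1},\dots,a''_s)$, which is your $z$ in a slightly different parametrization). Where you genuinely diverge is primitivity: the paper argues directly by divisibility, writing $d(c'_0,\dots,c'_s)-q(a_0,\dots,a_s)=(c_0,\dots,c_s)$ and deducing $d\mid(hq+k)$ and $d\mid(h'q+k')$, hence $d\mid h'(hq+k)-h(h'q+k')=1$; you instead exhibit an explicit functional $\psi\in M=N^\vee$ with $\psi(v_{s+1})=1$. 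Your route is arguably cleaner and more reusable (primitivity certified by a dual vector), at the cost of having to verify that $\psi$ kills $\sum_i a_i\mathbf{e}_i$, which you do. Finally, you supply a proof of the characterization statement (the dimension count showing $A\cap B\simeq\bQ$, saturation of $N\cap(A\cap B)$, and the strong convexity of $\sum_{i=0}^r\bQ_{\geq 0}u_i$ singling out $+v_{s+1}$ over $-v_{s+1}$); the paper only asserts this as a remark, so this is a welcome addition and it is correct as written.
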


\begin{proof}
We firstly see that $v_{s+1}$ is an element in $N$. Since $\gcd(h,h')=1$, 
there exist $k,k'\in\bZ$ such that $h'k-hk'=1$ holds. Set 
\[
\left(c_0,\dots,c_r\right):=k\left(a''_0,\dots,a''_r\right), \quad
\left(c_{r+1},\dots,c_s\right):=k'\left(a''_{r+1},\dots,a''_s\right). 
\]
Then we can directly check that 
\[
\pi_N\left(\sum_{i=0}^sc_i\mathbf{e}_i\right)= k(h' v_{s+1}) + k'(-hv_{s+1})= v_{s+1}. 
\]
Thus, it is enough to show that $v_{s+1}\in N$ is primitive. 
Take any $d\in\bZ_{>0}$, $q\in\bZ$ and 
$\left(c'_0,\dots,c'_s\right)\in\bZ^{s+1}$ satisfying 
\[
d\left(c'_0,\dots,c'_s\right)-q(a_0,\dots,a_s)=(c_0,\dots,c_s). 
\]
It is enough to show that $d=1$. Since 
\begin{eqnarray*}
d\left(c'_0,\dots,c'_r\right)&=&(hq+k)\left(a''_0,\dots,a''_r\right), \\
d\left(c'_{r+1},\dots,c'_s\right)&=&
(h'q+k')\left(a''_{r+1},\dots,a''_s\right), 
\end{eqnarray*}
we have $d\mid (hq+k)$ and $d\mid (h'q+k')$. Since 
$h'(hq+k)-h(h'q+k')=1$, we must have $d=1$.
\end{proof}

\begin{lemma}\label{lemma_ray-ray}
For any $0\leq i\leq s$, we have 
\[
\operatorname{mult}\left(\bR_{\geq 0}u_i+\bR_{\geq 0}v_{s+1}\right)=g_i.
\]
\end{lemma}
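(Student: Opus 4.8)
The plan is to compute the multiplicity of the (two-dimensional) cone $\sigma_i:=\bR_{\ge 0}u_i+\bR_{\ge 0}v_{s+1}$ as the order of a torsion group, and then to evaluate that order through a gcd of maximal minors. Recall that for a simplicial cone spanned by primitive generators one has $\operatorname{mult}(\sigma_i)=[\,N\cap\bR\sigma_i:\bZ u_i+\bZ v_{s+1}\,]$, and since $N\cap\bR\sigma_i$ is exactly the saturation of $\bZ u_i+\bZ v_{s+1}$ in $N$, this index equals the order of the torsion subgroup of $N/(\bZ u_i+\bZ v_{s+1})$. So first I would check that $\sigma_i$ is genuinely two-dimensional, i.e.\ that $u_i$ and $v_{s+1}$ are linearly independent. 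Using the single relation $\sum_l a_lu_l=0$ among the $u_l$, one sees that $u_i\in\bR v_{s+1}$ forces the block containing $i$ to be a singleton; as $1\le r\le s-1$ always leaves the first block $\{0,\dots,r\}$ with at least two elements, this can happen only for $r=s-1$ and $i=s$. That degenerate case I would dispose of by hand: there $v_{s+1}=-\tfrac1h u_s$, so $\bZ u_i+\bZ v_{s+1}=\bZ v_{s+1}$ is already saturated and the index is $1=g_s$ (empty-gcd convention).

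Next I would lift everything to $\bZ^{s+1}$. Writing $a:=\sum_{l=0}^s a_l\mathbf{e}_l$ and taking the integral lift $c:=\sum_l c_l\mathbf{e}_l$ of $v_{s+1}$ produced in the proof of Lemma~\ref{lemma_ray} (so $c_l=ka''_l$ for $l\le r$, $c_l=k'a''_l$ for $l\ge r+1$, with $h'k-hk'=1$), we obtain $N/(\bZ u_i+\bZ v_{s+1})\cong\bZ^{s+1}/L$ with $L:=\bZ a+\bZ\mathbf{e}_i+\bZ c$. In the non-degenerate case $a,\mathbf{e}_i,c$ are $\bQ$-independent, so $L$ has rank $3$, and the product of its invariant factors equals both $|\mathrm{Tors}(\bZ^{s+1}/L)|$ and the gcd of all $3\times 3$ minors of the matrix $[\,a\mid\mathbf{e}_i\mid c\,]$ (the standard Smith-normal-form computation). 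Hence $\operatorname{mult}(\sigma_i)$ is this gcd of minors.

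Then I would evaluate the minors. Because the middle column is $\mathbf{e}_i$, the $3\times 3$ minor on rows $l_1<l_2<l_3$ vanishes unless one of the rows is $i$, in which case it reduces (up to sign) to the $2\times 2$ determinant $a_lc_{l'}-a_{l'}c_l$ with $l,l'\ne i$. Substituting the $c_l$ shows $a_lc_{l'}-a_{l'}c_l=0$ whenever $l,l'$ lie in the same block, while for $l,l'$ in opposite blocks it equals $\pm a''_la''_{l'}$, precisely because $h'k-hk'=1$. Therefore $\operatorname{mult}(\sigma_i)=\gcd\{a''_la''_{l'}: l,l' \text{ in opposite blocks},\ l,l'\ne i\}$, which factors as a product of two one-block gcds since minima of sums of $p$-adic valuations add. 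For $0\le i\le r$ the factor coming from the second block is $\gcd(a''_{r+1},\dots,a''_s)=1$, while the first-block factor is $\gcd(a''_0,\dots,\hat{a}''_i,\dots,a''_r)=g_i$; the case $r+1\le i\le s$ is symmetric, with the first-block factor equal to $1$ and the second-block factor equal to $g_i$. Either way $\operatorname{mult}(\sigma_i)=g_i$.

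The main obstacle here is organizational rather than conceptual: pinning down the identification $\operatorname{mult}=|\mathrm{Tors}|=\gcd$ of $3\times 3$ minors cleanly, and isolating the single degenerate pairing $r=s-1$, $i=s$, where $\sigma_i$ collapses to a line and the minor formula no longer applies, so that one must invoke the convention $g_s=1$ directly. The crucial simplifying input is the Bézout relation $h'k-hk'=1$ from Lemma~\ref{lemma_ray}: it is exactly what makes every cross-block $2\times 2$ determinant collapse to $\pm a''_la''_{l'}$, after which the whole gcd computation is forced by $\gcd(a''_0,\dots,a''_r)=\gcd(a''_{r+1},\dots,a''_s)=1$.
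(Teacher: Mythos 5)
Your proof is correct and follows essentially the same route as the paper's: both lift $u_i$, $v_{s+1}$ and the relation $\sum_l a_l\mathbf{e}_l$ to a rank-$3$ sublattice of $\bZ^{s+1}$ and compute $\operatorname{mult}$ as the gcd of $3\times 3$ minors, which the block structure together with the B\'ezout relation $h'k-hk'=1$ reduces to $\gcd(a''_0,\dots,\hat a''_i,\dots,a''_r)\cdot\gcd(a''_{r+1},\dots,a''_s)=g_i$ (resp.\ the symmetric product for $i$ in the second block). The only differences are organizational: the paper row-reduces using the lift $h'v_{s+1}$ with a compensating factor $1/h'$, while you use the primitive lift $c$ and expand the minors directly, and you additionally isolate the degenerate pairing $r=s-1$, $i=s$, which the paper's argument silently skips.
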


\begin{proof}
We only consider the case $0\leq i\leq r$, since the other case 
$r+1\leq i\leq s$ can be proven in a completely same way. 
We may assume that $i=0$. From the definition of multiplicity, we have 
\begin{eqnarray*}
\operatorname{mult}\left(\bR_{\geq 0}u_0+\bR_{\geq 0}v_{s+1}\right)
&=&\frac{1}{h'}\operatorname{mult}\begin{pmatrix}
1 & 0 & \cdots & 0 & 0 & \cdots & 0\\ 
a''_0 & a''_1& \cdots & a''_r & 0 & \cdots & 0\\
a_0 & a_1& \cdots & a_r & a_{r+1} & \cdots & a_s
\end{pmatrix}\\
&=&\operatorname{mult}
\begin{pmatrix}
1 & 0 & \cdots & 0 & 0 & \cdots & 0\\ 
0 & a''_1& \cdots & a''_r & 0 & \cdots & 0\\
0 & 0& \cdots & 0 & a''_{r+1} & \cdots & a''_s
\end{pmatrix}=\frac{1}{g_0},
\end{eqnarray*}
since $\left(a''_1,\dots,a''_r\right)$ is divisible exactly $g_0$ times. 
\end{proof}

\begin{definition}\label{definition_swb}
Set 
\[
v_j:=\begin{cases}
u_j & \text{if }r+1\leq j\leq s, \\
v_{s+1} & \text{for }j=s+1.
\end{cases}
\]
Let $\tilde{\Sigma}$ be the star subdivision of $\Sigma$ by $v_{s+1}$. 
In other words, the set of maximal dimensional cones of $\tilde{\Sigma}$ 
consists of 
\[
\operatorname{Cone}\left(u_0,\dots,\hat{u}_i,\dots,u_r,
v_{r+1},\dots,\hat{v}_j,\dots,v_{s+1}\right)
\]
with $0\leq i\leq r$ and $r+1\leq j\leq s+1$. 
We set $\tilde{\bP}:=X_{\tilde{\Sigma}}$, and let 
$\psi\colon\tilde{\bP}\to\bP$ be the natural toric morphism. 
We call $\psi$ the \emph{standard weighted blowup of $\bP$ along 
$\bP(a_{r+1},\dots,a_s)$} (or, \emph{along $(x_0=\cdots=x_r=0)$}). 
We usually denote 
$
\bC\left[x_0,\dots,x_r,y_{r+1},\dots,y_s,z\right]
$
the Cox ring of $\tilde{\bP}$, where 
\begin{eqnarray*}
\tilde{D}_i&:=&V(u_i)=(x_i=0) \quad (0\leq i\leq r), \\
\tilde{D}_j&:=&V(v_j)=(y_j=0) \quad (r+1\leq j\leq s), \\
\bE&:=&V(v_{s+1})=(z=0).
\end{eqnarray*}
Note that the divisor $\bE\subset\tilde{\bP}$ is the exceptional divisor 
of $\psi$ and $\psi(\bE)=(x_0=\cdots=x_r=0)\subset\bP$. 
\end{definition}

\begin{definition}\label{definition_wbs}
Consider the canonical surjection 
\[
\pi_{N''}\colon 
\bZ^{r+1}=\bigoplus_{i=0}^r\bZ\mathbf{e}_i\twoheadrightarrow
N'':=\bZ^{r+1}/\bZ\sum_{i=0}^r a''_i\mathbf{e}_i 
\]
and set $u''_i:=\pi_{N''}(\mathbf{e}_i)$ for any $0\leq i\leq r$. 
The standard projection $\bZ^{s+1}\to\bZ^{r+1}$ onto the first $(r+1)$ coordinates induces the surjection 
$\pi_{N,N''}\colon N\twoheadrightarrow N''$. Moreover, 
by Proposition \ref{proposition_well-formed}, there exists an 
isomorphism $\pi_{N', N''} \colon N'\simeq N''$ with $g_i u'_i\mapsto u''_i$ for any 
$0\leq i\leq r$. 
Thus, the composition $\pi_{N,N'}:= (\pi_{N', N''})^{-1} \circ \pi_{N, N''} \colon N\twoheadrightarrow N'$ satisfies 
\begin{eqnarray*}
u_i&\mapsto& g_i u'_i \quad (0\leq i\leq r), \\
v_j &\mapsto& 0 \quad (r+1\leq j\leq s+1). 
\end{eqnarray*}
Hence the homomorphism $\pi_{N,N'}$ is compatible with the fans 
$\tilde{\Sigma}$ and $\Sigma'$ (in the sense of 
\cite[Definition 3.3.1]{CLS}). The induced toric morphism 
$\pi\colon\tilde{\bP}\to \bP'$ is said to be the 
\emph{induced weighted bundle structure} of $\tilde{\bP}$. 
Since $\pi_{N, N'}$ is surjective, we have 
$\pi_*\cO_{\tilde{\bP}}=\cO_{\bP'}$. 
\end{definition}

\begin{proposition}\label{proposition_degree}
There is an isomorphism $\operatorname{Cl}\left(\tilde{\bP}\right)
\simeq\bZ^2$ such that 
\begin{itemize}
\item 
the divisor $\tilde{D}_i$ maps to $(a''_i,0)$ for any $0\leq i\leq r$, 
\item 
the divisor $\tilde{D}_j$ maps to $(0,a''_j)$ for any $r+1\leq j\leq s$, 
and 
\item 
the divisor $\bE$ maps to $(-h',h)$. 
\end{itemize}
\end{proposition}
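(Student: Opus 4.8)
The plan is to realize the asserted isomorphism as the cokernel description coming from the standard toric divisor sequence. Since the rays $u_0,\dots,u_s,v_{s+1}$ of $\tilde{\Sigma}$ span $N_\bR$, \cite[Theorem 4.1.3]{CLS} gives an exact sequence
\[
0\longrightarrow M\overset{\phi}{\longrightarrow}\bZ^{s+2}\longrightarrow\Cl(\tilde{\bP})\longrightarrow 0,
\]
where $\bZ^{s+2}$ has basis the invariant prime divisors $\tilde{D}_0,\dots,\tilde{D}_r,\tilde{D}_{r+1},\dots,\tilde{D}_s,\bE$ (indexed by $u_0,\dots,u_r,v_{r+1},\dots,v_s,v_{s+1}$) and $\phi(\mathbf{m})=(\langle\mathbf{m},u_0\rangle,\dots,\langle\mathbf{m},u_s\rangle,\langle\mathbf{m},v_{s+1}\rangle)$. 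By Definition \ref{definition_wps} we identify $M=\{\mathbf{m}\in\bZ^{s+1}\mid\sum_{i=0}^s m_ia_i=0\}$ with $\langle\mathbf{m},u_i\rangle=m_i$. I would then define $\Theta\colon\bZ^{s+2}\to\bZ^2$ by $\tilde{D}_i\mapsto(a''_i,0)$ for $0\le i\le r$, $\tilde{D}_j\mapsto(0,a''_j)$ for $r+1\le j\le s$, and $\bE\mapsto(-h',h)$, and prove that
\[
0\longrightarrow M\overset{\phi}{\longrightarrow}\bZ^{s+2}\overset{\Theta}{\longrightarrow}\bZ^2\longrightarrow 0
\]
is exact. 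Exactness yields $\Cl(\tilde{\bP})\cong\bZ^{s+2}/\im\phi=\bZ^{s+2}/\ker\Theta\cong\bZ^2$ via the induced map $\overline{\Theta}$, under which the three families of divisor classes map exactly as claimed.

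For $\Theta\circ\phi=0$ I would use the two presentations of $v_{s+1}$ from Lemma \ref{lemma_ray}. The first coordinate of $\Theta(\phi(\mathbf{m}))$ equals $\sum_{i=0}^r a''_i\langle\mathbf{m},u_i\rangle-h'\langle\mathbf{m},v_{s+1}\rangle$, which vanishes because $h'v_{s+1}=\sum_{i=0}^r a''_iu_i$; the second equals $\sum_{j=r+1}^s a''_j\langle\mathbf{m},u_j\rangle+h\langle\mathbf{m},v_{s+1}\rangle$, which vanishes because $hv_{s+1}=-\sum_{j=r+1}^s a''_ju_j$. Surjectivity of $\Theta$ is immediate: since $\gcd(a''_0,\dots,a''_r)=1$ and $\gcd(a''_{r+1},\dots,a''_s)=1$, suitable integral combinations of the $\tilde{D}_i$ (resp.\ of the $\tilde{D}_j$) hit $(1,0)$ (resp.\ $(0,1)$). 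Injectivity of $\phi$ holds because the first $s+1$ coordinates of $\phi(\mathbf{m})$ already recover $\mathbf{m}$.

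The decisive step is exactness in the middle, $\ker\Theta\subseteq\im\phi$; this is also what forces $\Cl(\tilde{\bP})$ to be torsion-free rather than merely of rank $2$. Given $w=(w_0,\dots,w_s,w_{s+1})\in\ker\Theta$, I would set $\mathbf{m}:=(w_0,\dots,w_s)$ and verify $\mathbf{m}\in M$ with $\phi(\mathbf{m})=w$. Writing $a_i=ha''_i$ for $i\le r$ and $a_j=h'a''_j$ for $j>r$, the two coordinate relations $\sum_{i\le r}w_ia''_i=h'w_{s+1}$ and $\sum_{j>r}w_ja''_j=-hw_{s+1}$ give
\[
\sum_{i=0}^s m_ia_i=h\sum_{i=0}^r w_ia''_i+h'\sum_{j=r+1}^s w_ja''_j=hh'w_{s+1}-h'hw_{s+1}=0,
\]
so $\mathbf{m}\in M$; and the first relation shows $\langle\mathbf{m},v_{s+1}\rangle=\tfrac{1}{h'}\sum_{i\le r}a''_iw_i=w_{s+1}$, the remaining coordinate. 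Hence $\phi(\mathbf{m})=w$, which completes the exactness and the proof. The only genuine subtlety is this saturation statement, and the two-sided characterization of $v_{s+1}$ in Lemma \ref{lemma_ray} is precisely what makes the lift integral; everything else is bookkeeping.
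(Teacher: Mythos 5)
Your proof is correct. It rests on the same foundation as the paper's --- the toric divisor class sequence $0\to M\to\bZ^{\tilde{\Sigma}(1)}\to\Cl(\tilde{\bP})\to 0$ of \cite[Theorem 4.1.3]{CLS} --- but the execution is genuinely different. The paper extends $\iota_{\tilde{\Sigma}}$ to a map $\tilde{\iota}_{\tilde{\Sigma}}$ on all of $(\bZ^{s+1})^\vee$ using the integral lift $(c_0,\dots,c_s)$ of $v_{s+1}$ from Lemma \ref{lemma_ray}, applies the snake lemma to obtain an extension $0\to\bZ\to\Cl(\tilde{\bP})\to\bZ\to 0$, identifies the images of $[u_i]$, $[v_j]$, $[v_{s+1}]$ by a diagram chase, and then normalizes by the change of basis $\bigl(\begin{smallmatrix}-k'&h'\\ k&-h\end{smallmatrix}\bigr)$. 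You instead write down the target map $\Theta$ with the asserted values and verify directly that $0\to M\to\bZ^{s+2}\to\bZ^2\to 0$ is exact: the composite vanishes by the two rational expressions for $v_{s+1}$, surjectivity follows from $\gcd(a''_0,\dots,a''_r)=\gcd(a''_{r+1},\dots,a''_s)=1$ (so the B\'ezout integers $k,k'$ are not even needed), and the middle exactness --- the one genuinely nontrivial point, which is what rules out torsion --- is checked by exhibiting the explicit integral lift $\mathbf{m}=(w_0,\dots,w_s)$ and confirming $\sum m_ia_i=hh'w_{s+1}-h'hw_{s+1}=0$ and $\langle\mathbf{m},v_{s+1}\rangle=w_{s+1}$. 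Your route is more self-contained and makes the saturation step explicit rather than hiding it in a diagram chase; the paper's route produces the intermediate description $[u_i]\mapsto a''_i(h,k)$ etc.\ as a byproduct, which is of no independent use here. Both are complete proofs.
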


\begin{proof}
Set $\left(\bZ^{s+1}\right)^\vee=\bigoplus_{i=0}^s\bZ\mathbf{e}_i^\vee$, 
where $\left\{\mathbf{e}_i^\vee\right\}_{i=0}^s$ is the dual basis 
of $\left\{\mathbf{e}_i\right\}_{i=0}^s$. Then $M:=N^\vee$ is equal to 
\[
\left\{\sum_{i=0}^sm_i\mathbf{e}_i^\vee\in\left(\bZ^{s+1}\right)^\vee
\,\,\Big|\,\,\sum_{i=0}^s m_ia_i=0\right\}. 
\]
Set 
\[
\bZ^{s+2}=\bZ^{\tilde{\Sigma}(1)}:=\bigoplus_{i=0}^r\bZ[u_i]\oplus
\bigoplus_{j=r+1}^{s+1}\bZ[v_j]. 
\]
By \cite[Theorem 4.1.3]{CLS}, there is an exact sequence 
\[
0\to M\xrightarrow{\iota_{\tilde{\Sigma}}}
\bZ^{\tilde{\Sigma}(1)}\to\operatorname{Cl}\left(\tilde{\bP}\right)\to 0,
\]
where 
\[
\iota_{\tilde{\Sigma}}(m)=\left(\langle m,u_0\rangle,\dots,
\langle m,u_r\rangle,\langle m,v_{r+1}\rangle,\dots,
\langle m,v_{s+1}\rangle\right).
\]
Consider the homomorphism $\tilde{\iota}_{\tilde{\Sigma}}\colon
\left(\bZ^{s+1}\right)^\vee\to\bZ^{\tilde{\Sigma}(1)}$
given by 
\[
\begin{pmatrix}
1 & & \\
& \ddots& \\
&& 1 \\
c_0 &\cdots & c_s 
\end{pmatrix}, 
\]
where $(c_0,\dots,c_s)\in\bZ^{s+1}$ be as in the proof of Lemma 
\ref{lemma_ray}. Then, the homomorphism $\iota_{\tilde{\Sigma}}$ is 
equal to the restriction of $\tilde{\iota}_{\tilde{\Sigma}}$ to $M$. 
Therefore, we have the following exact diagram: 
\[
\xymatrix{
& 0 \ar[d] & 0 \ar[d] & & \\
0 \ar[r] & M \ar[r]^-{\iota_{\tilde{\Sigma}}} \ar[d] & 
\bZ^{\tilde{\Sigma}(1)} 
\ar[d]^-{\operatorname{id}} \ar[r] 
& \operatorname{Cl}\left(\tilde{\bP}\right) \ar[r] \ar[d] &0\\
0 \ar[r] &\left(\bZ^{s+1}\right)^\vee 
\ar[r]_-{{\tilde{\iota}}_{\tilde{\Sigma}}} \ar[d]_-{(a_0,\dots,a_s)}
& \bZ^{\tilde{\Sigma}(1)} \ar[r]_-{(-c_0,\dots,-c_s,1)} \ar[d] & \bZ 
\ar[d] \ar[r] & 0\\
&\bZ \ar[d] & 0 &0&\\
&0.&&&
}
\]
By the snake lemma, we get the following short exact sequence: 
\[
0\to \bZ\to \operatorname{Cl}\left(\tilde{\bP}\right)\to\bZ\to 0. 
\]
By easy diagram chases, there exists an isomorphism 
$\operatorname{Cl}\left(\tilde{\bP}\right)\simeq\bZ^2$ 
such that 
\begin{eqnarray*}
\left[u_i\right] &\mapsto& a''_i(h,k)\quad (0\leq i\leq r), \\
\left[v_j\right] &\mapsto& a''_j(h',k')\quad (r+1\leq j\leq s), \\
\left[v_{s+1}\right] &\mapsto& (0,-1),
\end{eqnarray*}
where $\left[u_i\right], \left[v_j\right], \left[v_{s+1}\right] \in \Cl (\tilde{\bP})$ are induced by the natural surjection $\bZ^{\tilde{\Sigma}(1)}\to\operatorname{Cl}(\tilde{\bP})$ and  $k,k'\in\bZ$ are as in the proof of Lemma \ref{lemma_ray}. 
%\blue{Do we identify $\left[u_i\right], \left[v_j\right], \left[v_{s+1}\right]$ with $\tilde{D_i}, \tilde{D_j}, \bE$?} \red{Yes, I regarded elements in 
%$\operatorname{Cl}(\tilde{\bP})$ with elements in $\bZ^{\tilde{\Sigma}(1)}$ via 
%the natural surjection $\bZ^{\tilde{\Sigma}(1)}\to\operatorname{Cl}(\tilde{\bP})$.}
Together with the isomorphism 
\[
\begin{pmatrix}
-k' & h'\\
k & -h
\end{pmatrix}\colon\bZ^2\to\bZ^2,
\]
we get the assertion. 
\end{proof}

\begin{definition}\label{definition_cl}
From the isomorphism 
$\operatorname{Cl}\left(\tilde{\bP}\right)\simeq\bZ^2$ in 
Proposition \ref{proposition_degree}, we write the element 
$\cO_{\tilde{\bP}}(\alpha,\beta)\in
\operatorname{Cl}\left(\tilde{\bP}\right)$ which 
corresponds to $(\alpha,\beta)\in\bZ^2$. Moreover, the 
$\bZ^2$-grading of the Cox ring 
$\bC[x_0,\dots,x_r,y_{r+1},\dots,y_s,z]$ is, 
\[
\deg x_i=\left(a''_i,0\right), \quad
\deg y_j=\left(0,a''_j\right), \quad
\deg z=\left(-h',h\right).
\]
\end{definition}

\begin{lemma}\label{lemma_cl-cl}
For any $0\leq i\leq r$, we have \[
\psi^*D_i=\tilde{D}_i+\frac{a_i}{hh'}\bE,\quad
\pi^*D'_i=g_i\tilde{D}_i.
\]
For any $r+1\leq j\leq s$, we have $\psi^*D_j=\tilde{D}_j$. 
In particular, we have 
\[
\psi^*\left[\cO_\bP(1)\right]=\left[\cO_{\tilde{\bP}}
\left(0,\frac{1}{h'}\right)\right], \quad
\pi^*\left[\cO_{\bP'}(1)\right]=\left[\cO_{\tilde{\bP}}
\left(g,0\right)\right].
\]
Moreover, the morphisms $\psi$ and $\pi$
are obtained by 
\begin{eqnarray*}
\psi\colon\left[x_0:\cdots:x_r; y_{r+1}:\cdots:y_s:z\right]
&\mapsto& \left[x_0:\cdots:x_r:y_{r+1}:\cdots:y_s\right],\\
\pi\colon\left[x_0:\cdots:x_r; y_{r+1}:\cdots:y_s:z\right]
&\mapsto& \left[x_0^{g_0}:\cdots:x_r^{g_r}\right].
\end{eqnarray*}
\end{lemma}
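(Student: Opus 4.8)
The plan is to treat the five displayed identities in four groups: the two pullbacks under $\psi$, the pullback under $\pi$, the two line-bundle identities (which are formal consequences of the divisor computations together with Proposition \ref{proposition_degree}), and finally the explicit Cox-coordinate formulas. The common engine for all the divisor pullbacks is the support-function description of the pullback of a torus-invariant $\bQ$-Cartier divisor under a toric morphism: if $D=\sum_\rho a_\rho D_\rho$ has support function $\varphi_D$, linear on each cone with $\varphi_D(u_\rho)=-a_\rho$, then its pullback again has support function $\varphi_D$, so the coefficient of $\psi^*D$ along a ray $w$ of $\tilde{\Sigma}$ is $-\varphi_D(w)$, and that of $\pi^*D'$ along $w$ is $-\varphi_{D'}(\pi_{N,N'}(w))$.

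For $\psi$, which is the star subdivision of $\Sigma$ by $v_{s+1}$ and hence the identity on $N$, the coefficients of $\psi^*D_i$ along the old rays $u_0,\dots,u_s$ coincide with those of $D_i$, so only the new ray $v_{s+1}$ can produce a nontrivial $\bE$-coefficient. I would compute it by invoking Lemma \ref{lemma_ray} to write $v_{s+1}=\frac1{h'}\sum_{k=0}^r a''_k u_k$, noting that $v_{s+1}$ lies in the cone $\operatorname{Cone}(u_0,\dots,u_r)\in\Sigma$ on which $\varphi_{D_i}$ is linear; since $\varphi_{D_i}(u_k)=-\delta_{ik}$, this yields $\varphi_{D_i}(v_{s+1})=-a''_i/h'=-a_i/(hh')$ for $0\le i\le r$, and $\varphi_{D_j}(v_{s+1})=0$ for $r+1\le j\le s$. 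Hence $\psi^*D_i=\tilde{D}_i+\frac{a_i}{hh'}\bE$ and $\psi^*D_j=\tilde{D}_j$.

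For $\pi$, induced by $\pi_{N,N'}$ with $u_i\mapsto g_iu'_i$ $(0\le i\le r)$ and $v_j\mapsto0$ $(r+1\le j\le s+1)$, the coefficient of $\pi^*D'_i$ along a ray $w$ is $-\varphi_{D'_i}(\pi_{N,N'}(w))$; this vanishes along every $v_j$, and along $u_k$ it equals $-g_k\varphi_{D'_i}(u'_k)=g_k\delta_{ik}$, giving $\pi^*D'_i=g_i\tilde{D}_i$. The two line-bundle identities then follow formally: writing $[\cO_\bP(1)]=\frac1{a_j}[D_j]$ for some $r+1\le j\le s$ and $[\cO_{\bP'}(1)]=\frac1{a'_i}[D'_i]$ in the respective $\bQ$-divisor class groups, I would pull back and substitute the classes $\tilde{D}_j\mapsto(0,a''_j)$ and $\tilde{D}_i\mapsto(a''_i,0)$ from Proposition \ref{proposition_degree}; the identities $a''_j=a_j/h'$ and $a'_i=a''_ig_i/g$ then collapse the expressions to $\cO_{\tilde{\bP}}(0,1/h')$ and $\cO_{\tilde{\bP}}(g,0)$ respectively.

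Finally, the explicit formulas for $\psi$ and $\pi$ are read off from the same lattice maps via the standard Cox-coordinate description of toric morphisms: for $\pi$, $u_i\mapsto g_iu'_i$ forces $x'_i\mapsto x_i^{g_i}$ (compatible with the $\bZ^2$-grading of Definition \ref{definition_cl}, since $\deg x_i^{g_i}=(g_ia''_i,0)=\pi^*(\deg x'_i)$) while $v_j\mapsto0$ eliminates $y_{r+1},\dots,y_s,z$, yielding $\pi[x_0:\cdots:x_r;y_{r+1}:\cdots:y_s:z]=[x_0^{g_0}:\cdots:x_r^{g_r}]$; for $\psi$, being the identity on the open torus, the map forgets the exceptional coordinate $z$ (after normalizing it). The main obstacle is the single genuinely arithmetic step, namely pinning down the coefficient $a_i/(hh')$ of $\bE$: everything hinges on evaluating $\varphi_{D_i}$ at $v_{s+1}$ correctly, which is exactly where the divisibilities $h=\gcd(a_0,\dots,a_r)$, $h'=\gcd(a_{r+1},\dots,a_s)$ and the normalizations $a''_i$ enter, and on verifying that $v_{s+1}$ lies in a single cone of $\Sigma$ so that $\varphi_{D_i}$ is genuinely linear there.
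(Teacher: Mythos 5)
Your proposal is correct and follows essentially the same route as the paper: the paper's proof simply notes that $v_{s+1}=\frac{1}{h'}\sum_{i=0}^r a''_iu_i$ and that $\pi_{N,N'}$ sends $u_i\mapsto g_iu'_i$, and declares the rest trivial, which is precisely the support-function computation you carry out in detail (including the key evaluation $-\varphi_{D_i}(v_{s+1})=a''_i/h'=a_i/(hh')$ and the substitution of the classes from Proposition \ref{proposition_degree}). Your write-up is a faithful, fully spelled-out version of the intended argument.
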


\begin{proof}
Note that $v_{s+1}=\frac{1}{h'}\sum_{i=0}^ra''_i u_i$ 
and the surjection $\pi_{N,N'}\colon N\twoheadrightarrow N'$
maps $u_i$ to $g_i u'_i$. Thus the assertion is trivial. 
\end{proof}

\begin{proposition}\label{proposition_pi-star}
For any $d\in\bZ_{\geq 0}$, there is a natural isomorphism 
\begin{eqnarray*}
\pi^*\colon H^0\left(\bP',\cO_{\bP'}(d)\right)
&\to& H^0\left(\tilde{\bP}, \cO_{\tilde{\bP}}(gd,0)\right), \\
f\left(x'_0,\dots,x'_r\right)&\mapsto&
f\left(x_0^{g_0},\dots,x_r^{g_r}\right).
\end{eqnarray*}
(Thus, we often write $\pi^*\left|\cO_{\bP}(d)\right|$ 
in place of $\left|\cO_{\tilde{\bP}}(gd,0)\right|$.)
\end{proposition}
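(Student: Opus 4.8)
The plan is to make the map $\pi^*$ --- pullback of global sections along $\pi$ --- completely explicit on monomial bases and to check that it is a bijection. By the Cox-ring description recorded in Definition~\ref{definition_cox-wps}, $H^0(\bP',\cO_{\bP'}(d))$ is the degree-$d$ piece of $\bC[x'_0,\dots,x'_r]$, with monomial basis $\{\prod_{i=0}^r(x'_i)^{c_i}\mid c_i\in\bZ_{\ge0},\ \sum_i c_i a'_i=d\}$, while $H^0(\tilde\bP,\cO_{\tilde\bP}(gd,0))$ is the degree-$(gd,0)$ piece of the Cox ring $\bC[x_0,\dots,x_r,y_{r+1},\dots,y_s,z]$ for the $\bZ^2$-grading of Definition~\ref{definition_cl}. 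First I would observe that no $y_j$ or $z$ can occur: for a monomial $\prod_i x_i^{e_i}\prod_j y_j^{f_j}z^m$ the second coordinate of its degree is $\sum_{j=r+1}^s f_j a''_j+mh$, which, all terms being nonnegative with $a''_j,h>0$, vanishes only if every $f_j=0$ and $m=0$. Hence the target has monomial basis $\{\prod_{i=0}^r x_i^{e_i}\mid e_i\in\bZ_{\ge0},\ \sum_i e_i a''_i=gd\}$.

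Next, using the explicit form of $\pi$ in Lemma~\ref{lemma_cl-cl}, namely $\pi^*x'_i=x_i^{g_i}$, the map $\pi^*$ sends $\prod_i(x'_i)^{c_i}\mapsto\prod_i x_i^{g_i c_i}$; since $\deg(x_i^{g_i})=(g_i a''_i,0)=(g a'_i,0)$ by $a'_i=a''_i g_i/g$, this lands in the correct graded piece, consistent with $\pi^*\cO_{\bP'}(1)=\cO_{\tilde\bP}(g,0)$. Distinct exponent vectors $(c_i)$ produce distinct vectors $(g_i c_i)$, so $\pi^*$ is injective, and its image is spanned precisely by those target monomials $\prod_i x_i^{e_i}$ for which $g_i\mid e_i$ for all $i$.

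The hard part will be surjectivity, that is, showing that the divisibility $g_i\mid e_i$ holds automatically for every basis monomial of the target. Given $e_0,\dots,e_r\ge0$ with $\sum_i e_i a''_i=gd$, I would read this relation modulo $g_i$. Since $g_i=\gcd(a''_0,\dots,\hat{a}''_i,\dots,a''_r)$ divides $a''_k$ for every $k\ne i$, and since $g_i\mid g\mid gd$, the relation collapses to $e_i a''_i\equiv0\pmod{g_i}$. As $(a''_0,\dots,a''_r)$ has gcd $1$ (recall $a''_i=a_i/h$ with $h=\gcd(a_0,\dots,a_r)$), no prime can divide both $a''_i$ and $g_i$, so $\gcd(a''_i,g_i)=1$ and therefore $g_i\mid e_i$. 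Setting $c_i:=e_i/g_i\in\bZ_{\ge0}$ gives $\sum_i c_i a'_i=\frac1g\sum_i e_i a''_i=d$, so $\prod_i x_i^{e_i}=\pi^*\bigl(\prod_i(x'_i)^{c_i}\bigr)$ lies in the image. Thus $\pi^*$ is a bijection between the two monomial bases, hence the asserted linear isomorphism, with the stated formula.

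Conceptually, this isomorphism is forced by $\pi_*\cO_{\tilde\bP}=\cO_{\bP'}$ from Definition~\ref{definition_wbs}, together with $\pi^*\cO_{\bP'}(1)=\cO_{\tilde\bP}(g,0)$ from Lemma~\ref{lemma_cl-cl}, fed through the projection formula; the monomial computation above is the concrete incarnation of this, and has the advantage of sidestepping reflexivity subtleties on the possibly singular base $\bP'$.
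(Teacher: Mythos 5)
Your proof is correct and follows essentially the same route as the paper: both first identify the degree-$(gd,0)$ piece of the Cox ring of $\tilde{\bP}$ with the degree-$gd$ piece of $\bC[x''_0,\dots,x''_r]$ (no $y_j$ or $z$ can occur), and then match this with $H^0(\bP',\cO_{\bP'}(d))$ via the substitution $x'_i\mapsto x_i^{g_i}$. The only difference is that the paper delegates the second step to \cite[Lemma 5.7]{Fletcher00}, whereas you prove it directly with the divisibility argument $\gcd(a''_i,g_i)=1\Rightarrow g_i\mid e_i$, which is a correct and self-contained substitute for that citation.
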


\begin{proof}
Since 
\begin{eqnarray*}
&&\left\{f\in\bC\left[x_0,\dots,x_r,y_{r+1},\dots,y_s,z\right]
\,\,|\,\,\deg f=(gd,0)\right\}\\
&\simeq&
\left\{f\in\bC\left[x''_0,\dots,x''_r\right]\,\,|\,\,
\deg f= gd\right\}
\end{eqnarray*}
with $\deg x''_i=a''_i$, we get the assertion by 
\cite[Lemma 5.7]{Fletcher00}. 
\end{proof}

In the following, we shall see that the standard weighted blow-up and the induced weighted bundle structure can be pulled back to the toric finite cover $\tau\colon\bar{\bP}\to\bP$ as in Proposition \ref{proposition_finite-wps}. 
 
\begin{lemma}\label{lemma_finite-swb}
For any $0\leq i\leq s$, take $e_i, \bar{a}_i\in\bZ_{>0}$ 
such that $a_i=e_i\bar{a}_i$. Note that 
$\left(\bar{a}_0,\dots,\bar{a}_s\right)$ is well-formed. 
We set $\bar{\bP}:=\bP\left(\bar{a}_0,\dots,\bar{a}_s\right)$
and let $\tau\colon\bar{\bP}\to\bP$ be the toric finite morphism as in Proposition 
\ref{proposition_finite-wps}. 
Let $\bar{\psi}\colon\tilde{\bar{\bP}}\to\bar{\bP}$ be the 
standard weighted blowup of $\bar{\bP}$ along 
$\bP\left(\bar{a}_{r+1},\dots,\bar{a}_s\right)$ and 
let $\bar{\bE}\subset\tilde{\bar{\bP}}$ be the exceptional 
divisor of $\bar{\psi}$. 
Moreover, let $\bar{\pi}\colon\tilde{\bar{\bP}}\to\bar{\bP}'$ be the 
induced weighted bundle structure of $\tilde{\bar{\bP}}$. 
For any $0\leq i\leq r$, set 
\begin{eqnarray*}
&&\bar{h}:=\gcd\left(\bar{a}_0,\dots,\bar{a}_r\right), \quad
\bar{a}_i'':=\frac{\bar{a}_i}{\bar{h}}, \\
&&\bar{g}_i:=\gcd\left(\bar{a}_0'',\dots,\hat{\bar{a}}''_i,\dots,
\bar{a}''_r\right), \quad
\bar{g}:=\prod_{i=0}^r\bar{g}_i, \quad
\bar{a}'_i:=\frac{\bar{a}''_i\bar{g}_i}{\bar{g}}.
\end{eqnarray*}
For any $r+1\leq j\leq s$, set 
\begin{eqnarray*}
&&\bar{h}':=\gcd\left(\bar{a}_{r+1},\dots,\bar{a}_s\right), \quad
\bar{a}_j'':=\frac{\bar{a}_j}{\bar{h}'}, \\
&&\bar{g}_j:=\gcd\left(\bar{a}_{r+1}'',\dots,\hat{\bar{a}}''_j,\dots,
\bar{a}''_s\right), \quad
\bar{g}':=\prod_{j=r+1}^s\bar{g}_j, \quad
\bar{a}'_j:=\frac{\bar{a}''_j\bar{g}_j}{\bar{g}'}.
\end{eqnarray*}
\begin{enumerate}
\item 
There are finite toric morphisms $\tilde{\tau}\colon\tilde{\bar{\bP}}\to
\tilde{\bP}$ and $\tau'\colon\bar{\bP}'\to \bP'$ 
such that the diagram commutes: 
\[
\xymatrix{
\tilde{\bar{\bP}} \ar[r]^-{\bar{\pi}} \ar[d]_-{\bar{\psi}} 
\ar[rd]^-{\tilde{\tau}} & \bar{\bP}' \ar[rd]^-{\tau'} & \\
\bar{\bP} \ar[rd]_-{\tau} & \tilde{\bP} \ar[d]^-\psi \ar[r]_-{\pi} & \bP' \\
& \bP.&
}\]
\item 
We have 
\begin{eqnarray*}
\tilde{\tau}^*\tilde{D}_i&=&e_i\tilde{\bar{D}}_i \quad (0\leq i\leq r), \\
\tilde{\tau}^*\tilde{D}_j&=&e_j\tilde{\bar{D}}_j \quad (r+1\leq j\leq s),\\
\tilde{\tau}^*\bE&=&\frac{hh'}{\bar{h}\bar{h}'}\bar{\bE}.
\end{eqnarray*}
In particular, we have 
\[
\tilde{\tau}^*\left[\cO_{\tilde{\bP}}(\alpha,\beta)\right]
\sim_\bQ\left[\cO_{\tilde{\bar{\bP}}}\left(
\frac{h}{\bar{h}}\alpha, \frac{h'}{\bar{h}'}\beta\right)\right].
\]
\item
For any $0\leq i\leq r$, we have 
\[
(\tau')^*D'_i=\frac{e_ig_i}{\bar{g}_i}\bar{D}'_i. 
\]
In particular, we have 
\[
(\tau')^*\left[\cO_{\bP'}(1)\right]\sim_\bQ
\left[\cO_{\bar{\bP}'}\left(\frac{gh}{\bar{g}\bar{h}}\right)\right].
\]
\item 
The morphisms $\tilde{\tau}$ and $\tau'$ are given by the 
following: 
\begin{eqnarray*}
\tilde{\tau}\colon
\left[\bar{x}_0:\cdots:\bar{x}_r;\bar{y}_{r+1}
:\cdots:\bar{y}_s:\bar{z}\right]
&\mapsto&
\left[\bar{x}_0^{e_0}:\cdots:\bar{x}_r^{e_r};
\bar{y}_{r+1}^{e_{r+1}}
:\cdots:\bar{y}_s^{e_s}:\bar{z}^{\frac{hh'}{\bar{h}\bar{h}'}}\right], \\
\tau'\colon
\left[\bar{x'_0}:\cdots:\bar{x'_r}\right]
&\mapsto&
\left[{\bar{x'_0}}^{\frac{e_0g_0}{\bar{g}_0}}:\cdots:{\bar{x'_r}}^{\frac{e_rg_r}{\bar{g}_r}}\right].
\end{eqnarray*}
\end{enumerate}
\end{lemma}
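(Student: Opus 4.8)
The plan is to carry out everything at the level of fans and lattice maps, since all four varieties in the diagram are toric and the two blowups do not change the underlying lattices. Recall that $\tilde{\bP}$ and $\tilde{\bar{\bP}}$ are the star subdivisions of $\Sigma$ (on $N$) and $\bar{\Sigma}$ (on $\bar{N}$) at the rays $\bR_{\ge 0}v_{s+1}$ and $\bR_{\ge 0}\bar{v}_{s+1}$, so their fans have identical combinatorics. First I would produce $\tilde{\tau}$. The lattice map $\pi_{\bar{N},N}$ inducing $\tau$ sends $\bar{u}_i\mapsto e_iu_i$; using the description $\bar{v}_{s+1}=\frac{1}{\bar{h}'}\sum_{i=0}^r\bar{a}''_i\bar{u}_i$ from the barred form of Lemma \ref{lemma_ray} together with $e_i\bar{a}''_i=a_i/\bar{h}=(h/\bar{h})a''_i$, a one-line computation gives
\[
\pi_{\bar{N},N}(\bar{v}_{s+1})=\frac{hh'}{\bar{h}\bar{h}'}\,v_{s+1}.
\]
Here $\bar{h}\mid h$ and $\bar{h}'\mid h'$ (any prime power dividing every $\bar{a}_i$ divides every $a_i=e_i\bar{a}_i$), so the coefficient is a positive integer. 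Since $\pi_{\bar{N},N}$ carries each primitive ray generator of $\tilde{\bar{\Sigma}}$ to a positive multiple of the corresponding generator of $\tilde{\Sigma}$, it maps cones into cones and is compatible with the two fans; the induced toric morphism $\tilde{\tau}$ is finite because it is induced by the injective finite-index lattice map $\pi_{\bar{N},N}$ (the same one inducing $\tau$), now compatible with the matching subdivided fans. Commutativity of the bottom square $\psi\circ\tilde{\tau}=\tau\circ\bar{\psi}$ is then immediate, as all four maps are induced by $\pi_{\bar{N},N}$ or by the relevant identity.

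Next I would construct $\tau'$ and prove the right-hand triangle commutes. The idea is to show that the composite $\pi_{N,N'}\circ\pi_{\bar{N},N}\colon\bar{N}\to N'$ factors through the surjection $\pi_{\bar{N},\bar{N}'}\colon\bar{N}\twoheadrightarrow\bar{N}'$ of Definition \ref{definition_wbs}. By construction $\pi_{\bar{N},\bar{N}'}$ kills $\bar{v}_{r+1},\dots,\bar{v}_{s+1}$, which rationally span its kernel, and the composite kills them too: $\pi_{N,N'}$ kills $v_{r+1},\dots,v_{s+1}$, and we have just seen $\bar{v}_{s+1}\mapsto\frac{hh'}{\bar{h}\bar{h}'}v_{s+1}\mapsto 0$. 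As $N'$ is torsion-free, the composite kills the whole saturated kernel, so it descends to a unique lattice map $\pi_{\bar{N}',N'}\colon\bar{N}'\to N'$; this descent is exactly the commutativity $\pi\circ\tilde{\tau}=\tau'\circ\bar{\pi}$. Evaluating on generators, $\pi_{\bar{N},\bar{N}'}(\bar{u}_i)=\bar{g}_i\bar{u}'_i$ and $\pi_{N,N'}\circ\pi_{\bar{N},N}(\bar{u}_i)=e_ig_iu'_i$ force
\[
\pi_{\bar{N}',N'}(\bar{u}'_i)=\frac{e_ig_i}{\bar{g}_i}\,u'_i\qquad(0\le i\le r),
\]
and since $\pi_{\bar{N}',N'}$ is a lattice map and $u'_i$ is primitive, $e_ig_i/\bar{g}_i$ is automatically a positive integer and $\tau'$ is finite.

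With the lattice maps in hand, the divisor pullbacks in (2) and (3) follow from the standard toric ramification rule already used in Proposition \ref{proposition_finite-wps}: if a primitive ray generator maps to $c$ times the primitive generator of its image ray, the corresponding invariant divisor pulls back with multiplicity $c$. This yields $\tilde{\tau}^*\tilde{D}_i=e_i\tilde{\bar{D}}_i$, $\tilde{\tau}^*\tilde{D}_j=e_j\tilde{\bar{D}}_j$, $\tilde{\tau}^*\bE=\frac{hh'}{\bar{h}\bar{h}'}\bar{\bE}$, and $(\tau')^*D'_i=\frac{e_ig_i}{\bar{g}_i}\bar{D}'_i$. Translating into the bases of $\Cl(\tilde{\bP})\simeq\bZ^2$ and $\Cl(\tilde{\bar{\bP}})\simeq\bZ^2$ from Proposition \ref{proposition_degree} (using $e_i\bar{a}''_i=(h/\bar{h})a''_i$ and $e_j\bar{a}''_j=(h'/\bar{h}')a''_j$) gives $\tilde{\tau}^*(1,0)\sim_\bQ(h/\bar{h},0)$ and $\tilde{\tau}^*(0,1)\sim_\bQ(0,h'/\bar{h}')$, hence the stated formula for $\tilde{\tau}^*[\cO_{\tilde{\bP}}(\alpha,\beta)]$; the computation of $(\tau')^*[\cO_{\bP'}(1)]$ is the analogous one on $\bP'$, where the identity $a'_i/\bar{a}'_i=(e_ig_i/\bar{g}_i)\cdot(\bar{g}\bar{h}/gh)$ produces the factor $gh/(\bar{g}\bar{h})$. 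Finally, the explicit coordinate formulas for $\tilde{\tau}$ and $\tau'$ are read off directly from the monomial description of these lattice maps, exactly as in Lemma \ref{lemma_cl-cl} and Proposition \ref{proposition_finite-wps}.

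The hard part will be the middle step: setting up $\tau'$ correctly and checking that the four gcd-normalizations $h,h',g,g'$ and their barred analogues fit together so that $e_ig_i/\bar{g}_i\in\bZ_{>0}$ and the line-bundle pullback picks up precisely the factor $gh/(\bar{g}\bar{h})$. The factoring argument makes the integrality automatic, so the genuine labor is the bookkeeping relating the normalized weights $a'_i,\bar{a}'_i$ through the chain $a_i=e_i\bar{a}_i$; once the identity $a'_i/\bar{a}'_i=(e_ig_i/\bar{g}_i)(\bar{g}\bar{h}/gh)$ is verified, all the numerical assertions fall out uniformly.
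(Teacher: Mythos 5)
Your proof is correct, and its backbone coincides with the paper's: the key step in both is the computation $\pi_{\bar{N},N}(\bar{v}_{s+1})=\frac{hh'}{\bar{h}\bar{h}'}v_{s+1}$, which produces $\tilde{\tau}$ as a finite toric morphism compatible with the two star subdivisions and, via the toric ramification rule of Proposition \ref{proposition_finite-wps}, yields assertion (2). Where you diverge is in the construction of $\tau'$. The paper observes that $\bar{\pi}$ is the unique nontrivial fibration structure on $\tilde{\bar{\bP}}$ and obtains $\tau'$ as the Stein factorization of $\pi\circ\tilde{\tau}$, then deduces (3) from (2) together with Lemma \ref{lemma_cl-cl}. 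You instead descend the composite lattice map $\pi_{N,N'}\circ\pi_{\bar{N},N}$ through the saturated kernel of $\pi_{\bar{N},\bar{N}'}$ (using torsion-freeness of $N'$) and read off $\pi_{\bar{N}',N'}(\bar{u}'_i)=\frac{e_ig_i}{\bar{g}_i}u'_i$ directly on generators. Your route is more explicit and self-contained: it gives the integrality of $e_ig_i/\bar{g}_i$, assertion (3), and the coordinate description in (4) for free, and it avoids appealing to uniqueness of the fibration structure; the paper's route is shorter but leaves the reader to unwind Lemma \ref{lemma_cl-cl} to extract the numerical factor $gh/(\bar{g}\bar{h})$, which you verify by the identity $e_i\bar{a}''_i=(h/\bar{h})a''_i$ and the definitions of $a'_i$, $\bar{a}'_i$. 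Both arguments are complete.
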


\begin{proof}
Let $\pi_{\bar{N},N}\colon \bar{N}\twoheadrightarrow N$ be as in 
Proposition \ref{proposition_finite-wps}. Then we can directly 
check that 
\[
\pi_{\bar{N},N}\left(\bar{v}_{s+1}\right)
=\frac{hh'}{\bar{h}\bar{h}'}v_{s+1}. 
\]
Thus we get the morphism $\tilde{\tau}$. Together with 
Proposition \ref{proposition_finite-wps}, we get the assertion (2). 
We remark that the induced weighted bundle structure $\bar{\pi}$ is 
the unique nontrivial fibration structure of $\tilde{\bar{\bP}}$. 
Thus $\bar{\pi}$ can be obtained by the Stein factorization of 
the morphism $\pi\circ\tilde{\tau}$. Thus we get the desired 
toric finite morphism $\tau'$ satisfying the assertion (1). 
The assertion (3) follows immediately from (2) and 
Lemma \ref{lemma_cl-cl}. 
The remaining assertion (4) is trivial from (2) and (3). 
\end{proof}

\begin{corollary}\label{corollary_intersection} 
Let $\psi \colon \tilde{\bP} \to \bP$ be the standard weighted blowup as in Definition \ref{definition_swb} and 
$\pi \colon \tilde{\bP} \to \bP'$ be the induced weighted bundle structure as in Definition \ref{definition_wbs}. 
\begin{enumerate}
\item 
For any $0\leq k\leq s$, we have
\[
\left(\cO_{\tilde{\bP}}(1,0)^{\cdot k}\cdot
\cO_{\tilde{\bP}}(0,1)^{\cdot s-k}\right)=\begin{cases}
\frac{h^k(h')^{s-k}}{a_0\cdots a_s} & \text{if }0\leq k\leq r, \\
0 & \text{if }r+1\leq k\leq s.
\end{cases}
\]
\item 
The two restriction morphisms 
\begin{eqnarray*}
\pi|_{\bE}\colon\bE&\to&\bP\left(a'_0,\dots,a'_r\right), \\
\psi|_{\bE}\colon\bE&\to&\bP\left(a'_{r+1},\dots,a'_s\right)
\end{eqnarray*}
give the isomorphism 
\[
\pi|_{\bE}\times\psi|_{\bE}\colon \bE\simeq 
\bP\left(a'_0,\dots,a'_r\right)\times
\bP\left(a'_{r+1},\dots,a'_s\right).
\]
Moreover, under the isomorphism, we have 
\begin{eqnarray*}
\left[\cO_{\tilde{\bP}}(\alpha,\beta)\right]|_{\bE}&\sim_\bQ&
\left[\cO_{\bP\left(a'_0,\dots,a'_r\right)\times
\bP\left(a'_{r+1},\dots,a'_s\right)}
\left(\frac{\alpha}{g},\frac{\beta}{g'}\right)\right], \\
\bE|_{\bE}&\sim_\bQ&
\left[\cO_{\bP\left(a'_0,\dots,a'_r\right)\times
\bP\left(a'_{r+1},\dots,a'_s\right)}
\left(-\frac{h'}{g},\frac{h}{g'}\right)\right].
\end{eqnarray*}
\end{enumerate}
\end{corollary}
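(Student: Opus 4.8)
The plan is to exploit the toric machinery already assembled. For part (1) I would reduce to a \emph{smooth} model via the finite cover of Lemma \ref{lemma_finite-swb}, where the intersection numbers are read off a two-variable Chow ring; for part (2) I would identify $\bE$ with a product by analysing the star of the ray $v_{s+1}$, and then obtain the line-bundle restrictions by restricting the identities of Lemma \ref{lemma_cl-cl}.

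For (1), apply Lemma \ref{lemma_finite-swb} with $e_i = a_i$ and $\bar{a}_i = 1$, so that $\bar{\bP} = \bP^s$, the model $\tilde{\bar{\bP}}$ is the (smooth) blowup of $\bP^s$ along the linear subspace $(x_0 = \cdots = x_r = 0)$, and $\bar{h} = \bar{h}' = 1$. Then $\tilde{\tau}^*\cO_{\tilde{\bP}}(\alpha,\beta) \sim_\bQ \cO_{\tilde{\bar{\bP}}}(h\alpha, h'\beta)$, and $\deg\tilde{\tau} = \prod_{i=0}^s a_i$ because $\psi\circ\tilde{\tau} = \tau\circ\bar{\psi}$ with $\psi,\bar{\psi}$ birational and $\deg\tau = \prod_{i=0}^s a_i$ by Proposition \ref{proposition_finite-wps}. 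Writing $A := \cO_{\tilde{\bar{\bP}}}(1,0)$ and $B := \cO_{\tilde{\bar{\bP}}}(0,1)$, the two primitive collections $\{u_0,\dots,u_r\}$ and $\{v_{r+1},\dots,v_{s+1}\}$ of the fan $\tilde{\Sigma}$ (Definition \ref{definition_swb}), together with $[\bar{\bE}] = B - A$ from Proposition \ref{proposition_degree}, present the Chow ring of $\tilde{\bar{\bP}}$ as $\bZ[A,B]/(A^{r+1}, B^{s-r}(B-A))$. From the resulting identity $B^{s-r+m} = A^m B^{s-r}$ and from $B^s = (\bar{\psi}^*\cO_{\bP^s}(1))^s = 1$ one obtains $A^k B^{s-k} = A^r B^{s-r} = B^s = 1$ for $0 \le k \le r$, while $A^k B^{s-k} = 0$ for $k > r$ since $A^{r+1} = 0$. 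Pulling these back along $\tilde{\tau}$ and dividing by $\deg\tilde{\tau}$ gives exactly $h^k(h')^{s-k}/(a_0\cdots a_s)$ in the first range and $0$ in the second; the vanishing for $k > r$ is also immediate from $\cO_{\tilde{\bP}}(1,0) = \tfrac{1}{g}\pi^*\cO_{\bP'}(1)$ being pulled back from the $r$-dimensional $\bP'$.

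For (2), the divisor $\bE = V(v_{s+1})$ is the toric variety of the star of the primitive ray $v_{s+1}$ (Lemma \ref{lemma_ray}), whose lattice is $N/\bZ v_{s+1}$. The relations $\sum_{i=0}^r a''_i u_i = h'v_{s+1}$ and $\sum_{j=r+1}^s a''_j u_j = -h v_{s+1}$ of Lemma \ref{lemma_ray} descend to $\sum_{i=0}^r a''_i \bar{u}_i = 0$ and $\sum_{j=r+1}^s a''_j \bar{u}_j = 0$ in $N/\bZ v_{s+1}$, and the maximal cones of the star---obtained by deleting one $u_i$ ($0 \le i \le r$) and one $v_j$ ($r+1 \le j \le s$) from the generators in Definition \ref{definition_swb}---are precisely the products of a cone on $\{\bar{u}_0,\dots,\bar{u}_r\}$ with a cone on $\{\bar{v}_{r+1},\dots,\bar{v}_s\}$. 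Thus the star is the product fan, and after the well-forming isomorphisms of Proposition \ref{proposition_well-formed} one identifies $\bE \simeq \bP(a'_0,\dots,a'_r)\times\bP(a'_{r+1},\dots,a'_s)$, with $\pi|_{\bE}$ and $\psi|_{\bE}$ the two projections; here $\psi|_{\bE}$ lands on $\psi(\bE) = (x_0 = \cdots = x_r = 0) \simeq \bP(a'_{r+1},\dots,a'_s)$ by Proposition \ref{proposition_sublinear}.

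The line-bundle formulas are then restrictions of the identities of Lemma \ref{lemma_cl-cl}. Since $\cO_{\tilde{\bP}}(1,0) = \tfrac{1}{g}\pi^*\cO_{\bP'}(1)$, restricting to $\bE$ and using that $\pi|_{\bE}$ is the projection to the first factor gives $\cO_{\tilde{\bP}}(1,0)|_{\bE} \sim_\bQ \cO(\tfrac{1}{g},0)$; since $\cO_{\tilde{\bP}}(0,1) = h'\psi^*\cO_{\bP}(1)$ and $\cO_{\bP}(1)|_{\psi(\bE)} \sim_\bQ \cO_{\bP(a'_{r+1},\dots,a'_s)}(\tfrac{1}{g'h'})$ by Proposition \ref{proposition_sublinear}, restricting gives $\cO_{\tilde{\bP}}(0,1)|_{\bE} \sim_\bQ \cO(0,\tfrac{1}{g'})$. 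Together these yield $\cO_{\tilde{\bP}}(\alpha,\beta)|_{\bE} \sim_\bQ \cO(\tfrac{\alpha}{g},\tfrac{\beta}{g'})$, and applying this to $[\bE] = \cO_{\tilde{\bP}}(-h',h)$ (Proposition \ref{proposition_degree}) gives $\bE|_{\bE} \sim_\bQ \cO(-\tfrac{h'}{g},\tfrac{h}{g'})$. The main obstacle is the product identification in (2): one must verify both that $N/\bZ v_{s+1}$ splits as the direct sum of the sublattices spanned by the $\bar{u}_i$ and by the $\bar{v}_j$ (a rank count together with the two descended relations) and that the cones of the star factor accordingly; once this is in place, the remaining computations are bookkeeping with Propositions \ref{proposition_well-formed} and \ref{proposition_sublinear}.
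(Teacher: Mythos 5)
Part (1) of your proposal is correct and follows essentially the same route as the paper: both reduce to the smooth model $\tilde{\bar{\bP}}$ via Lemma \ref{lemma_finite-swb} with $e_i=a_i$, $\bar{a}_i=1$, and divide by $\deg\tilde{\tau}=\prod a_i$. The only difference is that you compute $A^kB^{s-k}$ from the Stanley--Reisner presentation of the Chow ring of the blowup, whereas the paper reads it off the projective-bundle structure $\bP_{\bP^r}(\cO^{\oplus s-r}\oplus\cO(1))$; both computations are valid.

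For part (2), however, your argument has a genuine gap, and it sits exactly where you flag ``the main obstacle.'' Knowing that the images of $\sum_{i\le r}\bQ u_i$ and $\sum_{j>r}\bQ u_j$ in $(N/\bZ v_{s+1})_\bQ$ have complementary ranks and intersect trivially does \emph{not} show that $N/\bZ v_{s+1}$ splits as a direct sum of the corresponding sublattices: the sum of two saturated sublattices with complementary ranks and trivial intersection can still have finite index $>1$ (e.g.\ $\bZ(1,1)+\bZ(1,-1)\subset\bZ^2$). That index is precisely the degree of the finite map $\gamma=\pi|_{\bE}\times\psi|_{\bE}$, so ``a rank count together with the two descended relations'' proves only that $\gamma$ is finite and surjective --- which is where the paper \emph{starts}; the entire content of the paper's proof of (2) is the computation showing $\deg\gamma=1$, done by evaluating $\left(\pi^*\cO_{\bP'}(1)^{\cdot r}\cdot\psi^*\cO_{\bP}(g'h')^{\cdot s-r-1}\cdot\bE\right)$ in two ways using part (1), Lemma \ref{lemma_cl-cl} and Proposition \ref{proposition_sublinear}. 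Your toric approach can be completed, but it needs an actual lattice argument: with $w_1:=\sum_{i=0}^r a''_i\mathbf{e}_i$ and $w_2:=\sum_{j=r+1}^s a''_j\mathbf{e}_j$, the kernel of $\bZ^{s+1}\twoheadrightarrow N/\bZ v_{s+1}$ is $\bZ(hw_1+h'w_2)+\bZ(kw_1+k'w_2)$, which equals $\bZ w_1\oplus\bZ w_2$ because $hk'-h'k=-1$; hence $N/\bZ v_{s+1}\simeq\left(\bZ^{r+1}/\bZ w_1\right)\oplus\left(\bZ^{s-r}/\bZ w_2\right)$, and Proposition \ref{proposition_well-formed} then identifies the two factors with the lattices of $\bP(a'_0,\dots,a'_r)$ and $\bP(a'_{r+1},\dots,a'_s)$ so that the star of $v_{s+1}$ is the product fan. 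With that supplied (or with the paper's degree computation substituted), your derivation of the restriction formulas from Lemma \ref{lemma_cl-cl} and Proposition \ref{proposition_sublinear} is correct and matches the paper's.
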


\begin{proof}
(1) Let us apply Lemma \ref{lemma_finite-swb} for $\bar{a}_i=1$, 
$e_i=a_i$ for all $0\leq i\leq s$. Then $\bar{h}=\bar{h}'=1$. Moreover, 
we have $\bar{\bP}=\bP^s$ and $\tilde{\bar{\bP}}$ is just the 
ordinary blowup of $\bP^s$ along an $(s-r-1)$-dimensional 
linear subspace (see also Example \ref{example_typical}). In particular, we have 
\[
\tilde{\bar{\bP}}\simeq\bP_{\bP^r}
\left(\cO_{\bP^r}^{\oplus s-r}\oplus\cO_{\bP^r}(1)\right)
\]
and $\bar{\pi}$ is the $\bP^{s-r}$-bundle. Moreover, 
$\cO_{\tilde{\bar{\bP}}}(1,0)\simeq\pi^*\cO_{\bP^r}(1)$ and 
$\cO_{\tilde{\bar{\bP}}}(1,0)$ is isomorphic to the tautological 
line bundle with respect to the above bundle sturucture. 
This immediately implies that 
\[
\left(\cO_{\tilde{\bar{\bP}}}(1,0)^{\cdot k}\cdot
\cO_{\tilde{\bar{\bP}}}(0,1)^{\cdot s-k}\right)=\begin{cases}
1 & \text{if }0\leq k\leq r, \\
0 & \text{if }r+1\leq k\leq s.
\end{cases}
\]
Note that $\deg\tilde{\tau}=\deg\tau=\prod_{i=0}^s e_i=\prod_{i=0}^s a_i$. 
Thus the assertion (1) follows from Lemma~\ref{lemma_finite-swb}. 

(2) Set $\gamma:=\pi|_{\bE}\times\psi|_{\bE}$. Then $\gamma$ is a finite 
morphism. By Proposition \ref{proposition_sublinear}, we have 
\[
\left[\cO_\bP(1)\right]|_{\bP\left(a'_{r+1},\dots,a'_s\right)}
\sim_\bQ\left[\cO_{\bP\left(a'_{r+1},\dots,a'_s\right)}
\left(\frac{1}{g'h'}\right)\right].
\]
Thus we get 
\begin{eqnarray*}
&&\left(\pi^*\cO_{\bP'}(1)^{\cdot r}
\cdot\psi^*\cO_{\bP}\left(g'h'\right)^{\cdot s-r-1}\cdot\bE\right)\\
&=&\deg\gamma\cdot
\left(\cO_{\bP'}(1)^{\cdot r}\right)\cdot
\left(\cO_{\bP}\left(g'h'\right)^{\cdot s-r-1}\right)
=\frac{\deg\gamma}{a'_0\cdots a'_s}. 
\end{eqnarray*}
On the other hand, by (1) and Lemma \ref{lemma_cl-cl}, we have 
\begin{eqnarray*}
&&\left(\pi^*\cO_{\bP'}(1)^{\cdot r}
\cdot\psi^*\cO_{\bP}\left(g'h'\right)^{\cdot s-r-1}\cdot\bE\right)\\
&=&
\left(\cO_{\tilde{\bP}}(g,0)^{\cdot r}\cdot
\cO_{\tilde{\bP}}\left(0,g'\right)^{\cdot s-r-1}
\cdot\cO_{\tilde{\bP}}\left(-h',h\right)\right)
=\frac{g^rh^{r+1}(g')^{s-r-1}(h')^{s-r}}{a_0\cdots a_s}. 
\end{eqnarray*}
By combining the above two equalities, we get $\deg\gamma=1$. 
The remaining assertions are trivial.
\end{proof}

\begin{remark}\label{remark_section}
Assume that $r=s-1$. Then $\bE\simeq\bP'$ is a section of $\pi$. 
For any nonzero $f\in H^0\left(\bP',\cO_{\bP'}(d)\right)$, we can 
consider $(\pi^*f=0)\in\left|\cO_{\tilde{\bP}}(gd,0)\right|$ as in 
Proposition \ref{proposition_pi-star}, i.e., $\pi^*f
=f\left(x_0^{g_0},\dots,x_r^{g_r}\right)$. We can directly check that 
the restriction $(\pi^*f=0)|_{\bE}$, which is a $\bQ$-Cartier 
$\bQ$-divisor on $\bE$, is a Weil divisor on $\bE$ defined by 
$(f=0)\subset\bP'$ under the isomorphism $\pi|_{\bE}$. 
Moreover, if $(a_0,\dots,a_r)$ is well-formed (i.e., if 
$g_0=\cdots=g_r=1$), then, for any $\tilde{f}\in H^0\left(\tilde{\bP}, 
\cO_{\tilde{\bP}}(\alpha,\beta)\right)$, the restriction 
$\left(\tilde{f}=0\right)|_\bE$ is also a Weil divisor on $\bE$ 
defined by $\left(\tilde{f}\left(x_0,\dots,x_r,1,0\right)=0\right)
\subset\bE$ under the isomorphism $\bE\simeq\bP'$. 
\end{remark}

\begin{lemma}\label{lemma_aut-swb}
Assume that $g_0=1$. If $\varphi'\in H^0\left(\bP',
\cO_{\bP'}(a'_0)\right)$ is general, then there are 
isomorphisms $\tilde{\iota}\colon\tilde{\bP}\to\tilde{\bP}$ 
and $\iota'\colon\bP'\to\bP'$ satisfying 
$\iota'\circ\pi=\pi\circ\tilde{\iota}$, 
$(\iota')^*D'_0=\left(\varphi'=0\right)$ and 
$\tilde{\iota}^*\tilde{D}_0=\left(\pi^*\varphi'=0\right)$, 
where $D_0'= (x_0'=0) \subset \bP'$ and  $\tilde{D}_0= (x_0=0) \subset \tilde{\bP}$ are as in Setup \ref{setup:swb} and Definition \ref{definition_swb}, and   
\[
\pi^*\varphi'\in H^0\left(\tilde{\bP},\cO_{\tilde{\bP}}
(a''_0,0)\right)
\]
is as in Proposition \ref{proposition_pi-star}. 
\end{lemma}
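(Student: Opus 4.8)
The plan is to deduce the statement from Lemma~\ref{lemma_aut-wps} applied to $\bP'$ and then to lift the resulting automorphism through the weighted bundle structure $\pi$ by a triangular shear on the Cox ring of $\tilde{\bP}$.

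First I would put $\varphi'$ into a normal form. Since $\bP'=\bP(a_0',\dots,a_r')$ is well-formed and $\varphi'$ is general (hence quasi-linear) of degree $a_0'=\deg x_0'$, the variable $x_0'$ can occur in a monomial of degree $a_0'$ only as the monomial $x_0'$ itself, because any further factor would raise the degree above $a_0'$. Thus, after rescaling, $\varphi'=x_0'+\varphi_0'(x_1',\dots,x_r')$ for some $\varphi_0'\in H^0(\bP',\cO_{\bP'}(a_0'))$, exactly as in the proof of Lemma~\ref{lemma_aut-wps}. That lemma then provides the isomorphism $\iota'\colon\bP'\to\bP'$ given on homogeneous coordinates by
\[
\iota'\colon\ [x_0':\cdots:x_r']\mapsto[x_0'+\varphi_0'(x_1',\dots,x_r'):x_1':\cdots:x_r'],
\]
and by construction $(\iota')^*D_0'=(\varphi'=0)$, settling one of the three required identities.

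Next I would lift $\iota'$. Using Proposition~\ref{proposition_pi-star} and $g_0=1$, we have $\pi^*\varphi_0'=\varphi_0'(x_1^{g_1},\dots,x_r^{g_r})$, a polynomial in $x_1,\dots,x_r$ alone. The crucial check is its bidegree: since $\pi^*$ multiplies the first grading coordinate by $g$ and $a_0'=a_0''/g$ (because $g_0=1$), one gets $\deg(\pi^*\varphi_0')=(ga_0',0)=(a_0'',0)=\deg x_0$ in the $\bZ^2$-grading of Definition~\ref{definition_cl}. Hence the substitution sending $x_0\mapsto x_0+\pi^*\varphi_0'$ and fixing $x_1,\dots,x_r,y_{r+1},\dots,y_s,z$ is a graded, and visibly invertible (triangular), automorphism of the Cox ring, so it defines an isomorphism $\tilde{\iota}\colon\tilde{\bP}\to\tilde{\bP}$. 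Since $\pi^*(x_0')=x_0^{g_0}=x_0$, we have $\pi^*\varphi'=x_0+\pi^*\varphi_0'$, whence $\tilde{\iota}^*\tilde{D}_0=(x_0+\pi^*\varphi_0'=0)=(\pi^*\varphi'=0)$, giving the second identity.

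Finally I would verify $\iota'\circ\pi=\pi\circ\tilde{\iota}$ using the formula $\pi([x_0:\cdots:x_r;\dots])=[x_0^{g_0}:\cdots:x_r^{g_r}]$ from Lemma~\ref{lemma_cl-cl}. Tracing a point through both composites and using $g_0=1$ to identify $x_0^{g_0}=x_0$, each side equals $[x_0+\varphi_0'(x_1^{g_1},\dots,x_r^{g_r}):x_1^{g_1}:\cdots:x_r^{g_r}]$ in $\bP'$, a direct monomial comparison. The only genuinely delicate step is the bidegree computation that makes $\tilde{\iota}$ a graded automorphism; everything else is routine substitution. This is exactly where the hypothesis $g_0=1$ is indispensable, as it is what forces $\pi^*\varphi_0'$ to lie in the same graded piece $\cO_{\tilde{\bP}}(a_0'',0)$ as $x_0$, so that the shear in $x_0$ is admissible.
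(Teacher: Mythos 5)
Your proposal is correct and follows essentially the same route as the paper: normalize $\varphi'=x_0'+\varphi_0'(x_1',\dots,x_r')$, define $\iota'$ and $\tilde{\iota}$ by the explicit coordinate shears, and check compatibility with $\pi$. The bidegree verification $\deg(\pi^*\varphi_0')=(ga_0',0)=(a_0''g_0,0)=(a_0'',0)$ under $g_0=1$ is exactly the point the paper leaves implicit, and you have it right.
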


\begin{proof}
The proof is almost same as the proof of Lemma 
\ref{lemma_aut-wps}. We may assume that $\varphi'=x'_0+
\varphi'_0(x'_1,\dots,x'_r)$ with $\varphi'_0\in 
H^0\left(\bP',\cO_{\bP'}(a'_0)\right)$. Note that 
\[
\pi^*\varphi'=x_0+\varphi'_0(x_1^{g_1},\dots,x_r^{g_r}).
\]
We set 
\[
\iota'\colon\left[x'_0:x'_1:\cdots:x'_r\right]
\mapsto
\left[x'_0+\varphi'_0\left(x'_1,\dots,x'_r\right):x'_1
:\cdots:x'_r\right],
\]
\begin{eqnarray*}
\tilde{\iota}\colon
\left[x_0:x_1:\cdots:x_r;y_{r+1}:\cdots:y_s:z\right]\\
\mapsto
\left[x_0+\varphi_0'\left(x_1^{g_1},\dots,x_r^{g_r}\right)
:x_1:\cdots:x_r;y_{r+1}:\cdots:y_s:z\right].
\end{eqnarray*}
Then, by Lemma \ref{lemma_finite-swb}, 
the morphisms satisfy the properties in 
Lemma \ref{lemma_aut-swb}. 
\end{proof}

The standard weighted blow-up $\psi \colon \tilde{\bP} \to \bP$ and the induced weighted bundle structure $\pi \colon \tilde{\bP} \to \bP'$ induce such morphisms for $D_0$ and $D_s$ as follows. 

\begin{proposition}\label{proposition_divisor-swb}
Assume that $s\geq 2$. 
\begin{enumerate}
\item 
For any $1\leq i\leq s$, set 
\[
\dot{g}_i:=\gcd\left(a_1,\dots,\hat{a}_i,\dots,a_s\right), \quad
\dot{g}:=\prod_{i=1}^s\dot{g}_i, \quad
\dot{a}_i:=\frac{a_i\dot{g}_i}{\dot{g}}.
\]
We know that $D_0\simeq\bP\left(\dot{a}_1,\dots,\dot{a}_s\right)$
by Proposition \ref{proposition_divisor_wps}. 
If $r=1$, then $\psi|_{\tilde{D}_0}\colon \tilde{D}_0\to D_0$ is 
an isomorphism. Assume that $r\geq 2$. Then 
\[
\xymatrix{
\tilde{D}_0 \ar[d]_-{\psi|_{\tilde{D}_0}} \ar[r]^-{\pi|_{\tilde{D}_0}} 
& D'_0 \\
 D_0&
}\]
is the standard weighted blowup of 
$D_0=\bP\left(\dot{a}_1,\dots,\dot{a}_s\right)$ along 
$\bP\left(\dot{a}_{r+1},\dots,\dot{a}_s\right)$, together with the 
weighted bundle structure. We have 
\[
\bE\cdot\tilde{D}_0=\frac{1}{g_0}\bE_{D_0},
\]
where $\bE_{D_0}\subset\tilde{D}_0$ is the exceptional divisor 
of $\psi|_{\tilde{D}_0}$, and 
\[
\left[\cO_{\tilde{\bP}}(\alpha,\beta)\right]|_{\tilde{D}_0}\sim_\bQ
\left[\cO_{\tilde{D}_0}\left(
\frac{\gcd\left(\dot{a}_{r+1},\dots,\dot{a}_s\right)}{g_0h'}\alpha,
\frac{\gcd\left(\dot{a}_1,\dots,\dot{a}_r\right)}{g_0h}\beta\right)\right].
\]
\item 
For any $0\leq i\leq s-1$, set 
\[
\ddot{g}_i:=\gcd\left(a_0,\dots,\hat{a}_i,\dots,a_{s-1}\right), \quad
\ddot{g}:=\prod_{i=0}^{s-1}\ddot{g}_i, \quad
\ddot{a}_i:=\frac{a_i\ddot{g}_i}{\ddot{g}}.
\]
We know that $D_s\simeq\bP\left(\ddot{a}_0,\dots,\ddot{a}_{s-1}\right)$
by Proposition \ref{proposition_divisor_wps}. 
If $r=s-1$, then $\psi|_{\tilde{D}_s}\colon \tilde{D}_s\to D_s$ is 
an isomorphism and $\tilde{D}_s$ is a section of $\pi$. 
Assume that $r\leq s-2$. Then 
\[
\xymatrix{
\tilde{D}_s \ar[d]_-{\psi|_{\tilde{D}_s}} \ar[r]^-{\pi|_{\tilde{D}_s}} 
& \bP' \\
 D_s&
}\]
is the standard weighted blowup of 
$D_s=\bP\left(\ddot{a}_0,\dots,\ddot{a}_{s-1}\right)$ along 
$\bP\left(\ddot{a}_{r+1},\dots,\ddot{a}_{s-1}\right)$, together with the 
weighted bundle structure. We have 
\[
\bE\cdot\tilde{D}_s=\frac{1}{g_s}\bE_{D_s},
\]
where $\bE_{D_s}\subset\tilde{D}_s$ is the exceptional divisor 
of $\psi|_{\tilde{D}_s}$, and 
\[
\left[\cO_{\tilde{\bP}}(\alpha,\beta)\right]|_{\tilde{D}_s}\sim_\bQ
\left[\cO_{\tilde{D}_s}\left(
\frac{\gcd\left(\ddot{a}_{r+1},\dots,\ddot{a}_{s-1}\right)}{g_sh'}\alpha,
\frac{\gcd\left(\ddot{a}_0,\dots,\ddot{a}_r\right)}{g_sh}\beta\right)\right].
\]
\end{enumerate}
\end{proposition}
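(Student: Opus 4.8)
The two parts are exchanged by the symmetry that reverses the roles of the first $r+1$ and the last $s-r$ coordinates, so I would prove (1) and deduce (2) by applying the same argument to $D_s=V(u_s)$. Everything in sight is toric, so the plan is to read the restricted morphisms off the fans $\Sigma$ and $\tilde\Sigma$. Recall that for a primitive ray generator $u$ the orbit closure $V(u)$ is itself the toric variety of the star fan of $\bR_{\ge0}u$ in the quotient lattice $N(u)=N/(N\cap\bR u)=N/\bZ u$ (as in the proof of Proposition \ref{proposition_sublinear}, via \cite[Lemma 3.2.4, Theorem 3.2.6]{CLS}). Applied to $u_0$ in $\Sigma$ this recovers $D_0=V(u_0)\simeq\bP(\dot a_1,\dots,\dot a_s)$ with ray generators $\bar u_1,\dots,\bar u_s\in N/\bZ u_0$, as in Proposition \ref{proposition_divisor_wps}. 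To describe $\tilde D_0=V(u_0)\subset\tilde\bP$ I would note from Definition \ref{definition_swb} that the maximal cones of $\tilde\Sigma$ containing $u_0$ are exactly those indexed by $1\le i\le r$, $r+1\le j\le s+1$; projecting to $N/\bZ u_0$ (and using $\bar u_0=0$, so that $\bar v_{s+1}=\tfrac1{h'}\sum_{i=1}^r a''_i\bar u_i\in\operatorname{Cone}(\bar u_1,\dots,\bar u_r)$) these become $\operatorname{Cone}(\bar u_1,\dots,\hat{\bar u}_i,\dots,\bar u_r,\bar v_{r+1},\dots,\hat{\bar v}_j,\dots,\bar v_{s+1})$. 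The central observation is that this is precisely the star subdivision of the fan of $D_0$ along the ray $\bR_{\ge0}\bar v_{s+1}$. This identifies $\psi|_{\tilde D_0}$ as a standard weighted blowup, with center $\bP(\dot a_{r+1},\dots,\dot a_s)=V(\bar u_1,\dots,\bar u_r)$ and induced weighted bundle structure $\pi|_{\tilde D_0}$, via the characterization of the blown-up ray in Lemma \ref{lemma_ray} transported to the quotient together with Definitions \ref{definition_swb} and \ref{definition_wbs}. In the degenerate case $r=1$ the projected center is the toric divisor $\bP(\dot a_2,\dots,\dot a_s)$ of $D_0$, so the subdivision adds no new ray and $\psi|_{\tilde D_0}$ is an isomorphism.

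The intersection formula $\bE\cdot\tilde D_0=\tfrac1{g_0}\bE_{D_0}$ is where the lattice index enters. The exceptional ray of $\psi|_{\tilde D_0}$ is generated by the \emph{primitive} vector $w$ of $\bR_{\ge0}\bar v_{s+1}$ in $N/\bZ u_0$, and a direct computation in the rank-two saturated sublattice $N_\sigma=N\cap\operatorname{span}(\sigma)$ with $\sigma=\bR_{\ge0}u_0+\bR_{\ge0}v_{s+1}$ shows that $\bar v_{s+1}=g_0\,w$, where $g_0=\operatorname{mult}(\sigma)$ by Lemma \ref{lemma_ray-ray}. Thus $\bE_{D_0}=V(w)$, while the toric intersection $\bE\cap\tilde D_0$ corresponds to $\sigma$ with coefficient $1/\operatorname{mult}(\sigma)=1/g_0$ as in \cite[p.~100]{Fulton93}, exactly as in the proofs of Propositions \ref{proposition_sublinear} and \ref{proposition_divisor_wps}; this gives the factor $\tfrac1{g_0}$.

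For the restriction formula for $\cO_{\tilde\bP}(\alpha,\beta)|_{\tilde D_0}$ I would argue by bilinearity on the generators $(1,0)$ and $(0,1)$. Using Lemma \ref{lemma_cl-cl} to write $\cO_{\tilde\bP}(1,0)\sim_\bQ\tfrac1g\pi^*\cO_{\bP'}(1)$ and $\cO_{\tilde\bP}(0,1)\sim_\bQ h'\,\psi^*\cO_\bP(1)$, restricting to $\tilde D_0$, and feeding in $\cO_\bP(1)|_{D_0}\sim_\bQ\cO_{D_0}(1/\dot g)$ from Proposition \ref{proposition_divisor_wps} together with Lemma \ref{lemma_cl-cl} now applied to the standard weighted blowup of $D_0$ just identified (whose own block-gcds are $\gcd(\dot a_1,\dots,\dot a_r)$ and $\gcd(\dot a_{r+1},\dots,\dot a_s)$), one matches $\bQ$-divisor classes on $\tilde D_0$ and reads off the stated coefficients $\gcd(\dot a_{r+1},\dots,\dot a_s)/(g_0h')$ and $\gcd(\dot a_1,\dots,\dot a_r)/(g_0h)$. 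Alternatively, one can pull the whole square $\tilde D_0\to D_0$, $\tilde D_0\to D_0'$ back through the finite cover with $\bar a_i=1$ via Lemma \ref{lemma_finite-swb}, reducing to the ordinary blowup of $\bP^s$ along a linear $\bP^{s-r-1}$ restricted to $\bar D_0=\bP^{s-1}$ — which is classical — and then descend.

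I expect the only real obstacle to be the gcd/lattice bookkeeping in the last two steps: confirming that the quotient weights are exactly $\dot a_1,\dots,\dot a_s$ and that, once the index-$g_0$ phenomenon $\bar v_{s+1}=g_0w$ is accounted for, the numerical coefficients in the restriction formula come out as claimed. The geometric content — that taking the orbit closure at $u_0$ commutes with the star subdivision by $v_{s+1}$ — is clean and should cause no difficulty.
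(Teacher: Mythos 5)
Your proposal is correct and follows essentially the same route as the paper's proof: both identify $\psi|_{\tilde{D}_0}$ with a standard weighted blowup by passing to the quotient lattice $N/\bZ u_0$ and using the characterization of the blown-up ray from Lemma \ref{lemma_ray}, both obtain the coefficient $1/g_0$ from Lemma \ref{lemma_ray-ray} together with the multiplicity formula of \cite[p.~100]{Fulton93}, and both pin down the restriction of $\cO_{\tilde{\bP}}(\alpha,\beta)$ by matching known $\bQ$-divisor classes (the paper restricts the class of $\bE$, you restrict the pullbacks of $\cO_{\bP}(1)$ and $\cO_{\bP'}(1)$ — an equivalent bookkeeping). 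No gaps.
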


\begin{proof}
We only prove (1) since the proof of (2) is same as the proof of (1). 
We may assume that $r\geq 2$. The assertion 
$\bE\cdot\tilde{D}_0=\frac{1}{g_0}\bE_{D_0}$ follows from Lemma 
\ref{lemma_ray-ray}. Set 
\[\dot{N}'':=N/\bZ u_0\simeq\bZ^s/\bZ\sum_{i=1}^s a_i\mathbf{e}_i.
\]
Let $\dot{u}''_i\in\dot{N}''$ (resp., $\dot{v}_j''\in\dot{N}''$) be the images 
of $u_i\in N$ (resp., $v_j\in N$) for all $1\leq i\leq r$ (resp., 
$r+1\leq j\leq s$). By Proposition \ref{proposition_well-formed}, 
there is an isomorphism $\dot{N}\simeq\dot{N}''$ of lattices 
and there are primitive elements 
\[
\dot{u}_1,\dots,\dot{u}_r,\dot{v}_{r+1},\dots,\dot{v}_s\in\dot{N}
\]
satisfying 
\[
\dot{g}_i\dot{u}_i\mapsto\dot{u}''_i,\quad 
\dot{g}_j\dot{v}_j\mapsto\dot{v}''_j.
\]
The image $\dot{v}_{s+1}\in\dot{N}$ of $v_{s+1}\in N$ under the 
surjection $N\twoheadrightarrow\dot{N}''\simeq\dot{N}$ can be expressed as 
\[
\dot{v}_{s+1}=\frac{1}{h'}\sum_{i=1}^r a''_i\dot{u}_i
=-\frac{1}{h}\sum_{j=r+1}^s a''_j\dot{v}_j.
\]
The element $\dot{v}_{s+1}\in\dot{N}$ is a nonzero element 
satisfying 
\[
\dot{v}_{s+1}\in\sum_{i=1}^r\bQ_{\geq 0}\dot{u}_i\quad\text{and}\quad
\dot{v}_{s+1}\in\left(\sum_{i=1}^r\bQ\dot{u}_i\right)
\cap\left(\sum_{j=r+1}^s\bQ\dot{v}_j\right). 
\]
Moreover, the toric variety $\tilde{D}_0$ is defined by the fan in 
$\dot{N}_\bR$ whose set of maximal cones consists of 
\[
\operatorname{Cone}\left(\dot{u}_1,\dots,\hat{\dot{u}}_i,\dots,
\dot{u}_r,\dot{v}_{r+1},\dots,\hat{\dot{v}}_j,\dots,\dot{v}_{s+1}\right)
\]
with $1\leq i\leq r$ and $r+1\leq j\leq s+1$ by 
\cite[\S 3.2]{CLS}. Thus, as we pointed out in Lemma \ref{lemma_ray}, 
the morphism $\psi|_{\tilde{D}_0}\colon \tilde{D}_0\to D_0$
is nothing but the standard weighted blowup. 
Moreover, the morphism $\pi|_{\tilde{D}_0}\colon 
\tilde{D}_0\twoheadrightarrow D'_0$ is a fibration with connected fibers. 
Thus the morphism $\pi|_{\tilde{D}_0}$ is the induced weighted 
bundle structure. 
Let us set 
\[
\tilde{h}:=\gcd\left(\dot{a}_1,\dots,\dot{a}_r\right), \quad
\tilde{h}':=\gcd\left(\dot{a}_{r+1},\dots,\dot{a}_s\right). 
\]
We can take $d_0,d'_0\in\bQ_{>0}$ such that 
\[
\left[\cO_{\tilde{\bP}}(\alpha,\beta)\right]|_{\tilde{D}_0}\sim_\bQ
\left[\cO_{\tilde{D}_0}\left(
d_0\alpha,d'_0\beta\right)\right].
\]
holds for any $\alpha,\beta\in\bQ$. 
Since $\bE|_{\tilde{D}_0}=\frac{1}{g_0}\bE_{D_0}$, we get 
\[
\left[\cO_{\tilde{\bP}}\left(-h',h\right)\right]|_{\tilde{D}_0}\sim_\bQ
\left[\cO_{\tilde{D}_0}\left(-\frac{\tilde{h}'}{g_0},
\frac{\tilde{h}}{g_0}\right)\right].
\]
Thus we get 
\[
d_0=\frac{\tilde{h}'}{g_0h'},\quad d'_0=\frac{\tilde{h}}{g_0h}
\]
from the above equalities.
\end{proof}

\begin{corollary}\label{corollary_cut-swb}
Under the notation in Proposition \ref{proposition_divisor-swb}, 
assume moreover 
$r\geq 2$, $(a_1,\dots,a_s)$ is well-formed, and $g_0=1$. 
(For example, if $r\geq 2$ and 
$(a_1,\dots,a_r)$ is well-formed, then the above 
three conditions are satisfied.) Then we can easily check that 
$\dot{a}_i=a_i$ for any $1\leq i\leq s$ and 
\[
\frac{\tilde{h}'}{g_0h'}=\frac{\tilde{h}}{g_0h}=1. 
\]
Moreover, for any nonzero homogeneous polynomial 
\[
f\left(x_0,\dots,x_r,y_{r+1},\dots,y_s,z\right)
\in\bC\left[x_0,\dots,x_r,y_{r+1},\dots,y_s,z\right]
\]
of degree $(d_1,d_2)$ in the Cox ring of $\tilde{\bP}$, the restriction 
$(f=0)|_{\tilde{D}_0}$ of $(f=0)\subset\tilde{\bP}$ is a Weil divisor 
on $\tilde{D}_0$ in $\left|\cO_{\tilde{D}_0}(d_1,d_2)\right|$ 
defined by the homogeneous polynomial 
\[
f|_{\tilde{D}_0}:=
f\left(0,x_1\dots,x_r,y_{r+1},\dots,y_s,z\right)
\in\bC\left[x_1,\dots,x_r,y_{r+1},\dots,y_s,z\right]
\]
of degree $(d_1,d_2)$
in the Cox ring of $\tilde{D}_0$. 
\end{corollary}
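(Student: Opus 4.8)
The plan is to dispatch the two numerical identities first and then reduce the divisorial statement to a local computation at the codimension-one points of $\tilde{D}_0$, mimicking the proof of Proposition~\ref{proposition_divisor_wps}(2).

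For the numerical identities: since $(a_1,\dots,a_s)$ is well-formed, $\dot{g}_i=\gcd(a_1,\dots,\hat{a}_i,\dots,a_s)=1$ for every $1\leq i\leq s$, hence $\dot{g}=1$ and $\dot{a}_i=a_i$. Consequently $\tilde{h}'=\gcd(\dot{a}_{r+1},\dots,\dot{a}_s)=\gcd(a_{r+1},\dots,a_s)=h'$, giving $\tilde{h}'/(g_0h')=1$. For the first block, the hypothesis $g_0=\gcd(a_1/h,\dots,a_r/h)=1$ together with $h\mid a_i$ ($1\leq i\leq r$) forces $\gcd(a_1,\dots,a_r)=h$, so $\tilde{h}=\gcd(\dot{a}_1,\dots,\dot{a}_r)=h=g_0h$ and $\tilde{h}/(g_0h)=1$. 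Feeding these into Proposition~\ref{proposition_divisor-swb}(1) turns the restriction formula into $[\cO_{\tilde{\bP}}(\alpha,\beta)]|_{\tilde{D}_0}\sim_\bQ[\cO_{\tilde{D}_0}(\alpha,\beta)]$; in particular the bidegree of $f|_{\tilde{D}_0}$ as a section over $\tilde{D}_0$ is again $(d_1,d_2)$.

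For the divisorial statement (in the relevant case $\tilde{D}_0\not\subset(f=0)$, i.e.\ $f|_{\tilde{D}_0}\neq 0$), both $(f=0)|_{\tilde{D}_0}$ and the Weil divisor cut out by $f|_{\tilde{D}_0}$ are $\bQ$-Cartier $\bQ$-divisors, so it suffices to match their coefficients along every prime divisor $Z\subset\tilde{D}_0$. As in Proposition~\ref{proposition_divisor_wps}(2), if $\tilde{\bP}$ is smooth at the generic point $\eta_Z$ of $Z$, then $\tilde{D}_0$ is Cartier there, a local equation of $(f=0)$ is $f$ up to a unit, and restricting it to $(x_0=0)$ computes the coefficient of $Z$ in both divisors simultaneously. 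Everything thus reduces to showing that $\tilde{\bP}$ is smooth at every codimension-one point of $\tilde{D}_0$.

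This smoothness computation is the crux. The generic point of a non-invariant prime divisor of $\tilde{D}_0$ lies in the dense orbit of $\tilde{D}_0$, which is the orbit of the ray $\operatorname{Cone}(u_0)$ and is therefore smooth. The torus-invariant prime divisors correspond to the two-dimensional cones of $\tilde{\Sigma}$ through $u_0$: the cones $\operatorname{Cone}(u_0,u_k)$ for $1\leq k\leq s$ (with $u_j=v_j$ for $r+1\leq j\leq s$) and the cone $\operatorname{Cone}(u_0,v_{s+1})$. Because $r\geq 2$, no $\operatorname{Cone}(u_0,u_k)$ contains $\operatorname{Cone}(u_0,\dots,u_r)$, so these cones are untouched by the star subdivision and their multiplicities, being computed in $N$ alone, agree with those in $\bP$; by the computation in the proof of Proposition~\ref{proposition_divisor_wps}(1) they equal $\gcd(a_1,\dots,\hat{a}_k,\dots,a_s)=1$. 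Finally $\mult(\operatorname{Cone}(u_0,v_{s+1}))=g_0=1$ by Lemma~\ref{lemma_ray-ray}. Hence all codimension-one points of $\tilde{D}_0$ are smooth points of $\tilde{\bP}$, which closes the argument. I expect the only real obstacle to be the bookkeeping here: one must keep the two hypotheses in their proper roles---well-formedness of $(a_1,\dots,a_s)$ governing the old cones $\operatorname{Cone}(u_0,u_k)$ and $g_0=1$ governing the exceptional cone $\operatorname{Cone}(u_0,v_{s+1})$---and verify that $r\geq 2$ indeed prevents the star subdivision from altering the old cones.
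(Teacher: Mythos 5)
Your proposal is correct and follows essentially the same route as the paper: both arguments reduce the divisorial claim to checking that $\tilde{D}_0$ and $(f=0)$ are Cartier at every codimension-one point of $\tilde{D}_0$ and then restrict local equations as in the proof of Proposition~\ref{proposition_divisor_wps}(2). The paper obtains the Cartier-ness by citing Propositions~\ref{proposition_divisor-swb}(1) and \ref{proposition_divisor_wps}, whereas you redo the toric multiplicity computation for the two-dimensional cones through $u_0$ explicitly; the content is the same.
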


\begin{proof}
By Proposition \ref{proposition_divisor-swb} (1) and Proposition 
\ref{proposition_divisor_wps}, the $\bQ$-Cartier $\bQ$-divisor 
$(f=0)|_{\tilde{D}_0}$ is Cartier along all codimension one points of 
$\tilde{D}_0$. Thus we get the assertion as in the proof of 
Proposition \ref{proposition_divisor_wps}. 
\end{proof}

\begin{example}\label{example_swb-pt}
Assume that $s=r+1$, $(a_0,\dots,a_r)$ is well-formed, 
and set 
\[
P:=\left[0:\cdots:0:1\right]\in\bP.
\]
Then the singularity of $P\in\bP$ is isomorphic to the quotient 
singularity \[
0\in\bA^{r+1}_{\mathbf{x}_0,\dots,\mathbf{x}_r}/\bZ_{a_{r+1}}\left(
a_0,\dots,a_r\right)\] in the sense of \cite[Definition 2.2.10]{Kawakita}. 
By Lemma \ref{lemma_ray}, the morphism $\psi\colon\tilde{\bP}\to\bP$ 
is nothing but the weighted blowup of $P\in\bP$ with weights 
$\operatorname{wt}\left(\mathbf{x}_0,\dots,\mathbf{x}_r\right)
=\frac{1}{a_{r+1}}\left(a_0,\dots,a_r\right)$ in the sense of 
\cite[Definition 2.2.11]{Kawakita}. 
\end{example}

\begin{example}\label{example_typical}
Assume that there exists $a\in\bZ_{>0}$ such that 
\[
a_0=\cdots=a_r=1 \quad\text{and}\quad a_{r+1}=\cdots=a_s=a. 
\]
Then we have 
\begin{eqnarray*}
&&h=1, \quad a''_i=a_i=1, \quad g_i=g=1\quad (0\leq i\leq r), \\
&&h'=a, \quad a''_j=a'_j=1, \quad g_j=g'=1 \quad (r+1\leq j\leq s).
\end{eqnarray*}
In this case, the lattice 
\[
N=\bZ^{s+1}/\bZ\left(\sum_{i=0}^r\mathbf{e}_i
+a\sum_{j=r+1}^s\mathbf{e}_j\right)
\]
satisfies that 
\[
N=\bigoplus_{i=1}^r\bZ u_i\oplus\bigoplus_{j=r+2}^{s+1}\bZ v_j
\]
with 
\[
v_{r+1}=-\sum_{j=r+2}^{s+1}v_j,\quad 
u_0=-\sum_{i=1}^r u_i + a v_{s+1}. 
\]
By \cite[Example 7.3.5]{CLS}, we can directly check that 
\[
\tilde{\bP}\simeq\bP_{\bP^r}\left(\cO_{\bP^r}^{\oplus s-r}\oplus
\cO_{\bP^r}(a)\right). 
\]
Moreover, the morphism $\pi$ is the usual projective space bundle. 
Let $\xi\in\operatorname{Pic}\left(\tilde{\bP}\right)$ be the 
tautological line bundle with respect to the above bundle structure. 
Then we have 
\[
\cO_{\tilde{\bP}}(\alpha,\beta)\simeq\pi^*\cO_{\bP^r}(\alpha)\otimes
\xi^{\otimes\beta}
\]
for any $\alpha,\beta\in\bZ$. 
\end{example}

\subsection{On quasi-smoothness}\label{section_q-smooth}

We first discuss some properties of quasi-smooth weighted hypersurfaces, and then study whether the quasi-smoothness is preserved under weighted blow-ups.  

\begin{definition}\label{definition_q-sm}
Let $N=\bZ^s$ be a lattice, let $\Theta$ be a simplicial and complete 
fan in $N_\bR$, let $\{u_1,\dots,u_k\}\subset N$ be the set of 
primitive generators of the $1$-dimensional cones of $\Theta$. 
Let $\bP:=X_\Theta$ be the proper and $\bQ$-factorial toric variety 
associated with $\Theta$, let $\bC[z_1,\dots,z_k]$ be the Cox ring 
of $\bP$, where $z_i$ corresponds to $u_i$, and let 
\[
Z(\Theta)\subset\bA^k=\bA^k_{z_1,\dots,z_k}
\]
be the irrelevant locus of $\Theta$ in the sense of \cite[\S 5.1]{CLS}. 
%(cf. \cite[Proposition 5.1.6]{CLS}). 
As in \cite[Theorem 5.1.11]{CLS}, there is the canonical quotient 
morphism 
\[
\bA^k\setminus Z\left(\Theta\right)\to\bP. 
\]
Let us set $\alpha_i:=\cO_\bP\left(V(u_i)\right)
\in\operatorname{Cl}\left(\bP\right)$ 
for any $1\leq i\leq k$. 
Take any nonzero $f\in\bC[z_1,\dots,z_k]$ which is homogeneous 
of degree $\beta\in\operatorname{Cl}(\bP)$, and let us set 
$X:=(f=0)\in\left|\cO_\bP(\beta)\right|$. 
\begin{enumerate}
\item 
The \emph{non-quasi-smooth locus} $\operatorname{NQS}(X)\subset X$ of 
the divisor $X\subset\bP$ is defined to be the image by 
$\bA^k\setminus Z\left(\Theta\right)\to\bP$ of 
\[
\left\{z\in\bA^k\setminus Z\left(\Theta\right)\,\,|\,\,
(f=0)\subset\bA^k\text{ is singular at }z\right\}. 
\]
We say that 
the divisor $X\subset \bP$ is said to be \emph{quasi-smooth} if 
$\operatorname{NQS}(X)=\emptyset$. 
\item (cf.\ \cite[Definition 2.1]{ST24})
The divisor $X\subset\bP$ is said to be \emph{well-formed} if the subset 
$\operatorname{Sing}\bP\cap X\subset X$ has codimension $\geq 2$. 
\end{enumerate}
\end{definition}

\begin{proposition}\label{proposition_diff}
If $X\subset\bP$ is quasi-smooth, then the pair $(\bP, X)$ is a plt pair 
and 
\[
\left(K_\bP+X\right)|_X=K_X+\sum_{\substack{\tau\in\Theta(2);\\ 
V(\tau)\subset X}}\left(1-\frac{1}{\operatorname{mult}(\tau)}\right)
V(\tau), 
\]
where $\Theta(2)$ is the set of $2$-dimensional cones in $\Theta$. 
In particular, if moreover $X\subset\bP$ is well-formed, then we have 
$K_X=\left(K_\bP+X\right)|_X$. 
\end{proposition}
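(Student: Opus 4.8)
The plan is to deduce both assertions from a transverse surface computation of the different. Throughout write $D_i:=V(u_i)$, so that $K_\bP=-\sum_{i=1}^k D_i$ (see \cite{CLS}) and $\bP$ is $\bQ$-factorial because $\Theta$ is simplicial. First I would record that quasi-smoothness forces $X$ to be normal: the affine cone $\hat X:=(f=0)\subset\bA^k$ is smooth away from $Z(\Theta)$, and $X$ is the geometric quotient of $\hat X\setminus Z(\Theta)$ by the diagonalizable group $G:=\Hom(\Cl(\bP),\bC^*)$, whose stabilizers are finite since $\Theta$ is simplicial. By the Luna slice theorem $X$ has at worst quotient singularities, so it is normal and klt, and both $K_X$ and $(K_\bP+X)|_X$ are well-defined $\bQ$-Cartier $\bQ$-divisors. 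Their difference is the different $\Diff_X(0)$, an effective $\bQ$-divisor supported on $X\cap\Sing\bP$.

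Next I would localize the computation of $\Diff_X(0)$ at the generic point of each prime divisor in its support. Since $\Sing\bP=\bigcup_\sigma V(\sigma)$, the union over cones $\sigma$ with $\operatorname{mult}(\sigma)>1$, the only codimension-one (in $X$) components of $X\cap\Sing\bP$ are the $V(\tau)$ with $\tau\in\Theta(2)$, $\operatorname{mult}(\tau)>1$ and $V(\tau)\subset X$; cones with $\operatorname{mult}(\tau)=1$ contribute the coefficient $1-1=0$ and may harmlessly be included in the sum. Thus it suffices to show that the coefficient of each $V(\tau)$ (with $\tau\in\Theta(2)$, $V(\tau)\subset X$) in $\Diff_X(0)$ equals $1-1/\operatorname{mult}(\tau)$.

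Fix such a cone $\tau=\operatorname{Cone}(u_i,u_j)$ and set $m:=\operatorname{mult}(\tau)$. At the generic point $\eta_\tau$ of $V(\tau)$ the variety $\bP$ is analytically a product $\bA^{s-2}\times S$, where $S=\bA^2/G_\tau$ is the transverse cyclic quotient surface with $G_\tau=N_\tau/(\bZ u_i+\bZ u_j)$ of order $m$ ($N_\tau:=N\cap\operatorname{span}(\tau)$); by the Hirzebruch--Jung normal form $G_\tau$ acts freely in codimension one, so the quotient $\rho\colon\bA^2\to S$ is \'etale in codimension one and $K_{\bA^2}=\rho^*K_S$. Because $X$ is quasi-smooth, under this isomorphism $X$ corresponds to $\bA^{s-2}\times C$ with $C=\rho(\hat C)$ the image of a smooth $G_\tau$-invariant curve $\hat C\subset\bA^2$ through the origin $o$, so the coefficient of $V(\tau)$ in $\Diff_X(0)$ equals the coefficient $\delta$ of $P:=\rho(o)$ in $\Diff_C(0)$ for the pair $(S,C)$. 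Now $G_\tau$ acts faithfully on $\hat C$ (no nontrivial element fixes a curve, as the action is free in codimension one), so $\rho|_{\hat C}\colon\hat C\to C$ is cyclic of degree $m$, totally ramified over $P$; in particular $\rho^*C=\hat C$. Writing $(K_S+C)|_C=K_C+\delta P$, pulling back along $\rho|_{\hat C}$, and using $\rho^*(K_S+C)|_{\hat C}=(K_{\bA^2}+\hat C)|_{\hat C}=K_{\hat C}$ together with Riemann--Hurwitz $K_{\hat C}=(\rho|_{\hat C})^*K_C+(m-1)o$ and $(\rho|_{\hat C})^*P=m\,o$, one obtains $\delta\cdot m=m-1$, i.e.\ $\delta=1-1/m$, as claimed.

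Finally I would read off the remaining statements. All coefficients $1-1/\operatorname{mult}(\tau)$ are $<1$, so $\lfloor\Diff_X(0)\rfloor=0$; since $(X,\Diff_X(0))$ is then klt (as $X$ is klt) and $\bP$ is klt away from $X$, inversion of adjunction gives that $(\bP,X)$ is plt. If moreover $X$ is well-formed, then $X\cap\Sing\bP$ has codimension $\geq 2$ in $X$, so no $V(\tau)$ with $\operatorname{mult}(\tau)>1$ is a divisor on $X$; hence $\Diff_X(0)=0$ and $K_X=(K_\bP+X)|_X$. The main obstacle is the analytic reduction of the third paragraph: one must justify that at $\eta_\tau$ quasi-smoothness splits the completed local pair as a product with a smooth factor, and in particular that the transverse curve $\hat C$ is smooth. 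Once this is in place the surface computation and the deductions of plt-ness are formal.
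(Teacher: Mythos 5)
Your route is genuinely different from the paper's in its middle step, though it rests on the same mechanism (quasi-smoothness means the Cox-chart cover $q\colon\bA^s\to\bA_\sigma=\bA^s/\mathbf{D}(\sigma)$ by the finite \emph{small} group $\mathbf{D}(\sigma)$ pulls $X$ back to a smooth hypersurface). The paper works directly on this chart: it applies the ramification formula to $q$ to get the coefficient $1-1/d$ of the different along $V(\tau)$, deduces plt-ness from $K_{\bA^s}+q^*X=q^*(K_{\bA_\sigma}+X)$ and descent of singularities along a finite crepant cover, and then identifies $d$ with $\operatorname{mult}(\tau)$ by comparing with the toric adjunction formula for the quasi-smooth divisor $D_1$. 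You instead cut down to a transverse surface slice $S=\bA^2/G_\tau$ and run Riemann--Hurwitz on the curve $\hat C\to C$; this has the advantage that $\lvert G_\tau\rvert=\operatorname{mult}(\tau)$ is immediate from $G_\tau=N_\tau/(\bZ u_i+\bZ u_j)$, so you avoid the paper's separate identification of the ramification index. The slice reduction you flag as the ``main obstacle'' is in fact fine and can be made precise exactly along the lines you sketch: at a general point $p$ of $(x_1=x_2=0)$ one has $f=x_1g_1+x_2g_2$ with $(g_1(p),g_2(p))\neq(0,0)$ by quasi-smoothness, so $\hat X$ meets a general $G_\tau$-invariant transverse $\bA^2$ in a curve smooth at $p$, and the different coefficient along $V(\tau)$ is computed on this slice because adjunction localizes at the generic point of $V(\tau)$ (one does not need, and should not claim, that $X$ itself is a product).

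The one step that is wrong as written is the justification of plt-ness. You assert that $(X,\Diff_X(0))$ is klt ``as $X$ is klt'' because the coefficients are $<1$; this implication is false in general (a klt variety plus an effective boundary with coefficients $<1$ need not be a klt pair --- e.g.\ $(\bA^2,(1-\epsilon)C)$ with $C$ a cuspidal cubic and $\epsilon$ small). In the present situation the pair \emph{is} klt, but the reason is that $q|_{\hat X}\colon\hat X\to X$ is a finite morphism from a smooth variety with $K_{\hat X}=(q|_{\hat X})^*\bigl(K_X+\Diff_X(0)\bigr)$, so klt-ness descends; only then does inversion of adjunction give that $(\bP,X)$ is plt near $X$. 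Once you are invoking that crepant finite cover anyway, the paper's argument is shorter: $(\bA^s,q^*X)$ is plt and $K_{\bA^s}+q^*X=q^*(K_{\bA_\sigma}+X)$, so $(\bA_\sigma,X)$ is plt by descent, with no need for inversion of adjunction at all. With that repair (and the slice justification made explicit), your proof is correct.
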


\begin{proof}
Take any $s$-dimensional cone 
\[
\sigma=\bR_{\geq 0}u_1+\cdots+\bR_{\geq 0}u_s\in\Theta
\]
and any $2$-dimensional subcone 
$\tau=\bR_{\geq 0}u_1+\bR_{\geq 0}u_2\in\Theta$ 
with $V(\tau)\subset X$. Consider the affine open subset 
$\bA_\sigma\subset\bP$ of $\bP$ defined by $\sigma$. By 
\cite[Proposition 3.5]{BC94}, the algebraic group
\[
\mathbf{D}(\sigma):=\bC\left[\operatorname{Cl}(\sigma)\right]\subset\left(\bC^*\right)^s
\]
in \cite[Theorem 1.9]{BC94} is finite and small, and the quotient 
\[
q\colon \bA^s_{x_1\dots x_s}\to \bA^s/\mathbf{D}(\sigma)
\]
is isomorphic to $\bA_\sigma$. Moreover, the pullback of $D_i:=V(u_i)
\subset\bA_\sigma$ is equal to the hyperplane $(x_i=0)$ for any 
$1\leq i\leq s$, and the divisor $q^*\left(X|_{\bA_\sigma}\right)
\subset\bA^s$ is smooth. Since 
\[
K_{\bA^s}+q^*\left(X|_{\bA_\sigma}\right)=q^*\left(K_{\bA_\sigma}+
X|_{\bA_\sigma}\right), 
\]
the pair $(\bP,X)$ is plt. Moreover, by the ramification formula, 
the coefficient of the different 
of $X\subset\bP$ along $V(\tau)$ is equal to $1-\frac{1}{d}$, where 
$d\in\bZ_{>0}$ is defined to be 
\[
q^*V(\tau)=d\cdot(x_1=x_2=0)_{\operatorname{red}}. 
\]
On the other hand, as in the proof of Proposition 
\ref{proposition_divisor_wps}, we have 
\[
\left(K_\bP+D_1\right)|_{D_1}=K_{D_1}+\sum_{\substack{u\in\Theta(1);\\
\tau_1:=\bR_{\geq 0}u+\bR_{\geq 0}u_1\in\Theta(2)}}\left(
1-\frac{1}{\operatorname{mult}(\tau_1)}\right)V(\tau_1)
\]
where $D_1=V(u_1)$. Then the above $d$ must be equal to $\mult(\tau)$ since $D_1\subset\bP$ is also quasi-smooth. 
\end{proof}

For quasi-smooth hypersurfaces 
$X\in\left|\cO_{\bP}(\beta)\right|$, the well-formedness of 
$X$ can be interpreted in a combinatorial way, as in 
\cite[6.10]{Fletcher00}. 

\begin{lemma}\label{lemma_cl-toric}
Under the assumption in Definition \ref{definition_q-sm}, 
we have the following: 
\begin{enumerate}
\item 
For any $1\leq i\leq k$, we have 
\[
\operatorname{Cl}\left(\bP\right)=\sum_{j\in\left\{
1,\dots,k\right\}\setminus\{i\}}\bZ\alpha_j.
\]
\item 
Take any $2\leq t\leq s$. 
Assume that $\tau=\bR_{\geq 0}u_{i_1}+\cdots+\bR_{\geq 0}u_{i_t}
\in\Theta(t)$ for some $1\le t \le k$, where $\Theta(t)$ is the set 
of $t$-dimensional cones in $\Theta$. 
Then the quotient group 
\[
\mathcal{C}:=\operatorname{Cl}\left(\bP\right)
/\sum_{j\in\left\{1,\dots,k\right\}\setminus\left\{
i_1,\dots,i_t\right\}}\bZ\alpha_j
\]
is a finite Abelian group, and is of order 
$\operatorname{mult}(\tau)$. 
\end{enumerate}
\end{lemma}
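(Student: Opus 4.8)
The plan is to reduce everything to the fundamental exact sequence of toric geometry \cite[Theorem 4.1.3]{CLS}. Since $\Theta$ is complete, the primitive ray generators $u_1,\dots,u_k$ span $N_\bR$, so $\bP$ has no torus factors and there is a short exact sequence
\[
0 \to M \xrightarrow{\iota} \bZ^k \xrightarrow{\rho} \operatorname{Cl}(\bP) \to 0,
\]
where $M=N^\vee$, the map $\iota$ sends $m$ to $(\langle m,u_1\rangle,\dots,\langle m,u_k\rangle)$, and $\rho$ sends the standard basis vector $f_j$ to $\alpha_j=[V(u_j)]$. In particular $\rho$ is surjective, so $\operatorname{Cl}(\bP)=\sum_{j=1}^k\bZ\alpha_j$, and both parts become computations of quotients of $\bZ^k$ by $\iota(M)$ together with a coordinate subgroup.

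For part (1), since $u_i$ is a primitive element of $N$ it extends to a $\bZ$-basis, so there exists $m\in M$ with $\langle m,u_i\rangle=1$. Then $\iota(m)=f_i+\sum_{j\neq i}\langle m,u_j\rangle f_j$ lies in $\ker\rho$, which forces
\[
\alpha_i=-\sum_{j\neq i}\langle m,u_j\rangle\,\alpha_j\in\sum_{j\neq i}\bZ\alpha_j.
\]
Combined with the surjectivity of $\rho$, this yields $\operatorname{Cl}(\bP)=\sum_{j\neq i}\bZ\alpha_j$.

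For part (2), I would write $I:=\{i_1,\dots,i_t\}$, $J:=\{1,\dots,k\}\setminus I$, and split $\bZ^k=\bZ^I\oplus\bZ^J$ along the standard basis. Since $\sum_{j\in J}\bZ\alpha_j=\rho(\bZ^J)$, the third isomorphism theorem gives
\[
\mathcal{C}=\bZ^k/\bigl(\iota(M)+\bZ^J\bigr)\cong \bZ^I/p(\iota(M)),
\]
where $p\colon\bZ^k\to\bZ^I$ is the projection killing $\bZ^J$. The composite $p\circ\iota$ is the map $\phi\colon M\to\bZ^I$, $m\mapsto(\langle m,u_{i_1}\rangle,\dots,\langle m,u_{i_t}\rangle)$. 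Setting $N_0:=\bZ u_{i_1}\oplus\cdots\oplus\bZ u_{i_t}$ (free of rank $t$, as $\tau$ is simplicial), the identification $\bZ^I\cong N_0^\vee$ carrying $f_{i_\ell}$ to the dual basis vector of $u_{i_\ell}$ turns $\phi$ into the restriction map $\operatorname{res}\colon N^\vee\to N_0^\vee$. Hence $\mathcal{C}\cong\operatorname{coker}(\operatorname{res})$.

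Finally I would compute this cokernel by factoring the restriction through the saturation $N':=N\cap\operatorname{span}_\bR(\tau)=N_\tau$, a rank-$t$ sublattice containing $N_0$ with $[N':N_0]=\operatorname{mult}(\tau)$ by definition of multiplicity. The restriction $N^\vee\to N'^\vee$ is surjective because $N'$ is saturated: then $N/N'$ is free, so $N'\hookrightarrow N$ splits and dualizing a split injection gives a split surjection. Consequently $\operatorname{coker}(N^\vee\to N_0^\vee)=\operatorname{coker}(N'^\vee\to N_0^\vee)$, and this last map is injective with finite cokernel; since for a square integer matrix the Smith normal forms of the matrix and its transpose coincide, $\operatorname{coker}(N'^\vee\to N_0^\vee)\cong N'/N_0$, a finite abelian group of order $[N':N_0]=\operatorname{mult}(\tau)$. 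The only genuinely delicate point is this final duality bookkeeping (keeping straight that restriction corresponds to the transpose and that saturation makes the outer map surjective); the remaining steps are direct manipulations of the toric exact sequence.
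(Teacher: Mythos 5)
Your proposal is correct and follows essentially the same route as the paper: part (1) is identical (primitivity of $u_i$ yields $m\in M$ with $\langle m,u_i\rangle=1$ and hence the relation expressing $\alpha_i$ in terms of the other $\alpha_j$), and for part (2) the paper likewise reduces $\mathcal{C}$ to the cokernel of $\phi\colon M\to\bZ^t$, $m\mapsto(\langle m,u_{i_1}\rangle,\dots,\langle m,u_{i_t}\rangle)$ — via a snake-lemma diagram rather than your third-isomorphism-theorem step, which amounts to the same computation — and then observes that $\phi$ factors through the surjection $M\twoheadrightarrow M_\tau$ so that the cokernel has order $\operatorname{mult}(\tau)$. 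Your final paragraph merely spells out the duality bookkeeping (saturation, split surjection, Smith normal form) that the paper leaves implicit in that last sentence.
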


\begin{proof}
(1) Since $u_i\in N$ is primitive, there exists $m\in M$ 
such that $\langle m,u_i\rangle=1$ holds, where $M:=N^\vee$. 
Thus we get 
\[
\alpha_i=-\sum_{j\in\left\{1,\dots,k\right\}\setminus\{i\}}
\langle m, u_j\rangle \alpha_j. 
\]

(2) 
We may assume that 
$\left\{i_1,\dots,i_t\right\}=\left\{1,\dots,t\right\}$. 
Let $M_\tau$ be the dual of 
\[
N_\tau:=N\cap \left(\bR u_1+\cdots+\bR u_t\right).
\]
Consider the homomorphism 
\begin{eqnarray*}
\phi\colon M &\to& \bZ^t \\
m &\mapsto& \left(
\langle m,u_1\rangle,\dots,\langle m,u_t\rangle\right). 
\end{eqnarray*}
Since $\phi$ naturally factors through the projection 
$M\twoheadrightarrow M_\tau$, the cokernel $\mathcal{C}'$ 
of $\phi$ is a finite group of order 
$\operatorname{mult}(\tau)$. 
Let us consider the diagram 
\[
\xymatrix{
0 \ar[r] & 0 \ar[r] \ar[d] & 
M \ar[r]^-{\operatorname{id}} \ar[d]^-\iota & M \ar[r] 
\ar[d]^-\phi& 0\\
0 \ar[r] & \ar[r] \bZ^{k-t} \ar[r] & \bZ^k \ar[r]_p & \bZ^t \ar[r] & 0,
}
\]
where 
$\iota(m):=\left(\langle m,u_i\rangle\right)_{1\leq i\leq k}$ 
and $p$ is the canonical projection to first $t$-th 
components. By the snake lemma, we get the following 
exact sequence: 
\[
\bZ^{k-t}\to \operatorname{Cl}\left(\bP\right)
\to \mathcal{C}'\to 0.
\]
The image of $\bZ^{k-t}\to \operatorname{Cl}\left(\bP\right)$
is nothing but $\sum_{j=t+1}^k\bZ\alpha_j$. 
Thus we have an isomorphism $\mathcal{C}\simeq\mathcal{C}'$. 
\end{proof}

\begin{proposition}\label{proposition_dimca}
Under the assumption in Definition \ref{definition_q-sm}, 
assume that $X\in\left|\cO_\bP(\beta)\right|$ is a 
quasi-smooth hypersurface in $\bP$. Then the following are 
equivalent: 
\begin{enumerate}
\item 
The divisor $X\subset\bP$ is well-formed. 
\item 
For any $\tau=\bR_{\geq 0}u_{i_1}
+\bR_{\geq 0}u_{i_2}\in\Theta(2)$, we have 
\[
\beta\in\sum_{j\in\left\{1,\dots,k\right\}\setminus\{
i_1,i_2\}}\bZ\alpha_j
\]
in $\operatorname{Cl}\left(\bP\right)$. 
\end{enumerate}
\end{proposition}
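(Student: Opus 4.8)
The plan is to prove the equivalence by a local, combinatorial analysis of the torus-invariant strata of codimension two. First I would reformulate well-formedness cone by cone. Since $\bP=X_\Theta$ is simplicial, its singular locus is the union of the orbit closures $V(\sigma)$ over the non-smooth cones $\sigma\in\Theta$, and it has codimension $\geq 2$; its codimension-two part is precisely $\bigcup_{\tau}V(\tau)$, where $\tau$ ranges over the $2$-dimensional cones with $\operatorname{mult}(\tau)>1$. Because $X$ is a divisor, the only way $\operatorname{Sing}\bP\cap X$ can acquire a component of codimension $1$ in $X$ is if such a $V(\tau)$ is contained in $X$: indeed, if $V(\tau)\not\subset X$ then $V(\tau)\cap X$ is a proper closed subset of the irreducible $(s-2)$-fold $V(\tau)$, hence of codimension $\geq 2$ in $X$, while the strata coming from higher-dimensional cones already have codimension $\geq 2$ in $X$. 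Thus
\[
X \text{ is well-formed} \iff V(\tau)\not\subset X \text{ for every } \tau\in\Theta(2) \text{ with } \operatorname{mult}(\tau)>1.
\]

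Next I would bring both conditions to the same index set. For $\tau=\bR_{\geq 0}u_{i_1}+\bR_{\geq 0}u_{i_2}\in\Theta(2)$, Lemma \ref{lemma_cl-toric}(2) says that $\mathcal{C}_\tau:=\operatorname{Cl}(\bP)/\sum_{j\notin\{i_1,i_2\}}\bZ\alpha_j$ is finite of order $\operatorname{mult}(\tau)$; in particular $\mathcal{C}_\tau=0$ when $\operatorname{mult}(\tau)=1$, so condition (2) holds automatically for smooth $2$-cones. Hence (2) is equivalent to its restriction to the cones with $\operatorname{mult}(\tau)>1$, and by the previous paragraph it suffices to prove, for each fixed such $\tau$,
\[
V(\tau)\not\subset X \iff \beta\in\sum_{j\notin\{i_1,i_2\}}\bZ\alpha_j.
\]
Here $V(\tau)$ is the closure of the orbit $\{z_{i_1}=z_{i_2}=0,\ z_j\neq 0\ (j\notin\{i_1,i_2\})\}$, so $V(\tau)\subset X=(f=0)$ holds if and only if every monomial of $f$ is divisible by $z_{i_1}$ or $z_{i_2}$; equivalently, $V(\tau)\not\subset X$ exactly when $f$ contains a monomial involving only the variables $z_j$ with $j\notin\{i_1,i_2\}$.

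The implication ``$\Rightarrow$'' is then immediate, as such a pure monomial has degree $\beta$ and lies in $\sum_{j\notin\{i_1,i_2\}}\bZ_{\geq 0}\alpha_j$. The implication ``$\Leftarrow$'' is the heart of the matter, and I expect it to be the main obstacle, since it is the only place where quasi-smoothness is genuinely used. Arguing by contradiction, I would assume $\beta\in\sum_{j\notin\{i_1,i_2\}}\bZ\alpha_j$ but $V(\tau)\subset X$, so every monomial of $f$ is divisible by $z_{i_1}$ or $z_{i_2}$. Choose a lift $\hat{p}\in\bA^k\setminus Z(\Theta)$ of a general point of $V(\tau)$, with $z_{i_1}=z_{i_2}=0$ and all remaining coordinates nonzero; that $\hat{p}\notin Z(\Theta)$ follows by picking a maximal cone containing $\tau$. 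Quasi-smoothness (Definition \ref{definition_q-sm}) forces the affine cone $(f=0)\subset\bA^k$ to be smooth at $\hat{p}$, i.e. $\nabla f(\hat{p})\neq 0$. Since every monomial of $f$ is divisible by $z_{i_1}$ or $z_{i_2}$, each partial $\partial f/\partial z_i$ with $i\notin\{i_1,i_2\}$ again lies in $(z_{i_1},z_{i_2})$ and so vanishes at $\hat{p}$; hence, for general such $\hat p$, one of $\partial f/\partial z_{i_1}$, $\partial f/\partial z_{i_2}$ is nonzero there.

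Regarding these two partials as polynomials in the variables $z_j$ ($j\notin\{i_1,i_2\}$) after setting $z_{i_1}=z_{i_2}=0$, at least one is not identically zero, which forces $f$ to contain a monomial of the form $z_{i_1}\prod_{j\notin\{i_1,i_2\}}z_j^{c_j}$ or $z_{i_2}\prod_{j\notin\{i_1,i_2\}}z_j^{c_j}$. In the first case, comparing degrees gives $\alpha_{i_1}\equiv\beta\pmod{\sum_{j\notin\{i_1,i_2\}}\bZ\alpha_j}$, so $\alpha_{i_1}\equiv 0$ in $\mathcal{C}_\tau$ by the assumption on $\beta$. But Lemma \ref{lemma_cl-toric}(1), applied with the index $i_2$, gives $\operatorname{Cl}(\bP)=\bZ\alpha_{i_1}+\sum_{j\notin\{i_1,i_2\}}\bZ\alpha_j$, so $\mathcal{C}_\tau$ is generated by the class of $\alpha_{i_1}$; therefore $\mathcal{C}_\tau=0$ and $\operatorname{mult}(\tau)=1$, contradicting $\operatorname{mult}(\tau)>1$. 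The case of a monomial divisible by $z_{i_2}$ is symmetric, using Lemma \ref{lemma_cl-toric}(1) with the index $i_1$. This contradiction yields $V(\tau)\not\subset X$ and completes the equivalence. The only points needing care are the identification of the codimension-two part of $\operatorname{Sing}\bP$ and the verification that the chosen lift $\hat{p}$ lies in $\bA^k\setminus Z(\Theta)$, so that the quasi-smoothness hypothesis truly applies at $\hat p$.
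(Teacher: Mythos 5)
Your argument is correct and follows essentially the same route as the paper: reduce to the $2$-cones $\tau$ with $\operatorname{mult}(\tau)>1$ (for which condition (2) is the only nontrivial one), characterize $V(\tau)\subset X$ by the absence of a monomial in the remaining variables, and use quasi-smoothness at a lift of a point of $V(\tau)$ to produce a monomial linear in $z_{i_1}$ or $z_{i_2}$, then derive the contradiction from Lemma \ref{lemma_cl-toric}. Your phrasing of the final step via $\mathcal{C}_\tau=0\Rightarrow\operatorname{mult}(\tau)=1$ is just a contrapositive repackaging of the paper's appeal to Lemma \ref{lemma_cl-toric}(1), and your explicit identification of the codimension-two part of $\operatorname{Sing}\bP$ fills in a step the paper leaves implicit.
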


\begin{proof}
Assume that 
$\tau=\bR_{\geq 0}u_1+\bR_{\geq 0}u_2\in\Theta(2)$ 
and satisfies that $\operatorname{mult}(\tau)>1$. 
By Lemma \ref{lemma_cl-toric} (2), we have 
\[
\sum_{i=3}^k\bZ\alpha_i\subsetneq 
\operatorname{Cl}\left(\bP\right). 
\]
We can uniquely write 
\[
f=f_0\left(z_3,\dots,z_k\right)
+z_1g_1\left(z_3,\dots,z_k\right)
+z_2g_2\left(z_3,\dots,z_k\right)
+\sum_{i+j\geq 2}z_1^iz_2^jg_{i,j}\left(z_3,\dots,z_k\right).
\]
The non-well formedness condition $V(\tau)\subset X$ 
is equivalent to $f_0=0$. 

If $\beta\not\in\sum_{i=3}^k\bZ\alpha_i$, then we must have 
$f_0=0$. 
Conversely, assume that $f_0=0$. We can take a point 
$p=(0,0,p_3,\dots,p_k)\in\bA^k\setminus 
Z\left(\Theta\right)$ such that $p\in (f=0)$. 
Since 
\[
\frac{\partial f}{\partial z_i}(p)=0
\]
for any $3\leq i\leq k$, we have 
$g_1\left(p_3,\dots,p_k\right)\neq 0$ or 
$g_2\left(p_3,\dots,p_k\right)\neq 0$ since $X\subset\bP$ is 
quasi-smooth. This implies that 
\[
\beta-\alpha_1\text{ or }
\beta-\alpha_2\in\sum_{i=3}^k\bZ\alpha_i.
\]
We may assume that $\beta-
\alpha_2\in\sum_{i=3}^k\bZ\alpha_i$. 
If $\beta\in\sum_{i=3}^k\bZ\alpha_i$, then we have 
$\alpha_2\in\sum_{i=3}^k\bZ\alpha_i$. Thus we have 
$\sum_{i=2}^k\bZ\alpha_i\subsetneq\operatorname{Cl}
\left(\bP\right)$, a contradiction to 
Lemma \ref{lemma_cl-toric} (1). Thus we must have 
$\beta\not\in\sum_{i=3}^k\bZ\alpha_i$, and then 
we immediately get the desired equivalence. 
\end{proof}

\begin{example}\label{example_irrelevant}
\begin{enumerate}
\item 
Assume that $(a_0,\dots,a_s)$ is well-formed, and let $\Sigma$ in $N_\bR$ 
and let $\bP:=\bP(a_0,\dots,a_s)$ as in Definitions \ref{definition_wps}
and \ref{definition_q-sm}. 
Then $Z(\Sigma)=\{0\}\subset\bA^{s+1}_{x_0\cdots x_s}$
holds. By \cite[Proposition 2.3]{PST17}, if $s\geq 4$ and if $X\subset\bP$ 
is a quasi-smooth and well-formed hypersurface, then the class group 
of $X$ is freely generated by $\cO_\bP(1)|_X$. 
The sheaf is often denoted by $\cO_X(1)$ in this paper. 
Moreover, if we further assume that $X$ is Fano (i.e., $-K_X$ is ample), 
then the \emph{index} of $X$ 
is defined to be the positive integer $\iota$ satisfying 
$\cO_X(-K_X)\simeq\cO_X(\iota)$. By Proposition \ref{proposition_diff}, 
the index of $X$ 
is equal to $\sum_{i=0}^sa_i-d$, where $d$ is the degree of $X$. 
\item 
The irrelevant locus $Z\left(\tilde{\Sigma}\right)$ as in Definition \ref{definition_q-sm} of the fan $\tilde{\Sigma}$ as in Definition \ref{definition_swb} can be described as  
\[
Z\left(\tilde{\Sigma}\right)
=\left(x_0=\cdots=x_r=0\right)\cup\left(y_{r+1}=\cdots=y_s=z=0\right)
\subset\bA^{s+2}_{x_0\cdots x_r y_{r+1}\cdots y_s z}.
\]
\end{enumerate}
\end{example}

\begin{remark}\label{remark_cut-qs}
\begin{enumerate}
\item 
Under the assumption of Proposition \ref{proposition_divisor_wps}, assume 
that $(a_1,\dots,a_s)$ is well-formed, and take $X:=(f=0)\subset\bP$
as in Proposition \ref{proposition_divisor_wps} (2). 
Take a general $\varphi\in H^0\left(\bP,\cO_\bP(a_0)\right)$. 
By Lemma \ref{lemma_aut-wps}, there exists $\iota\in\operatorname{Aut}\bP$ 
such that $\iota^*D_0=(\varphi=0)$. It is trivial that the divisor 
\[
X|_{(\varphi=0)}\subset (\varphi=0)\simeq \bP(a_1,\dots,a_s)
\]
is quasi-smooth if and only if for any point $p\in (f=\varphi=0)\subset
\bA^{s+1}\setminus\{0\}$ the rank of the Jacobian matrix 
$J(f,\varphi)(p)$ is $2$, where 
\[
J(f,\varphi)(p):=\begin{pmatrix}
\frac{\partial f}{\partial x_0}(p) & \cdots & 
\frac{\partial f}{\partial x_s}(p)\\
\frac{\partial \varphi}{\partial x_0}(p) & \cdots & 
\frac{\partial \varphi}{\partial x_s}(p)
\end{pmatrix}.
\]
(We sometimes write $J\left(X, \varphi\right)(p)$ or $J\left(X, 
(\varphi=0)\right)(p)$ in place of $J(f,\varphi)(p)$ if we are only 
interested in its rank.)
\item 
Under the assumption in Corollary \ref{corollary_cut-swb}, take a 
nonzero homogeneous element 
$f\in\bC\left[x_0,\dots,x_r,y_{r+1},\dots,y_s,z\right]$ and set 
$X:=(f=0)\subset\tilde{\bP}$. Take a general $\varphi'\in 
H^0\left(\bP',\cO_{\bP'}(a'_0)\right)$. By Lemma \ref{lemma_aut-swb}, 
there exists $\tilde{\iota}\in \operatorname{Aut}\tilde{\bP}$ such that 
$\tilde{\iota}^*\left(\pi^*\varphi'=0\right)=\tilde{D}_0$ holds. 
It is also trivial that the Weil divisor 
\[
X|_{\tilde{\iota}^*\left(\pi^*\varphi'=0\right)}\subset
\tilde{\iota}^*\left(\pi^*\varphi'=0\right)\simeq\tilde{D}_0
\]
is quasi-smooth if and only if for any point 
\[
p\in \left(f=\pi^*\varphi'=0\right)\subset\bA^{s+1}\setminus 
Z\left(\tilde{\Sigma}\right), 
\]
the rank of the Jacobian matrix $J\left(f,\pi^*\varphi'\right)(p)$
is equal to $2$. 
\end{enumerate}
\end{remark}

\begin{proposition}\label{proposition_strict-transform}
Under the assumption in Setup \ref{setup:swb}, take 
$f\in H^0\left(\bP,\cO_{\bP}(d)\right)\setminus\{0\}$ with 
\[
f=\sum_{\substack{I=(m_0,\dots,m_r)\in\bZ^{r+1}_{\geq 0}\\
J=(m_{r+1},\dots,m_s)\in\bZ_{\geq 0}^{r-s}}}c_{I,J}x^Ix^J, 
\]
where $c_{I,J}\in\bC$, $x^I:=x_0^{m_0}\cdots x_r^{m_r}$ and 
$x^J:=x_{r+1}^{m_{r+1}}\cdots x_s^{m_s}$. Set $X:=(f=0)\subset\bP$. 
We also set 
\[
\deg'' I:=\sum_{i=0}^r a''_im_i,\quad
\deg'' J:=\sum_{j=r+1}^s a''_jm_j.
\]
Consider the values 
\[
d_0:=\min\left\{\deg''I\,\,|\,\,c_{I,J}\neq 0\right\}, \quad
d'_0:=\max\left\{\deg''J\,\,|\,\,c_{I,J}\neq 0\right\}. 
\]
Obviously, we have $hd_0+h'd'_0=d$. 
Then, the strict transform $\tilde{X}:=\psi^{-1}_*X\subset\tilde{\bP}$ 
is defined by 
\[
\tilde{f}=\sum_{I,J}c_{I,J}x^Iy^Jz^{\frac{\deg'' I-d_0}{h'}}
\in H^0\left(\tilde{\bP}, \cO_{\tilde{\bP}}(d_0,d'_0)\right).
\]
\end{proposition}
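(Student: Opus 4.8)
The plan is to verify three things in turn: that $\tilde f$ is a genuine homogeneous element of the Cox ring of degree $(d_0,d_0')$, that $\psi^* f$ factors as a power of the exceptional variable $z$ times $\tilde f$, and that the residual factor $\tilde f$ cuts out exactly the strict transform. First I would check that $\tilde f$ is well-defined. Every monomial of $f$ has the same degree $d=h\deg''I+h'\deg''J$ in $\Cl(\bP)\simeq\bZ$; since this quantity is constant over the monomials with $c_{I,J}\neq 0$, the monomial realizing $d_0=\min\deg''I$ is the one realizing $d_0'=\max\deg''J$, and subtracting the two expressions for $d$ yields at once the ``obvious'' relation $hd_0+h'd_0'=d$ together with $h(\deg''I-d_0)=h'(d_0'-\deg''J)$ for every monomial. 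As $(a_0,\dots,a_s)$ is well-formed we have $\gcd(a_0,\dots,a_s)=1$, hence $\gcd(h,h')=1$; thus $h'\mid(\deg''I-d_0)$, and the exponent $(\deg''I-d_0)/h'$ is a nonnegative integer (nonnegative by the definition of $d_0$). Using the grading $\deg x_i=(a_i'',0)$, $\deg y_j=(0,a_j'')$, $\deg z=(-h',h)$ of Definition \ref{definition_cl}, a direct computation shows that each monomial $x^Iy^Jz^{(\deg''I-d_0)/h'}$ has degree $(d_0,d_0')$, so indeed $\tilde f\in H^0(\tilde{\bP},\cO_{\tilde{\bP}}(d_0,d_0'))$.

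Next I would compute the pullback of $f$. By Lemma \ref{lemma_cl-cl} we have $\psi^*D_i=\tilde D_i+\frac{a_i}{hh'}\bE$ for $0\le i\le r$ and $\psi^*D_j=\tilde D_j$ for $r+1\le j\le s$. At the level of (possibly $\bQ$-Cartier) sections this translates into $\psi^* x_i=x_iz^{a_i/(hh')}$ and $\psi^* x_j=y_j$, since $\ddiv(x_iz^{a_i/(hh')})=\tilde D_i+\frac{a_i}{hh'}\bE=\psi^*D_i$ and $\ddiv(y_j)=\tilde D_j=\psi^*D_j$. Substituting, a monomial $x^Ix^J$ pulls back to $x^Iy^Jz^{\sum_i m_ia_i/(hh')}=x^Iy^Jz^{\deg''I/h'}$, whence
\[
\psi^* f=\sum_{I,J}c_{I,J}\,x^Iy^Jz^{\deg''I/h'}=z^{d_0/h'}\,\tilde f .
\]
The fractional power $z^{d_0/h'}$ simply records the $\bQ$-divisor $\frac{d_0}{h'}\bE$, so that $\psi^* X=\frac{d_0}{h'}\bE+(\tilde f=0)$.

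Finally I would identify $(\tilde f=0)$ with the strict transform. The minimal power of $z$ occurring in $\tilde f$ is attained at a monomial with $\deg''I=d_0$, which exists by the definition of $d_0$; hence $z\nmid\tilde f$ and $\bE\not\subset(\tilde f=0)$. On the dense open set $\tilde{\bP}\setminus\bE$, where $z$ is a unit and $\psi$ restricts to an isomorphism onto $\bP\setminus\psi(\bE)$, the displayed identity gives $(\tilde f=0)=\psi^{-1}(X)$; since neither $(\tilde f=0)$ nor $\psi^{-1}_*X$ contains $\bE$ and the two divisors agree on this open set, they coincide, so $\tilde X=\psi^{-1}_*X=(\tilde f=0)$. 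The one genuinely delicate point is the passage $\psi^* x_i=x_iz^{a_i/(hh')}$, whose exponent need not be an integer: to make it fully rigorous one either reads it as the identity of $\bQ$-divisors used above, or pulls everything back along the finite cover $\tau\colon\bP^s\to\bP$ of Proposition \ref{proposition_finite-wps} (with $\bar a_i=1$, $e_i=a_i$), where by Lemma \ref{lemma_finite-swb} and Example \ref{example_typical} the blowup becomes the ordinary blowup of $\bP^s$ along a linear subspace and the strict-transform computation involves only integral exponents; the stated formula then descends.
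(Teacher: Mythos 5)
Your proposal is correct and follows essentially the same route as the paper's (much terser) proof: homogeneity of $\tilde f$ via $\gcd(h,h')=1$ and the identity $h(\deg''I-d_0)=h'(d_0'-\deg''J)$, the observation that $z\nmid\tilde f$, and the identification of $(\tilde f=0)$ with $\psi^{-1}_*X$ using Lemma \ref{lemma_cl-cl}. Your extra care with the fractional exponent $z^{a_i/(hh')}$ (reading it as a $\bQ$-divisor identity or descending from the finite cover of Proposition \ref{proposition_finite-wps}) is a legitimate elaboration of what the paper leaves implicit.
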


\begin{proof}
Since $\gcd(h,h')=1$, $\deg'' I-d_0$ is always divisible by $h'$. 
Moreover, we have 
\[
\frac{\deg'' I-d_0}{h'}=\frac{d'_0-\deg'' J}{h}. 
\]
Thus $\tilde{f}$ is homogeneous. We can directly see 
from Lemma \ref{lemma_cl-cl} that 
$\tilde{f}$ is not divisible by $z$ and 
$\psi_*\left(\tilde{f}=0\right)=X$.
Thus we get the assertion. 
\end{proof}

We remark that $\tilde{X}$ may not be quasi-smooth in general 
even when $X$ is quasi-smooth.

\begin{example}\label{example_non-qsm}
\begin{enumerate}
\item 
Assume that $s=3$, $r=2$, $(a_0,a_1,a_2,a_3)=(3,1,1,1)$. 
Let us consider the weighted homogeneous polynomial  
\[
f:=x_3^2x_1^2+x_3x_0+x_1^4+x_2^4
\]
of degree $4$. We can directly check that the hypersurface $X\subset
\bP$ defined by $f$ is quasi-smooth. However, since its strict 
transform $\tilde{X}\subset\tilde{\bP}$ is defined by 
\[
\tilde{f}=y^2x_1^2+zyx_0+z^2(x_1^4+x_2^4)
\]
of degree $(2,1)$ (where we set $y:=y_3$),
the $\tilde{X}\subset\tilde{\bP}$ is not quasi-smooth at 
the point $\left[x_0:x_1:x_2;y:z\right]=\left[0:0:1; 1:0\right]$. 
\item 
Assume that $s=4$, $r=2$, 
$(a_0,a_1,a_2,a_3,a_4)=(2,3,4,4,5)$. 
Let us consider the weighted homogeneous polynomial 
\[
f=x_0x_4^2+x_1x_3x_4+x_2x_3^2+x_0^6+x_1^4+x_2^3
\]
of degree $12$. We can directly check that the hypersurface 
$X\subset\bP$ defined by $f$ is quasi-smooth. Moreover, 
the $X$ contains the center $(x_0=x_1=x_2=0)\subset\bP$ of 
the standard weighted blowup. 
However, since its strict transform 
$\tilde{X}\subset\tilde{\bP}$ is defined by 
\[
\tilde{f}=x_0y_4^2+x_1y_3y_4z+x_2y_3^2z^2+
\left(x_0^6+x_1^4+x_2^3\right)z^{10}
\]
of degree $(2,10)$, the $\tilde{X}\subset\tilde{\bP}$ 
is not quasi-smooth at any point 
$\left[x_0:x_1:x_2;1:0:0\right]$ with 
$(x_0,x_1,x_2)\in\bC^3\setminus\{0\}$. 
\end{enumerate}
\end{example}

However, when $s=r+1$ and $a_0\leq\cdots\leq a_s$, then 
we have the following affirmative result: 

\begin{proposition}\label{proposition_qsm-wbl}
Assume that $s=r+1$ (this implies that $h=1$ and $h'=a_s$) 
and $a_0\leq\cdots\leq a_s$. Then, if $X\subset\bP$ is a 
quasi-smooth hypersurface, then its strict transform 
$\tilde{X}\subset\tilde{\bP}$ is also quasi-smooth. 
If we further assume that $(a_0,\dots,a_r)$ is well-formed, then 
$\tilde{X}|_\bE\subset\bE\simeq\bP'$ is a quasi-smooth hypersurface. 
\end{proposition}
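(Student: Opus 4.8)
The plan is to verify quasi-smoothness of $\tilde X=(\tilde f=0)$ directly via the Jacobian criterion on the Cox cone of $\tilde{\bP}$, separating the locus where the exceptional coordinate $z$ is nonzero from the exceptional divisor $\bE=(z=0)$. Throughout I use that here $h=1$ and $h'=a_s$ (as recorded in the statement), and I write
\[
f=\sum_{m\ge 0}\phi_m(x_0,\dots,x_r)\,x_s^{m},
\]
where $\phi_m$ is weighted-homogeneous of degree $d-a_sm$ in $x_0,\dots,x_r$. With $d_0':=\max\{m:\phi_m\neq 0\}$ and $g:=\phi_{d_0'}$ (so $\deg g=d_0=d-a_sd_0'$), Proposition \ref{proposition_strict-transform} gives $\tilde f=\sum_m\phi_m\,y_s^{m}z^{d_0'-m}$, and I will exploit the resulting polynomial identity $\tilde f(x,y_s,z)=z^{d_0'}f(x,y_s/z)$. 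By Example \ref{example_irrelevant}(2) one has $Z(\tilde\Sigma)=(x_0=\dots=x_r=0)\cup(y_s=z=0)$, so quasi-smoothness of $\tilde X$ is equivalent to $\nabla\tilde f\neq 0$ on $(\tilde f=0)\setminus Z(\tilde\Sigma)$.

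On the locus $z\neq 0$ the morphism $\psi$ is an isomorphism onto $\bP\setminus\{P\}$, and this is visible from the identity above. At a point $p=(x,y_s,z)\notin Z(\tilde\Sigma)$ with $z\neq 0$ I would set $\xi:=(x,y_s/z)$; then $\xi\neq 0$ (otherwise $x=0$ and $p\in Z(\tilde\Sigma)$), and $\tilde f(p)=z^{d_0'}f(\xi)$ forces $f(\xi)=0$. Since $Z(\Sigma)=\{0\}$ and $X$ is quasi-smooth, $\nabla f(\xi)\neq 0$; differentiating the identity yields $\partial_{x_i}\tilde f(p)=z^{d_0'}(\partial_{x_i}f)(\xi)$ for $i\le r$ and $\partial_{y_s}\tilde f(p)=z^{d_0'-1}(\partial_{x_s}f)(\xi)$, so some partial of $\tilde f$ at $p$ is nonzero. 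This handles everything off $\bE$.

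The heart of the matter is $\bE$, and this is where the hypothesis $a_0\le\dots\le a_s$ is essential. Setting $z=0$ gives $\tilde f|_{\bE}=y_s^{d_0'}g(x)$, with $y_s\neq 0$ on $\bE$. If $P\notin X$ then $d_0=0$, $g$ is a nonzero constant, and $\tilde X\cap\bE=\emptyset$, so nothing is to be checked. If $P\in X$, I claim $\nabla g$ never vanishes. Indeed, quasi-smoothness of $X$ at the cone point $(0,\dots,0,1)$ over $P$ gives $\nabla f(0,\dots,0,1)\neq 0$; the Euler relation forces $\partial_{x_s}f(0,\dots,0,1)=0$, so $\hat f:=f(x_0,\dots,x_r,1)$ has a nonzero linear part, with a nonzero coefficient on some $x_{i_0}$. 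The monomials of $\hat f$ have weighted degrees lying in $d_0+a_s\bZ_{\ge 0}$, whence $a_{i_0}=d_0+ka_s$ for some $k\ge 0$; since $d_0\ge 1$ and $a_{i_0}\le a_s$ (this is exactly where the ordering is used), necessarily $k=0$ and $a_{i_0}=d_0$. Thus $x_{i_0}$ is a linear monomial of $g=\phi_{d_0'}$, and because $\deg g=d_0=a_{i_0}$ the variable $x_{i_0}$ occurs in $g$ only in this one monomial; hence $\partial_{x_{i_0}}g$ is its nonzero coefficient, a nonzero constant. Consequently, at any $p=(x^\circ,y_s^\circ,0)\in\bE$ with $g(x^\circ)=0$ we get $\partial_{x_{i_0}}\tilde f(p)=(y_s^\circ)^{d_0'}\partial_{x_{i_0}}g(x^\circ)\neq 0$, which finishes quasi-smoothness of $\tilde X$.

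For the last assertion I would assume $(a_0,\dots,a_r)$ well-formed, so that $g_0=\dots=g_r=1$, $\bE\simeq\bP'=\bP(a_0,\dots,a_r)$, and by Remark \ref{remark_section} the restriction $\tilde X|_{\bE}$ is the Weil divisor cut out by $\tilde f(x_0,\dots,x_r,1,0)=g(x_0,\dots,x_r)$ in these coordinates. When $P\in X$ this is a genuine hypersurface ($\deg g=d_0\ge 1$), and the statement $\nabla g\neq 0$ proved above is precisely its quasi-smoothness (when $P\notin X$ the restriction is empty). The only delicate step in the whole argument is the weighted-degree bookkeeping on $\bE$: without $a_{i_0}\le a_s$ the linear term of $\hat f$ may land in a higher layer $\phi_{d_0'-k}$ instead of in $g$, so that $g$ becomes a non-reduced or singular tangent cone—exactly the failure displayed in Example \ref{example_non-qsm}(1). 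I therefore expect this weighted-degree comparison, rather than the off-$\bE$ transport of quasi-smoothness, to be the main obstacle.
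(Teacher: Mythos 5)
Your proof is correct and follows essentially the same route as the paper's: decompose $f$ by powers of $x_s$, transport quasi-smoothness from $X$ on the locus $z\neq 0$, and on $\bE$ use the ordering $a_0\le\cdots\le a_s$ to force the quasi-linear layer to be the top coefficient $g=f_{d-h'k}$, whose restriction defines $\tilde X|_\bE$. The only cosmetic differences are your explicit substitution identity $\tilde f(x,y_s,z)=z^{d_0'}f(x,y_s/z)$ for the off-$\bE$ case and your use of the Euler relation to rule out $\partial_{x_s}f(0,\dots,0,1)\neq 0$, where the paper instead argues directly that the index $j$ of the quasi-linear layer must equal $k$.
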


\begin{proof}
Let $f$ be the defining equation of $X$ and set $d:=\deg f$. 
We can uniquely write 
\[
f=x_{r+1}^k f_{d-h'k}(x_0,\dots,x_r)+x_{r+1}^{k-1}f_{d-h'(k-1)}
(x_0,\dots,x_r)+\cdots+f_d(x_0,\dots,x_r),
\]
where $f_{d-h'j}(x_0,\dots,x_r)$ is a weighted homogeneous polynomial 
of degree $d-h'j$ for any $0\leq j\leq k$ with $f_{d-h'k}\neq 0$. 
Then $\tilde{X}\subset\tilde{\bP}$ is defined by the equation 
\[
\tilde{f}=y^kf_{d-h'k}(x_0,\dots,x_r)+zy^{k-1}f_{d-h'(k-1)}
(x_0,\dots,x_r)+\cdots+z^kf_d(x_0,\dots,x_r), 
\]
where we set $y:=y_{r+1}$. 

Assume that $d=h'k$. Then we may assume that $f_{d-h'k}=1$. 
Since $[0:\cdots:0:1]\not\in X$, it is trivial that $\tilde{X}$ is 
quasi-smooth since so is $X$. Note that $\tilde{X}|_\bE$ is the empty set in this case. 

Therefore, we can assume $d>h'k$ in the 
remaining case. In this case, $[0:\cdots:0:1]\in X$. 
By the quasi-smoothness of $X$ at the point, there exists 
$0\leq j\leq k$ such that $f_{d-h'j}$ is quasi-linear (in the sense 
of \cite[Definition 2.17]{KOW23}). 
On the other hand, since $a_0\leq\cdots\leq a_r\leq h'$, the $j$ 
must be equal to $k$. 
(Indeed, there exists $0\leq i\leq r$ such that $d-h'j=a_i$. 
If $j<k$, then 
\[
0<d-h'k\leq d-h'j-h'=a_i-h'\leq 0,
\]
a contradiction.) Take any point 
$
\tilde{p}=[x_0:\cdots:x_r; y:z]\in\tilde{X} 
$. 
If $z\neq 0$, then it is trivial that $\tilde{X}$ is quasi-smooth 
at $\tilde{p}$ follows from the quasi-smoothness of $X$ at 
the point $[x_0:\cdots:x_r:y]\in X$. Thus we may assume that 
$z=0$. Since $f_{d-h'k}$ is quasi-linear, there exists 
$0\leq i\leq r$ such that $d-h'k=a_i$ and the coefficient of $x_i$ in the polynomial 
$f_{d-h'k}$ is nonzero. Since 
\[
\frac{\partial\tilde{f}}{\partial x_i}\left(\tilde{p}\right)
=y^k\frac{\partial f_{d-h'k}}{\partial x_i}\left(\tilde{p}\right)\neq 
0, 
\]
the divisor $\tilde{X}$ is quasi-smooth at $\tilde{p}$. 
If we further assume that $(a_0,\dots,a_r)$ is well-formed, then 
$\tilde{X}|_\bE\subset\bE\simeq\bP'=\bP(a_0,\dots,a_r)$ is 
defined by $f_{d-h'k}(x_0,\dots,x_r)$ which is quasi-linear 
%in the sense of \cite[Definition 2.17]{KOW23} 
by the quasi-smoothness of $X$. Thus the assertion follows 
(cf.\ Lemma \ref{lemma_aut-wps}). 
\end{proof}

\section{Cutting process}\label{section_cutting}
We consider hyperplane sections of quasi-smooth hypersurfaces. 
To begin with, let us recall the following, which is a special case of the 
\emph{Abban--Zhuang's method} that we have already seen in 
Theorem \ref{theorem:AZ}. 

\begin{lemma}[{cf.\ \cite[Theorem 4.12]{ST24}}]\label{lem:cutting}
Let $X$ be an $n$-dimensional projective variety, 
let $\eta\in X$ be a scheme-theoretic point, let $\Delta$ be an effective 
$\bQ$-Weil divisor on $X$, let $L$ be a big $\Q$-line bundle on $X$, let 
$a\in\bQ_{>0}$, and let $Y\in|a L|$ be an irreducible and 
reduced divisor such that 
$\eta\in Y$ and the pair $\left(X,\Delta+Y\right)$ is plt at $\eta\in X$. 
Take an effective $\bQ$-Weil divisor $\Delta_Y$ on $Y$ such that 
\[
\left(K_X+\Delta+Y\right)|_Y=K_Y+\Delta_Y
\]
holds around $\eta$. 
Then we have 
\[
\delta_\eta\left(X,\Delta; L\right)\geq\min\left\{
a(n+1),\quad
\frac{n+1}{n}\delta_\eta\left(Y,\Delta_Y;L|_Y\right)\right\}. 
\]
\end{lemma}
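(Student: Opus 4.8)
The plan is to deduce the statement from the Abban--Zhuang inequality (Theorem \ref{theorem:AZ}) applied to the one-step plt flag $X = Y_0 \triangleright Y_1 := Y$. First I would check that this is a legitimate plt flag localized at $\eta$: since $Y$ is already a prime $\bQ$-Cartier divisor on $X$, one may take the associated plt blowup $\sigma_0$ to be the identity, and the hypothesis that $(X, \Delta + Y)$ is plt at $\eta$ guarantees both that $Y$ is of plt-type near $\eta$ and that $\coeff_Y \Delta = 0$, so that $A_{X,\Delta}(Y) = 1$ and the induced klt pair $(Y_1, \Delta_1)$ coincides with $(Y, \Delta_Y)$ around $\eta$ by the adjunction formula defining $\Delta_Y$. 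As every quantity entering $\delta_\eta$ depends only on divisors centered at $\eta$, the local plt hypothesis suffices (this is exactly the content of \cite[Theorem 4.12]{ST24}). Theorem \ref{theorem:AZ} with $j = 1$ then yields
\[
\delta_\eta(X,\Delta; L) \geq \min\left\{ \frac{A_{X,\Delta}(Y)}{S(L; Y)},\ \delta_\eta\left(X, \Delta \triangleright Y; L\right) \right\},
\]
since the only point $\eta' \in Y$ with $\sigma_0(\eta') = \eta$ is $\eta$ itself.

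For the first slope I would compute $S(L; Y)$ directly. Because $Y \in |aL|$, one has $\sigma_0^*L - xY \sim_\bQ (1 - xa)L$, hence $\vol_X(L - xY) = (1-xa)^n \vol_X(L)$ for $0 \le x \le 1/a$ and $0$ otherwise. Therefore
\[
S(L; Y) = \int_0^{1/a} (1 - xa)^n\, dx = \frac{1}{a(n+1)},
\]
so that $A_{X,\Delta}(Y)/S(L;Y) = a(n+1)$, matching the first term of the asserted minimum.

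The second slope requires comparing the refined invariant $S(L; Y \triangleright F)$ with the honest invariant $S(L|_Y; F)$ on $Y$, and this comparison is where the factor $\tfrac{n+1}{n}$ arises. By the Okounkov-body description (Definition \ref{definition_plt-flag}), $S(L; Y \triangleright F)$ is the $F$-barycentric coordinate of $\mathbf{O}_{Y_\bullet}(L)$, whose slice over $\{\ord_Y = t\}$ is the Okounkov body of the restriction of $(L - tY)|_Y \sim_\bQ (1-ta)L|_Y$ (cf.\ \cite{LM09}). Writing $s = 1 - ta$ and using the homogeneity $\vol_Y(sD) = s^{n-1}\vol_Y(D)$, the integration in the $t$-variable factors out as $\left(\int_0^{1/a} s^n\, dt\right)\big/\left(\int_0^{1/a}s^{n-1}\, dt\right) = \tfrac{n}{n+1}$, leaving precisely the $F$-barycenter of the Okounkov body of $L|_Y$ on $Y$. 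This gives $S(L; Y \triangleright F) = \tfrac{n}{n+1}\,S(L|_Y; F)$ for every prime divisor $F$ over $Y$ whose center contains $\eta$, and since $A_{Y_1,\Delta_1}(F) = A_{Y,\Delta_Y}(F)$, taking the infimum over such $F$ produces $\delta_\eta(X, \Delta \triangleright Y; L) = \tfrac{n+1}{n}\,\delta_\eta(Y, \Delta_Y; L|_Y)$. Substituting both slopes into the displayed inequality then gives the claim.

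The step I expect to be the genuine obstacle is the identity $S(L;Y\triangleright F) = \tfrac{n}{n+1}\,S(L|_Y;F)$: the slices of $\mathbf{O}_{Y_\bullet}(L)$ a priori record the \emph{restricted} series $\Image\left(H^0(X,\cdot)\to H^0(Y,\cdot)\right)$ rather than the complete series of $L|_Y$, so one must ensure these agree before the clean change of variables applies. I would handle this by invoking surjectivity of the restriction maps $H^0(X, m(a-j)L) \to H^0(Y, m(a-j)L|_Y)$ for $m \gg 0$ — valid since in the applications $L = \cO_X(1)$ is ample, by Serre vanishing — which forces the restricted and complete graded linear series to have equal volumes and hence equal $S$-invariants. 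The passage from the local plt hypothesis at $\eta$ to a bona fide application of Theorem \ref{theorem:AZ} is a secondary point, treated exactly as in \cite[Theorem 4.12]{ST24}; the remaining bookkeeping is the routine integral computation indicated above.
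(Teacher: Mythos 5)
Your overall architecture is exactly the paper's: apply Theorem \ref{theorem:AZ} to the one-step flag $X\triangleright Y$, compute $S(L;Y)=\tfrac{1}{a(n+1)}$ from $Y\sim_{\bQ}aL$ and the homogeneity of the volume, and reduce the second slope to $\delta_\eta(Y,\Delta_Y;L|_Y)$ via the identity $S(L;Y\triangleright F)=\tfrac{n}{n+1}S(L|_Y;F)$. The integral bookkeeping and the identification $A_{X,\Delta}(Y)=1$ are all as in the paper.

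The one place where your argument falls short of the stated generality is precisely the step you flag as the obstacle. You justify replacing the restricted graded linear series $\Image\bigl(H^0(X,\cdot)\to H^0(Y,\cdot)\bigr)$ by the complete series of $L|_Y$ via surjectivity of the restriction maps, obtained from Serre vanishing under the assumption that $L$ is ample. But the lemma is stated for $L$ merely big (and $X$ an arbitrary normal projective variety), where restriction maps need not be surjective and $H^1(X,(m-1)L)$ need not vanish; so as written your proof only covers the ample case. The paper closes this gap differently: it invokes the differentiability of the volume function \cite[Theorem B and Corollary 3.6]{BFJ} to show that the restricted series and the complete series of $(1-ta)L|_Y$ have the same asymptotic volumes, hence are \emph{asymptotically equivalent} in the sense of \cite[Definition 3.2, Lemma 3.3]{Fujita:2024aa}, which already forces equality of the $S$-invariants without any surjectivity. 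If you substitute that volume comparison for the Serre-vanishing argument, your proof becomes complete in the stated generality; everything else you wrote is correct.
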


\begin{proof}
The result is well-known to specialists. We give a proof for convenience. 
We may assume that $a=1$. Let $V_{\vec{\bullet}}$ be the Veronese 
equivalence class $H^0\left(\bullet L\right)$ of the complete linear 
series on $X$ associated to $L$ as in Definition \ref{defn:VeroneseEquiv},  
%(in the sense of \cite[Definition 2.1]{Fujita:2024aa})
and let 
$W_{\vec{\bullet}}:=V_{\vec{\bullet}}^{(Y)}$ be the refinement of 
$V_{\vec{\bullet}}$ by $Y$ as in Definition \ref{definition_plt-flag}. 
%(in the sense of \cite[Definition 2.7]{Fujita:2024aa}). 
By \cite[Theorem B and Corollary 3.6]{BFJ}, the $\bQ$-line bundle $L|_Y$ 
on $Y$ is big. Moreover, we have 
\[
\operatorname{vol}_X(L)=\operatorname{vol}_Y\left(L|_Y\right)
=\limsup_{k\to\infty}\frac{\dim\operatorname{Image}\left(
H^0\left(X,kL\right)\to H^0\left(Y, kL|_Y\right)\right)}{k^{n-1}/(n-1)!}. 
\]
In particular, by \cite[Lemma 3.3]{Fujita:2024aa}, 
the series $W_{\vec{\bullet}}$ is asymptotically equivalent to 
$\tilde{W}_{\vec{\bullet}}$ (cf. \cite[Definition 3.2]{Fujita:2024aa}), where $\tilde{W}_{\vec{\bullet}}$ is a $\bZ_{\ge 0}^2$-graded linear series defined by 
\[
\tilde{W}_{k,j}=\begin{cases}
H^0\left(Y, (k-j)L|_Y\right) & \text{if }0\leq j\leq k, \\
0 & \text{otherwise}. 
\end{cases}
\]
Take any prime divisor $F$ over $Y$ such that its center contains $\eta$. 
By \cite[Lemma 4.73]{Xubook} and \cite[Proposition 6.2]{Fujita:2024aa}, we have 
\begin{eqnarray*}
S\left(W_{\vec{\bullet}}; F\right)
&=&S\left(\tilde{W}_{\vec{\bullet}}; F\right)\\
&=&\frac{n}{\operatorname{vol}_Y(L|_Y)}\int_0^1\int_0^\infty
\operatorname{vol}_Y\left(x(L|_Y)-yF\right)dydx\\
&=&\frac{n}{\operatorname{vol}_Y(L|_Y)}\int_0^1x^n\int_0^\infty
\operatorname{vol}_Y\left(L|_Y-zF\right)dzdx\\
&=&\frac{n}{n+1}S\left(L|_Y; F\right). 
\end{eqnarray*}
This implies that 
\[
\delta_\eta\left(Y,\Delta_Y;W_{\vec{\bullet}}\right)
=\frac{n+1}{n}\delta_\eta\left(Y,\Delta_Y; L|_Y\right). 
\]
Since $A_{X,\Delta}(Y)=1$ and $S\left(L; Y\right)=\frac{1}{n+1}$, 
the assertion follows from \cite[Theorem 3.2]{AZ22}
(cf.\ Theorem \ref{theorem:AZ}; see also 
\cite[Theorem 12.3]{Fujita:2024aa}). 
\end{proof}

The following is a Bertini-type result, which is important in order to apply 
Abban--Zhuang's method (Lemma \ref{lem:cutting}) for quasi-smooth hypersurfaces. 

\begin{lemma}\label{lem:hyperplane_mod}
Let $n,r\in\bZ_{>0}$ with $r-1\leq n$. Take positive integers 
$1\leq a_0\leq\cdots\leq a_{n+1}$ with $a_r=1$, and set 
$\bP:=\bP\left(a_0,\dots,a_{n+1}\right)$. Take any quasi-smooth 
hypersurface $X\subset\bP$ and a point $p\in \bP$. Let $B_1^{\bP}$ and $B_{1,p}^{\bP}$ be as in Definition \ref{definition_Ba}. 
\begin{enumerate}
\item 
Assume that $\dim \left(B_1^\bP\cap X\right)\leq r-1$.
(For example, if $n\leq 2r-1$, then $\dim \left(B_1^\bP\cap X\right)
\leq r-1$.) 
Take a general $H\in 
\left|\cO_\bP(1)\right|$. 
Then, for any $q\in X\cap H$, the rank of the Jacobian matrix $J(X,H)(q)$ (cf. Remark \ref{remark_cut-qs}(1)) is equal to $2$. 
In particular, if $r\geq 2$, then $X|_H\subset 
H\simeq\bP(a_1,\dots,a_{n+1})$ is also quasi-smooth. 
\item 
Assume that $\dim\left(B_{1,p}^\bP\cap X\right)\leq r-2$.
(For example, if $n\leq 2r-3$, then we have $\dim\left( B_{1,p}^\bP\cap X\right)
\leq r-2$.) 
Take a general $H\in 
\left|\cO_\bP(1)_p\right|$. 
Then, for any $q\in X\cap H$, the rank of $J(X,H)(q)$ is equal to $2$. 
In particular, if $r\geq 2$, then $X|_H\subset 
H\simeq\bP(a_1,\dots,a_{n+1})$ is also quasi-smooth. 
\end{enumerate}
\end{lemma}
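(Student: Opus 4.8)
\emph{Proof plan.} The plan is to verify the Jacobian rank condition of Remark~\ref{remark_cut-qs}(1) and read off quasi-smoothness of $X|_H$ from it. First note that when $r\ge 2$ we have $a_0=\dots=a_r=1$, so $(a_1,\dots,a_{n+1})$ contains the two weights $a_1=a_2=1$ and is therefore well-formed; this is exactly the hypothesis under which Remark~\ref{remark_cut-qs}(1) identifies $H\simeq\bP(a_1,\dots,a_{n+1})$ and reduces the quasi-smoothness of $X|_H$ to the assertion that $J(X,H)(q)$ has rank $2$ at every $q\in(f=\ell=0)\subset\bA^{n+2}\setminus\{0\}$, where $f$ defines $X$ and $\ell$ defines $H$. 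Throughout I would work on the affine cone $C_X:=(f=0)\subset\bA^{n+2}$, which is smooth away from the origin precisely because $X$ is quasi-smooth, so $\nabla f(q)\neq 0$ for all $q\in C_X\setminus\{0\}$. Write $S:=\{i\mid a_i=1\}=\{0,\dots,c_1-1\}$, so that $H^0(\bP,\cO_\bP(1))$ is the space of linear forms in $\{x_i\}_{i\in S}$ and $B_1^\bP=(x_i=0,\ i\in S)$. Since $a_r=1$ forces $c_1\ge r+1$, we have $\dim|\cO_\bP(1)|=c_1-1\ge r$. As $\nabla\ell\neq 0$, the rank of $J(X,H)(q)$ drops below $2$ exactly when $\nabla f(q)$ is proportional to $\nabla\ell$, and I would split the points $q\in(f=\ell=0)\setminus\{0\}$ according to whether $\bar q\in B_1^\bP$ or not.

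For the points with $\bar q\notin B_1^\bP$, I would use Bertini. On the smooth locally closed set $C_X^\circ:=(C_X\setminus\{0\})\setminus(x_i=0,\ i\in S)$ the linear system $|\cO_\bP(1)|$ restricts to a base-point-free one, its base locus being the cone over $B_1^\bP$, which has been removed. Bertini's theorem in characteristic $0$ then gives that for a general $H$ the divisor $(\ell=0)\cap C_X^\circ$ is smooth, i.e.\ $J(X,H)(q)$ has rank $2$ at every such $q$. The points over the base locus are the genuine issue, and here the hypothesis on $\dim(B_1^\bP\cap X)$ enters. For $q$ with $\bar q\in B_1^\bP\cap X$ we have $x_i(q)=0$ for all $i\in S$, so $\ell(q)=0$ holds automatically for \emph{every} $\ell$; these points always lie on the section and cannot be moved off by Bertini. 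At such $q$, proportionality of $\nabla f(q)$ and $\nabla\ell$ forces $\tfrac{\partial f}{\partial x_j}(q)=0$ for all $j\notin S$ and $\big[(\tfrac{\partial f}{\partial x_i}(q))_{i\in S}\big]=[\ell]$ in $\bP^{c_1-1}$. Introducing
\[
\Xi:=\big\{\bar q\in X \ \big|\ \tfrac{\partial f}{\partial x_j}(q)=0\ \text{for all }j\notin S\big\},
\]
on which quasi-smoothness guarantees $(\tfrac{\partial f}{\partial x_i}(q))_{i\in S}\neq 0$, the Gauss-type map $\Psi\colon\Xi\to\bP^{c_1-1}$, $\bar q\mapsto\big[\tfrac{\partial f}{\partial x_0}(q):\dots:\tfrac{\partial f}{\partial x_{c_1-1}}(q)\big]$, is a morphism, and the bad $H$ are exactly those with $[\ell]\in\Psi(\Xi\cap B_1^\bP)$. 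Since
\[
\dim\Psi(\Xi\cap B_1^\bP)\le\dim(B_1^\bP\cap X)\le r-1<c_1-1=\dim|\cO_\bP(1)|,
\]
this image is not dense, so a general $H$ avoids it; combining with Bertini proves (1).

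For (2) I would run the same argument with $|\cO_\bP(1)_p|$ and $B_{1,p}^\bP$. If $p\in B_1^\bP$ then $|\cO_\bP(1)_p|=|\cO_\bP(1)|$ and $B_{1,p}^\bP=B_1^\bP$, so (2) follows from the argument for (1), the hypothesis being even stronger. If $p\notin B_1^\bP$, a coordinate change among the weight-$1$ variables lets me assume $p=[1:0:\dots:0]$, so that $|\cO_\bP(1)_p|=\bP(L_p)$ with $L_p$ the forms in $\{x_i\}_{i\in S}$ of vanishing $x_0$-coefficient, $\dim\bP(L_p)=c_1-2$, and $B_{1,p}^\bP=(x_i=0,\ i\in S\setminus\{0\})$. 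Bertini again handles $\bar q\notin B_{1,p}^\bP$, while for $q$ over $B_{1,p}^\bP\cap X$ (automatically on every admissible $H$) proportionality forces $\bar q\in\Xi_p:=\{\bar q\in X\mid \tfrac{\partial f}{\partial x_j}(q)=0\ \text{for all }j\notin S\setminus\{0\}\}$ and pins $[\ell]$ to the corresponding Gauss image; the bad $H$ form a set of dimension at most $\dim(B_{1,p}^\bP\cap X)\le r-2<c_1-2=\dim|\cO_\bP(1)_p|$, hence not dense. So a general $H$ works, proving (2).

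The step I expect to be the real obstacle is the analysis over the base locus: recognising that those $q$ lie on \emph{every} admissible $H$, so Bertini is useless there, and that the failure of the rank-$2$ condition is governed exactly by the image of a Gauss-type map whose source has dimension controlled by $\dim(B_1^\bP\cap X)$ (resp.\ $\dim(B_{1,p}^\bP\cap X)$). The strict inequalities $r-1<c_1-1$ and $r-2<c_1-2$, both consequences of $c_1\ge r+1$, are precisely what make the dimension count close.
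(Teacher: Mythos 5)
Your proposal is correct and follows essentially the same route as the paper: Bertini handles the points of $X\cap H$ lying off the base locus, and over $B_1^\bP\cap X$ (resp.\ $B_{1,p}^\bP\cap X$) one bounds the dimension of the image of a gradient map against $\dim|\cO_\bP(1)|=c_1-1$ (resp.\ $\dim|\cO_\bP(1)_p|=c_1-2$); your set $\Psi(\Xi\cap B_1^\bP)\subset\bP^{c_1-1}$ is exactly the paper's $\varphi(\tilde B_{1,X})\cap P$ under the identification of $P$ with $\bP^{c_1-1}$. The only cosmetic differences are that you first cut down to the locus $\Xi$ where the heavy partials vanish (so your Gauss map descends to $X$ rather than being defined on a lift to the affine cone) and you normalize $p=[1:0:\cdots:0]$ instead of the paper's choice.
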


\begin{proof}
Let $f$ be the defining polynomial of $X\subset\bP$. 
Let $r+1\leq c_1\leq n+2$ with 
\[
1=a_0=\cdots=a_{c_1-1}<a_{c_1}\leq\cdots\leq a_{n+1}. 
\]

\noindent(1) We note that $\dim B_1^\bP=n+1-c_1$. 
(Thus, if $n\leq 2r-1$, then we have $\dim B_1^{\bP}\leq r-1$.)

Take any closed subset $\tilde{B}_{1,X}\subset\bA^{n+2}\setminus\{0\}$ 
with $\dim\tilde{B}_{1,X}=\dim \left(B_1^\bP\cap X\right)$ such that 
$\tilde{B}_{1,X}$ maps onto $B_1^\bP\cap X$ via the quotient morphism 
$\bA^{n+2}\setminus\{0\}\to\bP$. 
Since $X$ is quasi-smooth, the following map is a regular morphism: 
\begin{eqnarray*}
\varphi\colon \tilde{B}_{1,X}&\to&\bP^{n+1}_{z_0\cdots z_{n+1}}\\
\tilde{q}&\mapsto&\left[
\frac{\partial f}{\partial x_0}\left(\tilde{q}\right):\cdots:
\frac{\partial f}{\partial x_{n+1}}\left(\tilde{q}\right)\right].
\end{eqnarray*}
Let us set 
\[
P:=\left\{z_{c_1}=\cdots=z_{n+1}=0\right\}\subset
\bP^{n+1}_{z_0\cdots z_{n+1}}. 
\]
Since $\dim\tilde{B}_{1,X}\leq r-1<\dim P=c_1-1$, a general 
$g=\sum_{i=0}^{c_1-1}e_i x_i$ satisfies that 
\[
\left[e_0:\cdots:e_{c_1-1}:0:\cdots:0\right]\in 
P\setminus\varphi\left(\tilde{B}_{1,X}\right). 
\]
This implies that the rank of $J(f,g)(q)$ is $2$ for any 
$q\in B_1^\bP\cap X$. 
Thus the assertion (1) follows by Bertini's theorem. 

\noindent(2) We may assume that $p\not\in B_1^\bP$. 
By coordinate changes, we may 
assume that 
\[
p=\left[0:\cdots:0:1:p_{c_1}:\cdots:p_{n+1}\right]\in\bP. 
\]
We note that $\dim B_{1,p}^\bP=n+2-c_1$. (Thus, if $n\leq 2r-3$, 
then we have $\dim B_{1,p}^\bP\leq r-2$.)

Take any closed subset $\tilde{B}_{1,p,X}\subset\bA^{n+2}\setminus\{0\}$ 
with $\dim\tilde{B}_{1,p,X}=\dim \left(B_{1,p}^\bP\cap X\right)$ such that 
$\tilde{B}_{1,p,X}$ maps onto $B_{1,p}^\bP\cap X$ via the morphism 
$\bA^{n+2}\setminus\{0\}\to\bP$. As in (1), we can consider the morphism 
\begin{eqnarray*}
\varphi\colon\tilde{B}_{1,p,X}&\to&\bP^{n+1}_{z_0\cdots z_{n+1}}\\
\tilde{q}&\mapsto&\left[
\frac{\partial f}{\partial x_0}\left(\tilde{q}\right):\cdots:
\frac{\partial f}{\partial x_{n+1}}\left(\tilde{q}\right)\right].
\end{eqnarray*}
Let us set 
\[
P_p:=\left\{z_{c_1-1}=\cdots=z_{n+1}=0\right\}\subset
\bP^{n+1}_{z_0\cdots z_{n+1}}. 
\]
Since $\dim\tilde{B}_{1,p,X}\leq r-2<\dim P_p=c_1-2$, a general 
$g=\sum_{i=0}^{c_1-2}e_ix_i$ satisfies that 
\[
\left[e_0:\cdots:e_{c_1-2}:0:\cdots:0\right]\in 
P_p\setminus\varphi\left(\tilde{B}_{1,p,X}\right). 
\]
This implies that the rank of $J(f,g)(q)$ is $2$ for any 
$q\in B_{1,p}^\bP\cap X$. 
Thus, assertion (2) also follows by Bertini's theorem. 
\end{proof}

In the proof, we showed the quasi-smoothness of $H|_X$ along $B_1^\bP\cap X$
(or, along $B_{1,p}^\bP\cap X$). In fact, we have the following
example.

\begin{example}\cite[Example (4.2.1)]{Kollar95}\label{example_Kollar95}
Let $X$ be a smooth variety, $V$ a closed subset of $X$ and $L$ a linear system such that there exists an element $D \in L$ that is smooth along $V$.  It may happen that a general member $D' \in L$ is not smooth along $V$. 

Let $X = \bC^n$ and $L=|B|:= \{ (\lambda x_1 + \mu x_1 x_2 + \nu f =0) \subset \bC^n \mid \lambda, \mu, \nu \in \bC \}$ be the linear system on $\bC^n$ with coordinates $(x_1, \ldots , x_n)$, where 
$f \in \bC[x_3, \ldots, x_n]$ is a polynomial defining an isolated singularity $(f=0) \subset \bC^{n-2}$ at the origin. 
Then $D=(x_1 =0) \in |B|$ is smooth, but a general member $D'=(\lambda x_1 + \mu x_1 x_2 + \nu f =0)$ is singular at $(0, - \frac{\lambda}{\mu}, 0, \ldots , 0)$. 

\end{example}

\begin{proposition}\label{proposition_B1}
Let $n,d\geq 2$ and let $\bP:=\bP\left(a_0,\dots,a_{n+1}\right)$ be 
well-formed. Take a quasi-smooth hypersurface $X\subset\bP$ 
of degree $d$. Assume that $B_1^\bP\subset X$. 

Then, for any $0\leq k\leq n+1$ 
with $a_k\geq 2$, a general element $H\in\left|\cO_\bP(a_k)\right|$ 
satisfies that $X|_H\subset H$ is also quasi-smooth. 
\end{proposition}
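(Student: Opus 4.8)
The plan is to run the Jacobian criterion for quasi-smoothness together with a Bertini argument, the decisive point being that the hypothesis $B_1^\bP\subseteq X$ forces the base locus of $|\cO_\bP(a_k)|$ to sit inside $X$ with controlled behaviour of $f$ there. Write $1=a_0=\cdots=a_{c_1-1}<a_{c_1}\le\cdots\le a_{n+1}$ (so $c_1\ge 1$, since $B_1^\bP\subseteq X\subsetneq\bP$), let $f$ be the defining polynomial of $X$ and let $g$ be the defining polynomial of a general $H\in|\cO_\bP(a_k)|$. As in Remark~\ref{remark_cut-qs}, $X|_H$ is quasi-smooth if and only if the Jacobian matrix $J(f,g)(p)$ has rank $2$ at every point $p$ of the affine cone $\widehat{X\cap H}\setminus\{0\}$; since $X$ is quasi-smooth, the vector $\left(\tfrac{\partial f}{\partial x_i}(p)\right)_i$ is nonzero on $\widehat X\setminus\{0\}$, so the condition is exactly that $\left(\tfrac{\partial g}{\partial x_i}(p)\right)_i$ is not proportional to $\left(\tfrac{\partial f}{\partial x_i}(p)\right)_i$.

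First I would record two structural facts. Because $a_k\ge 2$, for each weight-one variable $x_j$ the monomial $x_j^{a_k}$ lies in $H^0(\bP,\cO_\bP(a_k))$; hence any point off $B_1^\bP=(x_0=\cdots=x_{c_1-1}=0)$ is not a base point, and $\operatorname{Bs}|\cO_\bP(a_k)|\subseteq B_1^\bP\subseteq X$ (compare Lemma~\ref{lemma_Ba-taro}). Moreover $B_1^\bP\subseteq X$ gives $f\in(x_0,\dots,x_{c_1-1})$, so for $p\in B_1^\bP$ one has $\tfrac{\partial f}{\partial x_i}(p)=0$ for every $i\ge c_1$, while quasi-smoothness forces $\left(\tfrac{\partial f}{\partial x_i}(p)\right)_i\ne 0$. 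Thus along $B_1^\bP$ the gradient of $f$ is supported on the weight-one coordinates and is nonzero.

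Next I would dispose of the locus away from the base locus by Bertini. On the open set $U_1:=(\widehat X\setminus\{0\})\setminus\widehat{\operatorname{Bs}|\cO_\bP(a_k)|}$ the cone $\widehat X$ is smooth and $|\cO_\bP(a_k)|$ is base-point free, so for general $g$ the subscheme $\widehat X\cap(g=0)$ is smooth along $U_1$, i.e.\ $J(f,g)$ has rank $2$ at every point of $U_1\cap(g=0)$. Consequently $\operatorname{NQS}(X|_H)\subseteq\widehat{\operatorname{Bs}|\cO_\bP(a_k)|}\setminus\{0\}$, and it remains to treat the base locus.

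The heart of the argument, and the step I expect to be the main obstacle, is to show that for general $g$ the rank is $2$ at every $p$ in the cone over $\operatorname{Bs}|\cO_\bP(a_k)|\subseteq B_1^\bP$. There $g(p)=0$ automatically, and by the second paragraph the rank drops precisely when $\left(\tfrac{\partial g}{\partial x_i}(p)\right)_i$ is supported on the weight-one directions and proportional to $\left(\tfrac{\partial f}{\partial x_i}(p)\right)_i$. Two independent sources of freedom enter: the \emph{tangential} part $\left(\tfrac{\partial g}{\partial x_i}(p)\right)_{i\ge c_1}$ is the gradient of $g(0,\dots,0,x_{c_1},\dots,x_{n+1})$, a general weighted form of degree $a_k$ in $x_{c_1},\dots,x_{n+1}$, so its vanishing means $p$ is a non-quasi-smooth point of that restricted form inside $B_1^\bP$; while the \emph{normal} part $\left(\tfrac{\partial g}{\partial x_j}(p)\right)_{j<c_1}$ is governed by the coefficients of the mixed monomials $x_j\cdot(\text{monomial of degree }a_k-1\text{ in }x_{c_1},\dots,x_{n+1})$, which are independent of the restricted form. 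I would then set up the incidence variety $\Gamma=\{(p,[g]): p\in\widehat{\operatorname{Bs}|\cO_\bP(a_k)|}\setminus\{0\},\ \left(\tfrac{\partial g}{\partial x_i}(p)\right)_i\parallel\left(\tfrac{\partial f}{\partial x_i}(p)\right)_i\}$, stratify by the support $S(p)=\{i:x_i(p)\ne 0\}$, and estimate on each stratum the codimension in $|\cO_\bP(a_k)|$ of the fibre, showing the image $\operatorname{pr}_2(\Gamma)$ is a proper subvariety. The delicate point is that these codimensions are controlled by a combinatorial condition on the numerical semigroups $\langle a_l : l\in S\rangle$ (namely whether $a_k-1$ and the various $a_k-a_i$ lie in such a semigroup), and the naive dimension count can be borderline; well-formedness of $\bP$ and the existence of the quasi-smooth $X\supseteq B_1^\bP$ are exactly what is needed to close the estimate, and in the low-dimensional strata one may have to invoke the statement inductively, replacing $\bP$ by $B_1^\bP\cong\bP(a_{c_1},\dots,a_{n+1})$. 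Combining the Bertini step with this base-locus estimate, a general $g$ avoids every bad locus, so $X|_H$ is quasi-smooth.
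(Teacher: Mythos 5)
Your reduction to the Jacobian criterion, the Bertini step off the base locus, and the two structural facts (that $\operatorname{Bs}|\cO_\bP(a_k)|\subseteq B_1^\bP\subseteq X$ and that $f\in(x_0,\dots,x_{c_1-1})$ forces $\nabla f$ to be supported on the weight-one coordinates along $B_1^\bP$) all match the paper's proof. The gap is in what you call the heart of the argument: you never actually carry out the base-locus step, and the incidence-variety/stratification/numerical-semigroup machinery you propose for it is both unexecuted and unnecessary. You in fact already wrote down the fact that closes it and then walked past it: since $\deg x_k=a_k$, the only monomial of weighted degree $a_k$ that involves $x_k$ is $x_k$ itself, so $\tfrac{\partial g}{\partial x_k}$ is identically equal to the coefficient of $x_k$ in $g$, which is nonzero for general $g$. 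Because $a_k\geq 2$ means $k\geq c_1$, your "tangential part" $\bigl(\tfrac{\partial g}{\partial x_i}(p)\bigr)_{i\geq c_1}$ is therefore nonzero at \emph{every} point $p$, while $\bigl(\tfrac{\partial f}{\partial x_i}(p)\bigr)_{i\geq c_1}=0$ for $p\in B_1^\bP$ and $\nabla f(p)\neq 0$ by quasi-smoothness. Hence $\nabla g(p)$ can never be proportional to $\nabla f(p)$ on $B_1^\bP$, and $J(f,g)(p)$ has rank $2$ there; combined with your Bertini step this finishes the proof. No dimension count, stratification by support, or induction on $B_1^\bP\cong\bP(a_{c_1},\dots,a_{n+1})$ is needed, and your worry that the estimate is "borderline" dissolves.

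As written, your argument is incomplete at precisely the step you identify as the main obstacle: "showing the image $\operatorname{pr}_2(\Gamma)$ is a proper subvariety" is asserted as a plan, not proved, and the claimed role of well-formedness in "closing the estimate" is never substantiated. If you insist on the incidence-variety route you would still have to verify the codimension bounds stratum by stratum, which is exactly the work the one-line observation above renders moot. I recommend replacing the entire fourth paragraph with: for $p\in B_1^\bP$ one has $\tfrac{\partial g}{\partial x_k}(p)=c_k\neq 0$ and $\tfrac{\partial f}{\partial x_k}(p)=0$, so the rank is $2$; since $\operatorname{Bs}|\cO_\bP(a_k)|\subseteq B_1^\bP\cap(x_k=0)$, Bertini handles the complement.
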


\begin{proof}
Let $f$ (resp., $g$) be the defining equation of $X$ (resp., $H$). 
We may assume that 
$1=a_0=\cdots=a_{c_1-1}<a_{c_1}\leq \cdots\leq a_{n+1}$. 
Since $X$ is quasi-smooth, $d\geq 2$ and $B_1^\bP\subset X$, we have 
$c_1\geq 2$. We may assume that $g$ can be written as 
$g=x_k+g_0(x_0,\dots,x_{k-1})$. 
Take any $p\in B_1^\bP\cap (x_k=0)$. Since $B_1^\bP\subset X$ and $X$ is 
quasi-smooth, we have 
$\frac{\partial f}{\partial x_j}(p)=0$ for any $c_1\leq j\leq n+1$ and 
$\frac{\partial f}{\partial x_i}(p)\neq 0$ for some $0\leq i\leq c_1-1$. 
On the other hand, we have $\frac{\partial g}{\partial x_k}(p)\neq 0$. 
Thus the rank of $J(f,g)(p)$ is equal to $2$ for any 
$p\in B_1^\bP\cap (x_k=0)$. 
Since $\operatorname{Bs}\left|\cO_\bP(a_k)\right|\subset B_1^\bP
\cap (x_k=0)$, 
the restriction $X|_H\subset H$ is quasi-smooth by Bertini's theorem. 
\end{proof}

Now we consider hypersurfaces in standard weighted blowups. 
We fix the notation from now on. 

\begin{setup}\label{setup:cut-swb}
Under Setup \ref{setup:swb}, 
Definitions \ref{definition_swb}, \ref{definition_wbs} and 
\ref{definition_cl},  
we assume moreover that $n:=r=s-1$, and 
$\left(a_0,\dots,a_r\right)$ is well-formed with 
$
1=a_0=\cdots=a_{c_1-1}<a_{c_1}\leq\cdots\leq a_n, 
$
where $0\leq c_1\leq n$. 
(We do not have any assumption on $a_s=a_{n+1}$.)
Hence we have a diagram 
\[
\xymatrix{
\tilde{\bP} \ar[r]^-{\pi} \ar[d]_-{\psi} & \bP' = \bP(a_0, \ldots ,a_{s-1}) \\
\bP = \bP(a_0, \ldots , a_s) & 
}, 
\]
where $\psi$ is the standard weighted blowup and $\pi$ is the induced weighted bundle structure.  
Moreover, let us write $x_i:=x'_i$ for the coordinates of $\bP'$ 
($0\leq i\leq r$) and 
$y:=y_{n+1}$ for simplicity. (This makes no confusion since 
$g_0=\cdots=g_r=1$ by the well-formedness of $(a_0, \ldots , a_r)$.)
We fix a quasi-smooth hypersurface $\tilde{X}\subset\tilde{\bP}$ 
of degree $(c,k)$ ($c,k\in\bZ_{\geq 0}$) defined by 
$\left(\tilde{F}(x_0,\dots,x_n,y,z)=0\right)$. 
\end{setup}

\begin{lemma}\label{lemma:hyperplane-wbs}
\begin{enumerate}
\item 
Assume that 
\[
\dim\left(\pi^{-1}\left(B_1^{\bP'}\right)\cap\tilde{X}\right)\leq c_1-2. 
\]
(For example, if $n+3\leq 2c_1$, then $\dim
\pi^{-1}\left(B_1^{\bP'}\right)\leq c_1-2$ holds.)
Then, for any general $H'\in\left|\cO_{\bP'}(1)\right|$, we have 
$\operatorname{rank}J\left(\tilde{X}, \pi^*H'\right)\left(\tilde{q}\right)
=2$ for any $\tilde{q}\in\tilde{X}\cap \pi^*H'$. 
\item 
Take any point $p'\in\bP'$. Assume that 
\[
\dim\left(\pi^{-1}\left(B_{1,p'}^{\bP'}\right)\cap\tilde{X}\right)\leq c_1-3. 
\]
(For example, if $n+5\leq 2c_1$, then $\dim
\pi^{-1}\left(B_{1,p'}^{\bP'}\right)\leq c_1-3$ holds.)
Then, for any general $H'\in\left|\cO_{\bP'}(1)_{p'}\right|$, we have 
$\operatorname{rank}J\left(\tilde{X}, \pi^*H'\right)\left(\tilde{q}\right)
=2$ for any $\tilde{q}\in\tilde{X}\cap \pi^*H'$. 
\end{enumerate}
\end{lemma}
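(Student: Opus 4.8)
The plan is to transplant the proof of Lemma~\ref{lem:hyperplane_mod} into the ambient toric variety $\tilde{\bP}$, whose Cox ring is $\bC[x_0,\dots,x_n,y,z]$, whose affine cone is $\bA^{n+3}\setminus Z(\tilde{\Sigma})$ with $Z(\tilde{\Sigma})$ as in Example~\ref{example_irrelevant}(2), and in which $\tilde{X}=(\tilde{F}=0)$ is quasi-smooth. Since $g_0=\cdots=g_r=1$, Proposition~\ref{proposition_pi-star} identifies $H^0(\bP',\cO_{\bP'}(1))$ with $H^0(\tilde{\bP},\cO_{\tilde{\bP}}(1,0))$, and a bidegree count (using $\deg x_i=(a_i,0)$, $\deg y=(0,1)$, $\deg z=(-a_s,1)$) shows this space is spanned exactly by the weight-one coordinates $x_0,\dots,x_{c_1-1}$. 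Hence a general $H'\in|\cO_{\bP'}(1)|$ has $\pi^*H'=(g=0)$ with $g=\sum_{i=0}^{c_1-1}e_ix_i$, and by Lemma~\ref{lemma_cl-cl} the base locus of $\pi^*|\cO_{\bP'}(1)|$ equals $\pi^{-1}(B_1^{\bP'})=(x_0=\cdots=x_{c_1-1}=0)$.

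For part~(1), I would split $\tilde{X}\cap\pi^*H'$ into the part off the base locus and the part on it. Away from $\pi^{-1}(B_1^{\bP'})$ the system is base-point free, so Bertini's theorem guarantees that for general $H'$ the rank of $J(\tilde{X},\pi^*H')$ is $2$ at every such point. On the base locus, where $g$ vanishes identically, I would mimic Lemma~\ref{lem:hyperplane_mod}: choose a closed $\tilde{B}\subset\bA^{n+3}\setminus Z(\tilde{\Sigma})$ with $\dim\tilde{B}=\dim(\pi^{-1}(B_1^{\bP'})\cap\tilde{X})$ mapping onto $\pi^{-1}(B_1^{\bP'})\cap\tilde{X}$, and use quasi-smoothness of $\tilde{X}$ to obtain the regular gradient morphism $\varphi\colon\tilde{B}\to\bP^{n+2}$ sending $\tilde{q}$ to the point with homogeneous coordinates $\partial\tilde{F}/\partial x_0,\dots,\partial\tilde{F}/\partial x_n,\partial\tilde{F}/\partial y,\partial\tilde{F}/\partial z$. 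The gradient of $g$ lands in the subspace $P\simeq\bP^{c_1-1}$ where all coordinates but those of $\partial/\partial x_0,\dots,\partial/\partial x_{c_1-1}$ vanish; since $\dim\varphi(\tilde{B})\le\dim\tilde{B}\le c_1-2<c_1-1=\dim P$, a general point of $P$ avoids $\varphi(\tilde{B})$, so for the corresponding $g$ the two rows of $J(\tilde{F},g)$ are independent along $\pi^{-1}(B_1^{\bP'})\cap\tilde{X}$. Intersecting the two dense open conditions on $(e_i)$ finishes part~(1); the numerical example follows from $\dim\pi^{-1}(B_1^{\bP'})=n-c_1+1$.

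Part~(2) is the same argument run with the subsystem $|\cO_{\bP'}(1)_{p'}|$. We may assume $p'\notin B_1^{\bP'}$ (otherwise $B_{1,p'}^{\bP'}=B_1^{\bP'}$ and part~(1) applies), and after a linear change of the weight-one coordinates placing the weight-one part of $p'$ at $[1:0:\cdots:0]$ the relevant forms are $g=\sum_{i=1}^{c_1-1}e_ix_i$, with base locus $\pi^{-1}(B_{1,p'}^{\bP'})=(x_1=\cdots=x_{c_1-1}=0)$ and gradient lying in $P_{p'}\simeq\bP^{c_1-2}$. The hypothesis $\dim(\pi^{-1}(B_{1,p'}^{\bP'})\cap\tilde{X})\le c_1-3$ gives $\dim\varphi(\tilde{B})\le c_1-3<c_1-2=\dim P_{p'}$, so the avoidance argument carries over verbatim; here the numerical example follows from $\dim\pi^{-1}(B_{1,p'}^{\bP'})=n-c_1+2$.

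The main obstacle is the simultaneous control of the two regimes by a single general $g$: Bertini handles the locus off the base locus, while the locus $\tilde{X}\cap\pi^{-1}(B_1^{\bP'})$ is independent of the choice of $g$ and must instead be controlled by the gradient-image dimension count. Each regime cuts out a nonempty Zariski-open set of coefficient vectors $(e_i)$, so their intersection is nonempty, and the whole scheme hinges --- exactly as in Lemma~\ref{lem:hyperplane_mod} --- on the fact that quasi-smoothness of $\tilde{X}$ makes $\varphi$ a genuine morphism on $\tilde{B}$.
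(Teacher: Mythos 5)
Your proposal is correct and follows essentially the same route as the paper's proof: reduce to the case $p'\notin B_1^{\bP'}$, normalize coordinates, lift $\pi^{-1}(B^{\bP'}_{1})\cap\tilde X$ (resp.\ $\pi^{-1}(B^{\bP'}_{1,p'})\cap\tilde X$) to a closed subset of $\bA^{n+3}\setminus Z(\tilde\Sigma)$, use quasi-smoothness of $\tilde X$ to get the gradient morphism to $\bP^{n+2}$, and conclude by the dimension count against the linear subspace of gradients of weight-one forms together with Bertini off the base locus. The only cosmetic difference is that the paper writes out case (2) and declares (1) analogous, whereas you sketch both and make explicit the intersection of the two open conditions on the coefficient vector.
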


\begin{proof}
The proof is almost same as the proof of Lemma \ref{lem:hyperplane_mod}. 
We only see the proof of (2), since the proof of (1) is very similar to 
(2). 

We may assume that $p'\not\in B_1^{\bP'}$ by (1). After coordinate 
changes, we may assume that $p'=\left[0:\cdots:0:1:p'_{c_1}:\cdots:
p'_n\right]\in\bP'$. In particular, we have $\dim
\pi^{-1}\left(B_{1,p'}^{\bP'}\right)=n+2-c_1$. 
Take any closed subset 
\[
\tilde{B}_{1,p',\tilde{X}}\subset\bA^{n+3}\setminus 
Z\left(\tilde{\Sigma}\right)
\]
with \[
\dim \tilde{B}_{1,p',\tilde{X}}=
\dim\left(\pi^{-1}\left(B_{1,p'}^{\bP'}\right)\cap\tilde{X}\right)
\]
which maps onto $\pi^{-1}\left(B_{1,p'}^{\bP'}\right)\cap\tilde{X}$ 
under the quotient morphism $\bA^{n+3}\setminus Z\left(\tilde{\Sigma}
\right)\to\tilde{\bP}$. Then we can consider the morphism 
\begin{eqnarray*}
\varphi\colon \tilde{B}_{1,p',\tilde{X}}&\to&
\bP^{n+2}_{z_0\cdots z_{n+2}}\\
\tilde{q} &\mapsto&\left[\frac{\partial \tilde{f}}{\partial x_0}
\left(\tilde{q}\right):\cdots:
\frac{\partial \tilde{f}}{\partial x_n}
\left(\tilde{q}\right):\frac{\partial \tilde{f}}{\partial y}
\left(\tilde{q}\right):
\frac{\partial \tilde{f}}{\partial z}
\left(\tilde{q}\right)\right].
\end{eqnarray*}
Let us set the linear subspace
\[
P_{p'}:=\left(z_{c_1-1}=\cdots=z_{n+2}=0\right)\subset\bP^{n+2}
\]
of dimension $c_1-2$. 
For any general $g=\sum_{i=0}^{c_1-2}e_ix_i\in H^0\left(
\bP',\cO_{\bP'}(1)_{p'}\right)$, by the assumption, we have 
\[
\left[e_0:\cdots:e_{c_1-2}:0:\cdots:0\right]\in P_{p'}\setminus
\varphi\left(\tilde{B}_{1,p',\tilde{X}}\right).
\]
This implies that the rank of $J\left(\tilde{f}, 
\pi^*g\right)\left(\tilde{q}\right)$ is equal to $2$ for any 
$\tilde{q}\in\pi^{-1}\left(B_{1,p'}^{\bP'}\right)\cap\tilde{X}$. 
Thus the assertion follows from Bertini's theorem. 
\end{proof}

\begin{lemma}\label{lem:NQSwellformed_swb}
Fix $p'\in\bP'$ and take any $c_1\leq i\leq n$. Assume that 
\[
\dim\left(\pi^{-1}\left(B_{a_i,p'}^{\bP'}\right)\cap\tilde{X}\right)
\leq i-2. 
\]
(For example, if $a_i<a_{i+1}$ (or $i=n$) and $n+3\leq 2i$, then 
$\dim\pi^{-1}\left(B_{a_i,p'}^{\bP'}\right)\leq i-2$ holds.)
Then, for any general $H'\in\left|\cO_{\bP'}(a_i)_{p'}\right|$, we have 
$\operatorname{rank}J\left(\tilde{X}, \pi^*H'\right)\left(\tilde{q}\right)
=2$ for any $\tilde{q}\in\left(\pi^*H'\cap\tilde{X}\right)\setminus
\pi^{-1}\left(B_1^{\bP'}\right)$. 
\end{lemma}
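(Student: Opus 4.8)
The plan is to mirror the proofs of Lemmas \ref{lem:hyperplane_mod}(2) and \ref{lemma:hyperplane-wbs}(2), splitting the points $\tilde q\in(\pi^*H'\cap\tilde X)\setminus\pi^{-1}(B_1^{\bP'})$ into those lying outside and those lying inside the base locus of the system $\pi^*|\cO_{\bP'}(a_i)_{p'}|$. Write $\tilde F$ for the defining polynomial of $\tilde X$ and $g$ for that of $H'$, so that $J(\tilde X,\pi^*H')=J(\tilde F,\pi^*g)$. If $c_1=0$ then $B_1^{\bP'}=\bP'$ and the statement is vacuous, so I may assume $c_1\ge 1$; by Lemma \ref{lemma:hyperplane-wbs}(1) I reduce to $p'\notin B_1^{\bP'}$ and, after a coordinate change, normalise $p'=[1:0:\cdots:0]$, whence $B_1^{\bP'}=(x_0=\cdots=x_{c_1-1}=0)$. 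By Lemma \ref{lemma_Ba-taro}, $\operatorname{Bs}|\cO_{\bP'}(a_i)_{p'}|\subset B_1^{\bP'}\cup B_{a_i,p'}^{\bP'}$; hence a point $\tilde q$ as above either avoids $\pi^{-1}(\operatorname{Bs}|\cO_{\bP'}(a_i)_{p'}|)$, where a general $\pi^*H'$ is transverse to the quasi-smooth $\tilde X$ by Bertini's theorem, or it lies in $\pi^{-1}(B_{a_i,p'}^{\bP'})\setminus\pi^{-1}(B_1^{\bP'})$. The latter stratum is the heart of the matter.

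The key geometric input is that this stratum is small and sits where $x_0\ne 0$. Indeed, since $a_0=1\le a_i$ one has $B_{a_i,p'}^{\bP'}\subset\operatorname{Bs}|\cO_{\bP'}(1)_{p'}|=(x_1=\cdots=x_{c_1-1}=0)$, so a point of $B_{a_i,p'}^{\bP'}\setminus B_1^{\bP'}$ must have $x_0\ne 0$; and since the monomials $x_0^{a_i-a_l}x_l$ $(1\le l\le i)$ lie in $\operatorname{Bs}|\cO_{\bP'}(a_i)_{p'}|$, on that stratum one also has $x_1=\cdots=x_i=0$. I would therefore restrict attention to the subsystem of $|\cO_{\bP'}(a_i)_{p'}|$ spanned by these monomials, i.e. take $g=\sum_{l=1}^i e_l\,x_0^{a_i-a_l}x_l$ (each has degree $a_i$ because $a_l\le a_i$, and vanishes at $p'$). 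Using $x_1=\cdots=x_i=0$ and $x_0\ne 0$, a direct computation gives $\partial_{x_0}(\pi^*g)=\partial_y(\pi^*g)=\partial_z(\pi^*g)=0$ and $\partial_{x_l}(\pi^*g)=e_l x_0^{a_i-a_l}$ $(1\le l\le i)$ along the stratum, so that after rescaling to $x_0=1$ the direction $[\nabla(\pi^*g)]=[0:e_1:\cdots:e_i:0:\cdots:0]$ is constant in $\tilde q$. This is exactly what makes the non-linear system behave like the linear one of Lemma \ref{lemma:hyperplane-wbs}.

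From here the argument is the familiar one. I choose a closed subset $\tilde B\subset(x_0=1)\cap(\bA^{n+3}\setminus Z(\tilde\Sigma))$ with $\dim\tilde B\le i-2$ mapping onto $(\pi^{-1}(B_{a_i,p'}^{\bP'})\cap\tilde X)\setminus\pi^{-1}(B_1^{\bP'})$, form the Gauss map $\varphi\colon\tilde B\to\bP^{n+2}_{z_0\cdots z_{n+2}}$, $\tilde q\mapsto[\partial_{x_0}\tilde F:\cdots:\partial_{x_n}\tilde F:\partial_y\tilde F:\partial_z\tilde F](\tilde q)$, and set $P_{p'}:=(z_0=z_{i+1}=\cdots=z_{n+2}=0)\subset\bP^{n+2}$, a linear subspace of dimension $i-1$. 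Since $\tilde X$ is quasi-smooth, $\operatorname{rank}J(\tilde F,\pi^*g)(\tilde q)<2$ forces $\nabla(\pi^*g)(\tilde q)\parallel\nabla\tilde F(\tilde q)$, i.e. $\varphi(\tilde q)=[0:e_1:\cdots:e_i:0:\cdots:0]\in P_{p'}$. As $\dim\tilde B\le i-2<i-1=\dim P_{p'}$ (using the hypothesis $\dim(\pi^{-1}(B_{a_i,p'}^{\bP'})\cap\tilde X)\le i-2$), a general $[e_1:\cdots:e_i]\in P_{p'}$—equivalently a general $g$, and hence a general $H'$—avoids $\varphi(\tilde B)\cap P_{p'}$, so $\operatorname{rank}J=2$ on the whole stratum. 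Intersecting this dense open condition on $H'$ with the Bertini condition of the first paragraph yields the lemma.

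I expect the main obstacle to be exactly the non-linearity of $g$: because $a_i\ge 2$, the gradient $\nabla(\pi^*g)$ varies with the point, and the reduction to a fixed linear subspace $P_{p'}$ used in Lemma \ref{lemma:hyperplane-wbs} is not immediate. The device that resolves it is to use only the monomials $x_0^{a_i-a_l}x_l$ together with the fact that $x_0\ne 0$ off $\pi^{-1}(B_1^{\bP'})$—which is precisely why that locus is removed in the statement. The one step deserving care is the verification that $x_0\ne 0$ and $x_1=\cdots=x_i=0$ on the base-locus stratum, which is where Lemma \ref{lemma_Ba-taro} and the hypothesis $c_1\le i$ enter.
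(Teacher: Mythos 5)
Your proposal is correct and follows essentially the same route as the paper's proof: reduce to $p'=[1:0:\cdots:0]\notin B_1^{\bP'}$, use Lemma \ref{lemma_Ba-taro} and Bertini off the base locus, and on the stratum $\left(\pi^{-1}(B_{a_i,p'}^{\bP'})\cap\tilde{X}\right)\setminus\pi^{-1}(B_1^{\bP'})$ observe that the gradient of $\pi^*g$ is governed solely by the coefficients $e_l$ of the monomials $x_0^{a_i-a_l}x_l$, so the Gauss map together with the dimension hypothesis $\dim\tilde{V}\leq i-2<\dim P_{p'}=i-1$ finishes the argument. The only point to phrase with care is that you should carry out the gradient computation for a general member of the \emph{full} system $|\cO_{\bP'}(a_i)_{p'}|$ (after the harmless normalization $a_i<a_{i+1}$ or $i=n$, every other monomial contributes zero on the stratum), rather than literally restricting to the subsystem, so that "general $[e_1:\cdots:e_i]$" is indeed achieved by a general $H'$ --- which is exactly how the paper argues.
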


\begin{proof}
%The proof is essentially same as the proof of Lemma  \ref{lem:NQSwellformed_mod}. 
We may assume that $c_1\geq 1$. 
If $p'\in B_1^{\bP'}$, then, since $\operatorname{Bs}\left|
\cO_{\bP'}(a_i)_{p'}\right|\subset B_1^{\bP'}$, the assertion follows 
from Bertini's theorem. Thus we may assume that $p'\not\in B_1^{\bP'}$. 
By coordinate changes, we may assume that 
$a_i<a_{i+1}$ (or $i=n$) and 
$p'=\left[1:0:\cdots:0\right]\in\bP'$. Then we have 
\[
B_1^{\bP'}=\left(x_0=\cdots=x_{c_1-1}=0\right)\subset\bP',\quad
B_{a_i,p'}^{\bP'}=\left(x_1=\cdots=x_i=0\right)\subset\bP'.
\]
In particular, we have $\dim\pi^{-1}\left(B_{a_i,p'}^{\bP'}\right)
=n+1-i$. 
Let us take a closed subset 
\[
\tilde{V}\subset\left(\bA^{n+3}_{x_0\cdots x_n y z}\cap(x_0=1)\right)
\setminus Z\left(\tilde{\Sigma}\right)
\]
satisfying 
\[
\dim \tilde{V}=\dim \left(\left(\pi^{-1}\left(B_{a_i,p'}^{\bP'}\right)
\cap\tilde{X}\right)\setminus\pi^{-1}\left(B_1^{\bP'}\right)\right)
\]
and $\tilde{V}$ maps onto $\left(\pi^{-1}\left(B_{a_i,p'}^{\bP'}\right)
\cap\tilde{X}\right)\setminus\pi^{-1}\left(B_1^{\bP'}\right)$
via the quotient morphism $\bA^{n+3}\setminus Z\left(\tilde{\Sigma}\right)
\to\tilde{\bP}$. We can consider the morphism 
\begin{eqnarray*}
\varphi\colon\tilde{V}&\to&\bP^{n+2}_{z_0\cdots z_{n+2}}\\
\tilde{q} &\mapsto&\left[\frac{\partial \tilde{f}}{\partial x_0}
\left(\tilde{q}\right):\cdots:
\frac{\partial \tilde{f}}{\partial x_n}
\left(\tilde{q}\right):\frac{\partial \tilde{f}}{\partial y}
\left(\tilde{q}\right):
\frac{\partial \tilde{f}}{\partial z}
\left(\tilde{q}\right)\right].
\end{eqnarray*}
Consider the linear subspace 
\[
P_{p'}:=\left(z_0=z_{i+1}=\cdots=z_{n+2}=0\right)\subset\bP^{n+2}
\]
of dimension $i-1$. Since $\dim\tilde{V}<\dim P_{p'}$, the 
intersection $\varphi\left(\tilde{V}\right)\cap P_{p'}$ is not 
dense in $P_{p'}$. 

Take a general $g\in H^0\left(\bP',\cO_{\bP'}(a_i)_{p'}\right)$, 
and let $e_j$ be the coefficient of $x_0^{a_i-a_j}x_j$ for any 
$1\leq j\leq i$. Then, for any $\tilde{q}\in\tilde{V}$, we have 
\begin{eqnarray*}
&&\frac{\partial g}{\partial x_0}\left(\tilde{q}\right)
=\frac{\partial g}{\partial x_{i+1}}\left(\tilde{q}\right)
=\cdots=\frac{\partial g}{\partial x_n}\left(\tilde{q}\right)=
\frac{\partial g}{\partial y}\left(\tilde{q}\right)=\frac{\partial g}{\partial z}\left(\tilde{q}\right)=0, \\
&&\frac{\partial g}{\partial x_j}\left(\tilde{q}\right)=e_j 
\quad\left(1\leq j\leq i\right). 
\end{eqnarray*}
Since $g$ is general, we have 
\[
\left[0:e_1:\cdots:e_i:0:\cdots:0\right]\in 
P_{p'}\setminus\left(\varphi\left(\tilde{V}\right)\cap P_{p'}\right). 
\]
Thus, for any $\tilde{q}\in
\left(\pi^{-1}\left(B_{a_i,p'}^{\bP'}\right)\cap\tilde{X}\right)
\setminus\pi^{-1}\left(B_1^{\bP'}\right)$, we have 
$\operatorname{rank}J\left(\tilde{f},\pi^*g\right)\left(\tilde{q}
\right)=2$. Therefore, the assertion follows 
from Lemma \ref{lemma_Ba-taro} and Bertini's theorem. 
\end{proof}

\begin{corollary}\label{corollary_2wt-cutting}
Under Setup \ref{setup:cut-swb}, assume moreover $n\geq 3$ and 
$a_0=\cdots=a_{n-1}=1$. Set $p':=\left[1:0:\cdots:0\right]\in\bP'$. 
For any $2\leq j\leq n$, let us set 
\begin{eqnarray*}
&&\tilde{\bP}_n:=\tilde{\bP}, \quad\bE_n:=\bE, \\
&&\tilde{\bP}_{j-1}:=\tilde{\bP}_j\cap\pi^*\left(x_j=0\right), \quad
\bE_{j-1}:=\bE_j|_{\tilde{\bP}_{j-1}}\quad(3\leq j\leq n). 
\end{eqnarray*}
By Proposition \ref{proposition_divisor-swb} and Example 
\ref{example_typical}, for any $2\leq j\leq n-1$, we have 
\[
\tilde{\bP}_j=\bP_{\bP^{j-1}_{x_0\cdots x_{j-1}}}\left(
\cO_{\bP^{j-1}}\oplus\cO_{\bP^{j-1}}(a_{n+1})\right)
\]
and $\bE_j\subset\tilde{\bP}_j$ is the negative section. 
Then, there exists $\varphi\in\operatorname{Aut}\left(\bP'\right)$ 
with $\varphi(p')=p'$ such that, after replacing $\tilde{X}\subset\bP$ 
with $(\pi^*\varphi)\left(\tilde{X}\right)\subset\tilde{\bP}$, if we set 
\begin{eqnarray*}
&&\tilde{X}_n:=\tilde{X}\subset\tilde{\bP}_n, \\
&&\tilde{X}_{j-1}:=\tilde{X}_j|_{\tilde{\bP}_{j-1}}\subset\tilde{\bP}_{j-1}
\quad(3\leq j\leq n), 
\end{eqnarray*}
then $\tilde{X}_j\subset\tilde{\bP}_j$ is a quasi-smooth hypersurface 
of degree $(c,k)$ for any $2\leq j\leq n-1$. 
Moreover, if we further assume that $\tilde{X}|_\bE\subset\bE\simeq
\bP(1^n,a_n)$ is a quasi-smooth hypersurface of degree $c$, then 
we can also take the above $\varphi$ such that satisfying moreover 
$\tilde{X}_j|_{\bE_j}\subset\bE_j\simeq\bP^j$ is smooth of degree $c$ 
for any $2\leq j\leq n-1$. 
\end{corollary}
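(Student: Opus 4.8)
The plan is to produce $\tilde X_2,\dots,\tilde X_{n-1}$ by cutting $\tilde X$ with the hyperplanes $\pi^*(x_j=0)$ one at a time, in the order $j=n,n-1,\dots,3$, establishing quasi-smoothness of each successive section by the Bertini-type results of this section in a descending induction. The decisive point is that the cuts fall into two regimes. Since $a_0=\dots=a_{n-1}=1$ forces $c_1=n$ (so $a_n>1$), the \emph{first} cut is by the weight-$a_n$ coordinate hyperplane $(x_n=0)\in|\cO_{\bP'}(a_n)|$ and is governed by Lemma \ref{lem:NQSwellformed_swb} with $i=n$, whereas \emph{every} subsequent cut is by a weight-$1$ coordinate hyperplane and, crucially, takes place over honest projective space: after $(x_n=0)$ is removed the base of $\pi$ becomes $\bP^{n-1},\bP^{n-2},\dots$, whose base locus is empty. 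At the outset I would record the two facts $B_1^{\bP'}=\{[0:\dots:0:1]\}$ and $B_{a_n,p'}^{\bP'}=\{p'\}$ for $\bP'=\bP(1^n,a_n)$, both read off as in the proof of Lemma \ref{lem:NQSwellformed_swb}.

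For the first cut I would apply Lemma \ref{lem:NQSwellformed_swb} with $i=n$. Its dimension hypothesis $\dim(\pi^{-1}(B^{\bP'}_{a_n,p'})\cap\tilde X)\le n-2$ holds because $B^{\bP'}_{a_n,p'}=\{p'\}$, so the left-hand side is $\dim(\pi^{-1}(p')\cap\tilde X)\le 1\le n-2$, using $n\ge 3$. The lemma then gives $\operatorname{rank}J(\tilde X,\pi^*H')=2$ along $(\tilde X\cap\pi^*H')\setminus\pi^{-1}(B_1^{\bP'})$ for a general $H'\in|\cO_{\bP'}(a_n)_{p'}|$; but a general such $H'$ has nonzero $x_n$-coefficient, hence misses the vertex $B_1^{\bP'}$, so $\tilde X\cap\pi^*H'$ is disjoint from $\pi^{-1}(B_1^{\bP'})$ and is quasi-smooth everywhere. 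Finally I would transport $H'$ onto $(x_n=0)$: writing a general member as $c_nx_n+q(x_0,\dots,x_{n-1})$ with $c_n\ne0$ and $q$ free of the $x_0^{a_n}$ term, the automorphism $\varphi_0\in\Aut(\bP')$ determined by $\varphi_0^*x_n=c_nx_n+q$ and $\varphi_0^*x_i=x_i$ fixes $p'$ (as $q(1,0,\dots,0)=0$) and carries this section to $(x_n=0)$, exactly in the spirit of Lemma \ref{lemma_aut-wps}.

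After replacing $\tilde X$ by $(\pi^*\varphi_0)\tilde X$, the remaining cuts live over $\bP^{n-1}=(x_n=0)\subset\bP'$ and its further linear sections, where the base locus $B_1$ is empty; thus Lemma \ref{lemma:hyperplane-wbs}(1) applies to each $\tilde{\bP}_j$ — identified with the projective bundle $\bP_{\bP^\bullet}(\cO\oplus\cO(a_{n+1}))$ of the statement by Proposition \ref{proposition_divisor-swb} and Example \ref{example_typical} — with its dimension hypothesis vacuously satisfied, so a general weight-$1$ section stays quasi-smooth. I would then fold every required move into a single $\varphi$: compose $\varphi_0$ with a linear automorphism $\varphi_1$ of $\bP'$ acting only on $x_0,\dots,x_{n-1}$, fixing $p'$ and preserving $(x_n=0)$, chosen so that $\varphi_1^{-1}$ sends each coordinate hyperplane $(x_j=0)$ to a sufficiently general weight-$1$ hyperplane. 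As $\varphi_1$ fixes $(x_n=0)$ it leaves the first cut untouched, and the orbit of the coordinate flag under such $\varphi_1$ is dense, so a generic choice makes every iterated section quasi-smooth; the total map $\varphi=\varphi_1\circ\varphi_0$ still fixes $p'$. For the second assertion I would run the same procedure on $\tilde X|_\bE\subset\bE\simeq\bP(1^n,a_n)$, using that $\bE$ is a section of $\pi$ (Remark \ref{remark_section}), so that $\bE_j\simeq\bP^j$ and $\tilde X_j|_{\bE_j}$ is the corresponding section of $\tilde X|_\bE$: the $(x_n=0)$-cut deletes the unique singular point of $\bP(1^n,a_n)$ and lands in the smooth $\bP^{n-1}$, and the remaining weight-$1$ cuts are ordinary hyperplane sections in projective space, so that quasi-smoothness of the degree-$c$ section becomes smoothness (classical Bertini, cf.\ Lemma \ref{lem:hyperplane_mod}).

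The main obstacle is precisely this last coordination: exhibiting a \emph{single} $\varphi\in\Aut(\bP')$ with $\varphi(p')=p'$ that simultaneously renders all $n-2$ iterated sections of $(\pi^*\varphi)\tilde X$ quasi-smooth \emph{and} all iterated sections of $\tilde X|_\bE$ smooth. Each individual (quasi-)smoothness is an open condition on $\varphi$, and each is nonempty by the Bertini lemmas applied to a general section; but to see that the common locus is nonempty one must verify that the $\Aut(\bP')_{p'}$-orbit of the coordinate flag $(x_n=0)\supset(x_n=x_{n-1}=0)\supset\cdots$ is dense enough to realize, at every stage at once, the general configurations demanded by Lemmas \ref{lem:NQSwellformed_swb} and \ref{lemma:hyperplane-wbs}. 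The factorization $\varphi=\varphi_1\circ\varphi_0$, with $\varphi_0$ handling the weight-$a_n$ direction and $\varphi_1$ the weight-$1$ flag, is designed so that these two groups of conditions decouple and can be met together.
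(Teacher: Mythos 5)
Your proposal is correct and follows essentially the same route as the paper: the first cut is a general weight-$a_n$ section through $p'$ handled by Lemma \ref{lem:NQSwellformed_swb} (whose conclusion holds everywhere because $\operatorname{Bs}\left|\cO_{\bP'}(a_n)_{p'}\right|$ misses $B_1^{\bP'}$), followed by inductive weight-one cuts and the smoothness of the sections of $\tilde{X}|_{\bE}$ via Lemma \ref{lemma:hyperplane-wbs}, all packaged into one automorphism fixing $p'$. The only caveat is that the later cuts must also pass through $p'$ (since $\varphi$ fixes $p'$, the preimages of the coordinate hyperplanes are general only within $\left|\cO(1)_{p'}\right|$), so you should invoke Lemma \ref{lemma:hyperplane-wbs}(2) rather than (1); its hypothesis $\dim\left(\pi^{-1}\left(B_{1,p'}\right)\cap\tilde{X}_j\right)\leq c_1-3$ still holds because $B_{1,p'}=\{p'\}$ on the relevant projective spaces, the fiber over $p'$ is at most one-dimensional, and $n\geq 3$.
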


\begin{proof}
By Lemma \ref{lem:NQSwellformed_swb}, any general 
$H'\in\left|\cO_{\bP'}(a_n)_{p'}\right|$ satisfies that 
$\tilde{X}|_{\pi^*H'}\subset\pi^*H'$ is quasi-smooth, since 
$\operatorname{Bs}\left|\cO_{\bP'}(a_{n})_{p'}\right|\cap 
B_1^{\bP'}=\emptyset$. Moreover, if $\tilde{X}|_\bE\subset\bE$ is 
quasi-smooth, then $\tilde{X}|_{\pi^*H'\cap \bE}\subset H'\simeq\bP^{n-1}$ 
is also smooth by Lemma \ref{lemma:hyperplane-wbs}. By considering the 
same procedures inductively, we get the assertion. 
\end{proof}

\section{Covering process}\label{section_covering}

\begin{proposition}[{cf.\ \cite[Proposition 
3.4]{ST24}}]\label{proposition_covering}
Let $(X,\Delta)$ and $(X',\Delta')$ be pairs of normal projective 
varieties with effective $\bQ$-Weil divisors. Assume that there exists 
a finite morphism $\tau\colon X'\to X$ such that 
\[
K_{X'}+\Delta'=\tau^*\left(K_X+\Delta\right). 
\]
Let $\eta\in X$ and $\eta'\in X'$ be a scheme-theoretic points with 
$\tau\left(\eta'\right)=\eta$ and $(X,\Delta)$ is klt at $\eta$. 
Take any big $\bQ$-line bundle $L$ on $X$ and take any effective 
$\bQ$-Weil divisor $\Delta_0$ on $X$ with $\Delta_0\leq \Delta$ and 
$K_X+\Delta_0$ $\bQ$-Cartier at $\eta$. Then we have 
\[
\delta_{\eta'}\left(X',\Delta';\tau^*L\right)\leq
\delta_\eta\left(X,\Delta_0;L\right). 
\]
\end{proposition}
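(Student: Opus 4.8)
The plan is to reduce the asserted inequality of infima to a divisor-by-divisor comparison. Recall that $\delta_\eta(X,\Delta_0;L)$ (resp.\ $\delta_{\eta'}(X',\Delta';\tau^*L)$) is the infimum of $A_{X,\Delta_0}(F)/S(L;F)$ (resp.\ $A_{X',\Delta'}(F')/S(\tau^*L;F')$) over prime divisors over $X$ (resp.\ over $X'$) whose centers contain $\eta$ (resp.\ $\eta'$). Since $\tau$ is finite and surjective, $\tau^*L$ is big, and $(X',\Delta')$ is klt at $\eta'$ as the crepant preimage of the klt pair $(X,\Delta)$, so both thresholds are defined. It therefore suffices to produce, for each prime divisor $F$ over $X$ with $\eta$ in its center, a prime divisor $F'$ over $X'$ with $\eta'$ in its center such that
\[
\frac{A_{X',\Delta'}(F')}{S(\tau^*L;F')}\ \le\ \frac{A_{X,\Delta_0}(F)}{S(L;F)};
\]
taking the infimum over $F$ then gives the claim.

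First I would fix a smooth model $\sigma\colon\tilde X\to X$ on which $F$ appears as a prime divisor, form the normalized fibre product $\tilde X':=(\tilde X\times_X X')^{\nu}$, and record the induced finite morphism $\tilde\tau\colon\tilde X'\to\tilde X$ (of degree $\deg\tau$) together with the projective birational morphism $\sigma'\colon\tilde X'\to X'$, which satisfy $\sigma\circ\tilde\tau=\tau\circ\sigma'$. The reduced preimage $\tilde\tau^{-1}(F)$ is a union of prime divisors $F'_1,\dots,F'_k$ over $X'$, each dominating $F$. Because $\tau(\eta')=\eta$ lies in the center $\overline{\sigma(F)}$ of $F$, a point-lifting argument through the fibre product shows $\eta'\in\tau^{-1}(\overline{\sigma(F)})\subseteq\bigcup_i\overline{\sigma'(F'_i)}$, so I may choose $F'=F'_i$ to be a component whose center contains $\eta'$.

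Next I would compare the two invariants attached to this $F'$. Pulling the hypothesis $K_{X'}+\Delta'=\tau^*(K_X+\Delta)$ back through the commutative square gives $K_{\tilde X'}+\tilde\Delta'=\tilde\tau^*(K_{\tilde X}+\tilde\Delta)$ for the log-pullbacks $\tilde\Delta,\tilde\Delta'$. Reading off the coefficient of $F'$ and using the ramification formula $K_{\tilde X'}=\tilde\tau^*K_{\tilde X}+R$ with $\operatorname{coeff}_{F'}R=e-1$ (where $e$ is the ramification index of $F'$ over $F$, exact in codimension one in characteristic zero) yields the crepant transformation of log discrepancies
\[
A_{X',\Delta'}(F')=e\cdot A_{X,\Delta}(F).
\]
For the $S$-invariants I would write $\tilde\tau^*F=eF'+R'$ with $R'\ge 0$ not containing $F'$, so that $\tilde\tau^*\sigma^*L-xF'=\tilde\tau^*(\sigma^*L-\tfrac{x}{e}F)+\tfrac{x}{e}R'$; monotonicity of the volume, the identities $\vol_{\tilde X'}(\tilde\tau^*D)=\deg\tau\cdot\vol_{\tilde X}(D)$ and $\vol_{X'}(\tau^*L)=\deg\tau\cdot\vol_X(L)$, and the substitution $x=eu$ then give $S(\tau^*L;F')\ge e\cdot S(L;F)$. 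Finally, $\Delta_0\le\Delta$ forces $A_{X,\Delta}(F)\le A_{X,\Delta_0}(F)$, since $A_{X,\Delta_0}(F)-A_{X,\Delta}(F)=\ord_F\sigma^*(\Delta-\Delta_0)\ge 0$; combining the three inequalities produces the desired comparison.

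The hard part will be the exact identity $A_{X',\Delta'}(F')=e\,A_{X,\Delta}(F)$: it must be justified on possibly singular models via the codimension-one ramification formula for finite morphisms of normal varieties, and it relies crucially on the crepant hypothesis $K_{X'}+\Delta'=\tau^*(K_X+\Delta)$; one must also verify that the chosen component $F'$ really has $\eta'$ in its center. By contrast the $S$-comparison is comparatively soft, as only the inequality $S(\tau^*L;F')\ge e\,S(L;F)$ in the favorable direction is needed, and this follows from volume monotonicity after absorbing the extra effective divisor $\tfrac{x}{e}R'$.
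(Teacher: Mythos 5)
Your overall strategy is sound and your final chain of inequalities
\[
\frac{A_{X',\Delta'}(F')}{S(\tau^*L;F')}\le\frac{e\,A_{X,\Delta}(F)}{e\,S(L;F)}\le\frac{A_{X,\Delta_0}(F)}{S(L;F)}
\]
is exactly the right way to combine the crepant-pullback formula, the volume comparison, and the monotonicity $A_{X,\Delta}(F)\le A_{X,\Delta_0}(F)$ coming from $\Delta_0\le\Delta$. The route is, however, different in emphasis from the paper's: the paper disposes of the whole covering inequality $\delta_{\eta'}(X',\Delta';\tau^*L)\le\delta_\eta(X,\Delta;L)$ in one line by citing the proof of \cite[Proposition 3.4]{ST24}, and only adds the (trivial) observation that replacing $\Delta$ by $\Delta_0\le\Delta$ can only increase log discrepancies and hence the infimum. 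You instead reprove the cited covering inequality from scratch via the divisor-by-divisor lifting argument. That is a legitimate and more self-contained proof; your $S$-comparison (absorbing $\tfrac{x}{e}R'$ by volume monotonicity and using $\vol(\tilde\tau^*D)=\deg\tau\cdot\vol(D)$) and the crepant identity $A_{X',\Delta'}(F')=e\,A_{X,\Delta}(F)$ are both correct as stated, modulo the usual care about realizing $F'$ as a $\bQ$-Cartier divisor on a model (which does not affect $S$).

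The one step that is genuinely under-justified is the existence of a component $F'$ of $\tilde\tau^{-1}(F)$ with $\eta'\in c_{X'}(F')$. Your ``point-lifting argument through the fibre product'' does not quite work as described: what lifting gives you is that $(\sigma')^{-1}(\eta')$ is nonempty and maps into $\sigma^{-1}(\eta)$, but $\sigma^{-1}(\eta)$ may contain points not on $F$, so a priori $(\sigma')^{-1}(\eta')$ could miss $\tilde\tau^{-1}(F)$ entirely; equivalently, $\bigcup_i c_{X'}(F'_i)$ is a closed subset of $\tau^{-1}(c_X(F))$ surjecting onto $c_X(F)$, but this alone does not force it to contain every point of $\tau^{-1}(c_X(F))$. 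The statement you need is true, but the standard justification is either the valuation-theoretic lying-over theorem (every prime of $\mathcal{O}_{X',\eta'}$ over the center of $\operatorname{ord}_F$ is the center of some extension of $\operatorname{ord}_F$ to $K(X')$), or a reduction to the Galois case: pass to the Galois closure $X''\to X$, use that $\operatorname{Gal}(K(X'')/K(X))$ acts transitively both on $\tau''^{-1}(\eta)$ and on the set of extensions of $\operatorname{ord}_F$, conclude that the union of their centers is Galois-stable and hence equals $\tau''^{-1}(c_X(F))$, and then push the resulting valuation down to $X'$. You flagged this as the point requiring verification, and indeed it is the only substantive gap; with that lemma supplied, your argument is complete.
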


\begin{proof}
As in the proof of \cite[Proposition 3.4]{ST24}, we have
\[
\delta_{\eta'}\left(X',\Delta';\tau^*L\right)\leq
\delta_\eta\left(X,\Delta;L\right). 
\]
For any prime divisor $E$ over $X$ with $\eta\in c_X(E)$, we have 
$A_{X,\Delta}(E)\leq A_{X,\Delta_0}(E)$. Thus the inequality 
\[
\delta_\eta\left(X,\Delta;L\right)\leq
\delta_\eta\left(X,\Delta_0;L\right) 
\]
is trivial and we obtain the assertion. 
\end{proof}

\begin{proposition}[{cf.\ 
\cite[Claim 5.4]{ST24}}]\label{proposition_ST-cover}
Let $n\in\bZ_{>0}$ and let $\bP:=\bP\left(a_0,\dots,a_{n+1}\right)$ 
be well-formed. Assume that $\left(a_1,\dots,a_{n+1}\right)$ is also 
well-formed. 
Set $D_0:=(x_0=0)\subset\bP$. Let $X\subset\bP$ be a quasi-smooth and 
well-formed hypersurface with $X|_{D_0}\subset D_0\left(\simeq 
\bP\left(a_1,\dots,a_{n+1}\right)\right)$ also quasi-smooth. 
Consider the morphism 
\begin{eqnarray*}
\tau\colon\bP':=\bP\left(1,a_1,\dots,a_{n+1}\right)&\to&\bP\\
\left[x_0:x_1:\cdots:x_{n+1}\right]&\mapsto&
\left[x_0^{a_0}:x_1:\cdots:x_{n+1}\right]
\end{eqnarray*}
as in Proposition \ref{proposition_finite-wps}, and set 
$X':=\tau^*X\subset\bP'$. Then $X'\subset\bP'$ is also a quasi-smooth 
and well-formed hypersurface, and 
\[
\delta_{p'}\left(X';\cO_{X'}(1)\right)\leq\delta_p\left(X;\cO_X(1)\right)
\]
holds for any $p\in X$ and $p'\in X'$ with $\tau(p')=p$. 
\end{proposition}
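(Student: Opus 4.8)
The plan is to establish the two structural claims about $X'$ and then to read off the inequality from the covering estimate of Proposition~\ref{proposition_covering}. Write $X=(f=0)$ with $f\in\bC[x_0,\dots,x_{n+1}]$ homogeneous of degree $d$. By Proposition~\ref{proposition_finite-wps}(2), applied with $e_0=a_0$ and $e_i=1$ for $i\geq 1$, the cover is given by $X'=(f'=0)$ where $f':=f(x_0^{a_0},x_1,\dots,x_{n+1})$, so $X'$ again has degree $d$ and $\tau^*\cO_{\bP}(1)=\cO_{\bP'}(1)$. To prove quasi-smoothness I would test the Jacobian criterion of Definition~\ref{definition_q-sm} at an arbitrary nonzero point $q'=(q'_0,\dots,q'_{n+1})$ of the affine cone of $X'$, with image $q=\tau(q')=((q'_0)^{a_0},q'_1,\dots,q'_{n+1})$ on the cone of $X$. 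Here $\frac{\partial f'}{\partial x_0}=a_0 x_0^{a_0-1}\cdot\frac{\partial f}{\partial x_0}(x_0^{a_0},\dots)$ and $\frac{\partial f'}{\partial x_i}=\frac{\partial f}{\partial x_i}(x_0^{a_0},\dots)$ for $i\geq 1$. If $q'_0\neq 0$ the quasi-smoothness of $X$ at $q$ propagates directly, since the factor $a_0(q'_0)^{a_0-1}$ is a unit. The delicate case is $q'_0=0$: then $q$ lies on the cone of $X|_{D_0}=(f(0,x_1,\dots,x_{n+1})=0)$, the point $(q'_1,\dots,q'_{n+1})$ is nonzero, and $\frac{\partial f}{\partial x_i}(q)=\frac{\partial (f|_{D_0})}{\partial x_i}(q)$ for $i\geq 1$; here I would invoke the hypothesis that $X|_{D_0}$ is quasi-smooth to conclude that some $\frac{\partial f'}{\partial x_i}(q')$ with $i\geq 1$ is nonzero. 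This is precisely where the assumption on $X|_{D_0}$ enters.

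For well-formedness I would use the combinatorial characterization of Proposition~\ref{proposition_dimca}: a quasi-smooth hypersurface of degree $d$ is well-formed exactly when $\gcd(\{a_\ell:\ell\neq i,j\})\mid d$ for every pair $i\neq j$. In $\bP'$ the weight in position $0$ is $1$, so any pair $\{i,j\}$ with $0\notin\{i,j\}$ produces a set of weights containing $1$, making the condition automatic; the remaining pairs $\{0,k\}$ give exactly the conditions already guaranteed by the well-formedness of $X\subset\bP$. Hence $X'$ is well-formed, the well-formedness of $\bP'$ itself following in the same way from that of $(a_1,\dots,a_{n+1})$.

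For the inequality I would set $\Delta:=\left(1-\tfrac{1}{a_0}\right)(X|_{D_0})$ on $X$ and apply Proposition~\ref{proposition_covering} with $\tau_X:=\tau|_{X'}\colon X'\to X$, $\Delta'=0$, $\Delta_0=0$, $\eta=p$, $\eta'=p'$, and $L=\cO_X(1)$. The needed ramification identity comes from restricting the formula $K_{\bP'}=\tau^*\bigl(K_{\bP}+(1-\tfrac{1}{a_0})D_0\bigr)$ of Proposition~\ref{proposition_finite-wps}(1): adding $X'=\tau^*X$ gives $K_{\bP'}+X'=\tau^*\bigl(K_{\bP}+X+(1-\tfrac{1}{a_0})D_0\bigr)$, and then the adjunction identities $K_{X'}=(K_{\bP'}+X')|_{X'}$ and $K_X=(K_{\bP}+X)|_X$ from Proposition~\ref{proposition_diff} (valid since both $X'$ and $X$ are quasi-smooth and well-formed) yield $K_{X'}=\tau_X^*(K_X+\Delta)$. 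Since $\Delta_0=0\leq\Delta$ and $K_X$ is $\bQ$-Cartier at $p$, Proposition~\ref{proposition_covering} delivers $\delta_{p'}(X';\cO_{X'}(1))\leq\delta_p(X;\cO_X(1))$, using $\tau_X^*\cO_X(1)=\cO_{X'}(1)$.

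The point requiring the most care — and the main obstacle — is the remaining hypothesis of Proposition~\ref{proposition_covering} that $(X,\Delta)$ be klt at $p$. I would deduce this from the stronger fact that $(X,X|_{D_0})$ is plt: the Jacobian computation above shows that $X$ and $D_0$ meet transversally along $X\cap D_0$ on the smooth toric cover, so the cover of the pair $(X,X|_{D_0})$ is log smooth, and passing to the quotient (exactly as in the adjunction argument proving Proposition~\ref{proposition_diff}) keeps the pair plt, the different being supported on the quotient-singular locus with coefficients $<1$. As the coefficient $1-\tfrac{1}{a_0}$ is itself $<1$, the pair $(X,\Delta)$ is klt at $p$, and all hypotheses of Proposition~\ref{proposition_covering} are met.
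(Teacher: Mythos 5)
Your proof is correct and follows essentially the same route as the paper: the paper cites \cite[Claim 5.4]{ST24} for the quasi-smoothness and well-formedness of $X'$ (which you instead verify directly via the Jacobian criterion and Proposition~\ref{proposition_dimca}), then states the ramification formula $K_{X'}=\tau^*\bigl(K_X+\tfrac{a_0-1}{a_0}X|_{D_0}\bigr)$ and concludes by Proposition~\ref{proposition_covering}, exactly as you do. Your explicit check of the klt hypothesis of Proposition~\ref{proposition_covering} via pltness of $(X,X|_{D_0})$ is a detail the paper leaves implicit, and your argument for it is sound.
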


\begin{proof}
By \cite[Claim 5.4]{ST24}, 
the $X'\subset\bP'$ is quasi-smooth and well-formed. 
Moreover, we have 
\[
K_{X'}=\left(\tau|_{X'}\right)^*\left(K_X+\frac{a_0-1}{a_0}X|_{D_0}\right). 
\]
Thus the assertion follows by Proposition \ref{proposition_covering}. 
\end{proof}

\section{Barycenters of convex sets and stability thresholds on surfaces}\label{section:barycenter}

The following proposition gives some information on the barycenter of a convex subset in $\bR^2$ as in Figure \ref{fig:convex-set}. 

% \begin{figure}[htbp]
%   \centering
% % preamble:
% % \usepackage{tikz}
% % \usetikzlibrary{patterns}

% \begin{tikzpicture}[scale=1.0, line cap=round, line join=round]

%   % ---- parameters (edit these) ----
%   \def\cO{1.6}  % c_0
%   \def\cI{2.2}  % c_1
%   \def\cII{2.7} % c_2 (assume c_0 < c_2)

%   % right "bulge" geometry (edit these)
%   \def\xR{4.0}     % rightmost x
%   \def\yTop{2.4}   % y at right-top
%   \def\yBot{0.35}  % y at right-bottom

%   % ---- compute tangent angle at the top join (c1,c2) coming from the trapezoid edge
%   % incoming edge: (0,c0) -> (c1,c2)
%   \pgfmathsetmacro{\thetaTop}{atan2(\cII-\cO,\cI)} % degrees

%   % axes (optional: delete if you don't want axes)
%   \draw[->] (-0.4,0) -- (\xR+0.6,0);
%   \draw[->] (0,-0.4) -- (0,\cII+0.8);

%   % ---- one closed path = whole convex body ----
%   \path[save path=\K]
%     (0,0)
%     -- (0,\cO)
%     -- (\cI,\cII)
%     % from (c1,c2) into the right-top, leaving with the same tangent as the trapezoid's top edge
%     to[out=\thetaTop, in=180] (\xR,\yTop)
%     % go down along the right boundary (curved)
%     to[out=-90, in=90] (\xR,\yBot)
%     % back to (c1,0) leaving the right-bottom toward left with horizontal tangent at (c1,0)
%     to[out=180, in=0] (\cI,0)
%     -- cycle;

%   % fill with hatch, then draw boundary (black)
%   \fill[pattern=north east lines] [use path=\K];
%   \draw[thick] [use path=\K];

%   % label (optional)
%   % \node at (0.75*\cI,0.55*\cII) {$K$};

% \end{tikzpicture}

%   \caption{A convex subset $\mathbf{O} \subset \mathbb{R}^2$.}
%   \label{fig:convex-set}
% \end{figure}

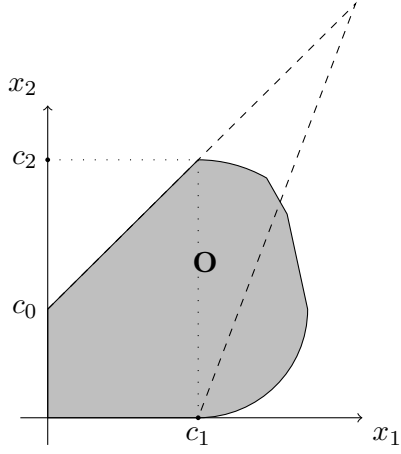
\begin{figure}
\centering
\begin{tikzpicture}[scale=0.90]
\filldraw[fill=lightgray,draw=black] (0,0) --(0,1.6) -- (2.2,3.8) 
arc(90:60:2) -- (3.5,3.0) -- (3.8,1.6) -- (3.8,1.6) arc(0:-90:1.6) -- (2.2,0)
-- cycle;
\draw[->] (-0.4,0) -- (4.6,0) node[below right] {$x_1$}; 
\draw[->] (0,-0.4) -- (0,4.6)  node[above left] {$x_2$};
\node at (current bounding box.center) {\(\mathbf{O}\)};
\draw[dashed] (0,1.6) -- (4.5,6.1) -- (2.2,0); 
\node[below] at (2.2,0) {$c_1$};
\node[left]  at (0,1.6) {$c_0$};
\node[left]  at (0,3.8) {$c_2$};
\fill (0,3.8) circle (1pt);
\draw[loosely dotted] (0, 3.8)--(2.2,3.8);
\fill (2.2,0) circle (1pt); 
\draw[loosely dotted] (2.2,0) -- (2.2, 3.8); 
\end{tikzpicture}
  \caption{A convex subset $\mathbf{O} \subset \mathbb{R}^2$. The coordinates of the barycenter of $\mathbf{O}$ are maximized when the figure of $\mathbf{O}$ is equal to the dashed area.}
  \label{fig:convex-set}
\end{figure}

\begin{proposition}\label{proposition_gravity}
Let $V, c_1, c_2\in\bR_{>0}$ be positive real numbers and let 
$c_0\in\bR_{\geq 0}$ be with $c_2\geq c_0$. 
Consider a compact convex set $\mathbf{O}\subset\bR^2$ with 
$\operatorname{vol}_{\bR^2}(\mathbf{O})=V$ satisfying 
\begin{eqnarray*}
&&\mathbf{O}\cap\left\{(x_1,x_2)\in\bR^2\,\,|\,\,x_1\leq c_1\right\}\\
&=&\left\{(x_1,x_2)\in\bR^2\,\,|\,\,0\leq x_1\leq c_1, \text{ and }0\leq x_2\leq\frac{c_2-c_0}{c_1}x_1+c_0\right\}. 
\end{eqnarray*}
(In particular, we have $c_1(c_0+c_2)/2\leq V$.) 
Let $(b_1,b_2)\in\mathbf{O}$ be the barycenter of the convex set 
$\mathbf{O}$. 
Let $p_1\colon\bR^2\to\bR$ be the projection to the first coordinate, set 
$p_1(\mathbf{O})=:[0,t_1]$ and 
let $g(x)$ be the length of $(p_1|_\mathbf{O})^{-1}(x)$ for any $x\in[0,t_1]$. 

\begin{enumerate}
\item 
We have 
\begin{eqnarray*}
b_1&\leq&
\frac{c_0^2c_1^2-4c_0c_1V+2c_1c_2V+4V^2}{6c_2V}\quad\text{and}\\
b_2&\leq&\frac{-c_0^3c_1^2+4c_0^2c_1V-4c_0V^2+4c_2V^2}{6c_1c_2V}. 
\end{eqnarray*}
\item 
Assume that 
\[
b_1=\frac{c_0^2c_1^2-4c_0c_1V+2c_1c_2V+4V^2}{6c_2V}.
\] 

Then we have $t_1=(2V-c_0c_1)/c_2$ and 
\[
g(x)=\begin{cases}
\frac{c_2}{c_1}x+c_0 & \text{if }x\in[0,c_1],\\
c_2\cdot\frac{t_1-x}{t_1-c_1} & \text{if }x\in[c_1, t_1].
\end{cases}
\]
\item 
If 
\[
b_2=\frac{-c_0^3c_1^2+4c_0^2c_1V-4c_0V^2+4c_2V^2}{6c_1c_2V},
\]
then we must have \[
b_1=\frac{c_0^2c_1^2-4c_0c_1V+2c_1c_2V+4V^2}{6c_2V}.
\] 
Moreover, $\mathbf{O}$ 
must be the convex hull of 
\[
\left\{(0,c_0), \quad (0,0), \quad (c_1,0), \quad
\left(\frac{2V-c_0c_1}{c_2},\frac{2c_2V-c_0(2V-c_0c_1)}{c_1c_2}\right)\right\}.
\]
\end{enumerate}
\end{proposition}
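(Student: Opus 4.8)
The plan is to work with the boundary description of $\mathbf{O}$. Writing $p_1(\mathbf{O})=[0,t_1]$, for each $x\in[0,t_1]$ the slice $(p_1|_\mathbf{O})^{-1}(x)$ is a segment $[\ell(x),u(x)]$ with $g(x)=u(x)-\ell(x)$, where $u$ is concave and $\ell$ is convex by convexity of $\mathbf{O}$; on $[0,c_1]$ the hypothesis forces $\ell\equiv 0$ and $u\equiv L$, where $L(x):=\frac{c_2-c_0}{c_1}x+c_0$. A direct computation gives $b_1V=\int_0^{t_1}x\,g\,dx$ and $b_2V=\int_0^{t_1}\frac{u^2-\ell^2}{2}\,dx$. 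I would first record two structural inequalities coming only from convexity: since $u$ is concave and coincides with the affine function $L$ on $[0,c_1]$, concavity forces $u\le L$ on all of $[0,t_1]$; since $\ell$ is convex and vanishes on $[0,c_1]$, it is nondecreasing, hence $\ell\ge 0$. Consequently each slice lies in $[0,L(x)]$ and $b_2V\le\int_0^{t_1}\bigl(Lg-\tfrac12 g^2\bigr)\,dx=:\tfrac12\Phi[g]$, with equality if and only if $u\equiv L$ (optimal vertical placement of the slices).

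The second step treats $\Phi[g]=\int_0^{t_1}(2Lg-g^2)\,dx$ as a functional of the slice function $g$ alone, over the convex class of feasible $g$ (concave, $\ge 0$, equal to $L$ on $[0,c_1]$, with $\int g=V$ and free right endpoint $t_1$). The key point is that $\Phi$ is \emph{strictly} concave, since for $g_0\ne g_1$ one has $\Phi\bigl[\tfrac{g_0+g_1}{2}\bigr]-\tfrac12(\Phi[g_0]+\Phi[g_1])=\tfrac14\int(g_0-g_1)^2>0$. Let $g_\star$ be the extremal slice function of part (2) (equal to $L$ on $[0,c_1]$, then linear down to $0$ at $t_1^\star=(2V-c_0c_1)/c_2$), and let $\ell_\star=L-g_\star$, which is the line through $(c_1,0)$ and $(t_1^\star,u_\star)$; a direct computation confirms $\tfrac12\Phi[g_\star]$ equals $b_2^{\max}V$, the right-hand side of part (1). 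I would prove that $g_\star$ is the maximiser by verifying the first-order condition $D\Phi[g_\star](g-g_\star)\le 0$. This differential equals $2\int \ell_\star(g-g_\star)$, and since $\int(g-g_\star)=0$ and $\ell_\star$ is linear on $[c_1,t_1^\star]$ while $g-g_\star$ vanishes on $[0,c_1]$, it collapses to $2\frac{u_\star}{t_1^\star-c_1}\,(b_1-b_1^{\max})V$. Thus the optimality condition is \emph{equivalent} to $b_1\le b_1^{\max}$, which is exactly the bound in part (1).

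A small extra input makes all integrals live on a common interval: I would show from concavity that $t_1\le t_1^\star$ for every feasible $g$. Indeed a concave $g$ with $g(c_1)=c_2$ lies above its chord to $(t_1,g(t_1))$, so $V=\int g\ge \tfrac12 c_2(t_1-c_1)$, with equality forcing $g=g_\star$. Granting this, the support of any feasible $g$ is contained in $[0,t_1^\star]$, so the computation of $D\Phi[g_\star](g-g_\star)$ above is exactly valid, and strict concavity of $\Phi$ then yields that $g_\star$ is the \emph{unique} maximiser.

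Finally I would assemble the equality analysis. If $b_2$ equals the bound of part (1), then in $b_2V=\tfrac12\Phi[g]\le\tfrac12\Phi[g_\star]=b_2^{\max}V$ every inequality is an equality: the first forces $u\equiv L$, and strict concavity forces $g=g_\star$. Hence $\ell=L-g_\star=\ell_\star$, so $\mathbf{O}$ is bounded above by the segment of $L$ from $(0,c_0)$ to $(t_1^\star,u_\star)$ and below by $x_2=0$ on $[0,c_1]$ and by $\ell_\star$ on $[c_1,t_1^\star]$; that is, $\mathbf{O}$ is the convex hull of the four listed points, and in particular $b_1=b_1^{\max}$. I expect the main obstacle to be organisational rather than a single hard estimate: namely, recognising that $\Phi$ is the correct strictly concave functional, and that its first-order optimality condition is nothing other than the already-established bound on $b_1$, combined with the free-endpoint fact $t_1\le t_1^\star$ that lets the comparison be carried out on $[0,t_1^\star]$.
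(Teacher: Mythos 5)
Your architecture for the $b_2$ bound is genuinely different from the paper's and, in outline, attractive: where the paper applies a shear taking the upper boundary to a horizontal line and then slices the region \emph{horizontally}, you keep vertical slices, use the structural facts $u\le L$ and $\ell\ge 0$ to get $b_2V\le\tfrac12\Phi[g]$ with $\Phi[g]=\int(2Lg-g^2)\,dx$, and exploit the strict concavity of $\Phi$. The individual computations you sketch are correct: the pointwise identity $\tfrac{u^2-\ell^2}{2}=ug-\tfrac12g^2$, the formula $D\Phi[g_\star](g-g_\star)=2\int\ell_\star(g-g_\star)=\tfrac{2u_\star}{t_1^\star-c_1}(b_1-b_1^{\max})V$, and the endpoint bound $t_1\le t_1^\star$ all check out, and the pointwise inequality $\Phi[g]-\Phi[g_\star]=D\Phi[g_\star](g-g_\star)-\int(g-g_\star)^2$ does give both the bound and the rigidity in part (3) once the first-order condition is known.

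The genuine gap is that the first inequality of part (1), namely $b_1\le b_1^{\max}$, is never proven. Your entire derivation of the $b_2$ bound is conditional on it: you correctly observe that the first-order condition $D\Phi[g_\star](g-g_\star)\le 0$ is \emph{equivalent} to $b_1\le b_1^{\max}$, and then you invoke it as ``exactly the bound in part (1)'' --- that is, you use a conclusion of the proposition to prove the proposition. Nothing else in your argument (neither $u\le L$, nor $\ell\ge 0$, nor $t_1\le t_1^\star$) yields the $b_1$ bound. The paper proves it by a crossing argument, which you would need to insert: with $\delta:=g-g_\star$ one has $\int\delta=0$, $\delta\equiv 0$ on $[0,c_1]$, and, since $g$ is concave while $g_\star$ is affine on $[c_1,t_1^\star]$ with $g(c_1)=g_\star(c_1)$, the function $\delta$ changes sign at most once, from $+$ to $-$, at some $s$; hence $\int x\delta\,dx=\int(x-s)\delta\,dx\le 0$, i.e.\ $b_1\le b_1^{\max}$. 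Once this step is added your implication chain $b_1\le b_1^{\max}\Rightarrow\Phi[g]\le\Phi[g_\star]\Rightarrow b_2\le b_2^{\max}$ closes and gives a clean alternative to the paper's shear-and-reslice proof of the second inequality. Separately, your equality analysis covers only part (3); part (2) (that $b_1=b_1^{\max}$ forces $t_1=t_1^\star$ and the stated profile of $g$) is not addressed, and it requires the equality case of the same crossing argument you are missing.
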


\begin{proof}
A part of the proof is essentially same as the proof in
\cite[Proposition 3.5 (2)]{Fujita:2024aa}. We give a complete proof for convenience. 
If $V=c_1(c_0+c_2)/2$, then the assertion is trivial. 
Thus we may assume that $V>c_1(c_0+c_2)/2$. 

Note that $g(x)$ is concave since $\mathbf{O}$ 
is convex by Brunn--Minkowski's theorem. 
By this and $g(c_1)=c_2$, it follows that 
\[
g(x)\geq c_2\cdot\frac{t_1-x}{t_1-c_1}
\]
for any $x\in[c_1,t_1]$. %, where $t_1$ is as in (2). 
Therefore, we have
\[
V-\frac{c_1(c_0+c_2)}{2}=\int_{c_1}^{t_1}g(x)dx\geq
\int_{c_1}^{t_1}c_2\cdot\frac{t_1-x}{t_1-c_1}dx=\frac{c_2(t_1-c_1)}{2}. 
\]
This immediately implies that $t_1\leq (2V-c_0c_1)/c_2$. Let us set the function 
$h_2\colon [0, (2V-c_0c_1)/c_2]\to\bR$ defined to be 
\[
h_2(x):=\begin{cases}
\frac{c_2-c_0}{c_1}x+c_0 & \text{if }x\in[0,c_1], \\
c_2\cdot\frac{\frac{2V-c_0c_1}{c_2}-x}{\frac{2V-c_0c_1}{c_2}-c_1} & \text{if }x\in[c_1, (2V-c_0c_1)/c_2]. 
\end{cases}
\]

Let us first show the first inequality in (1) and the item (2), i.e., the statements on $b_1$. 

\noindent{\bf (Case 1)} Assume that $h_2(t_1)\leq g(t_1)$. Then $h_2(x)\leq g(x)$ for all $x\in[c_1,t_1]$. 
Thus, we have
\begin{eqnarray*}
&&\int_0^{\frac{2V-c_0c_1}{c_2}}x h_2(x)dx-t_1 V
=\int_0^{\frac{2V-c_0c_1}{c_2}}(x-t_1)h_2(x)dx\\
&\geq&\int_0^{t_1}(x-t_1)g(x)dx=\int_0^{t_1}x g(x)dx-t_1 V.
\end{eqnarray*}

This implies that 
\[
b_1 = \frac{1}{V}\int_{0}^{t_1} xg(x)dx \leq \frac{1}{V} 
\int_0^{\frac{2V-c_0c_1}{c_2}}x h_2(x)dx   = 
\frac{c_0^2c_1^2-4c_0c_1V+2c_1c_2V+4V^2}{6c_2V}.
\]
Moreover, if 
\[
b_1=\frac{c_0^2c_1^2-4c_0c_1V+2c_1c_2V+4V^2}{6c_2V},
\]
then we must have $t_1=(2V-c_0c_1)/c_2$ and 
$g(x)=h_2(x)$ for all $x\in[c_1,t_1]$. 

\vspace{2mm}

\noindent{\bf (Case 2)} Assume the remaining case $h_2(t_1)> g(t_1)$. If $h_2(x)\geq g(x)$ holds 
for any $x\in[c_1,t_1]$, then 
\[
V=\int_0^{t_1}g(x)dx<\int_0^{t_1}h_2(x)dx\leq V, 
\]
a contradiction. Thus there exists $s_2\in(c_1, t_1)$ such that 
\[
\begin{cases}
g(s_2)=h_2(s_2), \\
g(x)>h_2(x) \text{ for all }x\in(c_1, s_2), \\
g(x)<h_2(x) \text{ for all }x\in(s_2, t_1).
\end{cases}
\]
Since 
\begin{eqnarray*}
&&\int_0^{\frac{2V-c_0c_1}{c_2}}x h_2(x)dx-s_2 V=\int_0^{\frac{2V-c_0c_1}{c_2}}(x-s_2)h_2(x)dx\\
&>&\int_0^{t_1}(x-s_2)g(x)dx=\int_0^{t_1}x g(x)dx-s_2 V,
\end{eqnarray*}
we get the inequality 
\[
b_1<\frac{c_0^2c_1^2-4c_0c_1V+2c_1c_2V+4V^2}{6c_2V}.
\]
Hence we obtain the first inequality in (1) and (2). 

\vspace{2mm}

Now we show the rest.  
Let us consider the linear transform
\begin{eqnarray*}
f\colon\bR^2&\to&\bR^2\\
\begin{pmatrix}
x_1 \\ x_2
\end{pmatrix}
&\mapsto&
\begin{pmatrix}
y_1 \\ y_2
\end{pmatrix}=
\begin{pmatrix}
1 & 0\\
-\frac{c_2-c_0}{c_1} & 1
\end{pmatrix}
\begin{pmatrix}
x_1 \\ x_2
\end{pmatrix},
\end{eqnarray*}
and set $\mathbf{O}':=f(\mathbf{O})$. Let $(b'_1,b'_2)\in\mathbf{O}'$ 
be the 
barycenter of $\mathbf{O}'$. 
Since the determinant of the matrix is $1$, we have 
$\operatorname{vol}_{\bR^2}(\mathbf{O}')=V$ and 
\begin{eqnarray*}
&&\mathbf{O}'\cap\left\{(y_1,y_2)\in\bR^2\,\,|\,\,y_1\leq c_1\right\}\\
&=&\left\{(y_1,y_2)\in\bR^2\,\,|\,\,0\leq y_1\leq c_1\text{ and }
-\frac{c_2-c_0}{c_1}y_1\leq y_2\leq c_0
\right\}.
\end{eqnarray*}
In particular, we note that $\mathbf{O}'\subset\{(y_1,y_2)\in\bR^2\,\,|\,\,y_2\leq c_0\}$. 

Set 
\[
\mathbf{O}'':=\mathbf{O}'\cap\{(y_1,y_2)\in\bR^2\,\,|\,\, y_1\geq c_1\}. 
\]
Note that $\operatorname{vol}_{\bR^2}(\mathbf{O}'')=V-c_1(c_0+c_2)/2$. 
Let $p_2\colon\bR^2\to\bR$ be the projection to the second coordinate, 
and set $p_2(\mathbf{O}'')=:[-c'',c_0]$. Obviously, we have $c''\geq c_2-c_0$. 
For any $y\in[-c'',c_0]$, let $h(y)\in\bR_{\geq 0}$ be the length of 
$(p_2|_{\mathbf{O}''})^{-1}(y)$, which is continuous and concave over 
$y\in[-c'',c_0]$. In particular, for any $y\in[-(c_2-c_0),c_0]$, we have 
\[
h(y)\geq \frac{y+c_2-c_0}{c_2}h(c_0).
\]
By this and  
$
V-\frac{c_1(c_0+c_2)}{2}=\int_{-c''}^{c_0}h(y)dy\geq
\int_{-(c_2-c_0)}^{c_0}\frac{y+c_2-c_0}{c_2}
h(c_0)dy=\frac{c_2h(c_0)}{2}
$, we obtain 
\[
h(c_0)\leq \frac{2V-c_1(c_0+c_2)}{c_2}. 
\]
Let us set 
\[
l(y):=\frac{2V-c_1(c_0+c_2)}{c_2^2}(y+c_2-c_0), 
\quad\tilde{h}(y):=h(y)-l(y)
\]
for any $y\in[-c'',c_0]$. 
Since $\tilde{h}\left(-(c_2-c_0)\right)\geq 0$ and 
$\tilde{h}(c_0)\leq 0$, there exists $-c_3\in[-(c_2-c_0),c_0]$ such that 
\[
\begin{cases}
\tilde{h}(-c_3)=0, \\
\tilde{h}(y)\geq 0 \text{ for all }y\in[-(c_2-c_0), -c_3], \\
\tilde{h}(y)\leq 0 \text{ for all }y\in[-c_3,c_0].
\end{cases}
\]
Therefore, we get $\int_{-c''}^{c_0} yh(y)dy \le \int_{-(c_2-c_0)}^{c_0}y l(y)dy$ from 
\begin{eqnarray*}
&&\int_{-c''}^{c_0} yh(y)dy+c_3\frac{2V-c_1(c_0+c_2)}{2}
=\int_{-c''}^{c_0}(y+c_3)h(y)dy\\
&\leq& \int_{-(c_2-c_0)}^{c_0}(y+c_3)l(y)dy
=\int_{-(c_2-c_0)}^{c_0}y l(y)dy+c_3\frac{2V-c_1(c_0+c_2)}{2}.
\end{eqnarray*}
This implies that 
\begin{eqnarray*}
b'_2&=&\frac{1}{V}\left(
\int_{-(c_2-c_0)}^0 y\frac{c_1}{c_2-c_0}(y+c_2-c_0)dy
+\int_0^{c_0}y c_1 dy
+\int_{-c''}^{c_0} y h(y)dy\right)\\
&=&\frac{1}{V}\left(\frac{1}{6}c_1
\left(2c_0^2+2c_0c_2-c_2^2\right)+\int_{-c''}^{c_0}yh(y)dy\right)\\
&\leq&\frac{1}{V}\left(\frac{1}{6}c_1
\left(2c_0^2+2c_0c_2-c_2^2\right)+\int_{-(c_2-c_0)}^{c_0}yl(y)dy\right)\\
&=&c_0\left(1-\frac{c_0c_1}{6V}\right)-\frac{c_2}{3}. 
\end{eqnarray*}
Moreover, if \[
b'_2=c_0\left(1-\frac{c_0c_1}{6V}\right)-\frac{c_2}{3}, 
\]
then we must have $c''=c_2-c_0$, and 
$h(y)=l(y)$ for all $y\in[-(c_2-c_0),c_0]$. 
Since $(p_2|_{\mathbf{O}''})^{-1}(y)
=[c_1,c_1+h(y)]$, the convex set $\mathbf{O}''$ must be the convex hull of the set 
\[
\left\{(c_1,c_0), \quad \left(\frac{2V-c_0c_1}{c_2},c_0\right),\quad 
\left(c_1,-(c_2-c_0)\right)\right\}. 
\]
Thus $\mathbf{O}$ must be the convex hull of the set 
\[
\left\{(0,c_0), \quad (0,0), \quad (c_1,0), \quad
\left(\frac{2V-c_0c_1}{c_2},\frac{2c_2V-c_0(2V-c_0c_1)}{c_1c_2}\right)\right\}.
\]
In this case, we can directly check that 
\[
b_1=\frac{c_0^2c_1^2-4c_0c_1V+2c_1c_2V+4V^2}{6c_2V}.
\]

On the other hand, we know that $b'_2=-b_1(c_2-c_0)/c_1+b_2$. 
Hence we have 
\begin{eqnarray*}
b_2&=&b'_2+\frac{c_2-c_0}{c_1}b_1\\
&\leq& 
c_0\left(1-\frac{c_0c_1}{6V}\right)-\frac{c_2}{3}+\frac{c_2-c_0}{c_1}\cdot
\frac{c_0^2c_1^2-4c_0c_1V+2c_1c_2V+4V^2}{6c_2V}\\
&=&\frac{-c_0^3c_1^2+4c_0^2c_1V-4c_0V^2+4c_2V^2}{6c_1c_2V}. 
\end{eqnarray*}
Thus we get the assertion. 
\end{proof}

Proposition \ref{proposition_gravity} is useful especially when $c_0$ is equal to zero. The following is a generalization of \cite[Lemma 3.2]{AZ23} and gives a lower bound for the stability threshold in terms of the Seshadri constant.

\begin{corollary}\label{corollary_gravity}
Let $(S,\Delta)$ be a $2$-dimensional projective klt pair and let 
$L$ be an ample $\bQ$-line bundle on $S$. 
Let us consider a plt blowup $\sigma\colon \tilde{S}\to S$ of $(S,\Delta)$ 
with the exceptional divisor $C\subset \tilde{S}$, that is, $-C$ on 
$\tilde{S}$ is $\sigma$-ample and the pair 
$(\tilde{S},\Delta_{\tilde{S}}+C)$ is plt, where 
$\Delta_{\tilde{S}}:=\sigma^{-1}_*\Delta$. 
Set $K_C+\Delta_C:=(K_{\tilde{S}}+\Delta_{\tilde{S}}+C)|_C$ and 
\[
\Delta_C=:\sum_{i=1}^m d_i q_i, \quad d_C:=\sum_{i=1}^m d_i,
\]
where $d_i\in(0,1)\cap\Q$ with $d_1\leq\cdots\leq d_m$ and 
$q_1,\dots,q_m\in C$ are mutually distinct points. 
Assume that a positive real number $\varepsilon\in\bR_{>0}$ satisfies 
$\varepsilon\leq\varepsilon(L; C)$, where 
\[
\varepsilon(L; C):=\max\left\{x\in\bR_{\geq 0}\,\,|\,\,
\sigma^*L-x C\text{ is nef}\right\}. 
\]
\begin{enumerate}
\item\label{item:1stineqonS(L;C)} 
We have an inequality
\[
S(L; C)\leq\frac{\varepsilon^2\frac{2
-d_C}{A_{S,\Delta}(C)}
+(L^2)}{\frac{3\varepsilon(2-d_C)}{A_{S,\Delta}(C)}},  
\]
where $S(L;C)$ and $A_{S,\Delta}(C)$ are as in Definition \ref{definition_delta}. 
Moreover, if the equality holds, then we have 
$\varepsilon=\varepsilon(L; C)$, 
\[
T(L; C)=\frac{(L^2)A_{S,\Delta}(C)}{\varepsilon (2-d_C)}, 
\]
and the restricted volume
\[
\operatorname{vol}_{\tilde{S}|C}(\sigma^*L-x C)=\begin{cases}
\frac{2-d_C}{A_{S,\Delta}(C)}x & 
\text{if }x\in\left[0,\varepsilon\right], \\
\frac{\varepsilon(2-d_C)}{A_{S,\Delta}(C)}\cdot
\frac{T(L;C)-x}{T(L; C)-\varepsilon} & \text{if }x\in \left[\varepsilon, 
T(L;C)\right).
\end{cases}
\]
\item 
For any $q\in C$, we have an inequality 
\[
S(L; C\triangleright q)\leq\frac{(L^2)}{3\varepsilon},
\]
where $S(L; C\triangleright q)$ is as in Definition \ref{definition_plt-flag}(3). 
Moreover, if the equality holds, then we must have the equality in (\ref{item:1stineqonS(L;C)}), that is, 
\[
S(L; C)=\frac{\varepsilon^2\frac{2
-d_C}{A_{S,\Delta}(C)}
+(L^2)}{\frac{3\varepsilon(2-d_C)}{A_{S,\Delta}(C)}}. 
\]
\item 
Let $p\in S$ be the center of $C$ on $S$. Then we have 
\[
\delta_p(S,\Delta; L)\geq\min\left\{
\frac{3\varepsilon(2-d_C)}{\varepsilon^2\frac{2-d_C}{A_{S,\Delta}(C)}
+(L^2)},\quad\frac{3\varepsilon(1-d_m)}{(L^2)}\right\}. 
\]
Moreover, if the equality holds,
then we must have 
\[
S(L; C)=\frac{\varepsilon^2\frac{2
-d_C}{A_{S,\Delta}(C)}
+(L^2)}{\frac{3\varepsilon(2-d_C)}{A_{S,\Delta}(C)}}. 
\]
In particular, all the properties in the equality case of (1) also hold. 
(We remark that, if $2d_m\geq d_C$, then the inequality
\[
\frac{3\varepsilon(2-d_C)}{\varepsilon^2\frac{2-d_C}{A_{S,\Delta}(C)}
+(L^2)}\geq\frac{3\varepsilon(1-d_m)}{(L^2)}
\]
holds. 
\end{enumerate}
\end{corollary}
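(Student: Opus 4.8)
The plan is to realize all three quantities as invariants of a single Okounkov body and then feed the resulting convex set into Proposition \ref{proposition_gravity}. Concretely, for a point $q\in C$ I would form the two-dimensional Okounkov body $\mathbf{O}:=\mathbf{O}_{Y_\bullet}(L)\subset\bR^2_{\geq 0}$ of the complete plt flag $Y_\bullet\colon\tilde{S}\triangleright C\triangleright q$ as in Definition \ref{definition_plt-flag}(\ref{item:Okbodydfn}). By loc.\ cit.\ its barycenter $(b_1,b_2)$ satisfies $b_1=S(L;C)$ and $b_2=S(L;C\triangleright q)$, while $\operatorname{vol}_{\bR^2}(\mathbf{O})=(L^2)/2=:V$. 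The first step is to identify the shape parameters of $\mathbf{O}$ on the region $\{x_1\leq\varepsilon\}$. The length of the slice at first coordinate $x$ equals the restricted volume $\operatorname{vol}_{\tilde{S}|C}(\sigma^*L-xC)$; on the nef range $x\in[0,\varepsilon]$ this is the intersection number $((\sigma^*L-xC)\cdot C)=-x(C^2)$, since $\sigma^*L\cdot C=L\cdot\sigma_*C=0$. Thus the slice function is linear through the origin, so in the notation of Proposition \ref{proposition_gravity} I may take $c_0=0$, $c_1=\varepsilon$ and $c_2=-\varepsilon(C^2)$.

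The key numerical input is the adjunction identity $-(C^2)=(2-d_C)/A_{S,\Delta}(C)$. Restricting $K_{\tilde{S}}+\Delta_{\tilde{S}}+C=\sigma^*(K_S+\Delta)+A_{S,\Delta}(C)\,C$ to $C$ and taking degrees gives $A_{S,\Delta}(C)(C^2)=\deg(K_C+\Delta_C)=d_C-2$, because $\sigma^*(K_S+\Delta)\cdot C=0$ and $C\simeq\bP^1$. Hence $c_2=\varepsilon(2-d_C)/A_{S,\Delta}(C)$. With $(c_0,c_1,c_2,V)=(0,\varepsilon,\varepsilon(2-d_C)/A_{S,\Delta}(C),(L^2)/2)$ the bounds of Proposition \ref{proposition_gravity}(1) on $b_1$ and $b_2$ simplify to exactly the stated upper bounds for $S(L;C)$ in part (\ref{item:1stineqonS(L;C)}) and for $S(L;C\triangleright q)$ in part (2). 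The equality statement of part (\ref{item:1stineqonS(L;C)}) is read off from Proposition \ref{proposition_gravity}(2): equality forces the slice function to have a genuine kink at $x=\varepsilon$, which can occur only at the nef threshold (so $\varepsilon=\varepsilon(L;C)$), and then $t_1=2V/c_2=(L^2)A_{S,\Delta}(C)/(\varepsilon(2-d_C))=T(L;C)$, with the piecewise-linear restricted-volume formula following. The equality statement of part (2) is immediate from Proposition \ref{proposition_gravity}(3), which says that maximality of $b_2$ forces maximality of $b_1$.

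For part (3) I would invoke the Abban--Zhuang inequality of Theorem \ref{theorem:AZ} on the flag $S\triangleright C$. Since $\sigma(C)=p$,
\[
\delta_p(S,\Delta;L)\geq\min\left\{\frac{A_{S,\Delta}(C)}{S(L;C)},\ \inf_{q\in C}\frac{A_{C,\Delta_C}(q)}{S(L;C\triangleright q)}\right\},
\]
where the inner infimum uses that over the curve $C$ the local threshold at $q$ is computed by the single valuation $\operatorname{ord}_q$. Bounding $S(L;C)$ and $S(L;C\triangleright q)$ from above by parts (\ref{item:1stineqonS(L;C)}) and (2), and using $A_{C,\Delta_C}(q)\geq 1-d_m$ with equality at $q=q_m$, turns the two terms into precisely $3\varepsilon(2-d_C)/(\varepsilon^2\tfrac{2-d_C}{A_{S,\Delta}(C)}+(L^2))$ and $3\varepsilon(1-d_m)/(L^2)$. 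For the equality case, if the displayed bound is attained then whichever term realizes the minimum must itself be an equality, and in both cases parts (\ref{item:1stineqonS(L;C)})--(2) force $S(L;C)$ to equal its upper bound, whence all the equality conclusions of part (1). Finally the parenthetical comparison follows from $1-d_C+d_m\geq 1-d_m$ (valid when $2d_m\geq d_C$) together with $(L^2)\geq\varepsilon^2(2-d_C)/A_{S,\Delta}(C)$, the latter being the nef self-intersection inequality $\operatorname{vol}(\sigma^*L-\varepsilon C)=(L^2)+\varepsilon^2(C^2)\geq 0$.

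The hard part will be the first step: justifying rigorously that $\mathbf{O}\cap\{x_1\leq\varepsilon\}$ is exactly the claimed triangle, i.e.\ that the slice length coincides with the restricted volume and is linear through the origin on the entire nef range, and carefully bookkeeping the quotient-singularity (orbifold) contributions so that $\operatorname{vol}_{\bR^2}(\mathbf{O})=(L^2)/2$ and the degree computation produces $d_C$ rather than a naive integer. This is precisely where the generalization beyond the smooth case of \cite[Lemma 3.2]{AZ23} resides, and I would model it on the corresponding computation in \cite[Proposition 3.5]{Fujita:2024aa}.
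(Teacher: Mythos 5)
Your proposal is correct and follows essentially the same route as the paper: realize $S(L;C)$ and $S(L;C\triangleright q)$ as the barycenter coordinates of the Okounkov body of the plt flag $C\triangleright q$, use adjunction to get $(C^2)=-(2-d_C)/A_{S,\Delta}(C)$, identify the slice over $[0,\varepsilon]$ via the restricted volume of the ample divisor $\sigma^*L-xC$, and then apply Proposition \ref{proposition_gravity} with $(c_0,c_1,c_2,V)=(0,\varepsilon,\varepsilon(2-d_C)/A_{S,\Delta}(C),(L^2)/2)$ together with Theorem \ref{theorem:AZ} for part (3). The step you flag as "hard" is handled in the paper exactly as you anticipate (surjectivity of restriction for ample divisors plus \cite[Corollary 4.27]{LM09} and the generalized Okounkov body of \cite{Fujita:2024aa}), and the equality $\varepsilon=\varepsilon(L;C)$ is cited from \cite[Lemma 10]{MR3797604} rather than argued via the kink, but the content is the same.
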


\begin{proof} We first show the assertions (1) and (2). 
By adjunction, we have 
\begin{equation}\label{eq:degK_C+Delta_C}
\deg_C(K_C+\Delta_C)=A_{S,\Delta}(C)\cdot(C^2).   \tag{*}  
\end{equation}
Thus we have 
\[
(C^2)=-\frac{2-d_C}{A_{S,\Delta}(C)}. 
\]
For any $q\in C$, the sequence $C\triangleright q$ is a complete plt 
flag over $(S,\Delta)$ in the sense of Definition \ref{definition_plt-flag}. 
Let $\mathbf{O}\subset\bR^2$ be the Okounkov body of $L$ associated with 
$C\triangleright q$ in the sense of Definition \ref{definition_plt-flag}. 
(If $q\in \tilde{S}$ is a smooth point, then $\mathbf{O}$ is nothing but 
the usual Okounkov body of $\sigma^*L$ associated with 
the admissible flag $C\supset\{q\}$.)
Take any $x\in(0,\varepsilon)\cap\bQ$ and a sufficiently divisible 
positive integer $k\gg 0$. Since $\sigma^*L-x C$ is ample, 
the restriction homomorphism 
\[
H^0(\tilde{S},k(\sigma^*L-x C))\to H^0\left(C,
\cO_{\bP^1}\left(k x\frac{2-d_C}{A_{S,\Delta}(C)}\right)\right)
\]
is surjective. Therefore, we have 
\[
\mathbf{O}\cap\{(x_1,x_2)\in\bR^2\,\,|\,\,x_1\leq \varepsilon\}
=\left\{(x_1,x_2)\in\bR^2\,\,|\,\,
0\leq x_1\leq\varepsilon\text{ and }0\leq x_2\leq
\frac{2-d_C}{A_{S,\Delta}(C)}x_1\right\}. 
\]
Set $p_1(\mathbf{O})=:[0,T_1]$ and let $(b_1,b_2)\in\mathbf{O}$ be the 
barycenter of $\mathbf{O}$. 
Then, as in \cite[Remark 4.8 and Definition 4.9]{Fujita:2024aa}, we have 
\[
b_1=S(L; C), \quad b_2=S(L; C\triangleright q),\quad T_1=T(L; C). 
\]
Moreover, for any $x\in\left[0,T(L; C)\right)$, the length of 
$(p_1|_{\mathbf{O}})^{-1}(x)$ is equal to 
$\operatorname{vol}_{\tilde{S}|C}(\sigma^*L-x C)$ by \cite[Corollary 4.27]{LM09}. 
Thus the assertions (1) and (2) immediately follow from Proposition 
\ref{proposition_gravity} by setting $c_0=0, c_1= \varepsilon, c_2= 
\frac{\varepsilon(2-d_C)}{A_{S,\Delta}(C)}$ and $V= \frac{(L^2)}{2}$. (The 
equality $\varepsilon=\varepsilon(L; C)$ 
in (1) follows from \cite[Lemma 10]{MR3797604}.)

The assertion (3) follows from (1), (2) and Theorem \ref{theorem:AZ} (cf.\cite{AZ22}, 
\cite[Theorem 12.3]{Fujita:2024aa}). 
The last remark follows from the inequality 
$0\leq (\sigma^*L-\varepsilon C)^2$ and the equality (\ref{eq:degK_C+Delta_C}). 
\end{proof}

As an immediate corollary of Corollary \ref{corollary_gravity}, 
we have the following result. When $a=1$, this result is a part of 
\cite[Lemma 3.2]{AZ23}. 

\begin{corollary}\label{corollary_lobster}
Let $S$ be a projective klt surface, let 
$L$ be an ample $\bQ$-line bundle on $S$, 
and let $p\in S$ be a point 
of the singularity $\bA^2_{\mathbf{x}_1\mathbf{x}_2}/\bZ_a(1,1)$ for some $a \in \bZ_{>0}$ (cf. Example \ref{example_swb-pt}). Let 
$\sigma\colon \tilde{S}\to S$ be the ordinary blowup at $p\in S$ 
with the exceptional divisor $C\subset \tilde{S}$. 
Then we have 
\[
\delta_p(S; L)\geq\frac{3\cdot\varepsilon(L; C)}{(L^2)}, 
\]
where the definition of $\varepsilon(L; C)$ can be found 
in Corollary \ref{corollary_gravity}. 
Moreover, if the equality holds, then we have the equalities 
\begin{eqnarray*}
S(L; C)&=&\frac{a\cdot \varepsilon(L;C)^2+(L^2)}{3a\cdot\varepsilon(L;C)},\\
\varepsilon(L; C)\cdot T(L; C)&=&\frac{(L^2)}{a}.
\end{eqnarray*}
%\blue{Does the $a=1$ case recover \cite[Lemma 3.2]{AZ23}??}
\end{corollary}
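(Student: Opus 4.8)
The plan is to deduce Corollary~\ref{corollary_lobster} as a direct specialization of Corollary~\ref{corollary_gravity}, applied to the klt pair $(S,\Delta)=(S,0)$ and to the plt blowup $\sigma\colon\tilde S\to S$ given by the ordinary blowup of $p$. The first and only substantial task is to read off the numerical data of $\sigma$ attached to the singularity $\bA^2/\bZ_a(1,1)$. Working torically, near $p$ we may take $N=\bZ^2+\bZ\cdot\tfrac1a(1,1)$ with the cone equal to the first quadrant, whose primitive ray generators are $u_1=e_1$ and $u_2=e_2$. By Example~\ref{example_swb-pt} the ordinary blowup is the star subdivision along the primitive vector $v:=\tfrac1a(1,1)\in N$, so $C=V(v)\simeq\bP^1$. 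Since $e_2=av-e_1$, the sets $\{u_1,v\}$ and $\{u_2,v\}$ are both bases of $N$, hence the two new maximal cones have multiplicity $1$: thus $\tilde S$ is smooth along $C$ and $C$ meets no singular point of $\tilde S$. Consequently, writing $K_C+\Delta_C=(K_{\tilde S}+C)|_C$ as in Corollary~\ref{corollary_gravity}, we get $\Delta_C=0$, i.e.\ $d_C=0$ and $m=0$. Taking $m_\sigma=(1,1)\in M$ with $\langle m_\sigma,u_i\rangle=1$, the log discrepancy is $A_{S,\Delta}(C)=\langle m_\sigma,v\rangle=2/a$, and then the adjunction identity \eqref{eq:degK_C+Delta_C} gives $(C^2)=-a$.

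With these values I would substitute into Corollary~\ref{corollary_gravity}(3), taking $\varepsilon=\varepsilon(L;C)$. Since $(2-d_C)/A_{S,\Delta}(C)=a$, the two quantities inside the minimum become $6\varepsilon/(a\varepsilon^2+(L^2))$ and $3\varepsilon/(L^2)$; the second is the contribution of the points $q\in C$, read with $d_m=0$ because $\Delta_C=0$ forces every such $q$ to be a general point of $(C,0)$ with log discrepancy $1$. As $2d_m=0\geq 0=d_C$, the remark closing Corollary~\ref{corollary_gravity}(3) shows the first quantity dominates the second; concretely this is the nefness inequality $0\leq(\sigma^*L-\varepsilon C)^2=(L^2)-a\varepsilon^2$, using $\sigma^*L\cdot C=0$ and $(C^2)=-a$. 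Hence the minimum equals $3\varepsilon(L;C)/(L^2)$, which is exactly the asserted lower bound for $\delta_p(S;L)$.

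For the equality clause, assume $\delta_p(S;L)=3\varepsilon(L;C)/(L^2)$. Then equality is attained in Corollary~\ref{corollary_gravity}(3), which forces $S(L;C)$ to equal its maximal value there and, through the equality case of part~(1), pins down $\varepsilon=\varepsilon(L;C)$ and the value of $T(L;C)$. Substituting $d_C=0$ and $A_{S,\Delta}(C)=2/a$ into those formulas yields
\[
S(L;C)=\frac{a\cdot\varepsilon(L;C)^2+(L^2)}{3a\cdot\varepsilon(L;C)},\qquad
\varepsilon(L;C)\cdot T(L;C)=\frac{(L^2)}{a},
\]
as claimed. The one place where care is genuinely needed is the local identification of the blowup data---verifying that the different $\Delta_C$ vanishes and that $(C^2)=-a$, $A_{S,\Delta}(C)=2/a$; once this is settled, the remaining steps are pure bookkeeping inside Corollary~\ref{corollary_gravity}, and I expect no further obstacle.
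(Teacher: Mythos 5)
Your proposal is correct and follows essentially the same route as the paper: the paper's proof simply observes that $d_C=0$ and $A_S(C)=2/a$ and then invokes Corollary \ref{corollary_gravity} directly, which is exactly what you do, with the toric verification of the blowup data and the substitution into parts (1) and (3) spelled out in more detail. All your intermediate computations ($(C^2)=-a$, the comparison of the two terms in the minimum via $0\leq(\sigma^*L-\varepsilon C)^2$, and the equality-case formulas) check out.
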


\begin{proof}
Under the assumption, we have $d_C=0$ and $A_S(C)=2/a$. 
Thus we get the assertion directly from Corollary 
\ref{corollary_gravity}. 
\end{proof}

\begin{example}\label{example_hirzebruch}
\begin{enumerate}
\item 
When $S=\bP(1,1,a)$, $L=\cO_{\bP}(1)$ and $p\in S$ is the point of 
singularity $\bA^2_{\mathbf{x}_1\mathbf{x}_2}/\bZ_a(1,1)$, then we can easily 
check that 
$(L^2)=1/a$, $\delta(S;L)=3$ 
(see \cite[Corollary 7.16]{MR4067358}) and $\varepsilon(L;C)=T(L;C)=1/a$. 
\item
Take any $a\in\bZ_{>0}$, let us set $S:=\bP(1,a,a+1)$, 
take $L:=\cO_{\bP}(1)$ and let $p\in S$ be a point of the 
singularity $\bA^2_{\mathbf{x}_1\mathbf{x}_2}/\bZ_a(1,1)$. 
If $a\geq 2$, then $p\in S$ is 
uniquely determined. (If $a=1$, then take $p\in S$ to be an arbitrary 
smooth point in $S$.)
Note that, if $a\geq 2$, the complete linear system 
$|L|$ consists of one prime divisor $D\subset S$. 
(If $a=1$, then take $D\in |L|$ to be the element with $p\in D$.)
Let $\sigma\colon\tilde{S}\to S$ 
be the ordinary blowup at $p$ and let $C\subset\tilde{S}$ be the 
exceptional divisor. 

We know that there exists a birational morphism 
$\nu \colon \tilde{S}\to\bP^2$ contracting the strict transform of 
$D$ as follows. 
The $S$ corresponds to the complete fan $\Sigma$ in $N=\bZ^2$ whose set of 
the primitive generators of $\Sigma(1)$ is $(1,0)$, $(0,1)$, $(-a,-a-1)$. The blowup 
$\sigma$ corresponds to the star subdivision $\tilde{\Sigma}$ of 
$\Sigma$ adding the $1$-dimensional cone whose 
primitive generator is $(-1,-1)$. The morphism $\tilde{S}\to \bP^2$ is nothing but the 
contraction of the ray $\bR_{\geq 0}(-a,-a-1)$. 
%So, the exceptional curve of the contraction $\tilde{S}\to \bP^2$ contains the $\frac{1}{a+1}(1,a)$-singularities and is contracted onto a smooth point.
%}

From the birational morphism $\nu$, we can easily compute that 
\begin{eqnarray*}
(L^2)=\frac{1}{a(a+1)}, &\quad&
\delta_p\left(S; L\right)=\delta\left(S; L\right)
=\frac{A_S(D)}{S(L; D)}=3,\\
\varepsilon(L;C)=\frac{1}{a(a+1)}, &\quad& T(L;C)=\frac{1}{a}, \quad
S(L; C)=\frac{a+2}{3a(a+1)}. 
\end{eqnarray*}
Thus we have 
\[
\delta_p\left(S; L\right)=\frac{3\cdot\varepsilon(L;C)}{(L^2)}
=3<\frac{A_S(C)}{S(L; C)}
=\frac{6(a+1)}{a+2}. 
\]
\end{enumerate}
\end{example}

On the other hand, Proposition \ref{proposition_gravity} is also useful 
even when $c_0>0$. 

\begin{proposition}\label{proposition_perhaps-useful}
Let $a,b\geq 1$, $k\geq 2$ and set $d:=bk+a$. Let us consider 
\[
\tilde{\bP}:=\bP_{\bP^2}\left(\cO\oplus\cO(b)\right)\xrightarrow{\pi}
\bP^2,
\]
let $\xi\in\operatorname{Pic}\tilde{\bP}$ be the tautological line bundle 
of $\bP_{\bP^2}\left(\cO\oplus\cO(b)\right)/\bP^2$, let 
$\bE \in |\xi - \pi^* \cO_{\bP^2}(b)|$ on $\tilde{\bP}$ be the negative section of $\pi$, 
let $\psi\colon\tilde{\bP}\to\bP(1,1,1,b)$ be the contraction of $\bE$.  
 
Let $\tilde{S}\subset\tilde{\bP}$ be a smooth projective surface
such that $\tilde{S}\sim k\xi+\pi^*\cO_{\bP^2}(a)$ and that 
$E:=\bE|_{\tilde{S}}$ is smooth (i.e., 
$E\subset\bE$ is a smooth plane curve of degree $a$). Let us take $\tilde{p}\in \tilde{S}\setminus\left(\bE
\cap \tilde{S}\right)$.
Set $S:=\psi\left(\tilde{S}\right)\subset\bP(1,1,1,b)$ and $p:=
\psi\left(\tilde{p}\right)\in S$. (Note that $S$ is a well-formed weighted  
hypersurface of degree $d$ and $p\in S$ is a smooth point.)

Then we have the inequality 
\[
\delta_p\left(S; \cO_S(1)\right)\geq \frac{3b}{d}. 
\]
\end{proposition}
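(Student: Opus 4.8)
The plan is to apply Corollary \ref{corollary_lobster} at the smooth point $p\in S$ (the case $a=1$ there, i.e.\ $p$ is an ordinary smooth point). This reduces the statement to the two computations
\[
(\cO_S(1)^2)=\frac{d}{b},\qquad \varepsilon(\cO_S(1);C)\ge 1,
\]
where $C$ is the exceptional curve of the ordinary blowup of $S$ at $p$: granting these, Corollary \ref{corollary_lobster} gives
\[
\delta_p(S;\cO_S(1))\ge\frac{3\,\varepsilon(\cO_S(1);C)}{(\cO_S(1)^2)}\ge\frac{3}{d/b}=\frac{3b}{d}.
\]
The first computation is immediate from Proposition \ref{proposition_finite-wps}: since $(\cO_{\bP(1,1,1,b)}(1)^{\cdot 3})=1/b$ and $S$ has degree $d$, one gets $(\cO_S(1)^2)=d\cdot(1/b)=d/b$. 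So the whole content is the Seshadri bound $\varepsilon(\cO_S(1);C)\ge 1$.

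To prove the Seshadri bound I would transport the computation to the smooth surface $\tilde S$. Since $\tilde p\notin\bE$, the morphism $\psi|_{\tilde S}\colon\tilde S\to S$ is an isomorphism near $\tilde p$, so by the projection formula $\varepsilon(\cO_S(1);p)=\varepsilon((\psi|_{\tilde S})^*\cO_S(1);\tilde p)$ (every irreducible curve through $\tilde p$ is not contracted and maps isomorphically near $\tilde p$ onto its image through $p$, preserving both multiplicities and intersection numbers against the pullback). By Lemma \ref{lemma_cl-cl} (with $h'=b$) together with Example \ref{example_typical} one has $\psi^*\cO_{\bP(1,1,1,b)}(1)=\tfrac1b\xi$, hence $(\psi|_{\tilde S})^*\cO_S(1)=\tfrac1b\,\xi|_{\tilde S}$. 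As $\varepsilon(\tfrac1b M;\tilde p)=\tfrac1b\varepsilon(M;\tilde p)$, it suffices to prove $\varepsilon(\xi|_{\tilde S};\tilde p)\ge b$. Writing $\xi=\bE+\pi^*\cO_{\bP^2}(b)$ and restricting gives $\xi|_{\tilde S}=E+bH$, where $E=\bE|_{\tilde S}$, $H:=(\pi|_{\tilde S})^*\cO_{\bP^2}(1)$ and $\tilde p\notin E$.

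On a surface the Seshadri constant equals $\inf_{\Gamma\ni\tilde p}(\xi|_{\tilde S})\cdot\Gamma/\operatorname{mult}_{\tilde p}\Gamma$ over irreducible curves $\Gamma$ (the self-intersection bound is not binding, since $(\xi|_{\tilde S})^2=bd\ge b^2$). For such a $\Gamma$ we have $\Gamma\neq$ any component of $E$, so $E\cdot\Gamma\ge 0$, and it is enough to show $H\cdot\Gamma\ge\operatorname{mult}_{\tilde p}\Gamma$. Here the finite degree-$k$ morphism $\phi:=\pi|_{\tilde S}\colon\tilde S\to\bP^2$ and the base-point-free system $\phi^*|\cO_{\bP^2}(1)|$ enter: for a general line $\ell\subset\bP^2$ through $q:=\phi(\tilde p)$ one has $\phi^*\ell\in|H|$ and
\[
H\cdot\Gamma=\phi^*\ell\cdot\Gamma\ge(\phi^*\ell\cdot\Gamma)_{\tilde p}\ge\operatorname{mult}_{\tilde p}(\phi^*\ell)\cdot\operatorname{mult}_{\tilde p}(\Gamma),
\]
so the claim follows once $\phi^*\ell$ is smooth at $\tilde p$ for general such $\ell$ (and $\Gamma\not\subset\phi^*\ell$, a generic condition).

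The main obstacle is precisely this last point: controlling $\operatorname{mult}_{\tilde p}(\phi^*\ell)$ in the presence of possible ramification of $\phi$. The key observation I would use is that $d\phi_{\tilde p}$ always has rank $\ge 1$. Indeed, in the chart $(z\neq 0)$ the surface $\tilde S=(g=0)$ maps to $\bP^2$ by forgetting the fibre coordinate $y$, and the only possible kernel direction of $d\phi|_{T_{\tilde p}\tilde S}$ is $\partial_y$, which lies in $T_{\tilde p}\tilde S$ exactly when $\partial_y g(\tilde p)=0$; but then smoothness of $\tilde S$ forces $(\partial_s g,\partial_t g)(\tilde p)\neq 0$, so $d\phi$ still has rank $1$. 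Hence the image of $d\phi_{\tilde p}$ contains a line in $T_q\bP^2$, and for a general line $\ell$ through $q$ the pulled-back equation $\phi^*(\text{eqn of }\ell)$ has nonzero differential at $\tilde p$, i.e.\ $\phi^*\ell$ is smooth there. This yields $H\cdot\Gamma\ge\operatorname{mult}_{\tilde p}\Gamma$ for every $\Gamma$, hence $\varepsilon(\xi|_{\tilde S};\tilde p)\ge b$ and the proposition. (One should also record that $p$ being a smooth point of the klt surface $S$ places us squarely in the smooth-point case of Corollary \ref{corollary_lobster}, so all discrepancies appearing are those at a smooth centre.)
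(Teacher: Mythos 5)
Your reduction to the Seshadri bound $\varepsilon(\cO_S(1);C)\ge 1$ has a genuine gap: that bound is false when the fibre $\tilde{l}$ of $\pi$ through $\tilde{p}$ is contained in $\tilde{S}$ (a case that really occurs, and which the paper's proof treats first). In that case $l:=\psi(\tilde{l})$ is an irreducible curve through $p$, smooth at $p$, with
\[
\cO_S(1)\cdot l=\psi^*\cO_S(1)\cdot\tilde{l}=\tfrac{1}{b}\,\xi\cdot\tilde{l}=\tfrac{1}{b},
\]
so $\varepsilon(\cO_S(1);C)\le 1/b$, and Corollary \ref{corollary_lobster} can then give at best $\delta_p\ge 3(1/b)/(d/b)=3/d$, far short of $3b/d$ when $b\ge 2$. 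Your own argument breaks at exactly this curve: $\Gamma=\tilde{l}$ is contracted by $\phi=\pi|_{\tilde{S}}$ (which is then only generically finite, not finite), hence $\Gamma$ is a component of $\phi^*\ell$ for \emph{every} line $\ell$ through $q$, the genericity assumption $\Gamma\not\subset\phi^*\ell$ fails, and indeed $H\cdot\tilde{l}=0<1=\operatorname{mult}_{\tilde p}\tilde{l}$.

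This is precisely why the paper does not argue through the point blowup at $\tilde p$. When $\tilde{l}\subset\tilde{S}$ it uses the flag $\tilde{p}\in\tilde{l}\subset\tilde{S}$, computes the Zariski decomposition of $\sigma^*\cO_S(1)-x\tilde{l}$ (negative part supported on $E$), and feeds the resulting Okounkov body into Proposition \ref{proposition_gravity} with $c_0=1/b>0$; the positive offset $c_0$ is what recovers the factor $b$ that the Seshadri constant loses. Your argument is correct in the complementary case $\tilde{l}\not\subset\tilde{S}$ (there the rank computation for $d\phi_{\tilde p}$ and the intersection with a general $\phi^*\ell$ do give $\varepsilon\ge 1$, matching the paper's simpler triangle Okounkov body), but as written the proof does not establish the proposition.
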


\begin{proof}
Set $\sigma:=\psi|_{\tilde{S}}\colon\tilde{S}\to S$. Moreover, 
let $\tilde{l}\subset\tilde{\bP}$ be the fiber of $\pi$ passing through 
$\tilde{p}$. We set $l:=\psi(\tilde{l})\subset\bP(1,1,1,b)$. 

We firstly consider the case $\tilde{l}$ is contained in $\tilde{S}$. 
By adjunction, 
\[
-2=\left(K_{\tilde{\bP}}+\tilde{S}\right)\cdot \tilde{l}+
(\tilde{l}^2)=-2+k+(\tilde{l}^2). 
\]
Thus $(\tilde{l}^2)=-k$. Moreover, $(E^2)=\left(\bE^2\cdot\tilde{S}\right)=-ab$. 
Take a general element $C$ in the complete linear system 
$\left|\pi^*\cO_{\bP^2}(1)|_{\tilde{S}}-\tilde{l}\right|$.
Since the linear system is base point free, any connected component 
of $C$ is a smooth curve with the self intersection number $0$. 
We also remark that $(E\cdot \tilde{l})=1$, 
$(E\cdot C)=a-1$ and $(C\cdot \tilde{l})=k$
by easy computations. 
Note that 
\begin{eqnarray*}
\sigma^*\cO_S(1)-x\tilde{l}&\sim_\bR&\frac{1}{b}E+(1-x)\tilde{l}+C, \\
\left(\sigma^*\cO_S(1)-x\tilde{l}\right)\cdot E&=&-x, \\
\left(\left(\frac{1}{b}-\frac{x}{ab}\right)E+(1-x)\tilde{l}
+C\right)\cdot \tilde{l}
&=&\frac{1}{b}+\left(k-\frac{1}{ab}\right)x.
\end{eqnarray*}
Thus, for $x\in [0,1]$, the negative part of the Zariski decopmosition 
of $\sigma^*\cO_S(1)-x\tilde{l}$ is equal to $\frac{x}{ab}E$. 

Let $\mathbf{O}\subset\bR^2$ be the Okounkov body of $\cO_S(1)$ with 
respect to the admissible flag $p\in l\subset S$. The $\mathbf{O}$ is 
equal to 
the Okounkov body of $\sigma^*\cO_S(1)$ with respect to the admissible 
flag $\tilde{p}\in \tilde{l}\subset\tilde{S}$ according to the definition of Okounkov bodies since $\sigma$ is an isomorphism in $\tilde{p}$. 

From the above Zariski decomposition, 
since $\tilde{p}\not\in E$, we have 
\begin{eqnarray*}
&&\mathbf{O}\cap\left\{(x_1,x_2)\in\bR^2\,\,|\,\,x_1\leq 1\right\}\\
&=&\left\{(x_1,x_2)\in\bR^2\,\,|\,\,0\leq x_1\leq 1 \text{ and }
0\leq x_2\leq\frac{1}{b}+\left(k-\frac{1}{ab}\right)x_1 \right\}. 
\end{eqnarray*}
On the other hand, by using the remark in Definition \ref{definition_plt-flag}(\ref{item:Okbodydfn}) and Theorem \ref{theorem:AZ} with the plt flag $S \triangleright \tilde{l} \triangleright \tilde{p}$
(see also \cite[Corollary 11.16]{Fujita2023}), we have 
\[
\delta_p\left(S; \cO_S(1)\right)\geq 
\min \left\{ 
\frac{A_S(\tilde{l})}{S(\cO_S(1); \tilde{l})}, \frac{A_{\tilde{l}}(\tilde{p})}{S(\cO_S(1); \tilde{l} \triangleright \tilde{p})}
\right\} = 
\min\left\{
\frac{1}{b_1},\frac{1}{b_2}\right\}, 
\]
where $(b_1,b_2)\in\mathbf{O}$ is the barycenter of $\mathbf{O}$. 

By applying Proposition \ref{proposition_gravity} %\blue{
with $c_0=\frac{1}{b}, c_1=1, c_2= \frac{1}{b}+ (k- \frac{1}{ab} )$ and $V= \frac{d}{2b}$, 
%??}
we can calculate  
\[
\delta_p\left(S; \cO_S(1)\right)\geq\min\left\{
\frac{3d(ad-a^2+a-1)}{2ad^2-(a^2+a+1)d+a},\,\,
\frac{3bd(ad-a^2+a-1)}{ad^3-(a^2+1)d^2+2ad-a}\right\}. 
\]
Since 
\[
d^2(ad-a^2+a-1)-\left(ad^3-(a^2+1)d^2+2ad-a\right)=a(d-1)^2>0, 
\]
we have 
\[
\frac{3bd(ad-a^2+a-1)}{ad^3-(a^2+1)d^2+2ad-a}>\frac{3b}{d}. 
\]
On the other hand, since $k\geq 2$, we have $b\leq(d-a)/2$. Observe that 
\begin{eqnarray*}
&&2d^2(ad-a^2+a-1)-(d-a)\left(2ad^2-(a^2+a+1)d+a\right)\\
&=&(a^2+3a-1)d^2-a(a^2+a+2)d+a^2
>a^2(2a-2)\geq 0, 
\end{eqnarray*}
where the strict inequality follows from that the function 
\[
(a^2+3a-1)x^2-a(a^2+a+2)x+a^2
\]
is strictly increasing when $x>a$ and the fact $d>a$. 
This also implies that
\[
\frac{3d(ad-a^2+a-1)}{2ad^2-(a^2+a+1)d+a}>\frac{3b}{d}. 
\]
Thus, when $\tilde{l}\subset\tilde{S}$, we get the 
strict inequality
\[
\delta_p\left(S;\cO_S(1)\right)>\frac{3b}{d}. 
\]

We consider the remaining case $\tilde{l}\not\subset\tilde{S}$. 
Let $C'\subset S$ be a general element of the sub linear system 
of $|\cO_S(1)|$ passing through $p$. By \cite[Lemma 3.1(2)]{ST24}, 
the curve $C'$ is irreducible and quasi-smooth outside the point 
$\psi(\bE)$. (In particular, $C'\sim \cO_S(1)$ and $p\in C'$ is a 
smooth point.) 
Let $\mathbf{O}'$ be the Okounkov body of $\cO_S(1)$ with respect to 
the admissible flag $p\in C'\subset S$. Since $C'\sim \cO_S(1)$ and 
$\left(\cO_S(1)^2\right)=d/b$, the Okounkov body $\mathbf{O}'$ is the 
convex hull of
\[
\left\{(0,0),\quad(1,0),\quad\left(0,\frac{d}{b}\right)\right\}. 
\]
Thus the barycenter $(b'_1,b'_2)$ of $\mathbf{O}'$ satisfies that 
\[
(b'_1,b'_2)=\left(\frac{1}{3},\quad\frac{d}{3b}\right). 
\]
Again by Theorem \ref{theorem:AZ} 
(see also \cite[Corollary 11.16]{Fujita2023}), we have 
\[
\delta_p\left(S;\cO_S(1)\right)\geq\frac{3b}{d}. 
\]
Thus we get the assertion. 
\end{proof}

\begin{remark}\label{remark_smooth-points}
Let $S\subset\bP(1,1,1,b)$ be a quasi-smooth surface of degree 
$d=kb+1$. Then, it follows from 
Proposition \ref{proposition_perhaps-useful} that, we have 
\[
\delta_p\left(S; \cO_S(1)\right)\geq \frac{3b}{d}
\]
for any $p\in S$ with $p\neq [0:0:0:1]$. 
In fact, let us take $\tilde{\bP}\to\bP(1,1,1,b)$ 
and $\bE\subset\tilde{\bP}$ as in 
Proposition \ref{proposition_perhaps-useful} and set 
$\tilde{S}:=\psi^{-1}_*S\subset\tilde{\bP}$. Since $S$ is quasi-smooth, 
by Proposition \ref{proposition_qsm-wbl}, 
the divisor $\tilde{S}\subset\tilde{\bP}$ is smooth
and $\tilde{S}\cap\bE$ is a line in $\bE$. 
\end{remark}

The following gives a lower bound of a stability threshold on a smooth point of a quasi-smooth hypersurface as in Theorem \ref{thm:d=ak+1}

\begin{corollary}\label{corollary_smooth-points}
Let $n\in\bZ_{\geq 2}$ and let $X\subset\bP\left(1^{n+1},b\right)$
be a quasi-smooth hypersurface of degree $d>1$. Assume that 
$P_{n+1}:=[0:\cdots:0:1]\in X$. Then, for any $p\in X$ with 
$p \neq P_{n+1}$, we have 
\[
\delta_p\left(X; \cO_X(1)\right)\geq\frac{(n+1)b}{d}. 
\]
\end{corollary}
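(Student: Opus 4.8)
The plan is to induct on $n\ge 2$, reducing the dimension by one at each stage via a general hyperplane section together with the cutting process of Lemma \ref{lem:cutting}, and to take Remark \ref{remark_smooth-points} as the base case $n=2$. I will treat the case $b\ge 2$; when $b=1$ the ambient space is $\bP^{n+1}$ and $X$ is a smooth hypersurface, so the estimate is the classical bound at a smooth point and follows from the same scheme (choosing hyperplanes through both $p$ and $P_{n+1}$). First I record two preliminary facts. Since $P_{n+1}\in X$ and $X$ is quasi-smooth, writing $X=(f=0)$ with $\deg f=d$, quasi-smoothness of the affine cone at $(0,\dots,0,1)$ forces $\partial f/\partial x_i(0,\dots,0,1)\ne 0$ for some $0\le i\le n$ (the derivative $\partial f/\partial x_{n+1}$ there vanishes because $f(P_{n+1})=0$), hence $f$ contains a monomial $x_ix_{n+1}^k$ with $kb+1=d$; thus $d=kb+1$ with $k\ge 1$, and in particular $d>b$. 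Second, $\bP(1^{n+1},b)$ is smooth away from $P_{n+1}$, so every $p\ne P_{n+1}$ is a smooth point of the quasi-smooth $X$. The base case is then immediate: for $n=2$, $X\subset\bP(1,1,1,b)$ is quasi-smooth of degree $d=kb+1$ containing $[0:0:0:1]$, so Remark \ref{remark_smooth-points} gives $\delta_p(X;\cO_X(1))\ge 3b/d=(n+1)b/d$.

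For the inductive step, fix $n\ge 3$ and $p\ne P_{n+1}$. The crucial observation is that every $H\in|\cO_\bP(1)|$ passes through $P_{n+1}$: the space $H^0(\bP,\cO_\bP(1))$ is spanned by the weight-one variables $x_0,\dots,x_n$, all of which vanish at $P_{n+1}$. Next I ensure that a general $H\in|\cO_\bP(1)_p|$ cuts $X$ in a quasi-smooth hypersurface. Here $B^\bP_{1,p}=\operatorname{Bs}|\cO_\bP(1)_p|$ is the curve $\{[\lambda p_0:\cdots:\lambda p_n:\mu]\}$ joining $p$ and $P_{n+1}$, of dimension $1$, so $\dim(B^\bP_{1,p}\cap X)\le 1$. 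Applying Lemma \ref{lem:hyperplane_mod}(2) with $r=3$ (valid since $a_3=1$, $r-1\le n$, and $\dim(B^\bP_{1,p}\cap X)\le 1=r-2$), a general $H\in|\cO_\bP(1)_p|$ yields $Y:=X|_H\subset H\cong\bP(1^n,b)$ quasi-smooth. By the previous remark $P_{n+1}\in Y$, where it is the vertex $[0:\cdots:0:1]$ of $\bP(1^n,b)$, while $p\in Y$ with $p\ne P_{n+1}$ and $\dim Y=n-1\ge 2$.

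I now feed this into Lemma \ref{lem:cutting} with $\Delta=0$, $L=\cO_X(1)$, $a=1$, and $Y\in|\cO_X(1)|$. Since $p$ is a smooth point of $X$ and $Y$ is smooth at $p$, the pair $(X,Y)$ is log smooth (hence plt) at $p$, and $Y$ is irreducible and reduced, being quasi-smooth of dimension $\ge 2$. I claim the different $\Delta_Y$ defined by $(K_X+Y)|_Y=K_Y+\Delta_Y$ vanishes: away from $P_{n+1}$ the variety $X$ is smooth, $\cO_X(1)$ is a line bundle and $Y$ is Cartier, so $\Delta_Y$ is supported on $\{P_{n+1}\}$, which has codimension $n-1\ge 2$ in $Y$, forcing $\Delta_Y=0$ (cf.\ Proposition \ref{proposition_diff}). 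By the inductive hypothesis applied to $Y\subset\bP(1^n,b)$ we get $\delta_p(Y;\cO_Y(1))\ge nb/d$, whence Lemma \ref{lem:cutting} yields
\[
\delta_p\!\left(X;\cO_X(1)\right)\ge\min\!\left\{\,n+1,\ \frac{n+1}{n}\,\delta_p\!\left(Y;\cO_Y(1)\right)\right\}\ge\min\!\left\{\,n+1,\ \frac{(n+1)b}{d}\right\}=\frac{(n+1)b}{d},
\]
the last equality because $d=kb+1>b$ gives $(n+1)b/d<n+1$.

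The main obstacle I anticipate is the bookkeeping of the boundary: Lemma \ref{lem:cutting} outputs $\delta_p(Y,\Delta_Y;\cO_Y(1))$, and matching this against the boundary-free inductive hypothesis requires the vanishing $\Delta_Y=0$, which in turn relies on $\operatorname{Sing}X\cap Y$ having codimension $\ge 2$ in $Y$. This holds precisely when $\dim Y\ge 2$, and the borderline $\dim Y=1$ is exactly why the surface case cannot be obtained by one further cut and must instead be imported from Remark \ref{remark_smooth-points}. A secondary, more routine, technical point is the Bertini-type preservation of quasi-smoothness under the hyperplane section, which is handled by Lemma \ref{lem:hyperplane_mod} once the estimate $\dim(B^\bP_{1,p}\cap X)\le 1$ is in place.
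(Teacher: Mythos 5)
Your proof is correct and takes essentially the same route as the paper: the paper's own (very terse) proof reduces to $n=2$ by repeatedly applying Lemma \ref{lem:hyperplane_mod} and Lemma \ref{lem:cutting} using $\dim B^{\bP}_{1,p}\le 1$, and then invokes Remark \ref{remark_smooth-points}. Your write-up just makes explicit the induction, the fact that $P_{n+1}$ lies on every hyperplane section so the base case applies, the vanishing of the different away from $P_{n+1}$, and the relation $d=kb+1$.
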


\begin{proof}
Since $P_{n+1}\in X$, we have $d=kb+1$ for some 
$k\in\bZ_{>0}$. By using Theorem \ref{theorem:AZ}, Lemmas \ref{lem:cutting} and \ref{lem:hyperplane_mod} repeatedly, 
we can assume that $n=2$ since we have $\dim B^{\bP}_{1,p} \le 1=3-2$. Then the assertion follows from Remark 
\ref{remark_smooth-points}. 
\end{proof}

\section{Bounds on local stability thresholds of singular points}\label{section_singular}

In this section, we fix the following setting.

\begin{setup}\label{setup:d=ak+1}
Let $n\geq 2$, $a, k\in\bZ_{>0}$, $\bP:=\bP(1^{n+1},a)$ 
with coordinates $x_0,\dots,x_n,x_{n+1}$ and let $X=X_{ak+1}
\subset\bP$ be a quasi-smooth hypersurface of degree $d=ak+1$ defined by 
the equation
\[
f:=x_{n+1}^kx_0+x_{n+1}^{k-1}f_{a+1}(x_0,\dots,x_n)+\cdots+
f_{ak+1}(x_0,\dots,x_n),
\]
and we set 
\[
m:=\min\left\{t\in\bZ\cap [1, k]\,\,|\,\,
f_{at+1} \text{ is not divisible by }x_0\right\}.
\]
\end{setup}

Let $\psi\colon\tilde{\bP}\to\bP$ be the ordinary blowup 
at $P=[0:\cdots:0:1]\in\bP$ and let $\bE\subset\tilde{\bP}$ 
be the exceptional divisor. 
In other words, $\psi$ is the standard weighted blowup of $\bP$ along 
$\bP(a)$. As in Example \ref{example_typical}, 
we know that $\tilde{\bP}
=\bP_{\bP^n}\left(\cO\oplus\cO(a)\right)\xrightarrow{\pi}
\bP^n_{x_0\cdots x_n}$. 
We set $\bH:=\pi^*(x_0=0)\simeq
\bP_{\bP^{n-1}_{x_1\dots x_n}}\left(\cO\oplus
\cO(a)\right)$. 
Let $x_0,\dots,x_n,y:=y_{n+1},z$ be multi-homogeneous coordinates 
of $\tilde{\bP}$ with $\deg x_i=(1,0)$, $\deg y=(0,1)$ and 
$\deg z=(-a,1)$. (Thus, $\bE\subset\tilde{\bP}$ is defined by the equation 
$z=0$.)
Let $\xi\in\operatorname{Pic}\tilde{\bP}$ be the tautological line bundle 
with respect to $\bP_{\bP^n}\left(\cO\oplus\cO(a)\right)/\bP^n$. 
We note that 
\[
\psi^*\cO_{\bP}(1)\sim_\bQ\frac{1}{a}\xi\quad\text{and}\quad
\bE\sim\xi-\pi^*\cO_{\bP^n}(a).
\]
Take $\tilde{X}:=\psi^{-1}_*X\subset\tilde{\bP}$, set 
$\sigma:=\psi|_{\tilde{X}}\colon\tilde{X}\to X$, 
$g:=\pi|_{\tilde{X}}\colon\tilde{X}\to\bP^n$, 
and set 
$E:=\bE|_{\tilde{X}}$. 

The divisor $E$ is a reduced hyperplane in $\bE$ defined by 
$x_0=0$, i.e., $E=\bE|_{\bH}$. The divisor $\tilde{X}\subset\tilde{\bP}$ is a smooth 
hypersurface defined by the equation 
\[
\tilde{f}:=y^kx_0+y^{k-1}zf_{a+1}(x_0,\dots,x_n)+\cdots+
z^kf_{ak+1}(x_0,\dots,x_n) 
\]
by Proposition \ref{proposition_strict-transform}. 
In particular, we have 
\[
\tilde{X}\sim k\xi+\pi^*\cO_{\bP^n}(1), \quad
\psi^*X=\tilde{X}+\frac{1}{a}\bE.
\]
We note that $g$ is a generically finite morphism of degree $k$ and 
\[
\cO_{\tilde{X}}(E)|_E\simeq\cO_{\bP^{n-1}}(-a), \quad
A_X(E)=\frac{n}{a}
\]
since 
\[
\left(E|_E\cdot\pi^*\cO_{\bP^n}(1)|_E^{n-2}\right)
=\left(\bE^2\cdot\pi^*\cO_{\bP^n}(1)^{n-2}\cdot\tilde{X}\right)
=-a.
\]
From the definition of $m$, the effective divisor 
$\tilde{X}|_{\bH}$ on $\bH$ has order $m$ along $E$. 
We set $G:=\bH|_{\tilde{X}}-m E$.
Note that 
\begin{equation}\label{eq:Glineq}
\cO_{\tilde{X}}(G)\simeq\cO_{\tilde{\bP}}\left(-m \xi\right)|_{\tilde{X}}
\otimes g^*\cO_{\bP^n}(am+1). 
\end{equation}

\begin{lemma}\label{lemma_irr-G}
If $n\geq 3$, then the effective divisor $G$ on $\tilde{X}$ is 
a prime divisor. 
\end{lemma}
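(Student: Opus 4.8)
The plan is to realize $G$ as an explicit hypersurface in the smooth toric variety $\bH\cong\bP_{\bP^{n-1}}(\cO\oplus\cO(a))$ and to prove that it is \emph{normal and connected}; a connected normal variety is integral, so this yields that $G$ is a prime divisor. First I would make the equation explicit. Restricting $\tilde f$ (as in Proposition \ref{proposition_strict-transform}) to $x_0=0$ and discarding the terms $f_{at+1}$ with $t<m$, which are divisible by $x_0$ by the definition of $m$, one obtains
\[
\tilde f|_{x_0=0}=z^m\bar f,\qquad
\bar f=\sum_{j=0}^{k-m}y^{k-m-j}z^j f_{a(m+j)+1}(0,x_1,\dots,x_n),
\]
whose leading coefficient $f_{am+1}(0,x_1,\dots,x_n)$ is nonzero precisely because $f_{am+1}$ is not divisible by $x_0$. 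The factor $z^m$ accounts for $mE$, and one checks $(\bar f=0)\subset\tilde X$, so $G=(\bar f=0)\subset\bH$. Being an effective Cartier divisor in the smooth variety $\bH$, $G$ is a local complete intersection, hence Cohen--Macaulay and in particular $S_2$. By Serre's criterion it therefore suffices to prove that $G$ is connected and regular in codimension one ($R_1$).

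For connectedness I would use the sequence $0\to\cO_\bH(-G)\to\cO_\bH\to\cO_G\to0$ and show $H^1(\bH,\cO_\bH(-G))=0$, which forces $H^0(\cO_G)=\bC$. The class of $G$ is $(am+1)\,\pi^*\cO_{\bP^{n-1}}(1)+(k-m)\,\xi$, where $\xi$ is the tautological class of the $\bP^1$-bundle $\pi|_\bH\colon\bH\to\bP^{n-1}$, so $\cO_\bH(-G)$ has nonpositive relative degree. Pushing forward along $\pi|_\bH$ (using $(\pi|_\bH)_*\cO_\bH=\cO_{\bP^{n-1}}$ and $R^1(\pi|_\bH)_*\cO_\bH=0$) reduces the Leray computation to the vanishing of $H^0$ and $H^1$ of line bundles of the form $\cO_{\bP^{n-1}}(\text{negative})$; the relevant $H^1(\bP^{n-1},-)$ vanishes exactly because $\dim\bP^{n-1}=n-1\ge2$. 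This is where the hypothesis $n\ge3$ enters, and it handles uniformly both the generically finite case $k>m$ and the vertical case $k=m$.

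The main work, and the principal obstacle, is the $R_1$ statement: I must bound $\Sing G$ inside $\bH$. A direct differentiation shows that on $G$ one has $\nabla_{x_1,\dots,x_n,y,z}\tilde f=z^m\nabla\bar f$, so away from the two sections $E=(z=0)$ and $(y=0)$ a singular point of $G$ is a point where the hyperplane $\bH$ is tangent to the smooth variety $\tilde X$; along $G\cap E$ and $G\cap(y=0)$ the criterion reduces to simultaneous vanishing of $f_{am+1}(0,\cdot)$, $f_{a(m+1)+1}(0,\cdot)$, resp.\ $f_{ak+1}(0,\cdot)$, $f_{a(k-1)+1}(0,\cdot)$, together with partials, as subschemes of $\bP^{n-1}$. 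The delicate point is to translate quasi-smoothness of $X$ (via the Jacobian criterion applied at the points of $X$ with $x_0=0$) into the estimate that all these loci have codimension $\ge 2$ in $G$; the two boundary loci $G\cap E$ and $G\cap(y=0)$ are a priori only of codimension one in $G$, so controlling their singular parts carefully is essential, and the dimension counts again use $n\ge3$. Granting $S_2$ together with this $R_1$ bound, $G$ is normal, and being connected it is integral, hence a prime divisor. (As an alternative to the normality route, one could instead rule out fibral components of $G$ by a codimension count—valid once $n-1\ge2$—and prove that the degree-$(k-m)$ multisection part is irreducible, but establishing transitivity of the associated monodromy seems no easier than the $R_1$ estimate above.)
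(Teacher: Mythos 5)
Your strategy (exhibit $G$ as an explicit hypersurface in $\bH$, prove it is connected and normal, and conclude integrality) is genuinely different from the paper's, but it has a gap at exactly the step you flag as ``the principal obstacle'': the $R_1$ estimate is never established, and I do not believe it can be in this generality. The connectedness half is fine --- the Leray computation does give $H^1\left(\bH,\cO_{\bH}(-G)\right)=0$ once $n-1\geq 2$ --- but the lemma must hold for \emph{every} quasi-smooth $X$ as in Setup \ref{setup:d=ak+1}, and $G$ is the strict transform of the \emph{non-generic} hyperplane section $V=(x_0=0)|_X$. Quasi-smoothness of $X$ only tells you that at a point of $V$ where $\partial f/\partial x_1=\cdots=\partial f/\partial x_{n+1}=0$ one has $\partial f/\partial x_0\neq 0$; nothing prevents this tangency locus from having codimension one in $V$, so $G$ need not be normal. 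Serre's criterion is therefore the wrong tool: normality is strictly stronger than primeness, and you would be trying to prove an intermediate statement that can fail even when the lemma is true. The same objection applies to your fallback via monodromy of the degree-$(k-m)$ multisection, which you also do not carry out.

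The paper's proof is two lines and sidesteps all of this. The push-forward $\sigma_*G$ is the Weil divisor $(x_0=0)|_X$, whose class is the ample generator of $\operatorname{Cl}(X)\simeq\bZ\cdot\cO_X(1)$; this is where $n\geq 3$ enters, via the Grothendieck--Lefschetz statement quoted in Example \ref{example_irrelevant}(1). Since every nonzero effective divisor on $X$ has class a \emph{positive} multiple of the ample generator, a divisor of class exactly $\cO_X(1)$ cannot decompose into a sum of effective divisors and is therefore a prime divisor; and $G=\sigma^{-1}_*(\sigma_*G)$ is then prime, being the strict transform of a prime divisor. If you want to keep your explicit coordinate setup, the correct replacement for the normality step is this primitivity-of-the-divisor-class argument, not Serre's criterion.
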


\begin{proof}
The push-forward $\sigma_*G$ on $X$ is the Weil divisor on $X$ defined 
by $(x_0=0)|_X$. Since we have $\Cl X = \bZ \cdot \cO_X(1)$ as in Example \ref{example_irrelevant}(1), 
the divisor $\sigma_*G \in |\cO_X(1)|$ must be a prime divisor on $X$. 
Thus $G=\sigma^{-1}_*(\sigma_*G)$ is also a prime divisor on $\tilde{X}$. 
\end{proof}

\subsection{A non-Eckardt case}\label{section_non-eckardt}

In this subsection, we consider the case $m\leq k-1$. 

\begin{lemma}\label{lemma_small}
Assume that $n\geq 3$ and $m\leq k-1$. Then 
any prime divisor $G'$ on $\tilde{X}$ satisfies that 
$\dim g(G')=n-1$. In other words, the Stein factorization 
of $g$ is a small birational morphism. 
\end{lemma}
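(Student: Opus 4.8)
The plan is to analyze the generically finite morphism $g=\pi|_{\tilde X}\colon\tilde X\to\bP^n$ directly and to locate precisely where its fibres become positive-dimensional. Since $\tilde X\sim k\xi+\pi^*\cO_{\bP^n}(1)$, the restriction of $\tilde f$ to a fibre $\pi^{-1}(p)\cong\bP^1$ is the binary form $\sum_{j=0}^k f_{aj+1}(p)\,y^{k-j}z^j$ (with the convention $f_1:=x_0$), so $\pi^{-1}(p)\subset\tilde X$ exactly when all of its coefficients vanish. First I would record that the locus $B:=\{p\in\bP^n\mid\pi^{-1}(p)\subset\tilde X\}$ is contained in $(x_0=0)\cong\bP^{n-1}$ and, using that $x_0\mid f_{aj+1}$ for $j<m$, that $B$ equals the common zero locus of $\bar f_{am+1},\dots,\bar f_{ak+1}$ in $\bP^{n-1}$, where $\bar f_{aj+1}:=f_{aj+1}(0,x_1,\dots,x_n)$. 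The positive-dimensional fibres of $g$ are exactly the $\pi^{-1}(p)$ with $p\in B$, so $\Exc(g)=\pi^{-1}(B)$ has dimension $\dim B+1$; consequently a prime divisor $G'$ is contracted by $g$ only if $G'\subseteq\Exc(g)$, which forces $\dim B\ge n-2$. Hence the whole statement reduces to the estimate $\dim B\le n-3$.

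The trivial bound is only $\dim B\le n-2$: indeed $x_0\nmid f_{am+1}$ gives $\bar f_{am+1}\neq 0$, so $B\subseteq(\bar f_{am+1}=0)\subsetneq\bP^{n-1}$. Improving this by one is the heart of the matter, and the leverage I would use is the prime divisor $G$ together with Lemma \ref{lemma_irr-G}. Writing $\tilde f|_{x_0=0}=z^m\,Q$ with $Q=\sum_{j=m}^k \bar f_{aj+1}\,y^{k-j}z^{j-m}$, the divisor $G$ is cut out by $Q$ on $(x_0=0)|_{\tilde X}$, and $g(G)\subseteq(x_0=0)\cong\bP^{n-1}$. Because $m\le k-1$, the form $Q$ has positive degree $k-m$ in $[y:z]$; for general $p$ its leading coefficient $\bar f_{am+1}(p)$ is nonzero, so $Q(\cdot,\cdot,p)$ is a nonzero binary form and has a root, which shows that $g|_G$ is dominant with finite general fibre, hence $g(G)=\bP^{n-1}$. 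On the other hand, for $p\in B$ we have $Q(\cdot,\cdot,p)\equiv 0$, so $\pi^{-1}(p)\subseteq G$ and therefore $\pi^{-1}(B)\subseteq G$.

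These two facts I would then play against each other. If $\dim B=n-2$, then $\pi^{-1}(B)$ is an $(n-1)$-dimensional closed subset of the irreducible $(n-1)$-dimensional divisor $G$, so $\pi^{-1}(B)=G$; but then $g(G)=g(\pi^{-1}(B))=B$ has dimension $n-2$, contradicting $g(G)=\bP^{n-1}$. Thus $\dim B\le n-3$, the locus $\Exc(g)$ has dimension at most $n-2$, and every prime divisor $G'$ on $\tilde X$ satisfies $\dim g(G')=n-1$. Finally, writing the Stein factorization $g=\nu\circ\phi$ with $\nu$ finite and $\phi$ having connected fibres, the morphism $\phi$ is birational (it is generically finite with connected fibres) and contracts no divisor (since $g$ does not and $\nu$ is finite), i.e.\ $\phi$ is small birational. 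The main obstacle is exactly the passage from $\dim B\le n-2$ to $\dim B\le n-3$; the two inputs that make it work are the hypothesis $m\le k-1$, ensuring that $Q$ is nonconstant in $[y:z]$ so that $g(G)=\bP^{n-1}$, and the irreducibility of $G$ from Lemma \ref{lemma_irr-G}, which is where $n\ge 3$ enters.
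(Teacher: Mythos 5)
Your proof is correct and follows essentially the same route as the paper's: both arguments use Lemma \ref{lemma_irr-G} to force any $g$-contracted prime divisor to coincide with $G$ (you via the explicit description $\Exc(g)=\pi^{-1}(B)\subseteq G$, the paper via containment in $\bH$ and the decomposition $\bH\cap\tilde{X}=E\cup G$). The final contradiction is also the same fact in two guises: the paper computes the intersection number $\left(G\cdot g^*\cO_{\bP^n}(1)^{\cdot n-1}\right)=k-m\neq 0$, while you observe that the binary form $Q$ cutting out $G$ on the fibres of $\pi$ has positive degree $k-m$, so that $g(G)=(x_0=0)\cong\bP^{n-1}$.
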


\begin{proof}
Assume that there exists a prime divisor $G_0\subset\tilde{X}$
such that $\dim g(G_0)\leq n-2$. Then we must have 
$G_0\subset\bH$. 
(Indeed, any fiber $f$ of $g|_{G_0} = \pi|_{G_0}$ is positive-dimensional and a fiber of $\pi$, thus we see that $f \cap \bE$ is a point on $X$ and $f \cap \bE \subset \tilde{X} \cap \bE = E$. This implies that $f \subset \pi^{-1}(\pi(E))=\bH$.)  
Together with Lemma \ref{lemma_irr-G}, 
we must have $G_0=G$. However, 
\[
0=\left(G\cdot g^*\cO_{\bP^n}(1)^{\cdot n-1}\right)=k-m, 
\]
\iffalse
We can write 
\begin{equation}\label{eq:HXtilde}
\bH|_{\tilde{X}}=mE+G_0+G_1, 
\end{equation}
where $G_1$ is an effective divisor on $\tilde{X}$ whose 
support does not contain $E$. 
Since $\tilde{X}$ is an ample divisor in $\tilde{\bP}$ and 
$n\geq 3$, we have $\Pic \tilde{\bP} \simeq \Pic \tilde{X}$ by the Lefschetz hyperplane section theorem.  
Hence, for any $i\in\{0,1\}$, we can uniquely write 
\[
\cO_{\tilde{X}}(G_i)\simeq\cO_{\tilde{\bP}}
(c_i\xi)|_{\tilde{X}}\otimes g^*\cO_{\bP^n}(d_i)
\]
with $c_i,d_i\in\bZ$. Note that $c_0+c_1=-m$ and 
$d_0+d_1=am+1$ follow from (\ref{eq:HXtilde}). Moreover, we have 
\begin{eqnarray*}
0&\leq&\left(G_i\cdot E\cdot g^*\cO_{\bP^n}(1)^{n-2}\right)
=d_i, \\
0&\leq&\left(G_1\cdot g^*\cO_{\bP^n}(1)^{n-1}\right)
=c_1(ak+1)+d_1k, \\
0 &=&\left(G_0\cdot g^*\cO_{\bP^n}(1)^{n-1}\right)
=c_0(ak+1)+d_0k
\end{eqnarray*}
since $\dim g(G_0)\leq n-2$. 
We note that $(c_0,d_0)\neq (0,0)$ by $G_0 \not\sim 0$. Thus there exists 
$e\in\bZ_{>0}$ such that $c_0=-ek$ and $d_0=e(ak+1)$. 
However, this implies that 
$0\leq d_1=am+1-e(ak+1)$. In particular, we have
\[
1\leq e\leq\frac{am+1}{ak+1}<1, 
\]
\fi
a contradiction. 
\end{proof}

\begin{corollary}\label{corollary_G-irr}
Assume that $n\geq 3$ and $m\leq k-1$. Take a general 
$\tilde{S}\in|g^*\cO_{\bP^n}(1)|$. Then $G|_{\tilde{S}}$ is a 
prime divisor on $\tilde{S}$ with 
\[
G|_{\tilde{S}}\sim_\bQ\sigma^*\cO_X(1)|_{\tilde{S}}
-\frac{am+1}{a}E|_{\tilde{S}}. 
\]
\end{corollary}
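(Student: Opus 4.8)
The plan is to treat the two assertions separately: the $\bQ$-linear equivalence is a formal computation, while the primality of $G|_{\tilde{S}}$ is the real content and rests on Lemmas \ref{lemma_irr-G} and \ref{lemma_small} together with Bertini's irreducibility theorem.

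For the linear equivalence I would start from (\ref{eq:Glineq}), which reads $\cO_{\tilde{X}}(G)\simeq\cO_{\tilde{\bP}}(-m\xi)|_{\tilde{X}}\otimes g^*\cO_{\bP^n}(am+1)$, and rewrite the two generators $\xi|_{\tilde{X}}$ and $g^*\cO_{\bP^n}(1)$ in terms of $\sigma^*\cO_X(1)$ and $E$. From $\psi^*\cO_\bP(1)\sim_\bQ\frac1a\xi$ one gets $\xi|_{\tilde{X}}\sim_\bQ a\,\sigma^*\cO_X(1)$, and from $\bE\sim\xi-\pi^*\cO_{\bP^n}(a)$ one gets $E\sim\xi|_{\tilde{X}}-a\,g^*\cO_{\bP^n}(1)$, hence $g^*\cO_{\bP^n}(1)\sim_\bQ\sigma^*\cO_X(1)-\frac1a E$. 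Substituting these into $G\sim_\bQ-m\,\xi|_{\tilde{X}}+(am+1)\,g^*\cO_{\bP^n}(1)$ and collecting terms gives $G\sim_\bQ\sigma^*\cO_X(1)-\frac{am+1}{a}E$ on $\tilde{X}$ (the coefficient of $\sigma^*\cO_X(1)$ being $-am+(am+1)=1$); restricting to $\tilde{S}$ produces the stated formula.

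For the primality I would argue as follows. By Lemma \ref{lemma_irr-G}, $G$ is a prime divisor on $\tilde{X}$, and since $m\leq k-1$, Lemma \ref{lemma_small} gives $\dim g(G)=n-1$. Because $\bH|_{\tilde{X}}=g^*(x_0=0)$ and $G$ is a component of $\bH|_{\tilde{X}}$, the image $g(G)$ is a closed irreducible subset of $H_0:=(x_0=0)\subset\bP^n$ of dimension $n-1$, hence $g(G)=H_0\cong\bP^{n-1}$ and $g|_G\colon G\to H_0$ is dominant. Now the complete system $|g^*\cO_{\bP^n}(1)|$ is the pullback of $|\cO_{\bP^n}(1)|$, so a general $\tilde{S}$ equals $g^{-1}(H)$ for a general hyperplane $H\subset\bP^n$, and therefore $G|_{\tilde{S}}=(g|_G)^{-1}(H)=(g|_G)^{-1}(H\cap H_0)$ is the preimage under $g|_G$ of a general hyperplane of $H_0$. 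Since $n\geq 3$, the image $H_0$ has dimension $n-1\geq 2$, so Bertini's irreducibility theorem applies to $g|_G$ and shows that this preimage is irreducible for general $H$; generic smoothness in characteristic zero makes it generically reduced, and an irreducible generically reduced divisor is prime. Thus $G|_{\tilde{S}}$ is a prime divisor on $\tilde{S}$.

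The main obstacle is the primality, and within it the verification that the irreducibility form of Bertini is applicable: this needs $\dim g(G)\geq 2$, which is exactly where the two standing hypotheses $n\geq 3$ and $m\leq k-1$ are used (the latter through Lemma \ref{lemma_small}). A subsidiary point to check is that a general member of $|g^*\cO_{\bP^n}(1)|$ may indeed be taken of the form $g^{-1}(H)$; I would either confirm $H^0(\tilde{X},g^*\cO_{\bP^n}(1))=g^*H^0(\bP^n,\cO_{\bP^n}(1))$ by pushing forward along $\pi$ (the relevant $R^1\pi_*\cO_{\tilde{\bP}}(-k\xi)$ being a direct sum of negative line bundles on $\bP^n$, with no global sections), or simply restrict to the base-point-free subsystem $g^*|\cO_{\bP^n}(1)|$, which already suffices for the intended application.
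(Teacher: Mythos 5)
Your proposal is correct and follows essentially the same route as the paper: the paper likewise obtains primality by applying Bertini's irreducibility theorem to $g|_G$ (which is legitimate because Lemma \ref{lemma_small} forces $\dim g(G)=n-1\geq 2$) and reads off the $\bQ$-linear equivalence from (\ref{eq:Glineq}). You simply fill in more of the routine details (the explicit substitution $\xi|_{\tilde{X}}\sim_\bQ a\,\sigma^*\cO_X(1)$, $g^*\cO_{\bP^n}(1)\sim_\bQ\sigma^*\cO_X(1)-\tfrac1a E$, and the identification of $|g^*\cO_{\bP^n}(1)|$ with $g^*|\cO_{\bP^n}(1)|$), all of which check out.
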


\begin{proof}
The $\tilde{S}$ is smooth. Moreover, by applying the Bertini's theorem for 
irreducibility (see \cite[Theorem 3.3.1]{Lazarsfeld04a}) to a morphism $g|_G \colon G \to \bP^n$, 
we see that the divisor $G|_{\tilde{S}}$ is a prime divisor on $\tilde{S}$. 
The remaining assertion follows from the linear equivalence (\ref{eq:Glineq}). 
\end{proof}

\begin{lemma}\label{lemma_sesh-non-ec}
Assume that $n=3$ and $m\leq k-1$. Then, for any 
irreducible curve $\tilde{C}\subset\tilde{X}$ with 
$\dim g\left(\tilde{C}\right)=1$, we have 
\begin{equation}\label{equation_sesh-non-ec}
\left(\xi-\left(1+\frac{a}{am+1}\right)\bE\right)
\cdot\tilde{C}\geq 0.
\end{equation}
\end{lemma}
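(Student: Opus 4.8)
The plan is to recast the tested class and then separate the curves $\tilde{C}$ according to their position relative to $G$ and $E$. Using $\xi|_{\tilde{X}}=E+g^{*}\cO_{\bP^{n}}(a)$ together with $G\sim g^{*}\cO_{\bP^{n}}(1)-mE$ (which is \eqref{eq:Glineq} rewritten), one gets on $\tilde{X}$
\[
\Big(\xi-\big(1+\tfrac{a}{am+1}\big)\bE\Big)\Big|_{\tilde{X}}
= a\,g^{*}\cO_{\bP^{n}}(1)-\tfrac{a}{am+1}\,E .
\]
Writing $d_{C}:=g^{*}\cO_{\bP^{n}}(1)\cdot\tilde{C}$, and noting that $\dim g(\tilde{C})=1$ forces $d_{C}>0$, the inequality \eqref{equation_sesh-non-ec} is equivalent to the Seshadri-type bound
\[
E\cdot\tilde{C}\le (am+1)\,d_{C},
\]
comparing the intersection with the exceptional divisor to the ``horizontal degree'' of $\tilde{C}$.

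First I would dispose of two elementary cases. If $\tilde{C}\not\subset G$, then $G\cdot\tilde{C}\ge 0$ because $G$ is a prime divisor (Lemma \ref{lemma_irr-G}); since $G\cdot\tilde{C}=d_{C}-m\,(E\cdot\tilde{C})$ this yields $E\cdot\tilde{C}\le d_{C}/m\le (am+1)d_{C}$, using $m\ge 1$. If $\tilde{C}\subset E$, then $\tilde{C}$ is a curve in $E\cong\bP^{n-1}$ with $E|_{E}\simeq\cO_{\bP^{n-1}}(-a)$, so $E\cdot\tilde{C}\le 0$ and the bound is trivial. This reduces everything to the case $\tilde{C}\subset G$ with $\tilde{C}\not\subset E$.

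For this case I would pass to the surface $G$ and set $H_{G}:=g^{*}\cO_{\bP^{n}}(1)|_{G}$, $E_{G}:=E|_{G}$; since $M\cdot\tilde{C}=(am+1)d_{C}-E\cdot\tilde{C}$ for $M:=(am+1)H_{G}-E_{G}$, it suffices to prove $M\cdot\tilde{C}\ge 0$. The decisive structural input is that $M$ is \emph{effective}. Because $m\le k-1$, the morphism $g|_{G}\colon G\to\bP^{n-1}$ is generically finite of degree $k-m\ge 1$ (Lemma \ref{lemma_small}), so $(am+1)H_{G}$ is the class of the preimage $g^{-1}(B)$ of the hypersurface $B:=\big(f_{am+1}(0,x_{1},\dots,x_{n})=0\big)\subset\bP^{n-1}$, which is exactly the image of $E_{G}$ under the isomorphism $g|_{E_{G}}$. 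Hence $g^{-1}(B)$ is effective in the class $(am+1)H_{G}$ and contains $E_{G}$, so
\[
M=g^{-1}(B)-E_{G}\ge 0 .
\]
In particular $M\cdot\tilde{C}\ge 0$ for every $\tilde{C}$ not contained in $\Supp(M)$, which settles all curves except those lying over $B$.

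The main obstacle is precisely the residual family of curves $\tilde{C}\subset\Supp(M)$, i.e.\ those with $g(\tilde{C})\subset B$, for which effectivity alone does not suffice. Here I would analyze the degree-$(k-m)$ cover $g|_{G}$ along $B$: over a general point of $B$ the fibre of $g|_{G}$ consists of the point of $E_{G}$ together with $k-m-1$ residual points forming $M$, so bounding $E\cdot\tilde{C}$ against $d_{C}$ becomes a local intersection computation governed by the order of vanishing of the successive coefficients $f_{a(m+1)+1}(0,\cdot),\dots,f_{ak+1}(0,\cdot)$ along $B$. Equivalently, one can compute the self-intersections of the components of $M$ using $H_{G}^{2}=k-m$, $E_{G}\cdot H_{G}=am+1$, $E_{G}^{2}=-a(am+1)$, together with the contraction $\sigma|_{G}\colon G\to\sigma(G)$ of $E_{G}$, for which $\sigma^{*}\cO_{X}(1)|_{G}=H_{G}+\tfrac1a E_{G}$ is nef and big and satisfies $\big(H_{G}+\tfrac1a E_{G}\big)\cdot E_{G}=0$. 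Carrying this last computation through, rather than the reductions above, is where the genuine work lies.
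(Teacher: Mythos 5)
Your reductions are sound as far as they go: the identity $\left(\xi-\left(1+\tfrac{a}{am+1}\right)\bE\right)\big|_{\tilde X}=a\,g^*\cO_{\bP^3}(1)-\tfrac{a}{am+1}E$ is correct, the cases $\tilde C\not\subset G$ and $\tilde C\subset E$ are disposed of correctly, and the effectivity of $M=(am+1)H_G-E_G$ can in fact be justified: in the chart $y=1$ of $\bH$ the equation of $G$ gives $f_{am+1}(0,x_1,\dots,x_n)\big|_G=-z\cdot\left(f_{a(m+1)+1}(0,x)+\cdots+z^{k-m-1}f_{ak+1}(0,x)\right)\big|_G$, and the second factor does not vanish identically on $G$ (else $g(G)\subset B$, contradicting Lemma \ref{lemma_small}), so $(g|_G)^*B=E|_G+D$ with $D$ effective. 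Note that this divisorial inequality is something you assert but do not prove --- set-theoretic containment of $E_G$ in $g^{-1}(B)$ does not control multiplicities. The real problem, however, is that the proof is not finished: the irreducible curves $\tilde C\subset\Supp(D)$, i.e.\ curves in $G$ lying over components of the plane curve $B$, are precisely those for which $E\cdot\tilde C$ can be large relative to $d_C$, and for them you only sketch a plan ("a local intersection computation governed by the order of vanishing\dots") and explicitly defer "where the genuine work lies." That residual case carries the entire content of the lemma, so as written nothing is proved.

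The paper closes this (and in fact avoids your case division altogether) by a uniform argument: for any $\tilde C$ with $C_0:=g(\tilde C)$ a curve, normalize $C_0$, form the ruled surface $\tilde T=\bP_C\left(\cO\oplus\cL^{\otimes a}\right)\to C$ with $\cL=\nu^*\cO_{\bP^3}(1)$, and pull back $\tilde X$ and $\tilde C$. Setting $m_0=m$ if $C_0\subset(x_0=0)$ and $m_0=0$ otherwise, the divisor $X_{\tilde T}-m_0E_{\tilde T}-C_{\tilde T}$ is effective with support not containing the negative section $E_{\tilde T}$, and intersecting with $E_{\tilde T}$ (using $\xi_{\tilde T}\cdot E_{\tilde T}=0$ and $E_{\tilde T}^2=-a\deg\cL$) yields $\bE\cdot\tilde C\le(am_0+1)\deg\cL\le(am+1)b_H$ in one stroke --- including exactly your unresolved case, where $C_0\subset B\subset(x_0=0)$ forces $m_0=m$. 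To complete your approach you would need to carry out the analogous computation on the ruled surface over the normalization of the relevant component of $B$; this is essentially the postponed "local computation," and once done your route reduces to the paper's argument restricted to a special class of base curves.
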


\begin{proof}
Take any irreducible curve $\tilde{C}\subset\tilde{X}$ with 
$C_0:=g\left(\tilde{C}\right)$ a curve. 
Since the $\bQ$-line bundle 
$\left(\xi-\left(1+\frac{a}{am+1}\right)\bE\right)\big|_{\bE}$ is ample on 
$\bE \simeq \bP^n$, we may assume that $\tilde{C}\not\subset\bE$ in order to show 
the inequality \eqref{equation_sesh-non-ec}. 
Let us set 
\[
b_H:=
\left(\pi^*\cO_{\bP^3}(1)\cdot\tilde{C}\right)\in\bZ_{>0}, \quad
b_E:=
\left(\bE\cdot\tilde{C}\right)\in\bZ_{\geq 0}. 
\]
Note that $\xi \cdot \tilde{C} = b_{E} + ab_H$ and 
\[
\left(\xi-\left(1+\frac{a}{am+1}\right)\bE\right)
\cdot\tilde{C}=\frac{a}{am+1}\left((am+1)b_H-b_E\right).
\]
Let $\nu\colon C\to C_0$ be the normalization, set 
$\cL:=\nu^*\left(\cO_{\bP^3}(1)|_{C_0}\right)$, and set 
$b_0:=\deg\cL\in\bZ_{>0}$. Moreover, let 
\[
\xymatrix{
\tilde{T} \ar[r]^{\nu'} \ar[d]_{\pi'} & \tilde{\bP} 
\ar[d]^{\pi} \\
C \ar[r]_{\nu} & \bP^3
}
\]
be the fiber product. Note that $\tilde{T}=\bP_C
\left(\cO\oplus\cL^{\otimes a}\right)$, $\xi_{\tilde{T}}:=
(\nu')^*\xi$ be the tautological line bundle, and 
$E_{\tilde{T}}:=(\nu')^*\bE$ be the negative section, i.e., 
$E_{\tilde{T}}\sim \xi_{\tilde{T}}-(\pi')^*\cL^{\otimes a}$. 
We can uniquely take an irreducible curve 
$C_{\tilde{T}}\subset\tilde{T}$ satisfying 
$\nu'_*C_{\tilde{T}}=\tilde{C}$. Let us express 
\[
C_{\tilde{T}}
\sim b_1\xi_{\tilde{T}}+(\pi')^*\cM
\]
for some $b_1\in\bZ$ and $\cM\in\operatorname{Pic} C$. 
Then we have 
\[
b_H=\left((\pi')^*\cL\cdot C_{\tilde{T}}\right)=b_0b_1, 
\quad
b_E=\left(E_{\tilde{T}}\cdot C_{\tilde{T}}\right)
=\deg\cM. 
\]
In particular, we have $b_0, b_1\in\bZ_{>0}$. 
On the other hand, by Lemma \ref{lemma_small}, 
\[
X_{\tilde{T}}:=(\nu')^* \tilde{X} \sim k \xi_{\tilde{T}}+(\pi')^*\cL
\]
is an effective divisor on $\tilde{T}$ containing 
$C_{\tilde{T}}$. Let us set 
\[
m_0:=\begin{cases}
0 & \text{if }C_0\not\subset(x_0=0), \\
m & \text{if }C_0\subset (x_0=0).
\end{cases}
\]
Since the order of the effective divisor $\tilde{X}|_{\bH}$ 
on $\bH$ along $E$ is $m$, the order of $X_{\tilde{T}}$ 
along $E_{\tilde{T}}$ is exactly equal to $m_0$. 
Thus the divisor $X_{\tilde{T}}-m_0 E_{\tilde{T}}-
C_{\tilde{T}}$ is effective and its support does not contain 
$E_{\tilde{T}}$. This implies that 
\[
0\leq\left(X_{\tilde{T}}-m_0 E_{\tilde{T}}-
C_{\tilde{T}}\right)\cdot E_{\tilde{T}}=(am_0+1)b_0-b_E. 
\]
Since $b_H\geq b_0$ and $m\geq m_0$, we get the desired 
inequality \eqref{equation_sesh-non-ec}. 
\end{proof}

\begin{corollary}\label{corollary_sesh-non-ec}
Under the assumption in Lemma \ref{lemma_sesh-non-ec}, 
for a general element $\tilde{S}\in 
\left|g^*\cO_{\bP^3}(1)\right|$, the $\Q$-divisor 
\[
\xi|_{\tilde{S}}-\left(1+\frac{a}{am+1}\right)\bE|_{\tilde{S}}
\]
is nef. 
\end{corollary}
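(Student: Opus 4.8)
The plan is to verify the nefness of the $\mathbb{Q}$-divisor $D:=\xi|_{\tilde S}-\left(1+\frac{a}{am+1}\right)\bE|_{\tilde S}$ on the surface $\tilde S$ curve by curve: it suffices to show that $D\cdot_{\tilde S}\tilde C\geq 0$ for every irreducible curve $\tilde C\subset\tilde S$. The first thing I would record is that, since $\tilde C$ lies inside $\tilde S\subset\tilde X$, the intersection number $D\cdot_{\tilde S}\tilde C$ computed on $\tilde S$ coincides with $\left(\xi-(1+\frac{a}{am+1})\bE\right)\cdot_{\tilde X}\tilde C$, because both equal the degree of the restriction of the corresponding $\mathbb{Q}$-line bundle to $\tilde C$. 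This reduces the corollary to a statement about curves of $\tilde X$ that happen to lie on $\tilde S$.

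Next I would split into two cases according to $\dim g(\tilde C)$. If $\dim g(\tilde C)=1$, then Lemma \ref{lemma_sesh-non-ec} applies verbatim and yields $\left(\xi-(1+\frac{a}{am+1})\bE\right)\cdot\tilde C\geq 0$, hence $D\cdot_{\tilde S}\tilde C\geq 0$. So the entire content is to rule out the other case, namely that $\tilde S$ contains a curve $\tilde C$ contracted by $g$, i.e.\ with $\dim g(\tilde C)=0$. The key input is Lemma \ref{lemma_small}: under the hypotheses $n=3$ and $m\leq k-1$, the Stein factorization of $g$ is a small birational morphism, so $g$ contracts no divisor.

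The heart of the argument is then to convert ``no divisor contracted'' into ``a general $\tilde S$ meets no contracted curve''. Writing $g=\phi\circ h$ for the Stein factorization, with $h\colon\tilde X\to Y$ birational and $\phi\colon Y\to\bP^3$ finite, the exceptional locus $Z:=\operatorname{Exc}(h)$ (which coincides with the locus of positive-dimensional fibers of $g$, since $\phi$ is finite) has dimension $\leq 1$. I would check that every one-dimensional component of $Z$ is contracted by $h$ to a point: otherwise $h$ would be quasi-finite along the generic point of that component, hence a local isomorphism there by normality of $Y$, contradicting its membership in $\operatorname{Exc}(h)$. Consequently $g(Z)\subset\bP^3$ is a \emph{finite} set $B$. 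A general hyperplane $H\subset\bP^3$ then avoids $B$, so $\tilde S=g^{-1}(H)$ is disjoint from $Z$ and $g|_{\tilde S}\colon\tilde S\to H$ is finite; in particular every irreducible curve $\tilde C\subset\tilde S$ satisfies $\dim g(\tilde C)=1$, and the previous paragraph concludes. The main obstacle is exactly this finiteness step: verifying that smallness of the Stein factorization forces the contracted locus to map to finitely many points of $\bP^3$, which rests on the fiber-dimension count for the small birational morphism $h$ rather than on any further geometry of $X$.
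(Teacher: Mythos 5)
Your proof is correct and follows the same route as the paper, whose entire proof reads ``Follows immediately from Lemmas \ref{lemma_small} and \ref{lemma_sesh-non-ec}'': curves dominating a curve in $\bP^3$ are handled by Lemma \ref{lemma_sesh-non-ec}, and $g$-contracted curves are excluded via Lemma \ref{lemma_small}. You have merely made explicit the step the authors leave implicit, namely that smallness of the Stein factorization forces the $g$-contracted locus to map to finitely many points, so a general $\tilde{S}=g^{-1}(H)$ avoids it.
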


\begin{proof}
Follows immediately from Lemmas \ref{lemma_small} and 
\ref{lemma_sesh-non-ec}. 
\end{proof}

\begin{proposition}\label{p:surfacesingular}
Assume $n=3$ and $m \le k-1$.  Let $S$ be a general element in $|\cO_X(1)|$ and $p=[0:0:0:1] \in S$. Then
$$
\delta_p(S; \cO_S(1)) \ge 3\frac{(m+1)a+1}{(ma+1)d}.
$$
\end{proposition}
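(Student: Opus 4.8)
The plan is to realize $\sigma|_{\tilde{S}}\colon\tilde{S}\to S$ as the ordinary blowup of the quotient singularity $p\in S$ and then apply the weighted Seshadri estimate of Corollary \ref{corollary_lobster}, with the required nefness furnished by Corollary \ref{corollary_sesh-non-ec}. First I would observe that a general $S\in|\cO_X(1)|$ is cut out by a general linear form $\ell(x_0,\dots,x_n)$, so that its strict transform $\tilde{S}:=\sigma^{-1}_*S=(\ell=0)|_{\tilde{X}}$ is a general member of $|g^*\cO_{\bP^3}(1)|$ and hence is smooth; moreover $\sigma^*S=\tilde{S}+\tfrac1a E$, so $\sigma|_{\tilde{S}}\colon\tilde{S}\to S$ is a projective birational morphism contracting the smooth rational curve $C:=E|_{\tilde{S}}$ to $p$. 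Since the only singular point of $\bP=\bP(1^4,a)$ is $P$ and $X$ is quasi-smooth, $S$ is smooth away from $p$, where it carries a single quotient singularity of type $\bA^2/\bZ_a(1,1)$ (a general hyperplane section of the $\bA^3/\bZ_a(1,1,1)$ singularity of $X$ at $P$, cf.\ Example \ref{example_swb-pt}); in particular $C^2=-a$ and $\sigma|_{\tilde{S}}$ is exactly the ordinary blowup of $p$, so Corollary \ref{corollary_lobster} is applicable.

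Next I would collect the numerical data. From Proposition \ref{proposition_finite-wps} one gets $(\cO_S(1)^2)=(\cO_X(1)^3)=d/a$, while adjunction on the smooth surface $\tilde{S}$, using $K_{\tilde{X}}=\sigma^*K_X+(\tfrac3a-1)E$ (recall $A_X(E)=n/a=3/a$) and $\tilde{S}=\sigma^*S-\tfrac1a E$, gives $A_S(C)=2/a$ and $\Delta_C=0$, i.e.\ $d_C=0$. This places us precisely in the hypotheses of Corollary \ref{corollary_lobster}, which yields
\[
\delta_p\!\left(S;\cO_S(1)\right)\geq\frac{3\,\varepsilon\!\left(\cO_S(1);C\right)}{(\cO_S(1)^2)}
=\frac{3a}{d}\,\varepsilon\!\left(\cO_S(1);C\right),
\]
so the problem is reduced to a lower bound for the Seshadri constant $\varepsilon(\cO_S(1);C)$ (defined as in Corollary \ref{corollary_gravity}).

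For this last bound I would invoke Corollary \ref{corollary_sesh-non-ec}, which applies since $n=3$ and $m\leq k-1$: for general $\tilde{S}$ the $\bQ$-divisor $\xi|_{\tilde{S}}-\bigl(1+\tfrac{a}{am+1}\bigr)\bE|_{\tilde{S}}$ is nef. Using $(\sigma|_{\tilde{S}})^*\cO_S(1)=\tfrac1a\,\xi|_{\tilde{S}}$ and $C=\bE|_{\tilde{S}}$, this rewrites as the nefness of $(\sigma|_{\tilde{S}})^*\cO_S(1)-\bigl(\tfrac1a+\tfrac1{am+1}\bigr)C$, whence
\[
\varepsilon\!\left(\cO_S(1);C\right)\geq\frac1a+\frac1{am+1}=\frac{(m+1)a+1}{a(am+1)}.
\]
Substituting into the displayed inequality gives $\delta_p(S;\cO_S(1))\geq 3\,\tfrac{(m+1)a+1}{(ma+1)d}$, as claimed.

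I expect the only genuinely delicate points to be bookkeeping rather than geometric: verifying that the restricted weighted blowup is literally the ordinary blowup of a $\tfrac1a(1,1)$ point (so that Corollary \ref{corollary_lobster} applies with $A_S(C)=2/a$ and $d_C=0$), and keeping the normalizations between $\xi,\bE$ and $(\sigma|_{\tilde{S}})^*\cO_S(1),C$ straight when turning the nefness statement into the Seshadri estimate. The substantive positivity input---the Seshadri-type bound coming from the non-Eckardt hypothesis $m\leq k-1$---is already packaged in Corollary \ref{corollary_sesh-non-ec}, so no further obstacle remains.
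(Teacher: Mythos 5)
Your proposal is correct and follows essentially the same route as the paper: identify $\sigma|_{\tilde S}\colon\tilde S\to S$ as the ordinary blowup of the $\tfrac1a(1,1)$ point $p$, extract the Seshadri bound $\varepsilon(\cO_S(1);C)\geq\frac{(m+1)a+1}{a(ma+1)}$ from the nefness statement of Corollary \ref{corollary_sesh-non-ec}, and conclude via Corollary \ref{corollary_lobster}. The only difference is that you spell out the normalization bookkeeping ($\sigma^*\cO_S(1)\sim_\bQ\tfrac1a\xi|_{\tilde S}$, $(\cO_S(1)^2)=d/a$, $A_S(C)=2/a$, $d_C=0$) that the paper leaves implicit.
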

\begin{proof}
Let $\tilde S$ be the strict transform of $S$ in $\tilde X$, so $\psi: \tilde S \to S$ is the ordinary blow-up of $S$ in $p$.  By Corollary \ref{corollary_sesh-non-ec} we know that the divisor 
$$
\psi^* \cO_S(ma+1)- \frac{(m+1)a+1}{a}E|_{\tilde{S}}
$$
is nef on $\tilde S$, that is
$$
\varepsilon\left(\cO_S(1); E|_{\tilde{S}}\right)\geq
\frac{(m+1)a+1}{(ma+1)a}.
$$ 
Then we can apply Corollary \ref{corollary_lobster}. 
\end{proof}

For special cases, we have a better bound: 

\begin{proposition}\label{p:surface_dbig-new}
Assume that $n=3$ and $d\geq (ma+1)^2$. Let $S$ be a general element in 
$|\cO_X(1)|$ and $p=\left[0:0:0:1\right]\in S$. Then 
\[
\delta_p\left(S;\cO_S(1)\right)\geq\frac{3(ma+1)}{d}. 
\]
\end{proposition}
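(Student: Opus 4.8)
The plan is to deduce the bound from the Seshadri-type estimate of Corollary~\ref{corollary_lobster}, applied to $S$ at the quotient singularity $p$. First I would observe that $p=[0:\cdots:0:1]$ is a singular point of type $\bA^2/\bZ_a(1,1)$ on $S$: in the chart $x_{n+1}\neq 0$ the threefold $X$ is defined by an equation whose linear part is $x_0$, so $X$ is locally $\bA^3_{x_1,x_2,x_3}/\bZ_a(1,1,1)$ near $P$, and since every member of $|\cO_X(1)|$ vanishes at $P$, cutting by a general such member yields a point of type $\bA^2/\bZ_a(1,1)$. Consequently $\sigma:=\psi|_{\tilde S}\colon\tilde S\to S$ is the ordinary blowup of $p$ with exceptional curve $C=\bE|_{\tilde S}$, exactly as in Corollary~\ref{corollary_lobster}. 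Using $(\cO_S(1)^2)=(\cO_X(1)^3)=d/a$, that corollary gives
\[
\delta_p\left(S;\cO_S(1)\right)\geq\frac{3\,\varepsilon\left(\cO_S(1);C\right)}{(\cO_S(1)^2)}=\frac{3a\,\varepsilon\left(\cO_S(1);C\right)}{d},
\]
so it is enough to prove $\varepsilon\left(\cO_S(1);C\right)\geq(ma+1)/a$.

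To prove this Seshadri bound I would show that the $\bQ$-divisor $\sigma^*\cO_S(1)-\frac{ma+1}{a}C$ is nef on $\tilde S$. Writing $L_H:=\pi^*\cO_{\bP^3}(1)|_{\tilde S}$ and using $\psi^*\cO_{\bP}(1)=\frac1a\xi$ together with $\xi|_{\tilde S}=C+aL_H$, a direct manipulation identifies this divisor with $\Gamma:=L_H-mC$, which by Corollary~\ref{corollary_G-irr} is precisely the restriction $G|_{\tilde S}$ of the prime divisor $G$ from Lemma~\ref{lemma_irr-G}; in particular $\Gamma$ is an irreducible curve on the smooth surface $\tilde S$. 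Here I would first note that the hypothesis forces $m\leq k-1$ (needed to invoke Corollary~\ref{corollary_G-irr}): if $m=k$ then $(ma+1)^2=(ak+1)^2=d^2$, and $d\geq d^2$ is impossible for $d=ak+1\geq 2$. Since $\Gamma$ is a single irreducible curve, it is nef as soon as $\Gamma^2\geq 0$, because every other irreducible curve meets it nonnegatively. Thus the whole statement reduces to the single inequality $\Gamma^2\geq 0$.

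The last step is to compute $\Gamma^2$ on $\tilde S$. Via the degree-$k$ morphism $\pi|_{\tilde S}\colon\tilde S\to\bP^2$ one finds $L_H^2=k$; since $C$ is a line inside $\bE$ one gets $L_H\cdot C=1$; and $C^2=-a$ (the self-intersection of the exceptional curve of the $\bA^2/\bZ_a(1,1)$ blowup). Hence
\[
\Gamma^2=(L_H-mC)^2=k-2m-m^2a.
\]
As $k=(d-1)/a$, the inequality $\Gamma^2\geq 0$ is equivalent to $d-1\geq 2ma+m^2a^2$, that is to $d\geq(ma+1)^2$, which is exactly the hypothesis. This yields $\varepsilon(\cO_S(1);C)\geq(ma+1)/a$ and therefore $\delta_p(S;\cO_S(1))\geq 3(ma+1)/d$. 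I expect the main obstacle to be the identification of $\sigma^*\cO_S(1)-\frac{ma+1}{a}C$ with the prime divisor $G|_{\tilde S}$---so that nefness collapses to a single self-intersection number---together with the attendant observation that the hypothesis $d\geq(ma+1)^2$ rules out the Eckardt case $m=k$; once these are in place the intersection computation is routine.
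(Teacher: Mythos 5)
Your proposal is correct and follows essentially the same route as the paper's proof: deduce $m\leq k-1$ from the hypothesis, identify $\sigma^*\cO_S(1)-\frac{ma+1}{a}C$ with the prime divisor $G|_{\tilde S}$ of Corollary \ref{corollary_G-irr}, verify $(G|_{\tilde S})^2=\frac{d-(ma+1)^2}{a}\geq 0$ to get nefness and the Seshadri bound $\varepsilon(\cO_S(1);C)\geq\frac{ma+1}{a}$, and conclude via Corollary \ref{corollary_lobster}. You merely make explicit some details (the intersection numbers $L_H^2=k$, $L_H\cdot C=1$, $C^2=-a$, and the exclusion of the Eckardt case $m=k$) that the paper leaves implicit.
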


\begin{proof}
From the assumption, we have $m\leq k-1$. 
Let $\tilde{S}$ be the strict transform of $S$ in $\tilde{X}$. 
By Corollary \ref{corollary_G-irr}, the curve 
$G|_{\tilde{S}}$ is a prime divisor, and 
\[
\left(G|_{\tilde{S}}^{\cdot 2}\right)=\frac{d-(ma+1)^2}{a}\geq 0. 
\]
Thus we have $\varepsilon\left(\cO_S(1); E|_{\tilde{S}}\right)\geq
\frac{ma+1}{a}$. 
Then we can again apply Corollary \ref{corollary_lobster}. 
\end{proof}

\subsection{An Eckardt case}\label{section_eckardt}

In \S \ref{section_eckardt}, we consider the case $m=k$, i.e., 
$X$ has a generalized Eckardt point at $P$. 
The result in this section 
is similar to \cite[\S 4.4]{AZ22}. 

Under the assumption in Setup \ref{setup:d=ak+1}, 
assume moreover that $m=k$. Then we can write
\begin{eqnarray*}
f&=&x_{n+1} x_0 g +f_{ak+1}(x_0,\dots,x_n)\text{ with }\\
g&:=&x_{n+1}^{k-1}+x_{n+1}^{k-2}g_a(x_0,\dots,x_n)
+\cdots+g_{a(k-1)}(x_0,\dots,x_n), 
\end{eqnarray*}
where $g_{aj}$ ($1\leq j \leq k-1$) (resp., $f_{ak+1}$) 
is a homogeneous polynomial with degree $aj$ (resp., $ak+1$). 
We write 
\[
f_{ak+1}(x_0,\dots,x_n)=x_0f_{ak}(x_0,\dots,x_n)
+g_{ak+1}(x_1,\dots,x_n).
\]

\begin{example}\label{example_eckardt-qsm}
There are many such quasi-smooth hypersurfaces $X\subset\bP$. 
For example, assume that 
\[
f=x_{n+1}^kx_0+\sum_{i=0}^nc_ix_i^{ak+1}
\]
with $c_0,\dots,c_n\in\bC^*$. (In other words, our $f$ satisfies that 
$g=x_{n+1}^{k-1}$, $f_{ak}
(x_0,\dots,x_n)=c_0x_0^{ak}$ and $g_{ak+1}(x_1,\dots,x_n)=\sum_{i=1}^n
c_ix_i^{ak+1}$.)
Then we can directly check that the hypersurface $X\subset\bP$ is 
quasi-smooth. 
\end{example}

In the above example, $g_{ak+1}(x_1,\dots,x_n)$ defines a 
(smooth) Fermat hypersurface in $\bP^{n=1}_{x_1\dots x_n}$. 
In fact, in general, the following holds.

\begin{lemma}[{cf.\ \cite[Lemma 4.9]{AZ22}}]\label{lemma_eckardt}
The hypersurface $W:=(g_{ak+1}=0)\subset
\bP^{n-1}_{x_1\cdots x_n}$ is smooth. 
\end{lemma}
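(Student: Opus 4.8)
The plan is to work directly on the affine cone and exploit the quasi-smoothness of $X$. Recall that $X\subset\bP$ being quasi-smooth means that the affine cone $\hat{X}:=(f=0)\subset\bA^{n+2}_{x_0\cdots x_{n+1}}$ is smooth away from the irrelevant locus, which for $\bP(1^{n+1},a)$ is just the origin. First I would argue by contradiction: suppose $W=(g_{ak+1}=0)\subset\bP^{n-1}_{x_1\cdots x_n}$ is singular at a point $[q]=[x_1:\cdots:x_n]$. Lifting to a nonzero vector $q=(x_1,\dots,x_n)\in\bA^n\setminus\{0\}$, this means $g_{ak+1}(q)=0$ and $\frac{\partial g_{ak+1}}{\partial x_i}(q)=0$ for all $1\leq i\leq n$.

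Next I would compute the gradient of $f$ along the line of points $p_t:=(0,x_1,\dots,x_n,t)\in\bA^{n+2}$, $t\in\bC$. Writing $f=x_{n+1}x_0 g+x_0 f_{ak}+g_{ak+1}$, every monomial except those coming from $g_{ak+1}$ carries a factor of $x_0$, while $g_{ak+1}$ involves neither $x_0$ nor $x_{n+1}$. Evaluating at $x_0=0$ therefore gives $\frac{\partial f}{\partial x_{n+1}}(p_t)=0$, then $\frac{\partial f}{\partial x_i}(p_t)=\frac{\partial g_{ak+1}}{\partial x_i}(q)$ for $1\leq i\leq n$, and finally $\frac{\partial f}{\partial x_0}(p_t)=t^k+g_a(0,q)\,t^{k-1}+\cdots+g_{a(k-1)}(0,q)\,t+f_{ak}(0,q)$, where $g_{aj}(0,q)$ and $f_{ak}(0,q)$ denote the evaluation at $(0,x_1,\dots,x_n)$. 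Under the singularity assumption the middle partials all vanish, so the only possibly nonzero entry of $\nabla f(p_t)$ is $\frac{\partial f}{\partial x_0}(p_t)$.

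The key observation is that $\frac{\partial f}{\partial x_0}(p_t)$ is, as a function of $t$, a monic polynomial of degree $k\geq 1$. Since $\bC$ is algebraically closed, I can choose $c\in\bC$ to be a root of it, so that $\nabla f(p_c)=0$. Moreover $f(p_c)=g_{ak+1}(q)=0$, and $p_c\neq 0$ because $q\neq 0$; hence $p_c\in\hat{X}\setminus\{0\}$ is a singular point of the affine cone $\hat{X}$. This contradicts the quasi-smoothness of $X$, and proves that $W$ is smooth.

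Since the whole argument is short, there is no genuinely hard step; the one point that must be checked with care is the gradient computation, namely that passing to $x_0=0$ annihilates every partial derivative except $\frac{\partial f}{\partial x_0}$, together with the observation that the surviving partial is monic of positive degree in the fibre variable $t$. This is exactly where $k\geq 1$ and the Eckardt form of $f$ enter, and the argument parallels the proof of \cite[Lemma 4.9]{AZ22} in the smooth projective setting.
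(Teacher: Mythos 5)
Your argument is correct and is essentially the paper's own proof: both lift a hypothetical singular point of $W$ to the affine cone, set $x_0=0$, observe that all partials except $\partial f/\partial x_0$ vanish there, and then kill that last partial by choosing $x_{n+1}$ to be a root of the monic degree-$k$ polynomial $t\,g(0,q,t)+f_{ak}(0,q)$, contradicting quasi-smoothness of $X$. No substantive difference.
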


\begin{proof}
Assume that $[c_1:\cdots:c_n]\in W$ is a singular point. 
We can take $y_0\in\bC$ such that 
\[
y_0g(0,c_1,\dots,c_n,y_0)+f_{ak}(0,c_1,\dots,c_n)=0
\]
since the polynomial $g(0,c_1, \ldots ,c_n, y)$ in $y$ is non-constant.
Then $\vec{c}:=[0:c_1:\dots:c_n:y_0] \in X$. 
However, we can check that $X$ is not quasi-smooth at $\vec{c}$, a 
contradiction. 
\end{proof}

Let us set $V\subset X$ be the prime divisor defined by the equation $x_0=0$, 
and let us set $\tilde{V}:=\sigma^{-1}_*V\subset\tilde{X}$. 
The Cartier divisor $\bH|_{\tilde{X}}$ on $\tilde{X}$ is nothing but 
$\tilde{V}+k E$. Moreover, the defining equations of $\tilde{V}$ in 
$\tilde{\bP}$ is $g_{ak+1}(x_1,\dots,x_n)=x_0=0$. 
In particular, $\tilde{V}\xrightarrow{\pi|_{\tilde{V}}}\pi(\tilde{V})=W
\subset\bP^{n-1}_{x_1\cdots x_n}$ is a $\bP^1$-bundle over $W$. 
Note that, for a general fiber $\ell\subset\tilde{V}$ of 
$\pi|_{\tilde{V}}$, we have $(\tilde{V}\cdot \ell)=-k$.

From now on, set $Y_0:=X$, $Y_1:=E$ and $W_1:=\tilde{V}|_E(\simeq W)$. 
Take any closed point $q\in E$. Consider a sequence of 
general linear subspaces $Y_1\supsetneq Y_2\supsetneq\dots\supsetneq Y_n$ with $Y_n=\{q\}$. Then, 
\[
Y_\bullet\colon Y_0\triangleright Y_1\triangleright\cdots\triangleright
Y_n
\]
is a complete plt flag over $X$. 
For any $2\leq j\leq n-1$, let us inductively set $W_j:=W_{j-1}|_{Y_j}$. 
Then each $W_j\subset Y_j$ is a smooth hypersurface. 

\begin{proposition}\label{proposition_ninja}
For any $1\leq j\leq n$, we have 
\[
S\left(\cO_X(1);Y_1\triangleright\cdots\triangleright Y_j\right)
=\begin{cases}
\frac{ak+n}{a(n+1)} & \text{if }j=1,\\
\frac{2ak+1}{(ak+1)(n+1)} & \text{if }j=n\text{ and }q\in W_1, \\
\frac{1}{n+1} & \text{otherwise}.
\end{cases}
\]
\end{proposition}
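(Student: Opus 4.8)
The plan is to compute each $S\left(\cO_X(1);Y_1\triangleright\cdots\triangleright Y_j\right)$ as the $j$-th barycentric coordinate of the Okounkov body $\mathbf{O}:=\mathbf{O}_{Y_\bullet}(\cO_X(1))\subset\bR^n$ of the complete plt flag $Y_\bullet$, via the identification in Definition \ref{definition_plt-flag}(\ref{item:Okbodydfn}) and Remark \ref{remark_higher-S}(2) (that is, \cite[Theorem 8.8]{Fujita:2024aa}). The plt blowup realizing $Y_1=E$ is exactly $\sigma\colon\tilde{X}\to X$, so $\mathbf{O}$ is built from $\sigma^*\cO_X(1)-xE$ together with the admissible flag $E\simeq\bP^{n-1}\supset Y_2\supset\cdots\supset Y_n=\{q\}$ of general linear subspaces. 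First I would record the numerical input already available: $\vol_X(\cO_X(1))=d/a$, $E|_E\simeq\cO_{\bP^{n-1}}(-a)$, $A_X(E)=n/a$, and the classes $\sigma^*\cO_X(1)\sim_\bQ\frac1a\xi|_{\tilde X}$, $\tilde V\sim_\bQ -k\,\xi|_{\tilde X}+(ak+1)\,g^*\cO_{\bP^n}(1)$, with $(\tilde V\cdot\ell)=-k$ for a fibre $\ell$ of $\pi|_{\tilde V}$.

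The heart of the matter is the Zariski--Nakayama decomposition of $\sigma^*\cO_X(1)-xE$ on $\tilde X$. I would show it is nef (indeed ample in the interior) for $x\in[0,1/a]$, while for $x\in[1/a,d/a]$ the contracted fibre class $\ell$ forces
\[
\sigma^*\cO_X(1)-xE=\frac{d-ax}{ak}\,g^*\cO_{\bP^n}(1)+\frac{ax-1}{ak}\,\tilde V,
\]
where the first summand is nef (a pullback from $\bP^n$) and the second is the negative part. Using $(\sigma^*\cO_X(1))^{\,n-1-i}\cdot E^{i+1}=0$ for $i<n-1$, $E^n=(-a)^{n-1}$, and $\left(g^*\cO_{\bP^n}(1)\right)^{n-1}\cdot E=1$, intersecting with $E$ gives the restricted volume
\[
\vol_{\tilde X\mid E}\!\left(\sigma^*\cO_X(1)-xE\right)=
\begin{cases}
(ax)^{n-1} & x\in[0,1/a],\\[2pt]
\left(\dfrac{d-ax}{ak}\right)^{n-1} & x\in[1/a,d/a].
\end{cases}
\]
Integrating $x$ against this density over $[0,d/a]$ and simplifying with $d=ak+1$ then recovers the first coordinate $S\left(\cO_X(1);Y_1\right)=\frac{ak+n}{a(n+1)}$.

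For $j\ge 2$ I would pin down the actual shape of $\mathbf{O}$. Over each height $x$ the slice is the Okounkov body of the restricted series on $E\simeq\bP^{n-1}$ for the general flag $Y_2\supset\cdots\supset Y_n$. Since $g|_E$ is an isomorphism onto $(x_0=0)\subset\bP^n$, the moving part restricts to the linear series $\cO_{\bP^{n-1}}\!\left(\mu(x)\right)$ with $\mu(x)=\frac{d-ax}{ak}$, which is asymptotically complete because its volume already equals the restricted volume; its slice is thus the standard simplex $\Delta_{\mu(x)}$, whose centroid has every coordinate equal to $\mu(x)/n$. The fixed part restricts to $\frac{ax-1}{ak}W_1$, and because $Y_2,\dots,Y_{n-1}$ are general while $Y_n=\{q\}$, the valuation vector of $W_1$ along this flag is $(0,\dots,0,\mathbf{1}_{q\in W_1})$; hence the fixed part merely translates $\Delta_{\mu(x)}$ in the last coordinate by $\frac{ax-1}{ak}$ exactly when $q\in W_1$. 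Integrating slice centroids against slice volumes then collapses, for $2\le j\le n-1$ and for $j=n$ with $q\notin W_1$, to $S=\frac1{n+1}$, while for $j=n$ with $q\in W_1$ the extra term $\int_{1/a}^{d/a}\frac{ax-1}{ak}\,\mu(x)^{n-1}\,dx=\frac{k}{n(n+1)}$ appears and produces $S=\frac{2ak+1}{(ak+1)(n+1)}$.

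The main obstacle is the rigorous justification of the shape of $\mathbf{O}$ on the range $x\in[1/a,d/a]$: that the displayed decomposition is genuinely the Zariski--Nakayama one (i.e.\ that $\tilde V$, and nothing else, is the negative part, which rests on $\ell$ being the only $g$-contracted curve class, cf.\ the $\bP^1$-bundle structure $\tilde V\to W$ here and the argument of Lemma \ref{lemma_small}), and that the restricted series genuinely splits as an asymptotically complete moving part plus the fixed divisor $\frac{ax-1}{ak}W_1$ with the asserted flag valuation. This is precisely what \cite[Theorem 8.8]{Fujita:2024aa} is designed to package, so in practice I would verify its hypotheses in the present toric/bundle situation and read off $\mathbf{O}$ directly, rather than redo the Okounkov-body bookkeeping by hand; the remaining barycenter integrals are then elementary.
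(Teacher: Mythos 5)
Your proposal is correct and follows essentially the same route as the paper: both rest on nefness of $\sigma^*\cO_X(1)-xE$ on $[0,1/a]$, the Zariski decomposition $\sigma^*\cO_X(1)-xE=\frac{d-ax}{ak}\,g^*\cO_{\bP^n}(1)+\frac{ax-1}{ak}\tilde V$ on $[1/a,d/a]$, the restriction of the positive part to $E\simeq\bP^{n-1}$, and the fact that the fixed part contributes to the flag valuations only through $W_1$ at the last step, all packaged via \cite[Theorem 8.8]{Fujita:2024aa}. Your slice-by-slice barycenter integrals are exactly the paper's integrals over its region $\bD_n$ with $v_n=\frac{1}{k}\left(x_1-\frac{1}{a}\right)$ when $q\in W_1$, and they check out numerically (the only cosmetic slip is the ``cf.'' to Lemma \ref{lemma_small}, which concerns the non-Eckardt case $m\leq k-1$; here the needed input is just $(\tilde V\cdot\ell)=-k$ for the $\bP^1$-bundle $\tilde V\to W$).
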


\begin{proof} 

The flag $Y_\bullet$ itself is a smooth dominant of $Y_\bullet$ 
in the sense of \cite[Definition 7.1]{Fujita:2024aa} (taking $\gamma_k = \mathrm{id}$ in the definition cited). 
Then, for any $1\leq j<l\leq n$, the values 
$d_{jl}$ and $g_{jl}$ in \cite[Definition 7.2]{Fujita:2024aa} satisfy 
$d_{jl}= g_{jl}=0$. 
The $\bR$-divisor 
\begin{equation}\label{eq:1stdecomp}
\sigma^*\cO_X(1)-x_1 E\sim_\bR
\left(\left(\frac{1}{a}-x_1\right)\xi+\pi^*\cO_{\bP^n}
(ax_1)\right)\bigg|_{\tilde{X}}
\end{equation}
is nef whenever $x_1\in [0,1/a ]$. On the other hand, 
a general fiber 
$\ell\subset\tilde{V}$ of $\pi|_{\tilde{V}}$ satisfies that 
\[
\left(\sigma^*\cO_X(1)-x_1 E\cdot \ell\right)=\frac{1}{a}-x_1
\]
and 
\begin{equation}\label{eq:2nddecomp}
\sigma^*\cO_X(1)-x_1 E-\frac{1}{k}\left(x_1-\frac{1}{a}\right)\tilde{V}
\sim_\bR g^*\cO_{\bP^n}\left(\frac{1}{k}\left(\frac{ak+1}{a}
-x_1\right)\right)
\end{equation}
is semiample whenever $x_1\in [1/a, (ak+1)/a]$. 
In particular, we have $\sigma^*\cO_X(1)-\frac{ak+1}{a} E \sim_{\bR} \tilde{V}$ and see that 
\[
T(\cO_X(1); E)=\frac{ak+1}{a}. 
\]

Moreover, we see that, for any $x_1\in[0,(ak+1)/a]$, the decomposition 
\[
\sigma^*\cO_X(1)-x_1E=N_{0,0}(x_1)+P_0(x_1)
\]
is the Zariski decomposition in a strong sense as in \cite[Definition 6.9]{Fujita:2024aa}, where 
\[
N_{0,0}(x_1)=\begin{cases}
0 & \text{if }x_1\in\left[0,\frac{1}{a}\right], \\
\frac{1}{k}\left(x_1-\frac{1}{a}\right)\tilde{V} & \text{if }
x_1\in\left[\frac{1}{a}, \frac{ak+1}{a}\right]
\end{cases}. 
\] 
In particular, $Y_\bullet$ itself is a smooth and adequate 
dominant of $Y_\bullet$ with 
respect to $\cO_X(1)$ in the sense of \cite[Definition 8.5]{Fujita:2024aa}. 

For $l=1, \ldots , n$, let $\bD_l \subset \bR_{>0}^l$ be the region which is defined inductively as in \cite[Definition 8.1]{Fujita:2024aa} from the plt flag (and smooth dominant) $Y_{\bul}$ and $L= \cO_X(1)$.  
%Moreover, in the terminology of \cite[Definition 8.1]{Fujita:2024aa}, 
Since $P_0(x_1)|_E \simeq \cO_{E}(ax_1)$ if $x_1 \in [0, \frac{1}{a}]$ by (\ref{eq:1stdecomp}) and $P_0(x_1)|_E \simeq \cO_{E}\left(\frac{1}{k}\left(\frac{ak+1}{a}-x_1\right)\right)$ if $x_1 \in [\frac{1}{a}, \frac{ak+1}{a}]$ by (\ref{eq:2nddecomp}), 
we can directly 
check that $\bD_n = \bD_{n,1} \cup \bD_{n,2}$ for  
\begin{eqnarray*}
\bD_{n,1}&:=&\left\{(x_1,\dots,x_n)\in\bR^n_{>0}\,\,\bigg|\,\,
x_1\leq \frac{1}{a}\text{ and }x_2+\cdots+x_n<ax_1\right\}, \\
\bD_{n,2} &:=&\left\{(x_1,\dots,x_n)\in\bR^n_{>0}\,\,\bigg|\,\,
\frac{1}{a}<x_1<\frac{ak+1}{a}\text{ and }x_2+\cdots+x_n<
\frac{1}{k}\left(\frac{ak+1}{a}-x_1\right)\right\}. 
\end{eqnarray*}
%by the above Zariski decomposition, where $\bD_{n,1}$ comes from (\ref{eq:1stdecomp}) and $\bD_{n,2}$ comes from (\ref{eq:2nddecomp}). 
In the same terminology, we have the following numbers: 
\[
u_{l,j}(x_1, \ldots , x_l) = \ord_{Y_j} (N_{l-1, j-2}(x_1, \ldots ,x_l)|_{Y_{j-1}}) \in \bR_{\ge 0}
%is defined 
\text{ for $1 \le l <j$ and $(x_1, \ldots , x_l) \in \bD_l$, and}  
\]   
\[
v_j(x_1, \ldots ,x_{j-1}):= \sum_{l=1}^{j-1} u_{l,j}(x_1, \ldots , x_l) \in \bR_{\ge 0}
\text{ for $2 \le j \le n$ %$1 \le l < j$ 
and $(x_1, \ldots , x_{j-1}) \in \bD_{j-1}$.} 
\]
%defined as in \cite[Definition 8.1]{Fujita:2024aa} 
We see that $u_{l,j} = 0$ for $2\le j \le n-1$ since $Y_j \simeq \bP^{n-j}$ and $N_{i-1,i-1}(x_1, \ldots, x_i)=0$ for $2\le i \le n$ and $(x_1, \ldots ,x_i) \in \bD_i$. 
By this and the generality of the linear subspaces $Y_i$, we also see that 
\begin{eqnarray*}
v_j&\equiv& 0 \text{ if } 2\leq j\leq n-1,\\
v_n(x_1,\dots,x_{n-1})&=&\begin{cases}
\frac{1}{k}\left(x_1-\frac{1}{a}\right) & \text{if }\frac{1}{a}<x_1<
\frac{ak+1}{a} \text{ and }q\in W_1,\\
0 & \text{otherwise}.
\end{cases}. 
\end{eqnarray*}

By \cite[Theorem 8.8 and Remark 8.9(1)]{Fujita:2024aa} and $g_{jl}=0$, we know that 
\[
S\left(\cO_X(1);Y_1\triangleright\cdots\triangleright Y_j\right)
=\frac{n!a}{ak+1}\int_{\vec{x}\in\bD_n}
\left(x_j+v_j(x_1,\dots,x_{j-1})\right)d\vec{x} 
\]
for any $1\leq j\leq n$. 
(Note that this also holds when $j=1$ by setting $v_1:=0$.) 
Thus the assertion follows from direct computations. 
%For example, we can compute it when $j=1$ as follows: We have 
%\[
%S\left(\cO_X(1);Y_1 \right)
%=\frac{n!a}{ak+1}\int_{\vec{x}\in\bD_n}
%x_1 d\vec{x}= \frac{n!a}{ak+1}\int_{\vec{x}\in\bD_{n,1}}
%x_1 d\vec{x} + \frac{n!a}{ak+1}\int_{\vec{x}\in\bD_{L.n,2}}
%x_1 d\vec{x}. 
%\]
%Note that, for $c>0$, the volume $V_c$ of the $(n-1)$-dimensional simplex $\{(x_2, \ldots, x_n)\in \bR_{\ge 0}^{n-1} \mid x_2+ \cdots + x_n \le c \}$ is $V_c= \frac{c^{n-1}}{(n-%1)!}$. Then we can compute 
%\begin{eqnarray*}
%\int_{\vec{x}\in\bD_{n,1}}
%x_1 d\vec{x} &=& \int_{0}^{\frac{1}{a}} x_1 V_{ax_1} dx_1 = \frac{1}{(n-1)!} \cdot \frac{1}{(n+1)a^2},  \\
%\int_{\vec{x}\in\bD_{n,2}}
%x_1 d\vec{x} &=& \int_{\frac{1}{a}}^{\frac{ak+1}{a}} x_1 V_{c(x_1)} dx_1 = \frac{1}{(n-1)!}\left( \frac{k^2}{n(n+1)} + \frac{k}{an} \right),  
%\end{eqnarray*}
%where $c(x_1):= \frac{1}{k}\left(\frac{ak+1}{a}-x_1\right)$. 
%By these, we obtain $S\left(\cO_X(1);Y_1 \right)= \frac{ak+n}{a(n+1)}$. 
%We can compute $S\left(\cO_X(1);Y_1\triangleright\cdots\triangleright Y_j\right)$ similarly when $j \ge 2$.  

\end{proof}

\begin{corollary}\label{corollary_ninja}
We have 
\[
\delta_P\left(X; \cO_X(1)\right)\geq\min\left\{
\frac{n(n+1)}{ak+n},\,\,\frac{(ak+1)(n+1)}{2ak+1}\right\}. 
\]
Moreover, if $d=ak+1\geq n$, we have 
\[
\delta_P\left(X; \cO_X(1)\right)=\frac{n(n+1)}{ak+n}.
\]
\end{corollary}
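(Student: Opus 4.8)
The plan is to run the Abban--Zhuang method (Theorem \ref{theorem:AZ}) iteratively along the complete plt flag $Y_\bullet\colon X=Y_0\triangleright Y_1\triangleright\cdots\triangleright Y_n$ constructed just before Proposition \ref{proposition_ninja}, feeding in the $S$-values computed there together with the relevant log discrepancies. First I would record these discrepancies. By the Setup we have $A_X(Y_1)=A_X(E)=n/a$. For $2\le j\le n$ the divisor $Y_j$ is a general hyperplane on the smooth projective space $Y_{j-1}$ (indeed $Y_{j-1}\simeq\bP^{n-j+1}$ and $\tilde{X}$ is smooth), so adjunction inductively forces $\Delta_{j-1}=0$ and hence $A_{Y_{j-1},\Delta_{j-1}}(Y_j)=1$; note that the auxiliary hypersurfaces $W_j$ never contribute to the different, so they do not affect any of these discrepancies (they only influence the $S$-values through the term $v_n$).

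Next, for the first step I would apply Theorem \ref{theorem:AZ} to the one-term flag $X\triangleright E$. Since $E$ is contracted to $P$, every point $\eta'\in E$ lies over $P$, and we obtain
\[
\delta_P(X;\cO_X(1))\ge\min\left\{\frac{A_X(E)}{S(\cO_X(1);E)},\ \inf_{\eta'\in E}\delta_{\eta'}(X\triangleright E;\cO_X(1))\right\}.
\]
By Proposition \ref{proposition_ninja} one computes $A_X(E)/S(\cO_X(1);E)=(n/a)/((ak+n)/(a(n+1)))=n(n+1)/(ak+n)$, which is exactly the first term of the asserted minimum. To bound the infimum I would fix $\eta'\in E$ and continue with a general flag $E\triangleright Y_2\triangleright\cdots\triangleright Y_n=\{\eta'\}$ through $\eta'$; iterating Theorem \ref{theorem:AZ} from $j=2$ up to $j=n$ (the innermost term at $j=n$ drops out, being an infimum over divisors centred at the zero-dimensional $Y_n$) yields $\delta_{\eta'}(X\triangleright E;\cO_X(1))\ge\min_{2\le j\le n}1/S(\cO_X(1);Y_1\triangleright\cdots\triangleright Y_j)$. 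Plugging in the three cases of Proposition \ref{proposition_ninja} gives $n+1$ for $2\le j\le n-1$ and for $j=n$ with $\eta'\notin W_1$, and $(ak+1)(n+1)/(2ak+1)$ for $j=n$ with $\eta'\in W_1$. Since $(ak+1)(n+1)/(2ak+1)\le n+1$, the worst case is $\eta'\in W_1$, and taking the infimum over $\eta'$ produces the lower bound $\min\{n(n+1)/(ak+n),\ (ak+1)(n+1)/(2ak+1)\}$, proving the first assertion.

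For the equality statement under $d=ak+1\ge n$, I would first note the matching upper bound: $E$ is itself a prime divisor over $X$ with center $P$, so directly from the definition $\delta_P(X;\cO_X(1))\le A_X(E)/S(\cO_X(1);E)=n(n+1)/(ak+n)$. It then suffices to verify that the minimum in the lower bound is realised by its first term, i.e. that $n(n+1)/(ak+n)\le(ak+1)(n+1)/(2ak+1)$. Clearing the positive denominators, this is equivalent to $(ak+1)(ak+n)-n(2ak+1)=ak\,(ak+1-n)=ak\,(d-n)\ge0$, which holds exactly because $d\ge n$. Combining the two inequalities then gives $\delta_P(X;\cO_X(1))=n(n+1)/(ak+n)$.

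The genuine subtlety, and the step I would be most careful about, is the bookkeeping of the iterated Abban--Zhuang inequality: confirming that each general cut contributes log discrepancy exactly $1$ (so that all boundaries $\Delta_j$ vanish), that the deepest infimum over divisors centred at the point $Y_n$ legitimately drops out, and that for each $\eta'\in E$ one is entitled to choose a \emph{general} flag through $\eta'$ so that the dichotomy $\eta'\in W_1$ versus $\eta'\notin W_1$ of Proposition \ref{proposition_ninja} applies verbatim. All the quantitative inputs---the $S$-values, the discrepancies, and the elementary inequality $ak(d-n)\ge0$---are already available, so beyond this bookkeeping the argument is routine.
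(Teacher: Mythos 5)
Your argument is correct and follows the same route as the paper: iterate Theorem \ref{theorem:AZ} along the complete plt flag $Y_\bullet$, feed in the $S$-values of Proposition \ref{proposition_ninja} (with all log discrepancies equal to $n/a$ at the first step and $1$ thereafter), and for the equality use the upper bound $A_X(E)/S(\cO_X(1);E)$ together with the elementary inequality $(ak+1)(ak+n)-n(2ak+1)=ak(d-n)\ge 0$. The bookkeeping you flag (vanishing boundaries, termination of the infimum at the point $Y_n$, and the dichotomy $q\in W_1$ versus $q\notin W_1$) is handled in the paper exactly as you describe.
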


\begin{proof}

By Theorem \ref{theorem:AZ}, we have  
\[
\delta_P(X;\cO_X(1))\geq\min\left\{\frac{A_X(Y_1)}{S(\cO_X(1); Y_1)}, 
\quad \inf_{q\in Y_1}\delta_q(X\triangleright Y_1; \cO_X(1))\right\}. 
\]
Moreover, for any $q\in Y_1$ and for any $2\leq j\leq n-1$,  by Theorem \ref{theorem:AZ} again, we obtain 
\[
\delta_q(X\triangleright Y_1\triangleright\cdots\triangleright Y_{j-1}; 
\cO_X(1))\geq\min\left\{\frac{A_{Y_{j-1}}(Y_j)}{S(\cO_X(1); Y_1\triangleright\cdots\triangleright Y_j)},\delta_q(X\triangleright Y_1\triangleright\cdots\triangleright Y_{j}; 
\cO_X(1))\right\}
\]
with $q=Y_n$. 
Together with these, Proposition \ref{proposition_ninja} and   
\[
\delta_q(X\triangleright Y_1\triangleright\cdots\triangleright Y_{n-1}; 
\cO_X(1))=\frac{A_{Y_{n-1}}(Y_n)}{S(\cO_X(1); Y_1\triangleright\cdots\triangleright Y_n)}, 
\]
we get the inequality
\[
\delta_P\left(X; \cO_X(1)\right)\geq\min\left\{
\frac{n(n+1)}{ak+n},\,\,n+1,\,\,\frac{(ak+1)(n+1)}{2ak+1}\right\}. 
\]

Moreover, we know that 
\[
\delta_P\left(X; \cO_X(1)\right)\leq\frac{A_X(E)}{S(\cO_X(1);E)}=
\frac{n(n+1)}{ak+n}.
\]
Thus the assertion follows. 
\end{proof}

\begin{corollary}\label{corollary_unstable}
Assume that $a\geq 2$ and $n>\frac{a^2k(k-1)}{a-1}$. 
Then $X$ is a K-unstable klt Fano variety. 
\end{corollary}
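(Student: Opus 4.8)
The plan is to bound $\delta(X;-K_X)$ from above by the local threshold at the vertex $P$, for which Corollary \ref{corollary_ninja} already supplies an explicit value, and then to turn the hypothesis on $n$ into the numerical inequality that forces this bound below $1$.

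First I would record the structural facts. Since $X$ is quasi-smooth and well-formed in the well-formed space $\bP(1^{n+1},a)$, it has only quotient singularities, hence is klt, and $\Sing\bP\cap X=\{P\}$ has codimension $n\geq 2$ in $X$; thus by Example \ref{example_irrelevant}(1) and Proposition \ref{proposition_diff} one has $-K_X\sim_\bQ\iota\,\cO_X(1)$ with index $\iota=(n+1+a)-(ak+1)=n-a(k-1)$. I would then check that the hypothesis guarantees $X$ is Fano: since $a\geq 2$ gives $\tfrac{a^2k(k-1)}{a-1}\geq a(k-1)$, the bound $n>\tfrac{a^2k(k-1)}{a-1}$ yields $n>a(k-1)$, so $\iota\geq 1$.

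Next I would invoke the unconditional upper bound contained in the proof of Corollary \ref{corollary_ninja}, namely
\[
\delta_P(X;\cO_X(1))\leq\frac{A_X(E)}{S(\cO_X(1);E)}=\frac{n(n+1)}{ak+n},
\]
which uses only the single divisor $E$ over $X$ and so holds without the auxiliary assumption $d\geq n$ appearing in the equality part of that corollary. Because $S(L;F)$ is homogeneous of degree one in $L$ while $A_X(F)$ does not depend on $L$, rescaling gives $\delta_\eta(X;\lambda L)=\lambda^{-1}\delta_\eta(X;L)$; applying this with $\lambda=\iota$ and combining with $\delta(X;-K_X)\leq\delta_P(X;-K_X)$ produces
\[
\delta(X;-K_X)\leq\frac{1}{\iota}\,\delta_P(X;\cO_X(1))\leq\frac{n(n+1)}{\iota(ak+n)}.
\]

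Finally I would reduce K-instability to elementary algebra: by Remark \ref{remark_higher-S}(1) it suffices to show the last quantity is $<1$, i.e.\ $n(n+1)<\iota(ak+n)$. Expanding $\iota(ak+n)=(n-a(k-1))(ak+n)=n^2+an-a^2k(k-1)$ and cancelling $n^2+n$, this is equivalent to $a^2k(k-1)<n(a-1)$, which (using $a\geq 2$) is exactly the hypothesis. Hence $\delta(X;-K_X)<1$ and $X$ is K-unstable. I do not expect a genuine geometric obstacle here, since all the difficult input is the barycenter computation of Proposition \ref{proposition_ninja}; the only points needing care are verifying that the Corollary \ref{corollary_ninja} upper bound survives without $d\geq n$, and keeping the index bookkeeping and the rescaling of $\delta$ consistent.
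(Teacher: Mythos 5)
Your proof is correct and follows essentially the same route as the paper: bound $\delta_P(X;\cO_X(1))$ above by $A_X(E)/S(\cO_X(1);E)=n(n+1)/(ak+n)$ using Proposition \ref{proposition_ninja}, rescale by the index $n+a-ak$, and observe that the hypothesis $(a-1)n>a^2k(k-1)$ is exactly what makes $(n+a-ak)(ak+n)>n(n+1)$. Your additional check that the hypothesis forces $n>a(k-1)$, so that $X$ is indeed Fano, is a point the paper asserts without detail, and your algebra matches the paper's computation.
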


\begin{proof}
Note that 
$-K_X\sim \cO_X(n+a-ak)$ and $n+a-ak>0$ from the assumption. 
Moreover, 
\[
\delta(X)=\frac{1}{n+a-ak}\cdot\delta\left(X;\cO_X(1)\right)
\leq\frac{n(n+1)}{(n+a-ak)(ak+n)}
\]
by Proposition \ref{proposition_ninja}. Thus the assertion follows 
from the assumption since we have 
\[
(n+a-ak)(n+ak)-n(n+1)= (n^2-a^2k^2 +an+a^2k) -(n^2+n) = (a-1)n -a^2k(k-1). 
\]
%and we need this to be positive, right? Then I think we need $n> \frac{a^2k(k-1)}{a-1}$ to have $\frac{n(n+1)}{(n+a-ak)(ak+n)} <1$, isn't it??
%Please point out any mistakes. 
%}
\end{proof}

\begin{example}\label{example:unstable}
Let $X_{ak+1} \subset \bP(1^{n+1},a)$ be a hypersurface with a generalized Eckard point as in Corollary \ref{corollary_unstable} (cf. Example \ref{example_eckardt-qsm}). Note that the Fano index $r = r_X$ of $X$ is 
\[
r= n+1+a - (ak+1) = n-(k-1)a. 
\]
Then the condition $n>\frac{a^2k(k-1)}{a-1}$ in Corollary \ref{corollary_unstable} implies that 
\[
r> \frac{a^2k(k-1)}{a-1} - (k-1)a = \frac{\left( (k-1)a+1\right) (k-1)a}{a-1}. 
\]
For example, if $k=2$, we have 
$
r > \frac{a(a+1)}{a-1} \ge 6.
$
%and for any $a \ge 2$, we get a quasi-smooth Fano weighted hypersurface of index  $\frac{a(a+1)}{a-1}+1$ \red{index is an integer, so this might be misleading.} and dimension $n=r+a$ which is K-unstable. In these examples, if $a$ goes to infinity, the ration $r/n$ goes to $1/2$.
Thus, for any $a\geq 2$ and for any $n\in\bZ_{>0}$ with 
$n>\frac{2a^2}{a-1}$, the Fano hypersurface $X_{2a+1}\subset\bP(1^{n+1},a)$ as in 
Corollary \ref{corollary_unstable} is of index $r=n-a$ and is K-unstable. 
If we take $n=\lfloor\frac{2a^2}{a-1}+1\rfloor$ and if $a$ goes to infinity, the ratio 
$r/n$ goes to $1/2$.

\end{example}

%\begin{example}\label{example:unstable}
%Fix any $r\in\bZ_{\geq 2}$. Let us take any $a\in\bZ_{\geq 2}$ with 
%$a>\frac{r}{r-1}$. Consider the cases $k=2$ and $n=a+r$. 
%Then our $X$ (having a generalized Eckardt point at $P$) 
%satisfies that $n=a+r>\frac{a^2}{a-1}$ \blue{but not necessarily $a+r> \frac{2a^2}{a-1}$}. Thus $X$ is K-unstable and %$-K_X\sim \cO_X(r)$ 
%by Corollary \ref{corollary_unstable}. 
%For example, if we set $r=2$, then we can construct many 
%K-unstable quasi-smooth Fano weighted hypersurfaces $X$ of 
%all dimension $a+2\geq 5$ and index $2$. 
%\blue{If my computations are correct, then we should modify this example, I think.}
%\end{example}

\section{Proof of Theorem \ref{thm:d=ak+1}}\label{section_wt1}

We start with the following proposition:

\begin{proposition}\label{proposition_2wts-B1}
Let $n\geq 3$ and let $\bP:=\bP\left(a_0,\dots,a_{n+1}\right)$ be 
well-formed with $a_0\leq\cdots\leq a_{n+1}$. 
Let $X\subset\bP$ be a quasi-smooth hypersurface 
of degree $d>a_{n+1}+1$. Assume that $B_1^\bP\subset X$ (cf. Definition \ref{definition_Ba}). 
Then we have 
\[
\delta\left(X;\cO_X(1)\right)\geq\frac{n+1}{d-a_{n+1}}. 
\]
More precisely, for any $p\in X$, we have the following: 
\begin{enumerate}
\item 
If $p\neq P_{n+1}:=\left[0:\cdots:0:1\right]$, then we have 
\[
\delta_p\left(X;\cO_X(1)\right)\geq\frac{(n+1)a_{n+1}}{d}. 
\]
\item 
If $p=P_{n+1}$, then we have 
\[
\delta_p\left(X;\cO_X(1)\right)\geq\max\left\{\frac{(n+1)a_n}{d},\,\,
\frac{n+1}{d-a_{n+1}}\right\}. 
\]
\end{enumerate}
We remark that, if $a_n\geq 2$, then we always have the inequality 
\[
\frac{(n+1)a_n}{d}>\frac{n+1}{d-a_{n+1}}. 
\]
\end{proposition}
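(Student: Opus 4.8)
First I would record the arithmetic forced by the hypotheses. Since $P_{n+1}=[0:\cdots:0:1]$ lies in $B_1^\bP=(x_0=\cdots=x_{c_1-1}=0)$, where $1=a_0=\cdots=a_{c_1-1}<a_{c_1}\le\cdots\le a_{n+1}$, the assumption $B_1^\bP\subset X$ gives $P_{n+1}\in X$ and, equivalently, that the defining polynomial $f$ lies in the ideal $(x_0,\dots,x_{c_1-1})$. Quasi-smoothness at $P_{n+1}$ forces a monomial $x_{n+1}^k x_j$ with $a_{n+1}k+a_j=d$ to occur in $f$; but such a monomial lies in $(x_0,\dots,x_{c_1-1})$ only when $a_j=1$, so $d=a_{n+1}k+1$, and $d>a_{n+1}+1$ then yields $k\ge 2$. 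This already disposes of the closing remark: for $a_n\ge 2$ the asserted inequality $\frac{(n+1)a_n}{d}>\frac{n+1}{d-a_{n+1}}$ is equivalent to $(a_n-1)d>a_na_{n+1}$, and substituting $d=a_{n+1}k+1$ gives $(a_n-1)d-a_na_{n+1}=a_{n+1}\bigl[(a_n-1)k-a_n\bigr]+(a_n-1)\ge a_{n+1}(a_n-2)+(a_n-1)>0$ since $k\ge 2$.

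\textbf{Part (1): points $p\neq P_{n+1}$.} Here the plan is to remove the weights by a finite cover and quote the single-weight estimate. Using Proposition \ref{proposition_finite-wps} together with Proposition \ref{proposition_covering} (iterating Proposition \ref{proposition_ST-cover} if one prefers to cover one variable at a time), I would pass to the cover $\tau\colon\bP(1^{n+1},a_{n+1})\to\bP$ sending $\bar x_i\mapsto \bar x_i^{a_i}$ for $i\le n$, and set $X':=\tau^*X$. The role of $B_1^\bP\subset X$ is precisely that the successive coordinate restrictions of $X$ stay quasi-smooth (Proposition \ref{proposition_B1} and Lemma \ref{lem:hyperplane_mod}), so that $X'$ is again quasi-smooth and well-formed with $P_{n+1}\in X'$. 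Corollary \ref{corollary_smooth-points} applies to $X'\subset\bP(1^{n+1},a_{n+1})$ and gives $\delta_{p'}(X';\cO_{X'}(1))\ge\frac{(n+1)a_{n+1}}{d}$ for all $p'\neq P_{n+1}$, and the inequality $\delta_{p'}(X';\tau^*\cO_X(1))\le\delta_p(X;\cO_X(1))$ of Proposition \ref{proposition_covering} transports this lower bound down to $X$.

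\textbf{Part (2): the vertex $p=P_{n+1}$.} I expect the two bounds to arise from two separate arguments, whence the maximum. For $\frac{(n+1)a_n}{d}$ I would induct on $n$ by cutting with weight-one hyperplanes through the vertex: since every $H\in|\cO_\bP(1)|$ contains $B_1^\bP\ni P_{n+1}$, Lemma \ref{lem:cutting} (with $a=1$) gives $\delta_{P_{n+1}}(X;\cO_X(1))\ge\min\{\,n+1,\ \tfrac{n+1}{n}\delta_{P_{n+1}}(X|_H;\cO(1))\,\}$, where $X|_H\subset H\cong\bP(a_1,\dots,a_{n+1})$ is quasi-smooth (Lemma \ref{lem:hyperplane_mod}), of the same degree $d$, with the same two top weights, and again satisfies the hypotheses; the inductive bound $\frac{n a_n}{d}$ and the trivial $\frac{(n+1)a_n}{d}\le n+1$ close the step. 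For $\frac{n+1}{d-a_{n+1}}$ I would instead reduce (covering, and cutting down to dimension three where needed) to the single-weight vertex analysis of Section \ref{section_singular}: Corollary \ref{corollary_ninja} in the generalized Eckardt case and Propositions \ref{p:surfacesingular}/\ref{p:surface_dbig-new} otherwise, combined with the elementary inequality $\frac{n(n+1)}{a_{n+1}k+n}\ge\frac{n+1}{d-a_{n+1}}$ (valid for $n\ge 2$, $k\ge 2$, since it reduces to $(n-1)k\ge n$).

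\textbf{Assembly and main difficulty.} Combining the two parts, $\delta(X;\cO_X(1))=\min_{p}\delta_p(X;\cO_X(1))\ge\min\{\frac{(n+1)a_{n+1}}{d},\ \frac{n+1}{d-a_{n+1}}\}=\frac{n+1}{d-a_{n+1}}$, the last equality because $\frac{(n+1)a_{n+1}}{d}\ge\frac{n+1}{d-a_{n+1}}$ amounts to $(a_{n+1}-1)d\ge a_{n+1}^2$, which holds for $a_{n+1}\ge 2$ once $d=a_{n+1}k+1$ with $k\ge 2$. The main obstacle is the vertex estimate in Part (2): one must control the local threshold at the quotient singularity $P_{n+1}$ sharply enough to produce $\frac{(n+1)a_n}{d}$ while \emph{retaining} the weight $a_n$ (so that a plain reduction to a single weight is not available for that bound), and one must run the Eckardt/non-Eckardt dichotomy uniformly, with the base cases (where $n=3$ or only two weight-one coordinates remain) handled directly by the surface estimates of Sections \ref{section:barycenter}--\ref{section_singular}. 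Keeping quasi-smoothness and well-formedness intact through every cover and cut---exactly what the condition $B_1^\bP\subset X$ is there to guarantee---is the recurring technical point.
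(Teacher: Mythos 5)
Most of your proposal matches the paper's proof: Part (1) is exactly the paper's argument (iterate Proposition \ref{proposition_B1} and Proposition \ref{proposition_ST-cover} to pass to $X'\subset\bP(1^{n+1},a_{n+1})$, then quote Corollary \ref{corollary_smooth-points}), the bound $\frac{n+1}{d-a_{n+1}}$ at the vertex is obtained the same way (reduce to $a_0=\cdots=a_n=1$, then split into the Eckardt case via Corollary \ref{corollary_ninja} and the non-Eckardt case via Proposition \ref{p:surfacesingular}, with the same elementary comparison of the three terms), and your derivation of $d=ka_{n+1}+1$, $k\geq 2$ and of the closing remark is correct.

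The genuine gap is in your argument for the bound $\frac{(n+1)a_n}{d}$ at $p=P_{n+1}$. You propose to induct on $n$ by cutting with a general $H\in|\cO_\bP(1)|$ and invoking Lemma \ref{lem:hyperplane_mod} for quasi-smoothness of $X|_H$. But Lemma \ref{lem:hyperplane_mod}(1) requires $\dim\left(B_1^\bP\cap X\right)\leq c_1-2$, i.e.\ $2c_1\geq n+3$, whereas the hypothesis $B_1^\bP\subset X$ only forces $2c_1\geq n+2$ (Lemma \ref{lemma_B1-cond}(1)). In the borderline case $2c_1=n+2$ (which occurs, e.g., for $X_{11}\subset\bP(1,1,1,2,2,5)$) the section $X|_H$ \emph{cannot} be quasi-smooth: it would be an $(n-1)$-dimensional quasi-smooth hypersurface containing $B_1^H=B_1^\bP$ of dimension $n/2>(n-1)/2$, contradicting Lemma \ref{lemma_B1-cond}(1). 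So the very first cut may fail; and even when it succeeds, each cut lowers $2c_1-n$ by one, so the induction terminates in the borderline situation with no base case (the paper's surface estimates only cover $\bP(1,1,1,b)$, not a general $\bP(a_1,a_2,a_3,a_4)$). The paper avoids all of this by a \emph{second covering trick}: the finite cover $\bP(1^n,a_n,1)\to\bP$, $[x_0:\cdots:x_{n+1}]\mapsto[x_0^{a_0}:\cdots:x_{n-1}^{a_{n-1}}:x_n:x_{n+1}^{a_{n+1}}]$, which also uniformizes the $x_{n+1}$-direction; the preimage of $P_{n+1}$ is then a \emph{smooth} point of a quasi-smooth hypersurface in $\bP(1^{n+1},a_n)$, and Corollary \ref{corollary_smooth-points} (with $b=a_n$) together with Proposition \ref{proposition_ST-cover} gives $\frac{(n+1)a_n}{d}$ directly. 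You should replace your induction by this cover.
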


\begin{proof}
\noindent(1) By applying Propositions \ref{proposition_B1} and 
\ref{proposition_ST-cover} several times, after twisting by an 
element of $\operatorname{Aut}\left(\bP\right)$, the morphism 
\begin{eqnarray*}
\tau\colon \bP':=\bP\left(1^{n+1},a_{n+1}\right)&\to&\bP\\
\left[x_0:\cdots:x_n:x_{n+1}\right]&\mapsto&
\left[x_0^{a_0}:\cdots:x_n^{a_n}:x_{n+1}\right]
\end{eqnarray*}
satisfies that, $X':=\tau^*X\subset\bP'$ is quasi-smooth of degree $d$ 
and 
\[
\delta_{p'}\left(X';\cO_{X'}(1)\right)\leq 
\delta_p\left(X;\cO_X(1)\right)
\]
for any $p'\in X'$ with $\tau(p')=p$. Thus, for any $p\neq P_{n+1}$, 
since $\left\{P_{n+1}\right\}\subset X'$, 
we have 
\[
\delta_p\left(X;\cO_X(1)\right)\geq\frac{(n+1)a_{n+1}}{d}
\]
by Corollary \ref{corollary_smooth-points}. 

\noindent(2) By the above argument using the morphism 
\[
\bP(1^n,a_n,1) \to \bP;  \ \ \ \ [x_0: \cdots : x_{n+1}] \mapsto [x_0^{a_0}: \cdots : x_{n-1}^{a_{n-1}}: x_n : x_{n+1}^{a_{n+1}}],
\]
we also have 
\[
\delta_{P_{n+1}}\left(X;\cO_X(1)\right)\geq\frac{(n+1)a_n}{d}.
\]
Thus, we may assume that $p=P_{n+1}$ and $a_0=\cdots=a_n=1$. 
Since $\left\{P_{n+1}\right\}\subset X'$ and $d>a_{n+1}+1$, 
there exists $k\in\bZ_{\geq 2}$ such that $d=ka_{n+1}+1$. 
By Proposition \ref{p:surfacesingular} and Corollary \ref{corollary_ninja}, 
we have 
\[
\delta_p\left(X;\cO_X(1)\right)\geq\min\left\{
\frac{(n+1)}{(k-1)a_{n+1}+1},\,\,\frac{n(n+1)}{a_{n+1}k+n},\,\,
\frac{(a_{n+1}k+1)(n+1)}{2a_{n+1}k+1}\right\}. 
\]
We see that the 2nd term is larger than the 1st term by $k \ge 2$ and  
\[
n((k-1)a_{n+1}+1) - (a_{n+1}k +n) = a_{n+1}(n(k-1)-k) \ge a_{n+1}(3(k-1)-k) >0. 
\]
Since the 3rd term is $\frac{d}{2d-1}(n+1) > \frac{n+1}{2}$, we get the assertion (2). 

If $a_n\geq 2$, then we have 
\[
d(a_n-1)\geq (2a_{n+1}+1)(a_n-1)>2a_{n+1}(a_n-1)\geq a_na_{n+1}.
\]
Thus we also get the last remark. 
\end{proof}

We briefly analyze the condition $B_1^\bP\subset X$. 

\begin{lemma}\label{lemma_B1-cond}
Let $n\in\bZ_{>0}$, $\bP:=\bP\left(a_0,\dots,a_{n+1}\right)$ be well-formed 
with 
\[
1=a_0=\cdots=a_{c_1-1}<a_{c_1}\leq\cdots\leq a_{n+1}. 
\]
Let $X\subset\bP$ be a quasi-smooth hypersurface of degree $d>1$. 
\begin{enumerate}
\item 
If $\dim B_1^\bP>\frac{n}{2}$ (i.e., if $n+1\geq 2c_1$), then 
$B_1^\bP\not\subset X$. 
\item
If 
\[
d\not\in\sum_{j=c_1}^{n+1}\bZ_{\geq 0}a_j,
\]
then $B_1^\bP\subset X$. 
\item 
If $B_1^\bP\subset X$, then, for any $c_1\leq j\leq n+1$, we have $d\equiv 1 \mod a_j$. 
\end{enumerate}
\end{lemma}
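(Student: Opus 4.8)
The plan is to reduce all three statements to the explicit description of $B_1^\bP$ together with the defining polynomial of $X$. Since $a_0 = \cdots = a_{c_1-1} = 1 < a_{c_1}$, the linear system $\left|\cO_\bP(1)\right|$ is spanned by $x_0, \dots, x_{c_1-1}$, so $B_1^\bP = (x_0 = \cdots = x_{c_1-1} = 0) \simeq \bP(a_{c_1}, \dots, a_{n+1})$, which has dimension $n+1-c_1$. If $f$ denotes the defining polynomial of $X$, I would write $f = f_0 + \sum_{i=0}^{c_1-1} x_i g_i$, where $f_0 := f(0, \dots, 0, x_{c_1}, \dots, x_{n+1})$ and each $g_i$ is homogeneous of degree $d-1$. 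Then $B_1^\bP \subset X$ holds if and only if $f_0 = 0$. With this dictionary, each part becomes short.

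Part (2) is immediate: if $d \notin \sum_{j=c_1}^{n+1} \bZ_{\geq 0} a_j$, then there is no monomial of degree $d$ in the variables $x_{c_1}, \dots, x_{n+1}$ alone, so $f_0 = 0$ and $B_1^\bP \subset X$. For part (3) I would assume $f_0 = 0$, fix $c_1 \leq j \leq n+1$, and test quasi-smoothness at the coordinate point $P_j$, lifted to $\mathbf{e}_j$, which lies on $B_1^\bP \subset X$. Evaluating $\partial f / \partial x_l$ at $\mathbf{e}_j$ picks out the coefficient of the pure $x_j$-power of $\partial f / \partial x_l$, hence is governed by the monomials $x_l x_j^m$ occurring in $f$. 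For $l \geq c_1$ such monomials involve only $x_{c_1}, \dots, x_{n+1}$ and therefore belong to $f_0 = 0$; so the nonvanishing partial forced by quasi-smoothness must occur for some $l < c_1$, i.e. $a_l = 1$, producing a monomial $x_l x_j^m$ with $1 + m a_j = d$, that is, $a_j \mid (d-1)$.

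The substance is in part (1). Assuming $n+1 \geq 2c_1$, equivalently $\dim B_1^\bP = n+1-c_1 \geq c_1$, I would argue by contradiction: if $B_1^\bP \subset X$ then $f = \sum_{i=0}^{c_1-1} x_i g_i$, so along $B_1^\bP$ every partial $\partial f / \partial x_j$ with $j \geq c_1$ vanishes, while $\partial f / \partial x_i$ restricts to $g_i|_{B_1^\bP}$ for $i < c_1$. Hence quasi-smoothness of $X$ at the points of $B_1^\bP$ is equivalent to the $c_1$ sections $g_0|_{B_1^\bP}, \dots, g_{c_1-1}|_{B_1^\bP}$ of $\cO_\bP(d-1)|_{B_1^\bP}$, which is ample, having no common zero on $B_1^\bP$. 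The key geometric input is then that on a projective variety of dimension $\geq c_1$ the common zero locus of $c_1$ sections of an ample line bundle is nonempty; I would prove this by induction on the number of sections, using that a section of an ample line bundle on a positive-dimensional projective variety has nonempty zero locus (the zero section giving the whole space) and that ampleness restricts to closed subvarieties. The resulting common zero contradicts quasi-smoothness, forcing $B_1^\bP \not\subset X$.

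I expect this last nonemptiness-of-intersection step to be the only place needing real care; parts (2) and (3) amount to bookkeeping with monomials, and the whole argument hinges on correctly identifying $B_1^\bP$ and translating the containment $B_1^\bP \subset X$ into $f_0 = 0$.
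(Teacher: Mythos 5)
Your proof is correct and follows essentially the same route as the paper's: translate $B_1^\bP\subset X$ into $f_0=0$, observe that the partials $\partial f/\partial x_j$ with $j\geq c_1$ then vanish along $B_1^\bP$, and for (1) use the dimension count $\dim B_1^\bP=n+1-c_1\geq c_1$ to produce a common zero of the remaining $c_1$ partials, contradicting quasi-smoothness; parts (2) and (3) are the same monomial bookkeeping (the paper handles (3) by testing quasi-smoothness at the $j$-th coordinate point, exactly as you do). The only difference is that you spell out the nonemptiness of the common zero locus of $c_1$ sections of an ample line bundle on a projective variety of dimension $\geq c_1$, a standard fact the paper leaves implicit.
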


\begin{proof}
Let $f$ be the defining homogeneous polynomial of $X$.  
If $d\not\in\sum_{j=c_1}^{n+1}\bZ_{\geq 0}a_j$, then $f$ does not have 
any monomial of the form $x_{c_1}^{j_{c_1}}\cdots x_{n+1}^{j_{n+1}}$. 
Thus the assertion (2) is trivial. The assertion (3) is also trivial 
since $X$ is quasi-smooth at the $j$-th coordinate point. 

Let us consider (1). Assume that $B_1^\bP\subset X$. 
Since $n+1-c_1\geq c_1$ 
and $d>1$, there exists a point $p=\left[0:\cdots:0:p_{c_1}:\cdots:
p_{n+1}\right]\in B_1^{\bP}$ such that 
\[
\frac{\partial f}{\partial x_0}(p)=\cdots=
\frac{\partial f}{\partial x_{c_1-1}}(p)=0.
\]
Since $B_1^\bP\subset X$, we also have 
\[
\frac{\partial f}{\partial x_{c_1}}(p)=\cdots=
\frac{\partial f}{\partial x_{n+1}}(p)=0.
\]
This leads to a contradiction since $X$ is quasi-smooth. 
\end{proof}

We analyze how much the estimate in Proposition \ref{proposition_2wts-B1} 
is sharp.

\begin{lemma}\label{lem:num_d=ka+1}
Let $n\geq 3$ and let $\bP:=\bP\left(a_0,\dots,a_{n+1}\right)$ be 
well-formed with $a_0\leq\cdots\leq a_{n+1}$. Assume $a_n \ge 2$.
Let $X\subset\bP$ be a quasi-smooth Fano hypersurface of index $1$. Assume that $B_1^\bP\subset X$ and write $d=ka_{n+1}+1$.
Then 
$$
\frac{(n+1)a_n}{d} > 1,
$$
in the following cases:
\begin{enumerate}
    \item $k \ge 4$,
    \item $k \ge 3$ and $a_n \ge 3$.
\end{enumerate}
\end{lemma}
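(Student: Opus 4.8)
The plan is to reduce the statement to the single integer inequality $(n+1)a_n>ka_{n+1}+1$ and then feed into it the two structural constraints that are available: the index condition and the hypothesis $B_1^\bP\subset X$. First I would record the inputs. Since $X$ has index $1$, Example~\ref{example_irrelevant}(1) gives $\sum_{i=0}^{n+1}a_i=d+1=ka_{n+1}+2$. Writing $1=a_0=\cdots=a_{c_1-1}<a_{c_1}\leq\cdots\leq a_{n+1}$ as usual, the assumption $B_1^\bP\subset X$ forces, via the contrapositive of Lemma~\ref{lemma_B1-cond}(1), that $2c_1\geq n+2$; equivalently $n+1-c_1\leq c_1-1$. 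Finally $a_n\geq 2$ together with $a_n\leq a_{n+1}$ gives $c_1\leq n$, so the indices $c_1,\dots,n$ range over $n+1-c_1\geq 1$ ``big'' weights, each at most $a_n$.

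Next I would extract the core estimate. Splitting off the $c_1$ ones and bounding each of the weights $a_{c_1},\dots,a_n$ by $a_n$, the index identity becomes
\[
ka_{n+1}+2=c_1+a_{n+1}+\sum_{i=c_1}^{n}a_i\leq c_1+a_{n+1}+(n+1-c_1)a_n,
\]
that is $(k-1)a_{n+1}+2-c_1\leq(n+1-c_1)a_n$. Adding $c_1a_n$ to both sides and using $(n+1)a_n=(n+1-c_1)a_n+c_1a_n$ yields
\[
(n+1)a_n-(ka_{n+1}+1)\geq c_1(a_n-1)-(a_{n+1}-1).
\]
Hence the whole lemma reduces to the clean claim $c_1(a_n-1)\geq a_{n+1}$: once this holds the right-hand side is an integer $\geq 1$, so $(n+1)a_n>ka_{n+1}+1$ and $\frac{(n+1)a_n}{d}>1$.

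It remains to prove $c_1(a_n-1)\geq a_{n+1}$, and this is where the hypotheses on $k$ (and on $a_n$) enter. From the core estimate I get $(k-1)a_{n+1}\leq(n+1-c_1)a_n+c_1-2\leq(c_1-1)a_n+c_1-2$, where the last step uses $n+1-c_1\leq c_1-1$. Since $k\geq 3$ gives $k-1>0$, it then suffices to check that $(c_1-1)a_n+c_1-2\leq(k-1)c_1(a_n-1)$, for this combined with the previous line gives $(k-1)a_{n+1}\leq(k-1)c_1(a_n-1)$ and hence $a_{n+1}\leq c_1(a_n-1)$. Rearranging, the inequality to verify is exactly
\[
a_n\bigl((k-2)c_1+1\bigr)\geq kc_1-2,
\]
which I would dispatch by a two-line case split. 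If $k\geq 4$ and $a_n\geq 2$, the left side is $\geq 2(k-2)c_1+2$ and $2(k-2)c_1+2-(kc_1-2)=(k-4)c_1+4>0$. If $k=3$ and $a_n\geq 3$, then $(k-2)c_1+1=c_1+1$ and $a_n(c_1+1)\geq 3c_1+3>3c_1-2=kc_1-2$. Both hypotheses in the statement are thus used precisely where needed.

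The step I expect to be the genuine content, rather than bookkeeping, is recognizing that the hypothesis $B_1^\bP\subset X$ is exactly what makes the argument run: it supplies $n+1-c_1\leq c_1-1$, which is what lets me replace $(n+1-c_1)a_n$ by $(c_1-1)a_n$ and thereby remove all explicit dependence on $n$ from the final numerical inequality; without it, $a_{n+1}$ cannot be controlled by $c_1$ and $a_n$ alone. The remaining risk is purely arithmetical sign-tracking in passing from the index identity to $c_1(a_n-1)\geq a_{n+1}$, which I would cross-check against small explicit models such as $\bP(1^5,3,3)$ with $k=3$ (giving $18/10>1$) and $\bP(1^6,2,2)$ with $k=4$ (giving $14/9>1$).
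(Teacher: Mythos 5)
Your proof is correct. Every step checks out: the index-$1$ identity gives $(k-1)a_{n+1}+2-c_1\leq(n+1-c_1)a_n$, Lemma~\ref{lemma_B1-cond}(1) gives $n+1-c_1\leq c_1-1$, the reduction to the pivot claim $c_1(a_n-1)\geq a_{n+1}$ is a correct rearrangement, and that claim is verified via $a_n\bigl((k-2)c_1+1\bigr)\geq kc_1-2$ in both cases; the strict inequality then follows because $c_1(a_n-1)-(a_{n+1}-1)$ is a positive integer. The paper uses the same two ingredients but organizes them differently: setting $h:=\#\{i\mid a_i>1\}$ it derives the lower bound $n+1\geq\frac{2(k-1)a_{n+1}+3+a_n}{a_n+1}$ and then checks $(n+1)a_n>ka_{n+1}+1$ by the single computation $\bigl(2(k-1)a_{n+1}+3+a_n\bigr)a_n-(ka_{n+1}+1)(a_n+1)>0$, which reduces to $(k-2)a_n\geq k$ --- essentially the same final case split as yours. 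One point in favour of your organization: the paper's displayed intermediate inequality $d+1\leq h+(h-1)a_n+a_{n+1}$ is not literally correct, since the $n+2-h$ unit weights contribute $n+2-h\geq h$ rather than $h$ (for instance, for $X_9\subset\bP(1^6,2,2)$ one has $d+1=10$ while the right-hand side equals $6$); the bound on $n+1$ that the paper extracts from it is nonetheless valid and recoverable by carrying $c_1=n+2-h$ through and invoking $2h\leq n+2$ only at the end. Your version, which keeps $c_1$ explicit throughout and isolates $c_1(a_n-1)\geq a_{n+1}$ as the key numerical claim, sidesteps that slip and is arguably the cleaner write-up.
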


\begin{proof}
Set 
%$I= \{i \ : \ a_i > 1 \}$. 
%The fact that $B_1^\bP\subset X$ implies that all monomials appearing in the defining 
%polynomial $f$ of $X$ contain at least one variable of weight 1. 
%By the quasi-smoothness of $X$ and \cite[8.1 Theorem]{Fletcher00}, we deduce that the 
%number of weights equal to 1 is at least $h:=|I|$.   
$h:=\#\left\{0\leq i\leq n+1\,\,|\,\,a_i>1\right\}$. 
Since $B_1^\bP\subset X$, we have $2h\leq n+2$ by 
Lemma \ref{lemma_B1-cond} (1).
Since $X$ is of index $1$, we have
$$
d+1=\sum_{i=0}^{n+1} a_i \le h+ (h-1)a_n + a_{n+1},
$$
that is
$$
h \ge \frac{(k-1)a_{n+1}+ 2+a_n}{a_{n}+1},
$$
which gives
$$
n+1 \ge 2h-1 \ge 2\frac{(k-1)a_{n+1}+2+a_n}{a_{n}+1}-1= \frac{2(k-1)a_{n+1}+3+a_n}{a_n+1}.
$$
We have
$$
(2(k-1)a_{n+1}+3+a_n)a_n -(ka_{n+1}+1)(a_n+1) =(k-2)a_{n+1}a_n+a_n^2-1-ka_{n+1} +2a_n, 
$$
which is strictly greater than 0 if $(k-2)a_n \ge k$, i.e. $a_n \ge k/(k-2)$. This is true if $k \ge 4$ or $k=3$ and $a_n \ge 3$, which is what we want. 
\end{proof}

Here is an immediate corollary of Proposition \ref{proposition_2wts-B1}. 

\begin{corollary}\label{corollary_2wts-B1}
Let $a\geq 2$, $n\geq 3$, and $X\subset\bP\left(1^{n+1},a\right)$ 
be a quasi-smooth hypersurface of degree $d\geq a+2$. Then we have 
\[
\delta\left(X; \cO_X(1)\right)\geq\frac{n+1}{d-a}. 
\]
\end{corollary}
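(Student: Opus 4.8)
The plan is to split the argument according to whether $a \mid d$, since this divisibility controls exactly whether the hypothesis $B_1^\bP \subset X$ needed to invoke Proposition \ref{proposition_2wts-B1} is available. Writing $\bP = \bP(1^{n+1},a)$ with $a_0 = \cdots = a_n = 1 < a_{n+1} = a$, the space is well-formed with non-decreasing weights, and here $B_1^\bP = (x_0 = \cdots = x_n = 0)$ is the single point $P_{n+1} := [0:\cdots:0:1]$. Note also that $d \ge a+2 > a_{n+1}+1$, so the degree hypothesis of Proposition \ref{proposition_2wts-B1} is met throughout.

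First I would treat the case $a \nmid d$. Here $d \notin \bZ_{\ge 0}\, a = \sum_{j=n+1}^{n+1}\bZ_{\ge 0}\, a_j$, so Lemma \ref{lemma_B1-cond}(2) gives $B_1^\bP \subset X$. All hypotheses of Proposition \ref{proposition_2wts-B1} then hold, and it yields $\delta(X;\cO_X(1)) \ge \frac{n+1}{d-a_{n+1}} = \frac{n+1}{d-a}$ directly; this is the sense in which the corollary follows immediately from that proposition.

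It remains to handle the case $a \mid d$. Since $a \ge 2$ we have $d \equiv 0 \not\equiv 1 \pmod a$, so by the contrapositive of Lemma \ref{lemma_B1-cond}(3) we get $B_1^\bP \not\subset X$ (equivalently $P_{n+1} \notin X$), and Proposition \ref{proposition_2wts-B1} no longer applies. Instead I would invoke the main theorem of \cite{ST24}: since $a_{n+1} = a > 1$ divides $d$ and $\bP$ carries $n+2$ homogeneous coordinates, it gives $\delta(X;\cO_X(1)) \ge \frac{(n+2)a}{d}$. It then suffices to verify the numerical inequality $\frac{(n+2)a}{d} \ge \frac{n+1}{d-a}$, i.e. $d\bigl[(n+2)a-(n+1)\bigr] \ge (n+2)a^2$. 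Because $a \mid d$ together with $d \ge a+2 > a$ forces $d \ge 2a$, and because the left-hand side is increasing in $d$ (its slope $(n+2)a-(n+1) \ge n+3 > 0$ as $a \ge 2$), it is enough to check this at $d = 2a$, where it reduces to $(n+2)a \ge 2(n+1)$, which holds since $a \ge 2$.

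The argument is essentially bookkeeping of which black box applies in which regime; the only genuine computation is the elementary comparison in the $a \mid d$ case, and that is the step I would be most careful with, since one must confirm that the uniform bound $\frac{n+1}{d-a}$ asserted by the corollary is dominated by the sharper divisible-weight bound $\frac{(n+2)a}{d}$ over the full admissible range of multiples $d \ge 2a$ of $a$.
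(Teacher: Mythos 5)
Your proof is correct and follows essentially the same route as the paper: split on whether $a \mid d$, use Lemma \ref{lemma_B1-cond} to conclude $B_1^{\bP}\subset X$ and hence apply Proposition \ref{proposition_2wts-B1} when $a\nmid d$, and fall back on the main theorem of \cite{ST24} plus an elementary numerical comparison when $a\mid d$. One caution: the paper's own proof (and its other invocations of \cite[Theorem 1.1]{ST24} for an $n$-dimensional hypersurface, e.g.\ in the proof of Theorem \ref{thm:wt2}) takes the bound to be $\frac{(n+1)a}{d}$ rather than your $\frac{(n+2)a}{d}$, the latter apparently coming from the inconsistently indexed restatement in the introduction. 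Fortunately your argument survives with the weaker constant: $\frac{(n+1)a}{d}\geq\frac{n+1}{d-a}$ is equivalent to $d(a-1)\geq a^2$, which holds since $d\geq a+2$ gives $d(a-1)\geq a^2+a-2\geq a^2$ for $a\geq 2$.
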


\begin{proof}
By Proposition \ref{proposition_2wts-B1}, we may assume that 
$B_1^\bP\not\subset X$. 
In this case, $a \mid d$ holds. By \cite[Theorem 1.1]{ST24}, 
we have 
\[
\delta\left(X; \cO_X(1)\right)\geq\frac{(n+1)a}{d}\geq\frac{n+1}{d-a}. 
\]
Thus, the assertion follows. 
\end{proof}

\begin{proof}[Proof of Theorem \ref{thm:d=ak+1}] 
This directly follows from Corollary \ref{corollary_2wts-B1} since we have $d=n+a$ by the assumption.

\end{proof}

\section{Proof of Theorem \ref{thm:wt2}}\label{section_wt2}

\begin{proposition}\label{prop:2weights}
Let $X \subset \bP(1^n,a_n, a_{n+1})$ 
be a quasi-smooth weighted hypersurface of degree $d\geq a_{n+1}+2$ and dimension $n \geq 2$.  Assume
$a_n \le a_{n+1}$.
Then 
$$
\delta\left(X; \cO_X(1)\right) \ge \frac{(n+1)a_{n}}{d}.
$$
\end{proposition}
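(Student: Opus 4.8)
The plan is to establish the local inequality $\delta_p(X;\cO_X(1))\ge\frac{(n+1)a_n}{d}$ at every closed point $p\in X$ and then invoke $\delta(X;\cO_X(1))=\inf_p\delta_p(X;\cO_X(1))$. Writing $\bP:=\bP(1^n,a_n,a_{n+1})$, the singular locus of $\bP$ lies on the line $B_1^\bP=(x_0=\cdots=x_{n-1}=0)\cong\bP(a_n,a_{n+1})$, and the only points that can be genuinely singular are the two vertices $P_n:=[0:\cdots:0:1:0]$ and $P_{n+1}:=[0:\cdots:0:1]$, carrying the quotient singularities $\frac1{a_n}(1^n,a_{n+1})$ and $\frac1{a_{n+1}}(1^n,a_n)$ respectively. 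I would argue by induction on $n=\dim X$, reducing everything to a surface computation. Cutting through a vertex keeps it a vertex, so the same induction treats smooth points and singular vertices uniformly.

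For the inductive step ($n\ge 3$) I would fix $p$ and cut by a general $H\in|\cO_\bP(1)_p|$. By the Bertini-type Lemma \ref{lem:hyperplane_mod}(2) the restriction $X|_H\subset H\cong\bP(1^{n-1},a_n,a_{n+1})$ is again quasi-smooth of the same degree $d\ge a_{n+1}+2$ and still passes through $p$; here one must control $\dim(B_{1,p}^\bP\cap X)$, which in small dimensions requires first distinguishing the cases $B_1^\bP\subset X$ and $B_1^\bP\not\subset X$ and, if necessary, applying Proposition \ref{proposition_ST-cover} and Proposition \ref{proposition_B1} to shrink the base locus (exactly as in the proof of Corollary \ref{corollary_smooth-points}). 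Lemma \ref{lem:cutting} with $a=1$ then gives $\delta_p(X;\cO_X(1))\ge\min\{\,n+1,\ \tfrac{n+1}{n}\delta_p(X|_H;\cO_{X|_H}(1))\,\}$. Since $d\ge a_n$ forces $n+1\ge\frac{(n+1)a_n}{d}$, and since the inductive hypothesis for the $(n-1)$-dimensional $X|_H$ reads $\delta_p(X|_H)\ge\frac{na_n}{d}$, both terms are $\ge\frac{(n+1)a_n}{d}$. The telescoping factor $\frac{n+1}{n}\cdots\frac{4}{3}=\frac{n+1}{3}$ is precisely what converts the surface bound below into the stated one.

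The base case $n=2$ is where the real work sits: for a quasi-smooth surface $S\subset\bP(1^2,a_n,a_{n+1})$ of degree $d$ I must prove $\delta_p(S;\cO_S(1))\ge\frac{3a_n}{d}$ for every $p$. For a smooth $p\notin B_1^\bP$ I would run a weighted blowup together with the barycenter estimate as in Proposition \ref{proposition_perhaps-useful} and Corollary \ref{corollary_lobster}. For each of the two vertices I would take the standard weighted blowup $\psi\colon\tilde S\to S$ of that point (Example \ref{example_swb-pt}), compute the log discrepancy $A_S(C)$ of the exceptional curve $C$ by adjunction, estimate from below the Seshadri-type constant $\varepsilon(\cO_S(1);C)=\max\{x\mid\psi^*\cO_S(1)-xC\ \text{nef}\}$, and feed this into Corollary \ref{corollary_gravity} to obtain the desired lower bound on $\delta_p(S)$.

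The hard part will be exactly this Seshadri/nefness estimate at the singular vertices: one must pin down the Zariski decomposition of $\psi^*\cO_S(1)-xC$ on $\tilde S$ accurately enough to locate the barycenter of the associated Okounkov body, and the interaction of the two weights $a_n\le a_{n+1}$ — the exceptional fibre of the weighted blowup now being a weighted projective line rather than an ordinary $\bP^1$ — makes this genuinely more delicate than the single-weight analysis of Section \ref{section_singular}. I expect the degree hypothesis $d\ge a_{n+1}+2$ to enter here, ensuring enough positivity of $\cO_S(1)$ to carry out the estimate and to keep the vertex in the quasi-smooth configuration on which the computation relies.
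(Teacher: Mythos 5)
Your overall architecture --- cut down to a surface in $\bP(1,1,a_n,a_{n+1})$ and then do the two singular vertices by hand via weighted blowups and Corollary \ref{corollary_gravity} --- is not the paper's route, and the place where you defer the work (``the hard part will be exactly this Seshadri/nefness estimate at the singular vertices'') is a genuine gap, not a routine verification. You would need, for the vertex of type $\frac{1}{a_{n+1}}(1,a_n)$ on a degree-$d$ quasi-smooth surface $S\subset\bP(1,1,a_n,a_{n+1})$, a nefness threshold for $\psi^*\cO_S(1)-xC$ strong enough to force $\delta_p(S)\ge\frac{3a_n}{d}$; this requires controlling the curves on the blown-up surface (irreducibility and self-intersection of strict transforms of the relevant hyperplane sections), and nothing in the paper supplies that computation for two weights $>1$. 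Indeed the entire design of the paper's proof is to avoid ever landing on such a surface: for $p\in B_1^\bP\cap X$ it cuts by the coordinate hyperplanes all the way down to the \emph{curve} $X_1\subset\bP(1,a_n,a_{n+1})$ with its adjunction boundary $\Delta_1$, where $\delta_p$ is computed in closed form as $\frac{2a_na_{n+1}}{r_p d}$ and the telescoped factor $\frac{n+1}{2}$ gives the claim; for $p\notin B_1^\bP$ it first performs the standard weighted blowup along $\bP(a_{n+1})$ at the level of $X$ itself and then cuts using the induced weighted bundle structure (Corollary \ref{corollary_2wt-cutting}), so that the terminal surface lives in $\bP_{\bP^1}(\cO\oplus\cO(a_{n+1}))$, i.e.\ effectively in $\bP(1,1,1,a_{n+1})$, where Proposition \ref{proposition_perhaps-useful} applies. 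Your plan to ``blow up after cutting'' cannot simply cite Proposition \ref{proposition_perhaps-useful} for the smooth points of your base-case surface either, since that proposition is stated only for $\bP(1,1,1,b)$, one weight greater than $1$.

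Two further points you should be aware of. First, the case $B_1^\bP\subset X$ cannot be folded into your induction: when the whole singular line lies on $X$, general members of $|\cO_\bP(1)_p|$ need not cut $X$ quasi-smoothly (the hypothesis $\dim(B_{1,p}^\bP\cap X)\le r-2$ of Lemma \ref{lem:hyperplane_mod}(2) fails for small $n$, and $B_{1,p}^\bP\cap X$ here contains a surface), which is why the paper disposes of that case separately via Proposition \ref{proposition_2wts-B1} before doing anything else. Second, if you cut through a singular vertex down to intermediate surfaces you must track the adjunction boundaries $\Delta_j$ on each $X_j$ (as the paper does explicitly for the curve $X_1$); your inductive hypothesis is stated for the absolute $\delta_p(X|_H;\cO(1))$ and silently drops these boundary contributions. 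The arithmetic of your telescoping is fine, but as written the proof has a missing base case and an induction that does not quite match the objects it produces.
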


\begin{proof}
By Proposition \ref{proposition_2wts-B1}, we may assume that 
$B_1^\bP\not\subset X$. For $2\leq j\leq n$, let us set 
\begin{eqnarray*}
&&\bP_n:=\bP(1^n,a_n,a_{n+1}), \\
&&\bP_{j-1}:=\bP_j\cap\left(x_{j-1}=0\right)\left(\simeq
\bP(1^{j-1},a_n,a_{n+1})\right)\quad(2\leq j\leq n).
\end{eqnarray*}
Since $B_1^\bP\not\subset X$, we can apply Lemma 
\ref{lem:hyperplane_mod} (1): 
there exists $\varphi\in\operatorname{Aut}\left(\bP\right)$ such that, 
after replacing $X\subset\bP$ with $\varphi(X)\subset\bP$, we have 
$X_j\subset\bP_j$ quasi-smooth for any $1\leq j\leq n-1$, where 
\[
X_n:=X\subset\bP_n, \quad X_{j-1}:=X_j|_{\bP_{j-1}}\subset\bP_{j-1}
\quad(2\leq j\leq n).
\]
Note that $\dim \Sing X_j =0$ by $B_1^{\bP} \not\subset X$. 
By \cite[Lemmas 2.9 and 2.10]{CO}, for any $2\leq j\leq n$, 
the pair $(X_j,X_{j-1})$ is plt and 
\[
\left(K_{X_j}+X_{j-1}\right)|_{X_{j-1}}=\begin{cases}
K_{X_{j-1}} & \text{if }3\leq j\leq n, \\
K_{X_1}+\Delta_1 & \text{if }j=2 
\end{cases}
\]
with 
\[
\Delta_1=\sum_{p\in B_1^\bP\cap X}\left(1-\frac{1}{r_p}\right)p, 
\]
where 
\[
r_p:=\begin{cases}
a_{n+1} & \text{if }p=\left[0:\cdots:0:0:1\right],\\
a_n & \text{if }p=\left[0:\cdots:0:1:0\right],\\
\operatorname{gcd}(a_n,a_{n+1}) & \text{otherwise}.
\end{cases}
\]
For any $p\in B_1^\bP\cap X$, by Lemma \ref{lem:cutting}, we have 
\begin{eqnarray*}
\delta_p\left(X; \cO_X(1)\right)\geq\min\left\{
n+1,\quad\frac{n+1}{2}\delta_p\left(X_1,\Delta_1; 
\cO_X(1)|_{X_1}\right)\right\}\geq\frac{(n+1)a_n}{d},  
\end{eqnarray*}
where we used $\delta_p\left(X_1,\Delta_1; 
\cO_X(1)|_{X_1}\right) = \frac{1/r_p}{d/2a_na_{n+1}}$. 
Thus we may further assume that $p\not\in B_1^\bP\cap X$. 

We assume the case $[0:\cdots:0:1]\not\in X$. In this case, we have 
$a_{n+1}\mid d$. Thus the assertion is trivially true 
by \cite[Theorem 1.1]{ST24}. Hence we may further assume that 
$[0:\cdots:0:1]\in X$. 

%\textbf{Proof 1:}
Let $\psi\colon\tilde{\bP}\to \bP_n$ be the standard weighted blowup of 
$\bP_n$ along $\bP(a_{n+1})$, and let $\tilde{X}\subset\tilde{\bP}$ be 
the strict transform of $X$. By Proposition \ref{proposition_qsm-wbl}, 
the $\tilde{X}\subset\tilde{\bP}$ is a quasi-smooth hypersurface 
and $\tilde{X}|_\bE\subset\bE$ is also quasi-smooth. 
By applying Corollary \ref{corollary_2wt-cutting} and Proposition 
\ref{proposition_perhaps-useful}, combining with Lemma \ref{lem:cutting}, 
we have 
\begin{eqnarray*}
\delta_p\left(X; \cO_X(1)\right)\geq\min\left\{
n+1,\quad\frac{n+1}{3}\cdot\frac{3a_{n+1}}{d}\right\}
=\frac{(n+1)a_{n+1}}{d}.
\end{eqnarray*}
\iffalse
\textbf{Proof 2:}\red{(As Taro pointed out, this ``proof" contains 
a silly mistake on dimension of $B_{1,p}^\bP\cap X$!!! 
I will erase it soon.)}
Since $B_1^\bP\not\subset X$, we can apply Lemma \ref{lem:hyperplane_mod} \blue{(2)?}: 
there exists $\varphi\in\operatorname{Aut}\left(\bP\right)$ 
with $\varphi(p)=p$ such that the same statement above \blue{do we get the quasi-smoothness of $X_1 \subset \bP_1$ as well??} 
holds. 
Since $p\not\in B_1^\bP\cap X$, we have 
\begin{eqnarray*}
\delta_p\left(X; \cO_X(1)\right)&\geq&\min\left\{
n+1,\quad\frac{n+1}{2}\delta_p\left(X_1,\Delta_1; 
\cO_X(1)|_{X_1}\right)\right\}\\
&=&\min\left\{n+1,
\quad\frac{(n+1)a_na_{n+1}}{d}\right\}\geq\frac{(n+1)a_{n+1}}{d}. 
\end{eqnarray*}
\fi
Thus the assertion follows. 
\end{proof}

\begin{lemma}\label{lemma_numeri}
Let $X=X_d \subset \bP(1^n,a_n, a_{n+1})$ be a quasi-smooth Fano weighted hypersurface of degree $d$, dimension $n \ge 3$ 
and index $1$.  Assume $1< a_n \le a_{n+1}$. 
Then we have 
$$
\frac{(n+1)a_{n}}{d} \ge 1,
$$
and the equality is possible only if $X= X_{2n+2} \subset \bP(1^n,2,n+1)$. 
%$d=2a_{n+1}$.

More precisely, if $a_{n+1} \nmid d$, then we have 
\[
\frac{(n+1)a_n}{d}\geq\frac{n+1}{n+\frac{1}{a_n}}. 
\]
\end{lemma}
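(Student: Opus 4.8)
The plan is to convert the statement into a purely arithmetic inequality between the weights and the degree, and then extract the two constraints we have: one from the index hypothesis, and one from quasi-smoothness at the heaviest coordinate vertex. First I would record, via the index formula in Example~\ref{example_irrelevant}(1), that the hypothesis ``index $1$'' is equivalent to
\[
d = n + a_n + a_{n+1} - 1.
\]
Substituting this, the desired inequality $(n+1)a_n \geq d$ becomes the single condition $a_{n+1} \leq n(a_n-1)+1$, and the sharper bound $\frac{(n+1)a_n}{d} \geq \frac{n+1}{n+1/a_n}$ becomes $d \leq n a_n + 1$, i.e. $a_{n+1} \leq (n-1)(a_n-1)+1$. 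Thus the whole lemma reduces to bounding $a_{n+1}$ above in terms of $n$ and $a_n$.

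The geometric input I would use is quasi-smoothness at $P_{n+1}=[0:\cdots:0:1]$. Writing $f$ for the defining polynomial, I would observe that $\partial f/\partial x_\ell$ is nonzero at the lift of $P_{n+1}$ precisely when $f$ contains a monomial $x_\ell x_{n+1}^m$; consequently (cf.\ Definition~\ref{definition_q-sm}) quasi-smoothness at $P_{n+1}$ forces one of two alternatives: either \textbf{(A)} $a_{n+1}\mid d$ (a pure power $x_{n+1}^{d/a_{n+1}}$ may occur, so $P_{n+1}$ may lie off $X$), or \textbf{(B)} there exist $\ell\leq n$ and $m\geq 1$ with $d = m a_{n+1} + a_\ell$, where necessarily $a_\ell\in\{1,a_n\}$.

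In case (A), since $n\geq 3$ and $a_n\geq 2$ give $d-a_{n+1}=n+a_n-1>0$, I would write $d=t a_{n+1}$ with $t\geq 2$; then $(t-1)a_{n+1}=n+a_n-1$ yields $a_{n+1}\leq n+a_n-1$, and $n+a_n-1\leq n(a_n-1)+1$ because the difference is $(n-1)(a_n-2)\geq 0$. This proves the inequality, and equality forces simultaneously $t=2$ and $a_n=2$, hence $a_{n+1}=n+1$ and $d=2n+2$, i.e. $X=X_{2n+2}\subset\bP(1^n,2,n+1)$. In case (B), inserting $d=n+a_n+a_{n+1}-1$ into $d=m a_{n+1}+a_\ell$ gives $(m-1)a_{n+1}=n+a_n-2$ when $a_\ell=1$ and $(m-1)a_{n+1}=n-1$ when $a_\ell=a_n$; in both subcases $m\geq 2$, so $a_{n+1}\leq n+a_n-2$ respectively $a_{n+1}\leq n-1$, and a one-line check shows both are $\leq (n-1)(a_n-1)+1$, yielding the precise bound. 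Moreover, since $a_n\geq 2$ one has $\frac{n+1}{n+1/a_n}>1$, so case (B) gives a strict inequality $\frac{(n+1)a_n}{d}>1$; hence no equality occurs there.

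The main obstacle I anticipate is not the arithmetic but the clean justification of the dichotomy in the previous paragraph: namely, that quasi-smoothness at $P_{n+1}$ really forces either a pure power of $x_{n+1}$ or a monomial that is linear in one of the remaining variables, and that the ``linear cofactor'' variable has weight in $\{1,a_n\}$. Once this quasi-smoothness condition at the vertex is established, everything else is elementary, and assembling the two cases (together with the strictness in case (B)) immediately pins down the equality locus as the single family $X_{2n+2}\subset\bP(1^n,2,n+1)$.
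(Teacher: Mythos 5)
Your proposal is correct and uses essentially the same two inputs as the paper's proof: the index-one relation $d=n+a_n+a_{n+1}-1$ and the quasi-smoothness constraint at $P_{n+1}$ forcing $d\equiv 0,1,$ or $a_n \pmod{a_{n+1}}$, followed by elementary case analysis. Your repackaging (solving for $a_{n+1}$ and bounding it above by $n(a_n-1)+1$, resp.\ $(n-1)(a_n-1)+1$) is a clean equivalent of the paper's direct manipulation of $d=ka_{n+1}+r$ with $r\in\{0,1,a_n\}$, and your equality analysis correctly isolates $X_{2n+2}\subset\bP(1^n,2,n+1)$.
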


\begin{proof}
Let $d=ka_{n+1}+r$ be with $0\le r \le d-1$. By quasi-smoothness we know that $r \in \{0,1,a_n\}$ and since the index of $X$ is 1, we obtain $k \ge 2$.

Note that $n+1=d-a_{n+1}-a_n+2$ and we need to prove
$$
(ka_{n+1}+r+2-a_n-a_{n+1})a_n \ge ka_{n+1} +r.
$$
If $a_n=2$, then we get
$$
((k-1)a_{n+1}+r)2 \ge ka_{n+1} +r.
$$
which is clearly true and the equality holds if $k=2$ and  $r=0$. 
In this case, we have $n+2 + a_{n+1} = d+1 = 2a_{n+1} +1$ and we obtain $a_{n+1}=n+1$. 
Hence we see that $X= X_{2n+2} \subset \bP(1^n,2,n+1)$. 

Hence, we can assume $a_n \ge 3$. 
If $k \ge 3$, using $a_{n+1} \ge a_n$ we are left with
$$
((k-2)a_{n+1}+r+2)a_n \ge ka_{n+1} +r
$$
which is always a strict inequality since $a_n \ge 3$.
Hence we are left with the case $k=2$ and $a_n \ge 3$. Then we obtain
$$
(a_{n+1}+r+2-a_n)a_n \ge 2a_{n+1}+r,
$$
which reads as
$$
(a_{n+1}-a_n)(a_n-2)+ r(a_n-1) \ge 0.
$$
The latter always holds true, and equality is possible only if $r=0$ and $a_{n+1}=a_n$. 
Then we obtain $d= a_{n+1} + a_n$ and $n+1=2$, thus an equality does not occur in this case. 

The latter inequality is similar. From the assumption, we have 
$r\geq 1$ and $k\geq 2$. It is enough to show that 
\[
1+a_n\left((k-1)a_{n+1}-a_n+r+1\right)\geq k a_{n+1}+r. 
\]
Since $r\geq 1$, it is enough to show that 
\[
a_{n+1}\left((k-2)a_n-k\right)+a_n(2+a_{n+1}-a_n)\geq 0.
\]
If $k\geq 3$, then the inequality is trivial except when $k=3$ and $a_n=2$. 
When $k=3$ and $a_n=2$, we see that the left hand side becomes 
\[
a_{n+1}(2-3) + a_n (a_{n+1}) = a_{n+1}(a_n-1)>0, 
\]
%$a_{n+1}$ and 
thus we obtain a (strict) inequality. 
If $k=2$, then 
the inequality is equivalent to $(a_n-2)(a_{n+1}-a_n)\geq 0$, 
which is trivially true.
\end{proof}

\begin{proof}[Proof of Theorem \ref{thm:wt2}]
If $b \mid d$, then there exists $k\geq 2$ with $d=kb$. 
It follows from \cite[Theorem 2.4]{ST24} that 
\[
\delta\left(X;\cO_X(1)\right)\geq\frac{n+1}{k}\geq\frac{n+1}{(k-1)b+\frac{1}{a}+1-a}
=\frac{n+1}{n+\frac{1}{a}},
\]
where the last inequality follows from the inequality $(k-1)(b-1)\geq a-\frac{1}{a}$. (Indeed, if $k >2$ or $b>a$, then the inequality is clear. We also see that $k=2$ and $b=a$ cannot occur since, otherwise we have $2b+n=a+b+n=d+1=2b+1$ and this contradicts $n \ge 3$.)  

If $b\nmid d$, then it is an immediate consequence of Proposition 
\ref{prop:2weights} and Lemma \ref{lemma_numeri}. 
\end{proof}

\section{Proof of Corollary \ref{cor:imperial}}\label{section_imperial}

\begin{lemma}\label{lem:num_a_{n+1}}
Let $n\geq 3$ and let $\bP:=\bP\left(1,a_1,\dots,a_{n+1}\right)$ be well-formed with $a_1\leq \ldots \leq a_{n+1}$ and $1 < a_{n+1}$. 
Let $X\subset\bP$ be a well-formed quasi-smooth Fano hypersurface of index 1. 
Then 
$$
\frac{(n+1)a_{n+1}}{d} > 1.
$$
\end{lemma}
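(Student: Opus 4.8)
The plan is to reduce the whole statement to a one-line averaging inequality among the weights, using the index-$1$ hypothesis to fix the degree $d$ and the well-formedness of $\bP$ to exclude the single degenerate configuration in which the inequality would fail to be strict.

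First I would record the degree. Since $X$ is quasi-smooth and well-formed, Proposition \ref{proposition_diff} gives $K_X = (K_\bP+X)|_X$, so by Example \ref{example_irrelevant}(1) the index of $X$ equals $\sum_{i=0}^{n+1}a_i - d$. With $a_0 = 1$ and index $1$ this forces
\[
d \;=\; \sum_{i=0}^{n+1}a_i - 1 \;=\; a_1 + a_2 + \cdots + a_{n+1}.
\]

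Next I would compute the target quantity directly. Substituting this value of $d$ and cancelling the top term,
\[
(n+1)a_{n+1} - d \;=\; \sum_{i=1}^{n+1}\bigl(a_{n+1}-a_i\bigr) \;=\; \sum_{i=1}^{n}\bigl(a_{n+1}-a_i\bigr) \;\ge\; 0,
\]
each summand being nonnegative because $a_1 \le \cdots \le a_{n+1}$. This already yields $(n+1)a_{n+1}/d \ge 1$, so the entire content of the lemma is to promote this to a strict inequality.

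The strictness is the only step that uses the hypotheses beyond pure numerics, and it is where I expect the (very mild) difficulty to lie: one must observe that the estimate above is an equality precisely in the non-well-formed case. Indeed the sum vanishes if and only if $a_i = a_{n+1}$ for all $1 \le i \le n$, that is $a_1 = \cdots = a_{n+1}$; but then $\gcd(a_1,\dots,a_{n+1}) = a_{n+1} > 1$, contradicting the well-formedness of $\bP = \bP(1,a_1,\dots,a_{n+1})$, since taking $i=0$ in Definition \ref{definition_wps}(2) demands $\gcd(a_1,\dots,a_{n+1}) = 1$. Hence at least one $a_i$ with $1 \le i \le n$ is strictly smaller than $a_{n+1}$, the sum is positive, and $(n+1)a_{n+1} > d$ as claimed.
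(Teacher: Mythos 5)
Your proposal is correct and follows essentially the same route as the paper: both proofs reduce to the inequality $d=\sum_{i=1}^{n+1}a_i<(n+1)a_{n+1}$, with strictness coming from the well-formedness condition $\gcd(a_1,\dots,a_{n+1})=1$ ruling out $a_1=\cdots=a_{n+1}>1$. The paper phrases this via the division $d=ka_{n+1}+r$ and the bound $k\le n$, but that is only a cosmetic difference from your direct computation of $(n+1)a_{n+1}-d=\sum_{i=1}^{n}(a_{n+1}-a_i)$.
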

\begin{proof}
Write $d=ka_{n+1}+r$ with $r < a_{n+1}$ and $k \in \mathbb N$.
By the index 1 assumption, 
$$
ka_{n+1} + r +1 = \sum_{i=0}^{n+1}a_i < 1 + (n+1)a_{n+1},
$$
which implies $k \le n$ (we used well-formedness for the second strict inequality) and so
$$
\frac{(n+1)a_{n+1}}{ka_{n+1}+r} > 1.
$$
\end{proof}

\begin{lemma}\label{lemma:imperial}
Assume that $n\geq 3$. 
Let $X\subset\bP=\bP(1^{c_1},a_{c_1},\dots,a_{n+1})$ be a quasi-smooth Fano hypersurface 
of index $1$ of degree $d$ such that $B_1^\bP\subset X$, 
$c_1\leq n$ and 
$2\leq a_{c_1}\leq\cdots\leq a_{n+1}$. 
Then either $d<(n+1)a_n$ or $d<n^2$ holds. 
\end{lemma}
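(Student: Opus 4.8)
The plan is to run a purely numerical argument: extract two constraints from the hypotheses and then bound $d$ from above. First I would record the index-$1$ identity
\[
d=\sum_{i=0}^{n+1}a_i-1=c_1+\sum_{j=c_1}^{n+1}a_j-1,
\]
where $a_0=\dots=a_{c_1-1}=1$. Next, since $B_1^\bP\subset X$, the contrapositive of Lemma~\ref{lemma_B1-cond}(1) gives $\dim B_1^\bP=n+1-c_1\le n/2$, i.e.\ $c_1\ge(n+2)/2$; in particular $c_1\ge 3$ because $n\ge 3$. Lemma~\ref{lemma_B1-cond}(3) supplies the divisibility $a_{n+1}\mid(d-1)$.

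The heart of the argument is to turn this divisibility into an upper bound for $a_{n+1}$. Writing $d-1=m\,a_{n+1}$ and setting $R:=d-1-a_{n+1}=c_1-2+\sum_{j=c_1}^{n}a_j$, one has $R=(m-1)a_{n+1}$. Since $c_1\ge 3$ and $\sum_{j=c_1}^n a_j\ge a_n\ge 2$ (here the hypothesis $c_1\le n$ guarantees that $a_n$ is among the weights $\ge 2$), we obtain $R\ge 3>0$, hence $m\ge 2$ and therefore $a_{n+1}\le R$. Substituting this into the index identity and using that there are $n+1-c_1$ summands, each at most $a_n$, yields
\[
d=c_1-1+a_{n+1}+\sum_{j=c_1}^{n}a_j\le 2c_1-3+2\sum_{j=c_1}^{n}a_j\le 2c_1-3+2(n+1-c_1)a_n.
\]

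Finally I would collapse this to a single clean bound. A direct computation gives
\[
\big(n a_n+n-1\big)-\big(2c_1-3+2(n+1-c_1)a_n\big)=(a_n-1)(2c_1-n-2)\ge 0,
\]
the last inequality holding because $a_n\ge 2$ and $c_1\ge(n+2)/2$. Hence $d\le n a_n+n-1$, and the dichotomy drops out: if $a_n\ge n$ then $d\le n a_n+n-1<(n+1)a_n$ (since $n-1<a_n$), while if $a_n\le n-1$ then $d\le n(n-1)+n-1=n^2-1<n^2$.

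I expect the only genuinely delicate point to be the step $m\ge 2$, that is, ruling out $d-1=a_{n+1}$: this is precisely where both $c_1\ge 3$ (coming from $B_1^\bP\subset X$ via Lemma~\ref{lemma_B1-cond}(1)) and $c_1\le n$ (ensuring $a_n\ge 2$) are used, so the argument hinges on invoking Lemma~\ref{lemma_B1-cond} correctly. Everything after the bound $a_{n+1}\le R$ is elementary algebra.
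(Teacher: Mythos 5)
Your argument is correct. You use the same two inputs as the paper — Lemma~\ref{lemma_B1-cond}(1) to get $c_1\ge\frac{n+2}{2}$ and Lemma~\ref{lemma_B1-cond}(3) to get $a_{n+1}\mid(d-1)$ — but you then diverge: the paper invokes Lemma~\ref{lem:num_d=ka+1} to dispose of the cases $k\ge 4$ and ($k\ge 3$, $a_n\ge 3$) where $k=(d-1)/a_{n+1}$, and then treats the two residual cases $(k,a_n)=(3,2)$ and $k=2$ by separate computations. You instead convert $k\ge 2$ directly into the bound $a_{n+1}\le c_1-2+\sum_{j=c_1}^{n}a_j$ and feed it back into the index-$1$ identity, arriving at the single inequality $d\le na_n+n-1$, from which the dichotomy is immediate. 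This is a cleaner and slightly stronger statement than the lemma asks for, avoids the case split and the dependence on Lemma~\ref{lem:num_d=ka+1} entirely, and all the estimates check out (in particular the identity $(na_n+n-1)-(2c_1-3+2(n+1-c_1)a_n)=(a_n-1)(2c_1-n-2)$ and the final split on $a_n\ge n$ versus $a_n\le n-1$ are correct). One cosmetic remark: for the step $m\ge 2$ you only need $R>0$, which already follows from $c_1\ge 3$; the hypothesis $c_1\le n$ is really used later, to ensure the sum $\sum_{j=c_1}^{n}a_j$ is nonempty and that $a_n\ge 2$ in the final bound. This does not affect the validity of the proof.
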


\begin{proof}
By Lemma \ref{lemma_B1-cond} (1), we have $c_1\geq\frac{n+2}{2}$. 
Note that $d=c_1+\sum_{i=c_1}^{n+1}a_i-1$. Moreover, by Lemma \ref{lemma_B1-cond} (3), 
there exists $k\geq 2$ such that $d=ka_{n+1}+1$ holds. By Lemma \ref{lem:num_d=ka+1}, 
we may assume that either $(k,a_n)=(3,2)$ or $k=2$. 

Assume firstly the case $(k,a_n)=(3,2)$. Since $a_{c_1}=\cdots=a_n=2$ 
and $d=3a_{n+1}+1$, we have 
$d=2 n+1-c_1+\frac{d-1}{3}$. Thus, by $n\geq 3$, we have 
\[
d=3n+1-\frac{3}{2}c_1\leq \frac{9}{4}n-\frac{1}{2}<n^2.
\]
%and $n^2-((9/4)n-(1/2))=(1/4)(n-2)(4n-1)>0$. 

We assume the case $k=2$. We also assume that $d\geq (n+1)a_n$. It is enough to show the 
inequality $d<n^2$. Since 
\begin{eqnarray*}
d&\leq&c_1+\sum_{i=c_1}^n a_n+a_{n+1}-1
\leq c_1+\frac{d}{n+1}(n+1-c_1)+\frac{d-1}{2}-1\\
&\leq&d-\left(\frac{d}{n+1}-1\right)\frac{n+2}{2}+\frac{d-3}{2}, 
\end{eqnarray*}
we get the desired inequality 
$d\leq n^2-1<n^2$. 
\end{proof}

\begin{proposition}\label{proposition:imperial}
Consider any $n\geq 3$, any $c_1\in\bZ$ with $\frac{n+2}{2}\leq c_1
\leq n+1$, 
and any $2\leq a_{c_1}\leq \cdots\leq a_{n+1}$ such that the value 
$d:=c_1+\sum_{i=c_1}^{n+1}a_i-1$ satisfies that 
there exists $k_i\geq 2$ such that $d=k_ia_i+1$ holds for any $c_1\leq 
i\leq n+1$. 
Let us consider the weighted hypersurface 
$X=(f=0)\subset\bP=\bP(1^{c_1},a_{c_1},\dots,a_{n+1})$ of degree $d$ defined by 
\[
f=\sum_{i=0}^{c_1-1}x_i^d+\sum_{j=0}^{n+1-c_1}x_j x_{c_1+j}^{k_{c_1+j}}. 
\]
Then the $X$ is a quasi-smooth, well-formed Fano hypersurface of index $1$ 
with $B_1^\bP\subset X$, and $\delta\left(X;\cO_X(1)\right)>1$ holds. 
\end{proposition}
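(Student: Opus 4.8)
The plan is to dispatch the four structural claims by direct computation with the explicit $f$, and then to prove $\delta(X;\cO_X(1))>1$ by combining Proposition~\ref{proposition_2wts-B1} with the numerical Lemmas~\ref{lem:num_a_{n+1}} and~\ref{lemma:imperial}, the only genuinely delicate input being the generalized Eckardt estimate of Corollary~\ref{corollary_ninja} at the vertex $P_{n+1}$.

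First I would record the structural properties. The index equals $\sum_i a_i-d=1$ by construction, so by Example~\ref{example_irrelevant}(1) $X$ is Fano of index $1$. For quasi-smoothness I compute $\partial f/\partial x_\bullet$: the hypothesis $c_1\ge\frac{n+2}{2}$ gives $n+1-c_1\le c_1-1$, so each monomial $x_jx_{c_1+j}^{k_{c_1+j}}$ pairs a high-weight variable with a weight-one variable, and a short case analysis (using $\partial_{x_{c_1+j}}f=k_{c_1+j}x_jx_{c_1+j}^{k_{c_1+j}-1}$ and $\partial_{x_j}f=dx_j^{d-1}+x_{c_1+j}^{k_{c_1+j}}+\cdots$) shows that the common vanishing of all partials forces every coordinate to vanish; hence $\NQS(X)=\emptyset$. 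Since $n\ge 3$ forces $c_1\ge 3$, deleting any two indices leaves a weight-one coordinate, so $\bP$ is well-formed and, by Proposition~\ref{proposition_dimca}, so is $X$. Finally, substituting $x_0=\cdots=x_{c_1-1}=0$ annihilates every monomial of $f$ (again by $n+1-c_1\le c_1-1$), so $B_1^{\bP}=(x_0=\cdots=x_{c_1-1}=0)\subset X$.

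With these in hand I would apply Proposition~\ref{proposition_2wts-B1}. Away from the vertex, i.e.\ for $p\ne P_{n+1}$, it yields $\delta_p(X;\cO_X(1))\ge\frac{(n+1)a_{n+1}}{d}$, which is $>1$ by Lemma~\ref{lem:num_a_{n+1}}. If $c_1=n+1$, then $X=X_{n+a_{n+1}}\subset\bP(1^{n+1},a_{n+1})$ and Theorem~\ref{thm:d=ak+1} gives $\delta\ge\frac{n+1}{n}>1$ outright; so I may assume $c_1\le n$, hence $a_n\ge 2$. At the vertex, Proposition~\ref{proposition_2wts-B1}(2) only gives $\delta_{P_{n+1}}\ge\frac{(n+1)a_n}{d}$, which settles the range $d<(n+1)a_n$.

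The main obstacle is the complementary range $d\ge(n+1)a_n$, where the generic bound is too weak; by Lemma~\ref{lemma:imperial} this forces $d<n^2$. Here I would exploit the rigidity of $f$: pulling $X$ back under the cover $\tau\colon\bP(1^{n+1},a_{n+1})\to\bP$, $x_i\mapsto x_i^{a_i}$, removing the weights $a_{c_1},\dots,a_n$ one at a time by Proposition~\ref{proposition_ST-cover} (each intermediate restriction $X|_{(x_i=0)}$ being quasi-smooth by the very same Jacobian computation as above), produces a quasi-smooth $X'\subset\bP(1^{n+1},a_{n+1})$ with $\delta_{P'}(X')\le\delta_{P_{n+1}}(X)$. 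The crucial point is that $X'$ acquires a \emph{generalized Eckardt point} at $P'$: in the pulled-back equation $x_{n+1}$ occurs only in the single monomial $x_{n+1-c_1}x_{n+1}^{k_{n+1}}$, so $m=k_{n+1}$ in Setup~\ref{setup:d=ak+1}. As $d=a_{n+1}k_{n+1}+1\ge n$, Corollary~\ref{corollary_ninja} computes $\delta_{P'}(X')=\frac{n(n+1)}{a_{n+1}k_{n+1}+n}=\frac{n(n+1)}{d+n-1}$, which is $>1$ exactly when $d\le n^2$. Thus $\delta_{P_{n+1}}(X)>1$ in both ranges, and together with the previous paragraph this gives $\delta(X;\cO_X(1))>1$. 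The technical crux throughout is verifying that the cover preserves quasi-smoothness (so that Proposition~\ref{proposition_ST-cover} can be iterated) and that $P'$ is genuinely Eckardt, which is precisely where the special choice of $f$ is used.
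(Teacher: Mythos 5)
Your proposal is correct and follows essentially the same route as the paper's own proof: reduce to $c_1\le n$ via Theorem~\ref{thm:d=ak+1}, handle $p\neq P_{n+1}$ and the range $d<(n+1)a_n$ by Proposition~\ref{proposition_2wts-B1} together with Lemmas~\ref{lem:num_a_{n+1}} and~\ref{lemma:imperial}, and treat the remaining case at the vertex by pulling back along the cover to $\bP(1^{n+1},a_{n+1})$, observing that the image of $P_{n+1}$ becomes a generalized Eckardt point, and invoking Proposition~\ref{proposition_ST-cover} and Corollary~\ref{corollary_ninja} with $d<n^2$. The only differences are presentational (you spell out the Jacobian checks and iterate the covering one coordinate at a time, and you use the equality case of Corollary~\ref{corollary_ninja} where the paper uses the minimum bound), none of which changes the argument.
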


\begin{proof}
The $X\subset \bP$ is obviously quasi-smooth and well-formed 
with $B_1^\bP\subset X$. 
By Theorem \ref{thm:d=ak+1}, we may assume that $c_1\leq n$. 
Take any point 
$p\in X$. By Proposition \ref{proposition_2wts-B1}, if $p\neq P:=\left[0:\cdots:0:1\right]$, 
then we have $\delta_p\left(X;\cO_X(1)\right)\geq\frac{(n+1)a_{n+1}}{d}>1$. 
Assume that $p=P$. By Proposition \ref{proposition_2wts-B1}, if $d<(n+1)a_n$, then we have 
$\delta_P\left(X; \cO_X(1)\right)>1$. Thus we may assume that $d\geq (n+1)a_n$. 
By Lemma \ref{lemma:imperial}, we must have $d<n^2$. 
Let us consider the abelian cover 
\begin{eqnarray*}
\tau\colon\bP':=\bP\left(1^{c_1},1^{n+1-c_1}, a_{n+1}\right)&\to&
\bP=\bP\left(1^{c_1},a_{c_1},\dots,a_n,a_{n+1}\right)\\
\left[x'_0:\cdots:x'_{c_1-1}:x'_{c_1}:\cdots:x'_n:x'_{n+1}\right]&\mapsto&
\left[x'_0:\cdots:x'_{c_1-1}:(x'_{c_1})^{a_{c_1}}:\cdots:(x'_n)^{a_n}:x'_{n+1}\right]
\end{eqnarray*}
and let us take $X':=\tau^*X\subset\bP'$. 
We remark that $X'\subset\bP'$ is defined by the equation 
\[
f':=\sum_{i=0}^{c_1-1}(x'_i)^d+\sum_{j=0}^{n+1-c_1}x'_j (x'_{c_1+j})^{d-1}, 
\]
which is again quasi-smooth and the point $P':=\left[0:\cdots:0:1\right]\in\bP'$ maps 
$\tau$ onto $P$. By Proposition \ref{proposition_ST-cover}, we have 
$\delta_P\left(X; \cO_X(1)\right)\geq \delta_{P'}\left(X'; \cO_{X'}(1)\right)$. 
Moreover, the point $P'\in X'\subset\bP'$ is a generalized 
Eckardt point. Thus, by Corollary 
\ref{corollary_ninja}, we have 
\[
\delta_{P'}\left(X'; \cO_{X'}(1)\right)\geq\min\left\{\frac{n(n+1)}{d+n-1},\,\,
\frac{d(n+1)}{2d-1}\right\}. 
\]
Clearly, we have $\frac{d(n+1)}{2d-1}>1$. Moreover, since $d<n^2$, we also have 
$\frac{n(n+1)}{d+n-1}>1$. As a consequence, we get the inequality 
$\delta\left(X; \cO_X(1)\right)>1$. 
\end{proof}

\begin{proof}[Proof of Corollary \ref{cor:imperial}]
This follows from Proposition \ref{proposition:imperial} and openness of (uniform) K-stability (cf. \cite{MR4411858,MR4505846, LXZ22}). 
\end{proof}

\bibliographystyle{amsalpha}
\bibliography{Library}

@book{Xubook,
	author = {Xu, Chenyang},
	date-added = {2025-12-08 14:55:55 +0900},
	date-modified = {2025-12-08 14:56:30 +0900},
	isbn = {978-1-009-53877-0; [9781009538763]},
	mrclass = {14J45 (32Q26)},
	mrnumber = {4893062},
	pages = {xi+411},
	publisher = {Cambridge University Press, Cambridge},
	series = {New Mathematical Monographs},
	title = {K-stability of {F}ano varieties},
	volume = {50},
	year = {2025}}

@article{MR894378,
	author = {Tian, G.},
	date-added = {2025-10-13 13:14:13 +0900},
	date-modified = {2025-10-13 13:14:13 +0900},
	doi = {10.1007/BF01389077},
	fjournal = {Inventiones Mathematicae},
	issn = {0020-9910,1432-1297},
	journal = {Invent. Math.},
	mrclass = {53C25 (32C10 53C55 58E20)},
	mrnumber = {894378},
	mrreviewer = {M.\ Kalka},
	number = {2},
	pages = {225--246},
	title = {On {K}\"ahler-{E}instein metrics on certain {K}\"ahler manifolds with {$C_1(M)>0$}},
	url = {https://doi.org/10.1007/BF01389077},
	volume = {89},
	year = {1987},
	bdsk-url-1 = {https://doi.org/10.1007/BF01389077}}

@article{MR2212883,
	author = {Arezzo, C. and Ghigi, A. and Pirola, G. P.},
	date-added = {2025-10-13 13:15:06 +0900},
	date-modified = {2025-10-13 13:15:06 +0900},
	doi = {10.1515/CRELLE.2006.018},
	fjournal = {Journal f\"ur die Reine und Angewandte Mathematik. [Crelle's Journal]},
	issn = {0075-4102,1435-5345},
	journal = {J. Reine Angew. Math.},
	mrclass = {32Q20 (14J45 14J70)},
	mrnumber = {2212883},
	mrreviewer = {Julien\ Keller},
	pages = {177--200},
	title = {Symmetries, quotients and {K}\"ahler-{E}instein metrics},
	url = {https://doi.org/10.1515/CRELLE.2006.018},
	volume = {591},
	year = {2006},
	bdsk-url-1 = {https://doi.org/10.1515/CRELLE.2006.018}}

@book{Tian00,
	author = {Tian, G.},
	doi = {10.1007/978-3-0348-8389-4},
	isbn = {3-7643-6194-8},
	mrclass = {32Q20 (53C25 58E11)},
	mrnumber = {1787650},
	mrreviewer = {Azniv\ Kasparian},
	note = {Notes taken by Meike Akveld},
	pages = {vi+101},
	publisher = {Birkh\"auser Verlag, Basel},
	series = {Lectures in Mathematics ETH Z\"urich},
	title = {Canonical metrics in {K}\"ahler geometry},
	url = {https://doi.org/10.1007/978-3-0348-8389-4},
	year = {2000},
	bdsk-url-1 = {https://doi.org/10.1007/978-3-0348-8389-4}}

@article{Tian90,
	author = {Tian, G.},
	doi = {10.1007/BF01231499},
	fjournal = {Inventiones Mathematicae},
	issn = {0020-9910,1432-1297},
	journal = {Invent. Math.},
	mrclass = {32L07 (32F07 53C25 53C55)},
	mrnumber = {1055713},
	mrreviewer = {M.\ Kalka},
	number = {1},
	pages = {101--172},
	title = {On {C}alabi's conjecture for complex surfaces with positive first {C}hern class},
	url = {https://doi.org/10.1007/BF01231499},
	volume = {101},
	year = {1990},
	bdsk-url-1 = {https://doi.org/10.1007/BF01231499}}

@article{MR4505846,
	author = {Blum, H. and Liu, Y. and Xu, C.},
	date-added = {2025-09-26 14:36:09 +0900},
	date-modified = {2025-09-26 14:36:09 +0900},
	doi = {10.1215/00127094-2022-0054},
	fjournal = {Duke Mathematical Journal},
	issn = {0012-7094,1547-7398},
	journal = {Duke Math. J.},
	mrclass = {14J45 (14E30)},
	mrnumber = {4505846},
	mrreviewer = {Jihun\ Park},
	number = {13},
	pages = {2753--2797},
	title = {Openness of {K}-semistability for {F}ano varieties},
	url = {https://doi.org/10.1215/00127094-2022-0054},
	volume = {171},
	year = {2022},
	bdsk-url-1 = {https://doi.org/10.1215/00127094-2022-0054}}

@article{MR4411858,
	author = {Blum, H. and Liu, Y.},
	date-added = {2025-09-26 14:35:10 +0900},
	date-modified = {2025-09-26 14:35:10 +0900},
	doi = {10.24033/asens.2490},
	fjournal = {Annales Scientifiques de l'\'Ecole Normale Sup\'erieure. Quatri\`eme S\'erie},
	issn = {0012-9593,1873-2151},
	journal = {Ann. Sci. \'Ec. Norm. Sup\'er. (4)},
	mrclass = {14J45 (14E07 32Q26)},
	mrnumber = {4411858},
	mrreviewer = {Alexandr\ V.\ Pukhlikov},
	number = {1},
	pages = {1--41},
	title = {Openness of uniform {K}-stability in families of {$\mathbb Q$}-{F}ano varieties},
	url = {https://doi.org/10.24033/asens.2490},
	volume = {55},
	year = {2022},
	bdsk-url-1 = {https://doi.org/10.24033/asens.2490}}

@article{BFJ,
	author = {Boucksom, S. and Favre, C. and Jonsson, M.},
	doi = {10.1090/S1056-3911-08-00490-6},
	fjournal = {Journal of Algebraic Geometry},
	issn = {1056-3911,1534-7486},
	journal = {J. Algebraic Geom.},
	mrclass = {14C20 (14C17)},
	mrnumber = {2475816},
	mrreviewer = {James\ McKernan},
	number = {2},
	pages = {279--308},
	title = {Differentiability of volumes of divisors and a problem of {T}eissier},
	url = {https://doi.org/10.1090/S1056-3911-08-00490-6},
	volume = {18},
	year = {2009},
	bdsk-url-1 = {https://doi.org/10.1090/S1056-3911-08-00490-6}}

@article{MR3797604,
	author = {Liu, Y.},
	doi = {10.1112/S0010437X18007042},
	fjournal = {Compositio Mathematica},
	issn = {0010-437X,1570-5846},
	journal = {Compos. Math.},
	mrclass = {14J45 (14B05 14L24 32Q15)},
	mrnumber = {3797604},
	mrreviewer = {Giulio\ Codogni},
	number = {6},
	pages = {1131--1158},
	title = {The volume of singular {K}\"ahler-{E}instein {F}ano varieties},
	url = {https://doi.org/10.1112/S0010437X18007042},
	volume = {154},
	year = {2018},
	bdsk-url-1 = {https://doi.org/10.1112/S0010437X18007042}}

@misc{Fujita:2024aa,
	archiveprefix = {arXiv},
	author = {Fujita, K.},
	eprint = {2412.04019},
	primaryclass = {math.AG},
	title = {On the coupled stability thresholds of graded linear series},
	url = {https://arxiv.org/abs/2412.04019},
	year = {2024},
	bdsk-url-1 = {https://arxiv.org/abs/2412.04019}}

@article{ST24,
	abstract = {We give a lower bound for the delta invariant of the fundamental divisor of a quasi-smooth weighted hypersurface. As a consequence, we prove K-stability of a large class of quasi-smooth Fano hypersurfaces of index 1 and of all smooth Fano weighted hypersurfaces of index 1 and 2. The proofs are based on the Abban--Zhuang method and on the study of linear systems on flags of weighted hypersurfaces.},
	author = {Sano, T. and Tasin, L.},
	doi = {https://doi.org/10.1016/j.aim.2025.110644},
	issn = {0001-8708},
	journal = {Advances in Mathematics},
	keywords = {Fano varieties, K-stability},
	pages = {110644},
	title = {Delta invariants of weighted hypersurfaces},
	url = {https://www.sciencedirect.com/science/article/pii/S0001870825005420},
	volume = {482},
	year = {2025},
	bdsk-url-1 = {https://www.sciencedirect.com/science/article/pii/S0001870825005420},
	bdsk-url-2 = {https://doi.org/10.1016/j.aim.2025.110644}}

@article{Liu22,
	author = {Liu, Y.},
	doi = {10.1515/crelle-2022-0002},
	fjournal = {Journal f\"ur die Reine und Angewandte Mathematik. [Crelle's Journal]},
	issn = {0075-4102,1435-5345},
	journal = {J. Reine Angew. Math.},
	mrclass = {14D20 (14J35 14J70 14L24 32Q20)},
	mrnumber = {4434748},
	mrreviewer = {Constantin\ Shramov},
	pages = {55--77},
	title = {K-stability of cubic fourfolds},
	url = {https://doi.org/10.1515/crelle-2022-0002},
	volume = {786},
	year = {2022},
	bdsk-url-1 = {https://doi.org/10.1515/crelle-2022-0002}}

@book{CLS,
	author = {Cox, D. A. and Little, J. B. and Schenck, H.K.},
	doi = {10.1090/gsm/124},
	isbn = {978-0-8218-4819-7},
	mrclass = {14M25 (05A15 05E45 52B12)},
	mrnumber = {2810322},
	mrreviewer = {Ivan\ Arzhantsev},
	pages = {xxiv+841},
	publisher = {American Mathematical Society, Providence, RI},
	series = {Graduate Studies in Mathematics},
	title = {Toric varieties},
	url = {https://doi.org/10.1090/gsm/124},
	volume = {124},
	year = {2011},
	bdsk-url-1 = {https://doi.org/10.1090/gsm/124}}

@misc{CO,
	archiveprefix = {arXiv},
	author = {Campo, L. and Okada, T.},
	eprint = {2409.09492},
	primaryclass = {math.AG},
	title = {K-stablity of {F}ano threefold hypersurfaces of index 1},
	url = {https://arxiv.org/abs/2409.09492},
	year = {2024},
	bdsk-url-1 = {https://arxiv.org/abs/2409.09492}}

@article{MR3352459,
	author = {Tian, G.},
	date-added = {2024-07-22 18:25:10 +0900},
	date-modified = {2024-07-22 18:25:10 +0900},
	doi = {10.1002/cpa.21578},
	fjournal = {Communications on Pure and Applied Mathematics},
	issn = {0010-3640,1097-0312},
	journal = {Comm. Pure Appl. Math.},
	mrclass = {53C55 (53C25)},
	mrnumber = {3352459},
	mrreviewer = {Matthew\ B.\ Stenzel},
	number = {7},
	pages = {1085--1156},
	title = {K-stability and {K}\"ahler-{E}instein metrics},
	url = {https://doi.org/10.1002/cpa.21578},
	volume = {68},
	year = {2015},
	bdsk-url-1 = {https://doi.org/10.1002/cpa.21578}}

@article{MR3936640,
	author = {Fujita, K.},
	date-added = {2024-07-20 15:46:09 +0900},
	date-modified = {2024-07-20 15:46:09 +0900},
	doi = {10.1017/s1474748017000111},
	fjournal = {Journal of the Institute of Mathematics of Jussieu. JIMJ. Journal de l'Institut de Math\'ematiques de Jussieu},
	issn = {1474-7480,1475-3030},
	journal = {J. Inst. Math. Jussieu},
	mrclass = {14J45 (14L24)},
	mrnumber = {3936640},
	mrreviewer = {Ziquan\ Zhuang},
	number = {3},
	pages = {519--530},
	title = {K-stability of {F}ano manifolds with not small alpha invariants},
	url = {https://doi.org/10.1017/s1474748017000111},
	volume = {18},
	year = {2019},
	bdsk-url-1 = {https://doi.org/10.1017/s1474748017000111}}

@article{CDS1,
	author = {Chen, X. and Donaldson, S. and Sun, S.},
	doi = {10.1090/S0894-0347-2014-00799-2},
	fjournal = {Journal of the American Mathematical Society},
	issn = {0894-0347,1088-6834},
	journal = {J. Amer. Math. Soc.},
	mrclass = {53C55 (14J45 32Q20 32W20 53C25)},
	mrnumber = {3264766},
	mrreviewer = {Julius\ Ross},
	number = {1},
	pages = {183--197},
	title = {K\"ahler-{E}instein metrics on {F}ano manifolds. {I}: {A}pproximation of metrics with cone singularities},
	url = {https://doi.org/10.1090/S0894-0347-2014-00799-2},
	volume = {28},
	year = {2015},
	bdsk-url-1 = {https://doi.org/10.1090/S0894-0347-2014-00799-2}}

@article{CDS2,
	author = {Chen, X. and Donaldson, S. and Sun, S.},
	doi = {10.1090/S0894-0347-2014-00800-6},
	fjournal = {Journal of the American Mathematical Society},
	issn = {0894-0347,1088-6834},
	journal = {J. Amer. Math. Soc.},
	mrclass = {53C55 (53C25)},
	mrnumber = {3264767},
	mrreviewer = {Julius\ Ross},
	number = {1},
	pages = {199--234},
	title = {K\"ahler-{E}instein metrics on {F}ano manifolds. {II}: {L}imits with cone angle less than {$2\pi$}},
	url = {https://doi.org/10.1090/S0894-0347-2014-00800-6},
	volume = {28},
	year = {2015},
	bdsk-url-1 = {https://doi.org/10.1090/S0894-0347-2014-00800-6}}

@article{CDS3,
	author = {Chen, X. and Donaldson, S. and Sun, S.},
	doi = {10.1090/S0894-0347-2014-00801-8},
	fjournal = {Journal of the American Mathematical Society},
	issn = {0894-0347,1088-6834},
	journal = {J. Amer. Math. Soc.},
	mrclass = {53C55 (53C25)},
	mrnumber = {3264768},
	mrreviewer = {Julius\ Ross},
	number = {1},
	pages = {235--278},
	title = {K\"ahler-{E}instein metrics on {F}ano manifolds. {III}: {L}imits as cone angle approaches {$2\pi$} and completion of the main proof},
	url = {https://doi.org/10.1090/S0894-0347-2014-00801-8},
	volume = {28},
	year = {2015},
	bdsk-url-1 = {https://doi.org/10.1090/S0894-0347-2014-00801-8}}

@book{Calabiproblem,
	author = {Araujo, C. and Castravet, A.-M. and Cheltsov, I. and Fujita, K. and Kaloghiros, A.-S. and Martinez-Garcia, J. and Shramov, C. and S\"u\ss, H. and Viswanathan, N.},
	isbn = {978-1-009-19339-9},
	mrclass = {14J45 (32Q15 32Q20)},
	mrnumber = {4590444},
	pages = {vii+441},
	publisher = {Cambridge University Press, Cambridge},
	series = {London Mathematical Society Lecture Note Series},
	title = {The {C}alabi problem for {F}ano threefolds},
	volume = {485},
	year = {2023}}

@article{SZ19,
	author = {Stibitz, C. and Zhuang, Z.},
	doi = {10.1112/s0010437x19007498},
	fjournal = {Compositio Mathematica},
	issn = {0010-437X,1570-5846},
	journal = {Compos. Math.},
	mrclass = {14J45 (14E05 32Q20)},
	mrnumber = {3993805},
	mrreviewer = {James\ McKernan},
	number = {9},
	pages = {1845--1852},
	title = {K-stability of birationally superrigid {F}ano varieties},
	url = {https://doi.org/10.1112/s0010437x19007498},
	volume = {155},
	year = {2019},
	bdsk-url-1 = {https://doi.org/10.1112/s0010437x19007498}}

@article{Johnson-Kollar,
	author = {Johnson, J. M. and Koll\'ar, J.},
	fjournal = {Experimental Mathematics},
	issn = {1058-6458},
	journal = {Experiment. Math.},
	mrclass = {14J45 (14J17 14J32 32Q20)},
	mrnumber = {1822861},
	mrreviewer = {Yuri G. Prokhorov},
	number = {1},
	pages = {151--158},
	title = {Fano hypersurfaces in weighted projective 4-spaces},
	url = {http://projecteuclid.org/euclid.em/999188430},
	volume = {10},
	year = {2001},
	bdsk-url-1 = {http://projecteuclid.org/euclid.em/999188430}}

@book{Kawakita,
	author = {Kawakita, M.},
	isbn = {978-1-108-84423-9},
	mrclass = {14E30 (14-02 14E05 14E15 14J30 14J45)},
	mrnumber = {4653048},
	mrreviewer = {Andreas\ H\"oring},
	pages = {xii+490},
	publisher = {Cambridge University Press, Cambridge},
	series = {Cambridge Studies in Advanced Mathematics},
	title = {Complex algebraic threefolds},
	volume = {209},
	year = {2024}}

@article{KOW23,
	author = {Kim, I.-K. and Okada, T. and Won, J.},
	doi = {10.1017/fms.2023.87},
	fjournal = {Forum of Mathematics. Sigma},
	issn = {2050-5094},
	journal = {Forum Math. Sigma},
	mrclass = {14J45 (32Q20)},
	mrnumber = {4653768},
	pages = {Paper No. e93, 114},
	title = {K-stability of birationally superrigid {F}ano 3-fold weighted hypersurfaces},
	url = {https://doi.org/10.1017/fms.2023.87},
	volume = {11},
	year = {2023},
	bdsk-url-1 = {https://doi.org/10.1017/fms.2023.87}}

@article{CCC11,
	author = {Chen, J.-J. and Chen, J. A. and Chen, M.},
	doi = {10.1090/S1056-3911-10-00542-4},
	fjournal = {Journal of Algebraic Geometry},
	issn = {1056-3911,1534-7486},
	journal = {J. Algebraic Geom.},
	mrclass = {14M10},
	mrnumber = {2762991},
	mrreviewer = {Alexandr\ V.\ Pukhlikov},
	number = {2},
	pages = {239--262},
	title = {On quasismooth weighted complete intersections},
	url = {https://doi.org/10.1090/S1056-3911-10-00542-4},
	volume = {20},
	year = {2011},
	bdsk-url-1 = {https://doi.org/10.1090/S1056-3911-10-00542-4}}

@article{MR4056840,
	author = {Zhuang, Z.},
	date-added = {2021-10-22 11:46:52 +0900},
	date-modified = {2021-10-22 11:46:52 +0900},
	doi = {10.1007/s00029-020-0536-1},
	fjournal = {Selecta Mathematica. New Series},
	issn = {1022-1824},
	journal = {Selecta Math. (N.S.)},
	mrclass = {14E08 (14C20 14J10 14J45 32Q20)},
	mrnumber = {4056840},
	mrreviewer = {Filippo F. Favale},
	number = {1},
	pages = {Paper No. 11, 20},
	title = {Birational superrigidity is not a locally closed property},
	url = {https://mathscinet.ams.org/mathscinet-getitem?mr=4056840},
	volume = {26},
	year = {2020},
	bdsk-url-1 = {https://mathscinet.ams.org/mathscinet-getitem?mr=4056840}}

@article{KW21,
	author = {Kim, I.-K. and Won, J.},
	doi = {10.1080/00927872.2021.1880591},
	fjournal = {Communications in Algebra},
	issn = {0092-7872,1532-4125},
	journal = {Comm. Algebra},
	mrclass = {14J45 (14J26 53C25)},
	mrnumber = {4255036},
	mrreviewer = {Cheolgyu\ Lee},
	number = {6},
	pages = {2679--2688},
	title = {Unstable singular del {P}ezzo hypersurfaces with lower index},
	url = {https://doi.org/10.1080/00927872.2021.1880591},
	volume = {49},
	year = {2021},
	bdsk-url-1 = {https://doi.org/10.1080/00927872.2021.1880591}}

@article{MR4225810,
	author = {Cheltsov, I. and Park, J. and Shramov, C.},
	doi = {10.1007/s12220-020-00355-9},
	fjournal = {Journal of Geometric Analysis},
	issn = {1050-6926,1559-002X},
	journal = {J. Geom. Anal.},
	mrclass = {14J17 (14J45 32Q20)},
	mrnumber = {4225810},
	mrreviewer = {Fei\ Ye},
	number = {3},
	pages = {2354--2382},
	title = {Delta invariants of singular del {P}ezzo surfaces},
	url = {https://doi.org/10.1007/s12220-020-00355-9},
	volume = {31},
	year = {2021},
	bdsk-url-1 = {https://doi.org/10.1007/s12220-020-00355-9}}

@article{KOW18,
	author = {Kim, I.-K. and Okada, T. and Won, J.},
	doi = {10.1093/imrn/rnw310},
	fjournal = {International Mathematics Research Notices. IMRN},
	issn = {1073-7928},
	journal = {Int. Math. Res. Not. IMRN},
	mrclass = {14J45 (32Q20)},
	mrnumber = {3801495},
	mrreviewer = {G. K. Sankaran},
	number = {9},
	pages = {2745--2800},
	title = {Alpha invariants of birationally rigid {F}ano three-folds},
	url = {https://doi.org/10.1093/imrn/rnw310},
	year = {2018},
	bdsk-url-1 = {https://doi.org/10.1093/imrn/rnw310}}

@article{Cheltsov08,
	author = {Cheltsov, I.},
	doi = {10.1016/j.aim.2007.06.010},
	fjournal = {Advances in Mathematics},
	issn = {0001-8708},
	journal = {Adv. Math.},
	mrclass = {14J45 (14E07 14E30 32Q20)},
	mrnumber = {2357324},
	mrreviewer = {Jaros\l aw A. Wi\'{s}niewski},
	number = {1},
	pages = {97--124},
	title = {Fano varieties with many selfmaps},
	url = {https://doi.org/10.1016/j.aim.2007.06.010},
	volume = {217},
	year = {2008},
	bdsk-url-1 = {https://doi.org/10.1016/j.aim.2007.06.010}}

@article{Johnson-Kollar2,
	author = {Johnson, J. M. and Koll\'{a}r, J.},
	fjournal = {Universit\'{e} de Grenoble. Annales de l'Institut Fourier},
	issn = {0373-0956},
	journal = {Ann. Inst. Fourier (Grenoble)},
	mrclass = {32Q20 (14E30 14J26 14J45)},
	mrnumber = {1821068},
	mrreviewer = {I. Dolgachev},
	number = {1},
	pages = {69--79},
	title = {K\"{a}hler-{E}instein metrics on log del {P}ezzo surfaces in weighted projective 3-spaces},
	url = {http://aif.cedram.org/item?id=AIF_2001__51_1_69_0},
	volume = {51},
	year = {2001},
	bdsk-url-1 = {http://aif.cedram.org/item?id=AIF_2001__51_1_69_0}}

@article{CJS10,
	author = {Cheltsov, I. and Park, J. and Shramov, C.},
	doi = {10.1007/s12220-010-9135-2},
	fjournal = {Journal of Geometric Analysis},
	issn = {1050-6926},
	journal = {J. Geom. Anal.},
	mrclass = {14J26 (14B05 14C20 14J45 32Q20)},
	mrnumber = {2683768},
	mrreviewer = {Stefan Schr\"{o}er},
	number = {4},
	pages = {787--816},
	title = {Exceptional del {P}ezzo hypersurfaces},
	url = {https://doi.org/10.1007/s12220-010-9135-2},
	volume = {20},
	year = {2010},
	bdsk-url-1 = {https://doi.org/10.1007/s12220-010-9135-2}}

@article{MR2499678,
	author = {Cheltsov, I. A.},
	date-added = {2021-10-20 10:39:45 +0900},
	date-modified = {2021-10-20 10:42:25 +0900},
	doi = {10.1070/SM2009v200n01ABEH003988},
	fjournal = {Matematicheski\u{\i} Sbornik},
	issn = {0368-8666},
	journal = {Mat. Sb.},
	mrclass = {32Q20 (14J45)},
	mrnumber = {2499678},
	mrreviewer = {Alexandr V. Pukhlikov},
	number = {1},
	pages = {97--136},
	title = {Extremal metrics on two {F}ano manifolds},
	url = {https://mathscinet.ams.org/mathscinet-getitem?mr=2499678},
	volume = {200},
	year = {2009},
	bdsk-url-1 = {https://mathscinet.ams.org/mathscinet-getitem?mr=2499678}}

@article{MR1926877,
	author = {Araujo, C.},
	date-added = {2021-10-20 13:14:51 +0900},
	date-modified = {2021-10-20 13:14:51 +0900},
	doi = {10.1090/S0002-9947-02-03081-7},
	fjournal = {Transactions of the American Mathematical Society},
	issn = {0002-9947},
	journal = {Trans. Amer. Math. Soc.},
	mrclass = {32Q20 (14J25)},
	mrnumber = {1926877},
	mrreviewer = {Zhiqin Lu},
	number = {11},
	pages = {4303--4312},
	title = {K\"{a}hler-{E}instein metrics for some quasi-smooth log del {P}ezzo surfaces},
	url = {https://mathscinet.ams.org/mathscinet-getitem?mr=1926877},
	volume = {354},
	year = {2002},
	bdsk-url-1 = {https://mathscinet.ams.org/mathscinet-getitem?mr=1926877}}

@article{MR4309493,
	author = {Zhuang, Z.},
	date-added = {2024-05-20 15:01:16 +0900},
	date-modified = {2024-05-20 15:01:16 +0900},
	doi = {10.1007/s00222-021-01046-0},
	fjournal = {Inventiones Mathematicae},
	issn = {0020-9910},
	journal = {Invent. Math.},
	mrclass = {14J45 (32Q26 53C55)},
	mrnumber = {4309493},
	number = {1},
	pages = {195--223},
	title = {Optimal destabilizing centers and equivariant {K}-stability},
	url = {https://mathscinet.ams.org/mathscinet-getitem?mr=4309493},
	volume = {226},
	year = {2021},
	bdsk-url-1 = {https://mathscinet.ams.org/mathscinet-getitem?mr=4309493}}

@article{MR3715806,
	author = {Li, C.},
	date-added = {2024-04-10 11:35:30 +0200},
	date-modified = {2024-04-10 11:35:30 +0200},
	doi = {10.1215/00127094-2017-0026},
	fjournal = {Duke Mathematical Journal},
	issn = {0012-7094},
	journal = {Duke Math. J.},
	mrclass = {14B05 (13A18 14J45 52A27 53C25 53C55)},
	mrnumber = {3715806},
	mrreviewer = {Ruadha\'{\i} Dervan},
	number = {16},
	pages = {3147--3218},
	title = {K-semistability is equivariant volume minimization},
	url = {https://doi.org/10.1215/00127094-2017-0026},
	volume = {166},
	year = {2017},
	bdsk-url-1 = {https://mathscinet.ams.org/mathscinet-getitem?mr=3715806}}

@article{MR3896135,
	author = {Fujita, K. and Odaka, Y.},
	date-added = {2024-04-10 11:34:48 +0200},
	date-modified = {2024-04-10 11:34:48 +0200},
	doi = {10.2748/tmj/1546570823},
	fjournal = {The Tohoku Mathematical Journal. Second Series},
	issn = {0040-8735},
	journal = {Tohoku Math. J. (2)},
	mrclass = {14J45 (32Q26 53C55)},
	mrnumber = {3896135},
	mrreviewer = {Andreas H\"{o}ring},
	number = {4},
	pages = {511--521},
	title = {On the {K}-stability of {F}ano varieties and anticanonical divisors},
	url = {https://doi.org/10.2748/tmj/1546570823},
	volume = {70},
	year = {2018},
	bdsk-url-1 = {https://mathscinet.ams.org/mathscinet-getitem?mr=3896135}}

@article{MR3956698,
	author = {Fujita, K.},
	date-added = {2024-04-10 11:34:43 +0200},
	date-modified = {2024-04-10 11:34:43 +0200},
	doi = {10.1515/crelle-2016-0055},
	fjournal = {Journal f\"{u}r die Reine und Angewandte Mathematik. [Crelle's Journal]},
	issn = {0075-4102},
	journal = {J. Reine Angew. Math.},
	mrclass = {14E05 (14J45 32Q26)},
	mrnumber = {3956698},
	mrreviewer = {Ziquan Zhuang},
	pages = {309--338},
	title = {A valuative criterion for uniform {K}-stability of {$\mathbb Q$}-{F}ano varieties},
	url = {https://doi.org/10.1515/crelle-2016-0055},
	volume = {751},
	year = {2019},
	bdsk-url-1 = {https://mathscinet.ams.org/mathscinet-getitem?mr=3956698}}

@article{MR4067358,
	author = {Blum, H. and Jonsson, M.},
	date-added = {2024-04-10 11:34:06 +0200},
	date-modified = {2024-04-10 11:34:06 +0200},
	doi = {10.1016/j.aim.2020.107062},
	fjournal = {Advances in Mathematics},
	issn = {0001-8708},
	journal = {Adv. Math.},
	mrclass = {14C20 (14M25)},
	mrnumber = {4067358},
	mrreviewer = {Chenyang Xu},
	pages = {107062, 57},
	title = {Thresholds, valuations, and {K}-stability},
	url = {https://doi.org/10.1016/j.aim.2020.107062},
	volume = {365},
	year = {2020},
	bdsk-url-1 = {https://mathscinet.ams.org/mathscinet-getitem?mr=4067358}}

@article{LXZ22,
	author = {Liu, Y. and Xu, C. and Zhuang, Z.},
	doi = {10.4007/annals.2022.196.2.2},
	fjournal = {Annals of Mathematics. Second Series},
	issn = {0003-486X,1939-8980},
	journal = {Ann. of Math. (2)},
	mrclass = {14J45 (14D20 14E30 32Q20)},
	mrnumber = {4445441},
	number = {2},
	pages = {507--566},
	title = {Finite generation for valuations computing stability thresholds and applications to {K}-stability},
	url = {https://doi.org/10.4007/annals.2022.196.2.2},
	volume = {196},
	year = {2022},
	bdsk-url-1 = {https://doi.org/10.4007/annals.2022.196.2.2}}

@article{AZ23,
	author = {Abban, H. and Zhuang, Z.},
	doi = {10.1215/00127094-2022-0026},
	fjournal = {Duke Mathematical Journal},
	issn = {0012-7094,1547-7398},
	journal = {Duke Math. J.},
	mrclass = {14J45 (14C20 14J70)},
	mrnumber = {4576239},
	number = {6},
	pages = {1109--1144},
	title = {Seshadri constants and {$K$}-stability of {F}ano manifolds},
	url = {https://doi.org/10.1215/00127094-2022-0026},
	volume = {172},
	year = {2023},
	bdsk-url-1 = {https://doi.org/10.1215/00127094-2022-0026}}

@article{AZ22,
	author = {Abban, H. and Zhuang, Z.},
	doi = {10.1017/fmp.2022.11},
	fjournal = {Forum of Mathematics. Pi},
	issn = {2050-5086},
	journal = {Forum Math. Pi},
	mrclass = {14J45 (32Q20)},
	mrnumber = {4448177},
	pages = {Paper No. e15, 43},
	title = {K-stability of {F}ano varieties via admissible flags},
	url = {https://doi.org/10.1017/fmp.2022.11},
	volume = {10},
	year = {2022},
	bdsk-url-1 = {https://doi.org/10.1017/fmp.2022.11}}

@article{LST,
	author = {Liu, Y. and Sano, T. and Tasin, L.},
	doi = {10.4310/jdg/1747062009},
	fjournal = {Journal of Differential Geometry},
	issn = {0022-040X,1945-743X},
	journal = {J. Differential Geom.},
	mrclass = {53C25 (58D17)},
	mrnumber = {4904495},
	number = {1},
	pages = {1--26},
	title = {Infinitely many families of {S}asaki-{E}instein metrics on spheres},
	url = {https://doi.org/10.4310/jdg/1747062009},
	volume = {130},
	year = {2025},
	bdsk-url-1 = {https://doi.org/10.4310/jdg/1747062009}}

@article{ST21,
	author = {Sano, T. and Tasin, L.},
	doi = {10.14231/ag-2024-010},
	fjournal = {Algebraic Geometry},
	issn = {2313-1691,2214-2584},
	journal = {Algebr. Geom.},
	mrclass = {14J45 (14J40)},
	mrnumber = {4713339},
	mrreviewer = {Jihun\ Park},
	number = {2},
	pages = {296--317},
	title = {On {K}-stability of {F}ano weighted hypersurfaces},
	url = {https://doi.org/10.14231/ag-2024-010},
	volume = {11},
	year = {2024},
	bdsk-url-1 = {https://doi.org/10.14231/ag-2024-010}}

@article{PST17,
	author = {Pizzato, M. and Sano, T. and Tasin, L.},
	doi = {10.2140/ant.2017.11.2369},
	fjournal = {Algebra \& Number Theory},
	issn = {1937-0652,1944-7833},
	journal = {Algebra Number Theory},
	mrclass = {14M10 (11D04 14F17 14J32 14J45)},
	mrreviewer = {Yukihide\ Takayama},
	number = {10},
	pages = {2369--2395},
	title = {Effective nonvanishing for {F}ano weighted complete intersections},
	url = {https://doi.org/10.2140/ant.2017.11.2369},
	volume = {11},
	year = {2017},
	bdsk-url-1 = {https://doi.org/10.2140/ant.2017.11.2369}}

@article{BC94,
	author = {Batyrev, V. V. and Cox, D. A.},
	doi = {10.1215/S0012-7094-94-07509-1},
	fjournal = {Duke Mathematical Journal},
	issn = {0012-7094,1547-7398},
	journal = {Duke Math. J.},
	mrclass = {14M25 (14C30 14D07 14F10)},
	mrnumber = {1290195},
	mrreviewer = {I.\ Dolgachev},
	number = {2},
	pages = {293--338},
	title = {On the {H}odge structure of projective hypersurfaces in toric varieties},
	url = {https://doi.org/10.1215/S0012-7094-94-07509-1},
	volume = {75},
	year = {1994},
	bdsk-url-1 = {https://doi.org/10.1215/S0012-7094-94-07509-1}}

@article{AB,
	author = {Alexeev, V., and Brunyate, A.},
	doi = {10.1007/s00222-011-0347-2},
	fjournal = {Inventiones Mathematicae},
	issn = {0020-9910},
	journal = {Invent. Math.},
	mrclass = {14C34 (14D20 14D22 14K10 14M27)},
	mrnumber = {2897696},
	mrreviewer = {Arvid Siqveland},
	number = {1},
	pages = {175--196},
	title = {Extending the {T}orelli map to toroidal compactifications of {S}iegel space},
	url = {https://doi.org/10.1007/s00222-011-0347-2},
	volume = {188},
	year = {2012},
	bdsk-url-1 = {https://doi.org/10.1007/s00222-011-0347-2}}

@article{AK,
	adsnote = {Provided by the SAO/NASA Astrophysics Data System},
	adsurl = {http://adsabs.harvard.edu/abs/2017arXiv170903540A},
	archiveprefix = {arXiv},
	author = {{Ambro}, F. and {Koll{\'a}r}, J.},
	eprint = {1709.03540},
	journal = {ArXiv e-prints},
	keywords = {Mathematics - Algebraic Geometry},
	month = sep,
	primaryclass = {math.AG},
	title = {{Minimal models of semi-log-canonical pairs}},
	year = 2017}

@article{Cheltsov01,
	author = {I. Cheltsov},
	journal = {Mat. Sb.},
	number = {8},
	pages = {155--172},
	title = {Log canonical thresholds on hypersurfaces},
	volume = {192},
	year = {2001}}

@incollection{Dolgachev,
	address = {Berlin},
	author = {Dolgachev, I.},
	booktitle = {Group actions and vector fields ({V}ancouver, {B}.{C}., 1981)},
	doi = {10.1007/BFb0101508},
	mrclass = {14L32 (14A05 14B05)},
	mrnumber = {704986 (85g:14060)},
	pages = {34--71},
	publisher = {Springer},
	series = {Lecture Notes in Math.},
	title = {Weighted projective varieties},
	url = {http://dx.doi.org/10.1007/BFb0101508},
	volume = {956},
	year = {1982},
	bdsk-url-1 = {http://dx.doi.org/10.1007/BFb0101508}}

@incollection{Fletcher00,
	address = {Cambridge},
	author = {A. R. Iano-Fletcher},
	booktitle = {Explicit birational geometry of 3-folds},
	pages = {101--173},
	publisher = {Cambridge Univ. Press},
	series = {London Math. Soc. Lecture Note Ser.},
	title = {Working with weighted complete intersections},
	volume = {281},
	year = {2000}}

@article{FS2020,
	author = {Fujino, O. and Sato, H.},
	doi = {10.1017/nmj.2018.27},
	fjournal = {Nagoya Mathematical Journal},
	issn = {0027-7630,2152-6842},
	journal = {Nagoya Math. J.},
	mrclass = {14M25 (14E30)},
	mrnumber = {4138895},
	mrreviewer = {Jheng-Jie\ Chen},
	pages = {42--75},
	title = {Notes on toric varieties from {M}ori theoretic viewpoint, {II}},
	url = {https://doi.org/10.1017/nmj.2018.27},
	volume = {239},
	year = {2020},
	bdsk-url-1 = {https://doi.org/10.1017/nmj.2018.27}}

@article{Fujita2023,
	author = {Fujita, K.},
	doi = {10.1093/imrn/rnac190},
	fjournal = {International Mathematics Research Notices. IMRN},
	issn = {1073-7928,1687-0247},
	journal = {Int. Math. Res. Not. IMRN},
	mrclass = {14J45 (14E30 32Q20)},
	mrnumber = {4621852},
	mrreviewer = {Piotr\ Pokora},
	number = {15},
	pages = {12601--12784},
	title = {On {K}-stability for {F}ano threefolds of rank 3 and degree 28},
	url = {https://doi.org/10.1093/imrn/rnac190},
	year = {2023},
	bdsk-url-1 = {https://doi.org/10.1093/imrn/rnac190}}

@book{Fulton93,
	author = {W. Fulton},
	publisher = {Princeton University Press},
	title = {Introduction to toric varieties},
	year = {1993}}

@incollection{Kollar95,
	address = {Providence, RI},
	author = {J. Koll{\'a}r},
	booktitle = {Algebraic geometry---Santa Cruz 1995},
	pages = {221--287},
	publisher = {Amer. Math. Soc.},
	title = {Singularities of pairs},
	year = {1997}}

@book{Lazarsfeld04a,
	address = {Berlin},
	author = {R. Lazarsfeld},
	publisher = {Springer-Verlag},
	series = {Ergebnisse der Mathematik und ihrer Grenzgebiete},
	title = {Positivity in {A}lgebraic {G}eometry. {I}},
	volume = {48},
	year = {2004}}

@article{LM09,
	author = {R. Lazarsfeld and M. Musta{\c{t}}\u{a}},
	fjournal = {Annales Scientifiques de l'{\'E}cole Normale Sup{\'e}rieure. Quatri{\`e}me S{\'e}rie},
	journal = {Ann. Sci. {\'E}c. Norm. Sup{\'e}r. (4)},
	number = {5},
	pages = {783--835},
	title = {Convex bodies associated to linear series},
	volume = {42},
	year = {2009}}

@article{Mori75,
	author = {Mori, S.},
	fjournal = {Journal of Mathematics of Kyoto University},
	issn = {0023-608X},
	journal = {J. Math. Kyoto Univ.},
	mrclass = {14M10},
	mrnumber = {0393054 (52 \#13865)},
	mrreviewer = {Manfred Herrmann},
	number = {3},
	pages = {619--646},
	title = {On a generalization of complete intersections},
	volume = {15},
	year = {1975}}

@article{Pukhlikov98,
	author = {Pukhlikov, Aleksandr V.},
	fjournal = {Inventiones Mathematicae},
	journal = {Invent. Math.},
	number = {2},
	pages = {401--426},
	title = {Birational automorphisms of {F}ano hypersurfaces},
	volume = {134},
	year = {1998}}

\end{document}